\documentclass[11pt,a4paper]{article}
\usepackage{graphicx}
\usepackage{color}
\usepackage{xcolor}  
\usepackage[utf8]{inputenc}
\usepackage[normalem]{ulem}
\usepackage{amsmath,amsthm,amsfonts} 
\usepackage{hyperref}

\addtolength\textwidth{5mm}

\newcommand\cyl{T}

\newcommand\infcyl{\Theta}

\newcommand{\contbeta}{{{\beta}}}
\newcommand{\conteta}{{{\eta}}}

\newcommand{\mixedgrowth}{\Phi_p}
\newcommand{\mixedgrowthnop}{\Phi}
\newcommand\Elb{E^\mathrm{lb}}
\newcommand\Elast{E^\mathrm{elast}}
\newcommand\Eub{E^\mathrm{ub}}
\newcommand\subcr{\mathrm{subcr}}
\newcommand\core{\mathrm{core}}
\newcommand\moll{\mathrm{moll}}

\renewcommand\ln{\log}

\newcommand\R{\mathbb{R}}
\newcommand\C{\mathbb{C}}
\newcommand\Z{\mathbb{Z}}
\newcommand\N{\mathbb{N}}

\newcommand\calH{\mathcal{H}}

\newcommand\calB{\mathcal{B}}

\newcommand\calM{\mathcal{M}}

\newcommand\calL{\mathcal{L}}
\newcommand\calA{\mathcal{A}}

\newcommand\weakstarto{{\displaystyle\mathop{\rightharpoonup}^*}}
\newcommand\weakto{\mathop{\rightharpoonup}}
\newcommand\e{\varepsilon}

\newcommand\supp{\mathop{\mathrm{supp}}}

\newcommand\Div{\mathop{\mathrm{div}}}
\newcommand\Curl{\mathop{\mathrm{curl}}}
\newcommand\CCurl{\mathop{\mathrm{Curl}}}
\newcommand\curl{\mathop{\mathrm{curl}}}
\newcommand\cof{\mathop{\mathrm{cof}}}
\newcommand\dist{{\mathrm{dist}}}
\newcommand\rel{{\mathrm{rel}}}

\newcommand\Id{{\mathrm{Id}}}
\newcommand\SO{{\mathrm{SO}}}
\newcommand\sym{{\mathrm{sym}}}
\newcommand\skw{{\mathrm{skew}}}

\newcommand\loc{{\mathrm{loc}}}
\newcommand\eps{{\varepsilon}}

\newcommand\dx{{\,dx}}

\newcommand{\LM}[1]{\hbox{\vrule width.2pt \vbox to#1pt{\vfill \hrule width#1pt height.2pt}}}

\newcommand\MBCC{{\mathcal M}^1_{\mathcal B}}

\newcommand{\LL}{{\mathchoice
{\,\LM7\,}{\,\LM7\,}{\,\LM5\,}{\,\LM{3.35}\,}}}
\newtheorem{theorem}{Theorem}[section]
\newtheorem{lemma}[theorem]{Lemma}
\newtheorem{definition}[theorem]{Definition}
\newtheorem{remark}[theorem]{Remark}
\newtheorem{corollary}[theorem]{Corollary}
\newtheorem{proposition}[theorem]{Proposition}
\numberwithin{equation}{section}

\newcommand\psiC{\psi_\C}

\newcommand\HWLin{$H^\mathrm{W}_\mathrm{Lin}$}
\newcommand\HWFinite{$H^\mathrm{W}_\mathrm{Finite}$}

\newcommand{\Flin}{F^{\mathrm{Lin}}}
\newcommand{\Ffinite}{F^{\mathrm{Finite}}}

\newcommand\Cyl{{T}}

\begin{document}
\begin{center}
 {\LARGE 
Line-tension limits for 
line singularities and application to the mixed-growth case
 } \\[5mm]
July 4, 2022\\[5mm]
Sergio Conti$^{1}$, Adriana Garroni$^{2}$, and Roberta Marziani$^{3}$
\\[2mm]
{\em $^1$ Institut f\"ur Angewandte Mathematik,
Universit\"at Bonn\\ 53115 Bonn, Germany }\\
{\em $^{2}$ Dipartimento di Matematica, Sapienza, Universit\`a di Roma\\
00185 Roma, Italy}\\
 {\em $^{3}$
     Angewandte Mathematik, Universit\"at Münster\\
     48149 Münster, Germany
}\\[4mm]
\begin{minipage}[c]{0.8\textwidth}
We study variational models for dislocations in three dimensions in the line-tension scaling. We present a unified approach which allows to treat energies with subquadratic growth at infinity and other regularizations of the singularity near the dislocation lines. We show that the asymptotics
via Gamma convergence is independent of the specific choice of the energy and of the regularization procedure.
\end{minipage}
\end{center}

\tableofcontents

\section{Introduction}

Variational models depending on fields that can have  topological singularities are of particular interest in materials science. 
The asymptotic analysis 
of such models
is often based on tools from geometric measure theory and permits to derive effective energies concentrated on sets of lower dimension.
Important examples range from the study of vortices in superconductors, to grain boundaries, fractures and other interfaces in solids, as well as line defects and disclinations in liquid crystals.
Here we consider the codimension-two case, and in particular
integral energies in three dimensions for fields that are curl-free away from a one-dimensional set, having in mind the important application of the study of dislocations in crystals.

Dislocations are the main mechanism for plastic deformation in metals. They are one-dimensional singularities of the strain field, 
with discrete multiplicity arising from the structure of the underlying crystal lattice.
We treat dislocation models of the type
\begin{equation}\label{eqenergyintro}
 \int_{\Omega} W(\beta) dx,
\end{equation}
where $\Omega\subseteq\R^3$, 
$W:\R^{3\times 3}\to[0,\infty)$ is an elastic energy density, 
$\beta:\Omega\to\R^{3\times 3}$ 
characterizes the strain field and obeys $\curl \beta=\mu$.
{In turn, $\mu$ is} the density of dislocations; which is
a 
divergence-free finite measure of the form $\eps \theta\otimes \tau\calH^1\LL\gamma$, with $\gamma\subset\Omega$ a one-rectifiable curve, $\theta\in L^1(\gamma,\calH^1;\calB)$
the discrete, vectorial multiplicity, $\tau\in L^\infty(\gamma, \calH^1; S^{2})$ tangent to $\gamma$. Here $\calB$ is a 
{three dimensional} 
Bravais lattice which describes the underlying crystalline structure and 
the small parameter $\eps$ represents the ratio between the atomic distance and the size of the sample.
In a finite setting we understand $\Omega$ as the deformed configuration, and $\beta$ as the inverse of the elastic strain; the energy density $W$ is then invariant 
under the action of $\SO(3)$ on the right.
{In the entire paper $\curl f$, for $f$ a distribution taking values in $\R^{3\times 3}$, denotes the rowwise distributional curl, which is also a distribution taking values in $\R^{3\times 3}$.}

If $W$ has quadratic growth, the energy \eqref{eqenergyintro} is infinite whenever $\mu\ne0$. 
In reality, the discrete nature of crystals shows that the appropriate model would be
discrete.
The divergence is a nonphysical result due to the fact that one uses a continuum approximation {which} in the region close to the singularity is not appropriate.
Therefore a number of regularizations {of the continuum model} have been proposed. They include a subquadratic growth of $W$ at infinity, the replacement of $\mu$ by a mollified version {at scale $\eps$}, and
the elimination of a core region from the integration domain. 
For example, the latter corresponds to 
\begin{equation}
 \int_{\Omega\setminus (\gamma)_\eps} W(\beta) dx
\end{equation}
where $(\gamma)_\eps$ denotes an $\eps$-neighbourhood of the curve $\gamma$.
The above models are all considered semi-discrete models, {in the sense that they are continuum models that 
still contain the discrete parameter $\eps$ and some discrete effects, namely, the quantization of the measure and the regularization, both at scale $\eps$.}
In turn, the energy density $W$ can be treated in a geometrically linear setting, or with finite kinematics. Ultimately, all these variants  have  the same leading-order behaviour.

In this paper, we provide a unified treatment of the three-dimensional continuum models which covers many different regularizations, as for example the one with mixed growth conditions,
and clarifies 
the equivalence of all these possible approximations and corresponding regularizations.
Our general approach produces also a simpler and more direct proof of some results from the
literature \cite{ContiGarroniOrtiz2015,garroni2020nonlinear}; we expect that this unified approach will 
prove helpful in further future generalizations.
We assume that dislocations are dilute, in the sense that the curve $\gamma$ on which $\mu$ is supported has some regularity, which may however degenerate in the limit. 
Our mechanical model includes frame indifference and is formulated in the deformed (spatial) configuration $\Omega$. As a measure of elastic strain at a point $x\in\Omega$, we use $\beta(x)\in\R^{3\times 3}$, which maps directions in the spatial configuration to directions in the lattice configuration; in the language of continuum mechanics this is the inverse elastic strain seen as a spatial field, $\beta=(F_e)^{-1}_s$, as discussed in Section~\ref{secdislofinite} below
{(the subscript $s$ indicates that the field depends on the spatial coordinates)}. 

The topological singularity leads to a logarithmic divergence of the energy in the regularization parameter $\eps$. After rescaling by $\log\frac1\eps$, we show $\Gamma$-convergence to an energy 
which depends on a constant rotation $Q$, 
a measure $\mu=\theta\otimes \tau \calH^1\LL\gamma$ 
representing the dislocation density, and
a curl-free field 
 $\conteta$ 
representing the elastic strain.
The limiting energy takes the form
\begin{equation}
 \int_\Omega\frac 12\C_Q \conteta\cdot \conteta \, dx + \int_{\gamma\cap\Omega} \psiC^\rel(\theta,Q\tau)d\calH^1.
\end{equation}
The second term represents  the line-tension energy, which 
can be obtained as the relaxation of 
\begin{equation}\label{eqintropsicnonrl}
\int_{\gamma\cap\Omega} \psiC(\theta,Q\tau)d\calH^1
\end{equation}
where $\psiC$ is the line-tension energy per unit length of infinite straight dislocations and can be computed from the matrix of elastic constants $\C$ by a one-dimensional variational problem (see \cite{ContiGarroniOrtiz2015} and Section \ref{preliminaries} below for details). The first term involves the elasticity matrix $\C$, rotated by $Q$.

We remark that our kinematic treatment of dislocations in finite kinematics is different from the one used in recent related mathematical literature, as for example \cite{ScardiaZeppieri2012,garroni2020nonlinear}. Indeed, 
we work in the deformed configuration and 
our limiting energy has the integrand $\psiC(\theta,Q\tau)$, whereas the cited literature works in the reference configuration and obtains a functional containing (in our notation) $\psiC(Q^T\theta',\tau')$. 
The two can be made to coincide 
if $\theta'=Q\theta$, $\tau'=Q\tau$; as $\theta$ takes value in the lattice $\calB$ this suggests that 
the expression $\psiC(Q^T\theta',\tau')$ needs
the multiplicity $\theta'$ to take values in the rotated lattice $Q\calB$. 
We remark that  our approach closely matches the one presented in 
\cite[(7.6)--(7.11)]{MuellerScardiaZeppieri2015}.

In Section \ref{sec-main} we state the main $\Gamma$-convergence result for the energies with mixed growth, we discuss the other models for alternative regularizations in the nonlinear and linear setting, 
which have the same $\Gamma$-limit,
and we give a sketch of the main ideas of our approach. The proofs of the $\Gamma$-convergence results are then collected in Section \ref{sec-mainproofs}.

In Section \ref{seccellpb} we show that for a single straight dislocation the leading-order term of the energy can be characterized by a cell problem under general assumptions for the energy density $W$. After rescaling, this term leads to \eqref{eqintropsicnonrl}. Therefore the linearization of a nonlinear elastic energy arises naturally in the limit. In particular this provides (Section~\ref{seccomp}) the lower bound for the $\Gamma$-limit of a nonlinear elastic model with energy densities of mixed growth, i.e., that behave as the minimum between $\dist^2(\cdot, \SO(3))$ and $\dist^p(\cdot, \SO(3))$ for some $p\in (1,2)$. {For this model we also obtain a compactness statement, that asserts that sequences $\beta_\eps$ with uniformly bounded energy have a converging subsequence in the relevant topology.}

Moreover in Section \ref{secupperbound} we show that the upper bound can be obtained by a pointwise limit. This proves that there is indeed a complete separation of scales between the relaxation, which happens at the line-tension level and only involves the line-tension energies $\psiC$ and $\psiC^\rel$, and the concentration, which relates the elastic energy $W$ integrated on a three-dimensional volume to the line-tension energy $\psiC$ integrated on one-dimensional sets. This was already apparent in the two-dimensional Nabarro-Peierls setting of \cite{ContiGarroniMueller2011}.

The general approach presented here will also allow, in a forthcoming paper \cite{ContiGarroniOrtizDiscrete}, to treat a discrete model, confirming that the semi-discrete models are a good description of discrete ones outside  the core region.

In two dimensions dislocations are point singularities, and a similar analysis for dilute configurations was performed for linear models in \cite{CermelliLeoni,GarroniLeoniPonsiglione} and for nonlinear models in 
\cite{ScardiaZeppieri2012,MuellerScardiaZeppieri2015}, both in the energy regimes scaling as $\log\frac1\eps$ and as $\log^2\frac1\eps$. These results  were extended in \cite{DelucaGarroniPonsiglione2012,Ginster1,Ginster2} to general configurations, exploiting the connection between models for dislocations and Ginzburg-Landau models for vortices \cite{AlicandroCicalesePonsiglione} and refining the ball construction due to Sandier and Jerrard \cite{Sandier1998,Jerrard1999,SandierSerfaty2007} to the context of elasticity in order to obtain compactness and optimal lower bounds. Similar techniques have been used 
first in \cite{Ponsiglione2007} and then 
in \cite{AlicandroDelucaGarroniPonsiglione2014} in order to study a discrete two-dimensional model for screw dislocations, see also \cite{hudson2015analysis}. 
Similar problems arise in the context of spin systems, see for instance \cite{BadalCicaleseDelucaPonsiglione}.
The two scales mentioned above give rise to mesoscopic limits where dislocations are identified with points (with energy scaling as $\log\frac1\eps$) or with densities ($\log^2\frac1\eps$). 
Results which obtain
a characterization of grain boundaries in polycrystals exhibit a different scaling.  
Lauteri and Luckhaus \cite{LauteriLuckhaus} characterized
the optimal scaling for the elastic energies of small-angle grain boundaries in nonlinear kinematics and gave a rigorous derivation of the Read-Shockley formula (see \cite{GarroniSpadaro} for the proof of the corresponding $\Gamma$-convergence result and  \cite{FanzonPalombaroPonsiglione} for the result in the context of linear elasticity with diluteness assumptions).

The extension from two dimensions to three dimensions in the context of Ginzburg-Landau models  is based on a slicing argument \cite{AlbertiBaldoOrlandi2005,BaldoJerrardOrlandiSoner2013}
which cannot be directly used for 
the anisotropic vectorial problem of 
elasticity, also because of the  degeneracy  arising from frame indifference and the relaxation of the line-tension energy. Therefore the question  whether our asymptotic analysis can also be obtained without the diluteness condition in dimension three may be  very difficult.
An intermediate step in this direction is the case in which the kinematics is restricted to a plane. Here the model reduces to a nonlocal phase-field model and was studied without diluteness assumptions in both regimes in 
\cite{GarroniMueller2006,CacaceGarroni2009,ContiGarroniMueller2011,ContiGarroniMueller2016,ContiGarroniMueller2021}. 

\section{Line dislocations in linear elasticity}\label{preliminaries}

We first collect some  results concerning the continuum study of dislocations in linear elasticity. Precisely, we recall the definition of the strain field in the presence of defects and its properties, then we give a characterization of the strain field for a straight infinite dislocation and of the associated line tension per unit length.

As usual, we assume that   $\C\in\R^{3\times 3\times 3\times 3}_\sym$  is degenerate on linearized rotations and is coercive on symmetric matrices,
in the sense that
\begin{equation}\label{eqdefC}
 \C=\C^T\,,\hskip3mm \C A\cdot A \ge c_0|A+A^T|^2 \text{ and } \C(A-A^T)=0 \text{ for all } A\in\R^{3\times 3}\,.
\end{equation}
Working in $\R^3$, dislocations are measures concentrated along one-dimensional sets. More precisely a distribution of dislocations in $\Omega\subset\R^3$ is a measure $\mu$ in the set $ \calM^1(\Omega)$ defined below.

\begin{definition}\label{defm1}
 We denote by $\calM(\Omega;V)$, with $V$ a finite-dimensional vector space, the set of finite $V$-valued Radon measures on the open set $\Omega\subseteq\R^3$. We write $\calM^1(\Omega)$ for the set of $\mu\in \calM(\Omega;\R^{3\times 3})$  which obey
$\Div\mu=0$ distributionally and have the form
\begin{equation}\label{eq-defmu}
 \mu=\theta\otimes \tau \calH^1\LL\gamma
\end{equation}
for some 1-rectifiable set $\gamma\subseteq\Omega$, $\theta:\gamma\to\R^3$, $\tau:\gamma\to S^2$. 
\end{definition}
The condition $\Div \mu=0$ automatically implies that $\tau$ is tangent to $\gamma$ $\mu$-almost everywhere. We refer to the multiplicity $\theta$ in \eqref{eq-defmu} as the Burgers vector of the dislocation line $\gamma$.

\subsection{Construction of the strain field}

Let  $\mu\in\calM(\R^3;\R^{3\times 3})$ with $\Div\mu=0$. Within linear elasticity, the corresponding equilibrium strain is a distributional solution 
$\beta\in L^1_\loc(\R^3;\R^{3\times 3})$ to
\begin{equation}\label{eqsystembetamu2}
 \begin{cases}
  \curl\contbeta=\mu,\\
  \Div \C\contbeta=0,
 \end{cases}
\end{equation}
with $\C$ as in \eqref{eqdefC}.
Existence and uniqueness of solutions to \eqref{eqsystembetamu2} were proven in
\cite[Theorem~4.1]{ContiGarroniOrtiz2015}. We recall here this result in $\R^3$.
\begin{theorem}\label{theoremsolr3}
Let $\C$ be as in (\ref{eqdefC}).
For any  bounded measure  $\mu\in\calM(\R^3;\R^{3\times 3})$
with $\Div\mu=0$ the following holds.
\begin{enumerate}
 \item \label{lemmasolr3exist}
 There is a unique $\contbeta\in L^{3/2}(\R^3;\R^{3\times 3})$ such that
  \begin{equation}\label{eqdivbmur3}
 \Div \C\contbeta=0 \quad\text{ and }\quad \Curl \contbeta=\mu
  \end{equation}
  distributionally.  The solution $\contbeta$ satisfies
  \begin{equation*}
 \|\contbeta\|_{L^{3/2}(\R^3)} \le c |\mu|(\R^3)\,.
  \end{equation*}
  \item\label{lemmasolr3intrep}
There is a function $N\in C^\infty(\R^3\setminus\{0\};\R^{3\times 3\times 3\times 3})$ which depends only on $\C$, is positively $-2$ homogeneous, in the sense that
\begin{equation}\label{eqNmenoduehom}
 N(\lambda x)=\lambda^{-2} N(x) \text{ for all } x\in\R^3,\lambda\in(0,\infty), 
\end{equation}
and such that the unique solution $\contbeta$  satisfies
  \begin{equation}\label{eqbetaNmu}
 \contbeta_{ij}(x)=\sum_{k,l=1}^3\int_{\R^3} N_{ijkl}(x-y) d\mu_{kl}(y)
\end{equation}
(for $x\not\in\supp\mu$).
\item \label{lemmasolr3lp}
 If  additionally $\mu\in \calM^1(\R^3)$ and $\mu=\sum_i b_i \otimes t_i\calH^1\LL\gamma_i$ for countably many segments $\gamma_i$, then for $x\not\in\supp\mu$
  \begin{equation}\label{eqDcontbetadist1}
 |\contbeta(x)|\le c \sum_i \frac{|b_i|}{\dist(x, \gamma_i)}.
  \end{equation}
If the number of segments is finite then
$\contbeta\in L^p(\R^3;\R^{3\times 3})$ for all $p\in [3/2,2)$.
\end{enumerate}
The constant $c$ depends only on $\C$.
\end{theorem}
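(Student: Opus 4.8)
The plan is to diagonalize \eqref{eqdivbmur3} by Fourier transform, read off the solution operator as a Fourier multiplier that is smooth and homogeneous of degree $-1$ away from the origin, and then transfer its properties back to physical space. Since $\Div\mu=0$ we have $\xi\cdot\hat\mu_{i\cdot}(\xi)=0$ for every row $i$. For $\xi\neq0$ the equation $\Curl\beta=\mu$ reads $i\,\xi\times\hat\beta_{i\cdot}(\xi)=\hat\mu_{i\cdot}(\xi)$, whose general solution is $\hat\beta_{i\cdot}(\xi)=\tfrac{i}{|\xi|^2}\,\xi\times\hat\mu_{i\cdot}(\xi)+g_i(\xi)\,\xi$; inserting this into $\C_{ijkl}\xi_j\hat\beta_{kl}=0$ gives a linear system $L(\xi)g=h(\xi)$ for $g=(g_1,g_2,g_3)$, where $L_{ik}(\xi)=\C_{ijkl}\xi_j\xi_l$ is the acoustic tensor and $h(\xi)$ is linear in $\hat\mu(\xi)$ and $0$-homogeneous in $\xi$. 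Testing \eqref{eqdefC} with $A=g\otimes\xi$ yields $L(\xi)g\cdot g=\C A\cdot A\ge c_0|g\otimes\xi+\xi\otimes g|^2\ge 2c_0|\xi|^2|g|^2$, so $L(\xi)$ is invertible with $\|L(\xi)^{-1}\|\le(2c_0|\xi|^2)^{-1}$. Hence $g=L(\xi)^{-1}h(\xi)$ and $\hat\beta(\xi)=\mathcal K(\xi)\hat\mu(\xi)$ for a tensor $\mathcal K\in C^\infty(\R^3\setminus\{0\};\R^{3\times3\times3\times3})$ depending only on $\C$ and positively homogeneous of degree $-1$; this defines $\beta$ as a tempered distribution. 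Uniqueness in $L^{3/2}$ is immediate: if $\mu=0$ then, by the same computation, $\hat\beta$ is supported at the origin, so $\beta$ is a polynomial, and the only polynomial in $L^{3/2}(\R^3)$ is $0$.

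\textbf{The $L^{3/2}$ bound (the main obstacle).} This is the one delicate point. The multiplier $\mathcal K$ has the size $|\xi|^{-1}$ of the symbol of the order $-1$ operator $|\nabla|^{-1}$, and $|\nabla|^{-1}$ maps $\calM(\R^3)$ only into the \emph{weak} space $L^{3/2,\infty}$, not into $L^{3/2}$ (as the example of the Newtonian potential of a point mass shows). The upgrade to the strong norm must use the divergence-free constraint, which forces $\hat\mu(\xi)$ to be purely transverse, $\hat\mu(\xi)=\hat\mu(\xi)\bigl(\Id-\xi\otimes\xi/|\xi|^2\bigr)$, and this is precisely the cancellation (of div--curl / Bourgain--Brezis type) that removes the endpoint loss. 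The estimate $\|\beta\|_{L^{3/2}(\R^3)}\le c\,|\mu|(\R^3)$ is exactly \cite[Theorem~4.1]{ContiGarroniOrtiz2015}, which we take as given.

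\textbf{The kernel (part (ii)).} A tempered distribution on $\R^3$ that is positively homogeneous of degree $-1$ and $C^\infty$ away from $0$ has a Fourier transform with the same two features but of degree $-3-(-1)=-2$; since the degrees $-1$ and $-2$ are noncritical in dimension $3$, no logarithmic terms appear and both $\mathcal K$ and its transform are genuine, locally integrable functions. Applying this to $\mathcal K$ produces $N\in C^\infty(\R^3\setminus\{0\};\R^{3\times3\times3\times3})$, positively $-2$-homogeneous and depending only on $\C$, so that \eqref{eqNmenoduehom} holds. Because $\widehat{N*\mu}=\mathcal K\hat\mu=\hat\beta$ and $N*\mu$ is well defined for $x\notin\supp\mu$ (using $|N(x)|\le c|x|^{-2}$, $N\in L^1_\loc$, and $|\mu|(\R^3)<\infty$, with a routine approximation to justify the convolution identity), we obtain \eqref{eqbetaNmu}.

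\textbf{Pointwise bound and higher integrability (part (iii)).} For $\mu=\sum_i b_i\otimes t_i\,\calH^1\LL\gamma_i$ with $\gamma_i$ segments, \eqref{eqbetaNmu} together with $|N(x)|\le c|x|^{-2}$ gives, for $x\notin\supp\mu$,
\[
 |\beta(x)|\le c\sum_i|b_i|\int_{\gamma_i}\frac{d\calH^1(y)}{|x-y|^2}\le c\sum_i\frac{|b_i|}{\dist(x,\gamma_i)},
\]
the last step being an elementary one-dimensional integral estimate; this is \eqref{eqDcontbetadist1}. If there are finitely many segments, fix a bounded open set $U\supset\bigcup_i\gamma_i$. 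On $U$ the pointwise bound yields $\int_U|\beta|^p\le c\sum_i\int_U\dist(\cdot,\gamma_i)^{-p}<\infty$ for every $p<2$, since the singular set is one-dimensional (transverse codimension $2$) and $U$ is bounded. On $\R^3\setminus U$ the same pointwise bound shows $\beta\in L^\infty$, while $\beta\in L^{3/2}(\R^3)$ by part (i), so by interpolation $\beta\in L^p(\R^3\setminus U)$ for all $p\in[3/2,\infty]$. Combining the two regions, $\beta\in L^p(\R^3)$ for every $p\in[3/2,2)$. All of part (iii) is soft once parts (i) and (ii) are in hand; the genuine difficulty of the whole statement is concentrated in the endpoint estimate of part (ii)/(i) discussed above.
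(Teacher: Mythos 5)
The paper does not actually prove Theorem~\ref{theoremsolr3}: it recalls it verbatim as Theorem~4.1 of \cite{ContiGarroniOrtiz2015}, adding only the remark that the lattice assumption on the $b_i$ in part \ref{lemmasolr3lp} of that reference is never used, and pointing to (4.7) and the following equations of \cite{ContiGarroniOrtiz2015} for part \ref{lemmasolr3intrep}. There is therefore no in-paper proof against which to check your argument step by step.

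Your reconstruction is nonetheless essentially correct and follows the natural route. Diagonalizing \eqref{eqdivbmur3} in Fourier, solving the row-wise curl equation up to a gradient term $g_i(\xi)\xi$ using $\xi\cdot\hat\mu_{i\cdot}=0$, inserting into the balance law, and inverting the acoustic tensor $L_{ik}(\xi)=\C_{ijkl}\xi_j\xi_l$, which is uniformly coercive for $\xi\neq0$ exactly because testing \eqref{eqdefC} with $A=g\otimes\xi$ gives $\C A\cdot A\ge 2c_0|g|^2|\xi|^2$, does yield a symbol $\mathcal K\in C^\infty(\R^3\setminus\{0\})$ positively homogeneous of degree $-1$, hence a kernel $N$ of degree $-2$ and smooth away from the origin; this is precisely part \ref{lemmasolr3intrep}. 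Your uniqueness argument, the kernel estimate $|N|\le c|x|^{-2}$, the one-dimensional integral bound giving \eqref{eqDcontbetadist1}, and the $L^p$ integrability count near and far from the segments are all correct, and, consistently with the paper's remark, they nowhere use that the $b_i$ lie in a lattice. You rightly single out the strong $L^{3/2}$ bound as the one genuinely delicate step (weak $L^{3/2}$ being what the Riesz-potential scaling gives for free) and defer it to the same reference the paper cites; whether the mechanism there is literally a Bourgain--Brezis type estimate or a different exploitation of $\Div\mu=0$ is not something the present paper reveals, but deferring that step is fully consistent with the level at which the paper itself treats this statement.
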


We remark that in point \ref{lemmasolr3lp}  of
\cite[Theorem~4.1]{ContiGarroniOrtiz2015} 
the vectors $b_i$ were required to be in a lattice. This assumption was never used in the proof.
Assertion \ref{lemmasolr3intrep} is proven in (4.7) and the following equations of 
\cite{ContiGarroniOrtiz2015}.

\subsection{Characterization of the strain for straight dislocations}

From the representation formula \eqref{eqbetaNmu} one also obtains a representation for the strain field of an infinite straight dislocation 
of Burgers vector $b$
along the line $\R t$, for some $t\in S^2$ and $b\in\R^3$, which is then a distributional solution of 
 \begin{equation}\label{eq-infinitestrain}
 \begin{cases}
  \curl\contbeta=b\otimes t \calH^1 \LL (\R t), \\
  \Div\C\contbeta=0.
 \end{cases}
\end{equation}
In the entire paper we denote by $B_r(x)$ and $B'_r(x)$ the balls of radius $r>0$ centered at $x$ in $\R^3$ and $\R^2$ respectively. For $x=0$ we simply write $B_r$ and $B'_r$.
\begin{lemma}\label{lemmabetabtkernel}
 Let  $N\in C^{\infty}(\R^3\setminus\{0\};\R^{3\times 3\times 3\times 3})$ be as in Theorem \ref{theoremsolr3}\ref{lemmasolr3intrep}.
 For any $b\in\R^3$ and $t\in S^2$  the function 
 \begin{equation}\label{eqbetamuline}
  (\hat\beta(x))_{ij}:=
  \sum_{k,l=1}^3\int_{\R t} N_{ijkl}(x-y)b_k t_l d\calH^1(y)
 \end{equation}
 (for $x\not\in \R t$) 
 {is in $L^1_\loc(\R^3;\R^{3\times 3})$ and}
 satisfies \eqref{eq-infinitestrain} in the sense of distributions.
\end{lemma}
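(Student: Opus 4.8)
The plan is to deduce the statement from Theorem~\ref{theoremsolr3} by approximating the infinite line $\R t$ by large closed polygonal loops, for which the associated dislocation measure is finite and divergence-free, and then to pass to the limit in $L^1_\loc$. Before doing that I would check that $\hat\beta$ is well defined and locally integrable. Since $N$ is continuous on $\R^3\setminus\{0\}$ and positively $-2$ homogeneous, it satisfies $|N(z)|\le c|z|^{-2}$ componentwise. Writing $y=st$ along $\R t$ and $d:=\dist(x,\R t)$, the identity $|x-st|^2=(s-x\cdot t)^2+d^2$ together with $\int_\R\big((s-x\cdot t)^2+d^2\big)^{-1}ds=\pi/d$ shows that the integral defining $\hat\beta(x)$ converges absolutely for every $x\notin\R t$ and obeys $|\hat\beta(x)|\le c|b|/\dist(x,\R t)$. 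The right-hand side is locally integrable on $\R^3$ (in cylindrical coordinates around $\R t$ the weight $r^{-1}$ is integrated against $r\,dr\,d\varphi\,dz$), hence $\hat\beta\in L^1_\loc(\R^3;\R^{3\times3})$.

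For the approximation, fix a unit vector $\nu\perp t$ and, for $R>0$, let $\Gamma_R$ be the boundary of the rectangle with vertices $-Rt$, $Rt$, $Rt+R\nu$, $-Rt+R\nu$, oriented so that its edge from $-Rt$ to $Rt$ carries tangent $+t$; set $\mu_R:=b\otimes\tau_R\,\calH^1\LL\Gamma_R$, with $\tau_R$ the oriented unit tangent. Then $\mu_R$ is a finite measure, it is divergence-free because $\Gamma_R$ is a closed curve, and it is supported on finitely many segments, so Theorem~\ref{theoremsolr3} applies: it yields $\beta_R\in L^p(\R^3;\R^{3\times3})$ for $p\in[3/2,2)$ with $\Div\C\beta_R=0$, $\Curl\beta_R=\mu_R$, and the representation $(\beta_R)_{ij}(x)=\sum_{k,l}\int_{\Gamma_R}N_{ijkl}(x-y)\,b_k(\tau_R)_l\,d\calH^1(y)$ for $x\notin\Gamma_R$.

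Next I would show $\beta_R\to\hat\beta$ in $L^1_\loc(\R^3;\R^{3\times3})$ by splitting the integral over $\Gamma_R$ into the contribution of the segment $[-R,R]t$ and that of the three remaining edges. On any fixed compact set $K$, once $R$ is large the three edges lie at distance $\ge cR$ from $K$; since their total length is $O(R)$ and $|N(z)|\le c|z|^{-2}$, their contribution is $O(|b|/R)$ uniformly on $K$, hence tends to $0$ in $L^1(K)$. The segment contribution converges to $\hat\beta(x)$ pointwise for $x\notin\R t$ and is dominated, uniformly in $R$, by $c|b|/\dist(\cdot,\R t)\in L^1_\loc$, so dominated convergence gives convergence in $L^1_\loc(\R^3)$. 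Finally, for a test function $\varphi\in C^\infty_c$ one has $\langle\Div\C\beta_R,\varphi\rangle=0$, and for $R$ large $\langle\Curl\beta_R,\varphi\rangle=\langle\mu_R,\varphi\rangle=\langle b\otimes t\,\calH^1\LL(\R t),\varphi\rangle$, because the three extra edges of $\Gamma_R$ leave $\supp\varphi$ while the segment part of $\mu_R$ agrees with $b\otimes t\,\calH^1\LL(\R t)$ on $\supp\varphi$. Passing to the limit using $\beta_R\to\hat\beta$ in $L^1_\loc$ yields $\Div\C\hat\beta=0$ and $\curl\hat\beta=b\otimes t\,\calH^1\LL(\R t)$, which is \eqref{eq-infinitestrain}.

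The only genuinely delicate point I expect is the negligibility of the "closing" part of $\Gamma_R$: it is precisely the $-2$ homogeneity of $N$ that makes the product (length $\sim R$)$\,\times\,$(pointwise size $\sim R^{-2}$) vanish as $R\to\infty$; a slower decay of $N$ would obstruct this step. Everything else is routine, modulo bookkeeping of the standard sign and transpose conventions in the distributional curl and the observation, recorded right after Theorem~\ref{theoremsolr3}, that no lattice constraint on $b$ is needed.
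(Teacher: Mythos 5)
Your proof is correct and follows essentially the same strategy as the paper: close the infinite line $\R t$ by a large auxiliary curve so that the resulting measure is finite and divergence-free, apply Theorem~\ref{theoremsolr3}, and pass to the limit in $L^1_\loc$. The paper closes the segment $[-Rt,Rt]$ with a half-circle and proves uniform convergence on compacts with a single tail estimate, while you close with a rectangle and use dominated convergence for the segment tail; these are cosmetic variations of the same argument.
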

\begin{proof}
 For $R>0$ we define $\mu_R$ by
 \begin{equation}
  \mu_R:= b\otimes \tau\calH^1 \LL \gamma_R
 \end{equation}
 where $\gamma_R$ is the union of the segment $[-Rt, Rt]$ with a half-circle which has $[-R t,R t]$ as diameter and $\tau:\gamma_R\to S^2$ is a tangent vector to $\gamma_R$, oriented so that it coincides with $t$ in $[-Rt, Rt]$. One easily checks that $\mu_R\in \calM^1(\R^3)$  and $\mu_R\weakstarto\mu$ as $R\to\infty$.
 
 We define $\contbeta_R:=N \ast \mu_R$, where $\ast$ denotes convolution as in 
 \eqref{eqbetaNmu}. 
 By Theorem~\ref{theoremsolr3}, it satisfies
 \begin{equation}\label{eqELbetaR}
  \begin{cases}
   \curl\contbeta_R=\mu_R,\\
   \Div\C\contbeta_R=0.
  \end{cases}
 \end{equation}
 Let $\hat\contbeta:=N\ast (b\otimes t \calH^1\LL (\R t))$ be defined as in \eqref{eqbetamuline}. 
 We remark that the integral exists for all $x\not\in\R t$ since the integrand is continuous and decays as $|y|^{-2}$ at infinity.
 
 We show that $\contbeta_R\to\hat\contbeta$ in $L^1_\loc(\R^3;\R^{3\times 3})$. 
 Fix $r>0$, and assume that $R\ge 2r$. Then for almost every $x\in B_r\setminus \R t$ we have
 \begin{equation*}
  \hat\contbeta(x)-\contbeta_R(x)=
  \int_{(\R t)\setminus [-R t, Rt]} N(x-y)b\otimes t d\calH^1(y)
  -\int_{\gamma_R\setminus [-R t, Rt]} N(x-y)b\otimes \tau d\calH^1(y)
 \end{equation*}
 which implies, since $|x|\le \frac12 R$ and $|y|\ge R$,
 \begin{equation*}
  |\hat\contbeta-\contbeta_R|(x)\le
  2 |b| \|N\|_{L^\infty(S^2)} \int_R^\infty \frac{4}{s^2} ds 
  + \pi R |b| \frac{4\|N\|_{L^\infty(S^2)} }{R^2}
  \le \frac{c |b| \|N\|_{L^\infty(S^2)} }{R}.
 \end{equation*}
 Therefore $\hat\contbeta-\contbeta_R$ converges to zero  uniformly on compact sets, as $R\to\infty$, and hence in $L^1_\loc(\R^3;\R^{3\times 3})$.
 By \eqref{eqELbetaR}, we obtain \eqref{eq-infinitestrain}
 distributionally in $\R^3$.
\end{proof}

The strain field $\hat\beta$  in \eqref{eqbetamuline} can be characterized with a variational argument and it is given by a two-dimensional profile $\contbeta_{b,t}$.
In order to do this, as in \cite{ContiGarroniOrtiz2015},
for any $t\in S^2$
we fix a rotation $Q_t\in \SO(3)$ such that $Q_te_3=t$, define the map
\begin{equation}\label{eqdefPhit}
 \Phi_t(r,\theta,z):=Q_t(r \cos\theta, r\sin\theta, z) ,
\end{equation}
and the orthonormal basis
\begin{equation}
 e_r(\theta):=(\cos\theta,\sin\theta,0),\hskip3mm
 e_\theta(\theta):=(-\sin\theta,\cos\theta,0),\hskip3mm
 e_z:=(0,0,1).
\end{equation}

We recall that in \cite[Lemma~5.1]{ContiGarroniOrtiz2015}, for any 
$b\in\R^3$ and $t\in S^2$ we defined $\contbeta_{b,t}\in L^1_\loc(\R^3;\R^3)$ as the unique function of the form
\begin{equation}\label{eqvarprobbetzabt1}
 \begin{split}
 \tilde\beta(\Phi_t(r,\theta,z))= \frac1r (f(\theta)\otimes Q_t e_\theta + g\otimes Q_t e_r)
 \end{split}
\end{equation}
for some $f\in L^2((0,2\pi);\R^3)$ with $\int_0^{2\pi} f(\theta) d\theta=b$ and $g\in\R^3$ which minimizes the variational problem
\begin{equation}\label{eqvarprobbetzabt}
 \begin{split}
  \psiC(b,t):=&\inf_{ \tilde\beta}\int_0^{2\pi} \frac12\C{ \tilde\beta}(\Phi_t(1,\theta,0))\cdot { \tilde\beta}(\Phi_t(1,\theta,0)) d\theta
  \\
  &= \int_0^{2\pi} \frac12\C\contbeta_{b,t}(\Phi_t(1,\theta,0))\cdot \contbeta_{b,t}(\Phi_t(1,\theta,0)) d\theta\,.
 \end{split}
\end{equation}

\begin{lemma}\label{lemmabetabtkernel1}
 Let  $N\in C^\infty(\R^3\setminus\{0\};\R^{3\times 3\times 3\times 3})$ be as in Theorem \ref{theoremsolr3}\ref{lemmasolr3intrep}.
 For any $b\in\R^3$ and $t\in S^2$ the solution $\contbeta_{b,t}$ of the one-dimensional problem in (\ref{eqvarprobbetzabt1}-\ref{eqvarprobbetzabt}) is
 \begin{equation}\label{eqbetamuline2}
  (\contbeta_{b,t}(x))_{ij}=
  \sum_{k,l=1}^3\int_{\R t} N_{ijkl}(x-y)b_k t_l d\calH^1(y)
 \end{equation}
 (for $x\not\in \R t$). On the same set it obeys 
 \begin{equation}\label{eqdecaybetabt}
  |\beta_{b,t}|(x)\le \frac{c|b|}{\dist(x,\R t)}\,,
 \end{equation}
 where $c$ depends only on $\C$. 
 \end{lemma}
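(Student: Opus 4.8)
Here is how I would attack Lemma~\ref{lemmabetabtkernel1}.

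The plan is to identify the field $\hat\contbeta$ defined by the right-hand side of \eqref{eqbetamuline2} --- which, by Lemma~\ref{lemmabetabtkernel}, lies in $L^1_\loc(\R^3;\R^{3\times3})$ and solves \eqref{eq-infinitestrain} distributionally --- with the minimiser $\contbeta_{b,t}$ of the one-dimensional problem \eqref{eqvarprobbetzabt1}--\eqref{eqvarprobbetzabt}. The decay bound \eqref{eqdecaybetabt} is immediate from the representation: for $x\notin\R t$ put $d:=\dist(x,\R t)$, write $x=x_\perp+(x\cdot t)\,t$ with $x_\perp\perp t$ and $|x_\perp|=d$, parametrise $\R t$ by $s\mapsto st$, and use $|b\otimes t|=|b|$ together with the $(-2)$-homogeneity \eqref{eqNmenoduehom} of $N$:
\[
 |\hat\contbeta(x)|\le|b|\int_\R|N(x-st)|\,ds\le|b|\,\|N\|_{L^\infty(S^2)}\int_\R\frac{ds}{d^2+(x\cdot t-s)^2}=\frac{\pi\,\|N\|_{L^\infty(S^2)}}{d}\,|b|,
\]
which is \eqref{eqdecaybetabt}, with $c$ depending only on $\C$ through $\|N\|_{L^\infty(S^2)}$; differentiating \eqref{eqbetamuline2} under the integral sign similarly gives $\hat\contbeta\in C^\infty(\R^3\setminus\R t)$.

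Next I would show that $\hat\contbeta$ has the special structure \eqref{eqvarprobbetzabt1}. Since $N$ is $(-2)$-homogeneous and $\R t$ is invariant both under $x\mapsto x+st$ ($s\in\R$) and under $x\mapsto\lambda x$ ($\lambda>0$), a change of variables in \eqref{eqbetamuline2} yields $\hat\contbeta(x+st)=\hat\contbeta(x)$ and $\hat\contbeta(\lambda x)=\lambda^{-1}\hat\contbeta(x)$; hence $\hat\contbeta(\Phi_t(r,\theta,z))=\tfrac1r\,\hat\contbeta(\Phi_t(1,\theta,0))$. Moreover, by Lemma~\ref{lemmabetabtkernel} we have $\curl\hat\contbeta=b\otimes t\,\calH^1\LL\R t$, so each row $\hat\contbeta_i$ is curl-free on the non-simply-connected set $\R^3\setminus\R t$ with circulation $b_i$ around the line; therefore $\hat\contbeta_i=\tfrac{b_i}{2\pi}\nabla\theta+\nabla\phi_i$ for a single-valued $\phi_i$, where $\nabla\theta=\tfrac1r\,Q_te_\theta$. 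Combining this with the two invariances forces $\phi_i(\Phi_t(r,\theta,z))=g_i\log r+\chi_i(\theta)$ for some $g_i\in\R$ and smooth $2\pi$-periodic $\chi_i$, so that
\[
 \hat\contbeta(\Phi_t(r,\theta,z))=\tfrac1r\big(f(\theta)\otimes Q_te_\theta+g\otimes Q_te_r\big),\qquad f:=\big(\tfrac{b_i}{2\pi}+\chi_i'\big)_{i}\in L^2((0,2\pi);\R^3),\quad g:=(g_i)_i,
\]
which is exactly the admissible form \eqref{eqvarprobbetzabt1}, with $\int_0^{2\pi}f\,d\theta=b$ by periodicity of the $\chi_i$.

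It then remains to check that $\hat\contbeta$ is a critical point, hence (by convexity) the minimiser, of $J(f,g):=\int_0^{2\pi}\tfrac12\C G\cdot G\,d\theta$ with $G(\theta):=f(\theta)\otimes Q_te_\theta+g\otimes Q_te_r$. Its first variation vanishes exactly when $(\C G)\,Q_te_\theta$ is independent of $\theta$ (variation of $f$ at fixed integral) and $\int_0^{2\pi}(\C G)\,Q_te_r\,d\theta=0$ (variation of the constant $g$). On the other hand, writing $\C\hat\contbeta=\tfrac1r(\C G)(\theta)$ in the cylindrical coordinates $\Phi_t$, the equation $\Div\C\hat\contbeta=0$ of \eqref{eq-infinitestrain} restricted to $\R^3\setminus\R t$ becomes precisely $\partial_\theta\big[(\C G)\,Q_te_\theta\big]=0$, i.e.\ the first Euler--Lagrange equation; and integrating $\Div\C\hat\contbeta=0$ against a test function over $\R^3\setminus(\R t)_\eps$ and letting $\eps\to0$ shows that the $\tfrac1r$-singular part of $\C\hat\contbeta$ produces a distribution concentrated on $\R t$ equal to $\big(\int_0^{2\pi}(\C G)\,Q_te_r\,d\theta\big)\,\calH^1\LL\R t$, which must therefore vanish --- this is the second Euler--Lagrange equation. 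Consequently $\hat\contbeta$ is a minimiser of $J$ over the admissible class; since that minimiser is unique --- part of the definition recalled from \cite[Lemma~5.1]{ContiGarroniOrtiz2015}, and also a consequence of the strict convexity of $J$ --- we conclude $\hat\contbeta=\contbeta_{b,t}$, which is \eqref{eqbetamuline2}.

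I expect the only genuinely delicate point to be this last step: identifying the part of the distribution $\Div\C\hat\contbeta$ carried by the line $\R t$ and showing that it equals $\int_0^{2\pi}(\C G)\,Q_te_r\,d\theta$ per unit length --- that is, that the circumference of the tube $(\R t)_\eps$ exactly absorbs the $\tfrac1r$ growth of $\C\hat\contbeta$ as $\eps\to0$ --- so that its vanishing matches the second Euler--Lagrange equation. The rest --- the homogeneity and translation invariance, the reduction through the curl-free structure to the form \eqref{eqvarprobbetzabt1}, the convexity of $J$ (from $\C A\cdot A\ge c_0|A+A^T|^2$) together with the fact that $f(\theta)\otimes Q_te_\theta+g\otimes Q_te_r$ can be skew-symmetric for all $\theta$ only if $f\equiv0$ and $g=0$, and the direct decay estimate --- is routine.
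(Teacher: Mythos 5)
Your proposal is correct and follows the same two-step strategy as the paper --- first establish the structure \eqref{eqvarprobbetzabt1} for the kernel representation $\hat\contbeta$, then verify the Euler--Lagrange conditions of \eqref{eqvarprobbetzabt} and invoke uniqueness --- but with two implementation differences worth noting. To derive the form $f(\theta)\otimes Q_te_\theta+g\otimes Q_te_r$, the paper works directly with the matrix $G(\theta)$, using the symmetry $\partial_j\hat\contbeta_{ki}=\partial_i\hat\contbeta_{kj}$ in cylindrical coordinates to obtain $GQ_te_3=0$ and $g'=0$; you instead pass through a rowwise scalar potential $\phi_i$ with $\hat\contbeta_i=\tfrac{b_i}{2\pi}\nabla\theta+\nabla\phi_i$. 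This is valid, but the claim that the radial coefficient $g_i$ is $\theta$-independent (rather than an arbitrary $(-1)$-homogeneous radial derivative) needs a line of justification: one shows that $\nabla\phi_i\cdot x$ is constant on $\R^2\setminus\{0\}$ by differentiating the identity $\phi_i(\lambda x)-\phi_i(x)=(\nabla\phi_i(x)\cdot x)\log\lambda$ in $x$ and using the $(-1)$-homogeneity of $\nabla\phi_i$ once more. For the Euler--Lagrange step, you propose to read off the two stationarity conditions from the regular and $\calH^1\LL\R t$-concentrated parts of the distribution $\Div\C\hat\contbeta$; the paper implements exactly this idea concretely by testing $\int\C\hat\contbeta\cdot D\tilde u\,dx=0$ against two explicit families of Lipschitz test functions --- one built with $\psi_R\in C^\infty_c((0,\infty))$, supported away from the axis, which isolates the variation $A$ in $f$, and one built with $\Psi_R(0)=1$, which does not vanish on $\R t$ and produces the boundary contribution $B$ --- thereby sidestepping the limit over shrinking tubes $(\R t)_\eps$ that you correctly flag as the delicate point.
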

 Equation \eqref{eqbetamuline2} in particular shows that $\beta_{b,t}$ does not depend on the choice of the matrix $Q_t$.
\begin{proof}
We need to prove that $\hat\contbeta$ as defined in \eqref{eqbetamuline} coincides with $\contbeta_{b,t}$. This involves two steps: we first show that $\hat\contbeta$ has the same structure as $\contbeta_{b,t}$,  and then that it obeys the Euler-Lagrange equations of the variational problem that defines $\contbeta_{b,t}$,  which has a unique solution by 
 \cite[Lemma~5.1]{ContiGarroniOrtiz2015}.
 
 We claim that 
 the definition of $\hat \contbeta$ implies that there is $G\in C ^1_\text{per}([0,2\pi];\R^{3\times 3})$ such that for any $r>0$, $\theta$ and $z$, with $\Phi_t$ as in \eqref{eqdefPhit},
 \begin{equation}\label{eqhatbetaphiG}
  \hat\contbeta(\Phi_t(r,\theta,z))= \frac1r G(\theta).
 \end{equation}
 To see this, we observe that by definition $\Phi_t(r,\theta,z)=rQ_te_r(\theta)+zt$, so that the definition of $\hat \contbeta$ yields
 \begin{equation*}
  \hat\contbeta(\Phi_t(r,\theta,z))= \int_{\R} N(rQ_te_r(\theta) + (z-s)t) (b\otimes t)ds
  = \frac{1}{r} \int_{\R} N(Q_te_r(\theta) -s't) (b\otimes t)ds',
 \end{equation*}
 where we used the change of variables $s':=(s-z)/r$ and  \eqref{eqNmenoduehom}. As the integral depends only on $\theta$, and the dependence is smooth, \eqref{eqhatbetaphiG} is proven.

 Next we show that the condition that $\hat\contbeta$ is locally curl free away from $\R t$ implies that $G$ has a special structure.
 We compute
 \begin{equation}\label{eqderPhit}
  \partial_r\Phi_t=Q_te_r,\hskip3mm
  \partial_\theta\Phi_t=rQ_te_\theta,\hskip3mm
  \partial_z\Phi_t=Q_te_3.
 \end{equation}
 Therefore
 \begin{equation*}
  \partial_r (\hat\contbeta\circ\Phi_t)= D\hat\contbeta Q_t e_r\,,\hskip3mm
  \partial_\theta (\hat\contbeta\circ\Phi_t)= r D\hat\contbeta Q_t e_\theta\,,\hskip3mm
  \partial_z (\hat\contbeta\circ\Phi_t)= D\hat\contbeta Q_t e_3.
 \end{equation*}
By \eqref{eqhatbetaphiG}, $\hat\contbeta\circ\Phi_t=G(\theta)/r$, and 
 \begin{equation*}
  D\hat\contbeta Q_t e_r=-\frac1{r^2} G\,,\hskip3mm
  D\hat\contbeta Q_t e_\theta=\frac1{r^2} G'\,,\hskip3mm
  D\hat\contbeta Q_t e_3=0.
 \end{equation*}
 Away from $\R t$ we have  $\curl\hat\contbeta=0$ and therefore $D\hat\contbeta=(D\hat \contbeta)^T$,
 in the sense that 
 $\partial_j\hat\beta_{ki}=\partial_i\hat\beta_{kj}$.
 This implies
 \begin{equation*}
  0=(D\hat\contbeta Q_te_r) Q_te_3-(D\hat\contbeta Q_te_3) Q_te_r
  =-\frac1{r^2}GQ_te_3
 \end{equation*}
 which gives $G(\theta)Q_te_3=0$ for all $\theta$.
 Further,
 \begin{equation}\label{eqGrteta}
  0=(D\hat\contbeta Q_te_\theta) Q_te_r-(D\hat\contbeta Q_te_r) Q_te_\theta
  =\frac1{r^2} G'(\theta) Q_t e_r + \frac{1}{r^2} G(\theta) Q_te_\theta.
 \end{equation}
 We define $f,g,h:(0,2\pi)\to\R^3$ by 
 \begin{equation*}
  G(\theta)= f(\theta)\otimes Q_te_\theta + g(\theta)\otimes Q_te_r + h(\theta)\otimes Q_t e_3.
 \end{equation*}
 The condition $GQ_te_3=0$ implies $h=0$. We compute, using $\partial_\theta e_\theta=-e_r$ and $\partial_\theta e_r=e_\theta$,
 \begin{equation*}
  G'(\theta)=f'(\theta)\otimes Q_t e_\theta - f(\theta)\otimes Q_t e_r
  +g'(\theta)\otimes Q_t e_r + g(\theta)\otimes Q_t e_\theta.
 \end{equation*}
Inserting in \eqref{eqGrteta} leads to
 \begin{equation*}
  -f(\theta)+g'(\theta)+f(\theta)=0 
 \end{equation*}
 which implies that $g$ is constant.
 We conclude that
 \begin{equation}
  \hat\contbeta(\Phi_t(r,\theta,z))= \frac1r (f(\theta)\otimes Q_t e_\theta + g\otimes Q_t e_r).
 \end{equation}
 By the condition $\curl\hat\contbeta=b\otimes t\calH^1\LL(\R t)$ we obtain $\int_0^{2\pi} f(\theta) d\theta=b$.

 It remains to show that $(f,g)$ are the unique minimizers of the  variational problem in \eqref{eqvarprobbetzabt}.
 Let $\tilde f\in L^\infty((0,2\pi);\R^3)$ with $\int_0^{2\pi} \tilde f(\theta)d\theta=0$, and $\tilde g\in \R^3$. We define
 \begin{equation}\label{eqdefA}
  A := \int_0^{2\pi} \C G(\theta) \cdot (\tilde f(\theta)\otimes Q_te_\theta) d\theta
 \end{equation}
 and
 \begin{equation}
  B := \int_0^{2\pi} \C G(\theta) \cdot (\tilde g\otimes Q_te_r) d\theta.
 \end{equation}
 If we can show that $A=B=0$, by convexity of the functional in \eqref{eqvarprobbetzabt} we are done.
 
 We start from $A$. Fix $h,R\ge 2$,
let $\psi_3\in C^\infty_c(\R;[0,1])$ 
  such that $\int_\R\psi_3 dz=1$, and
 $\psi_R\in C^\infty_c((0,\infty);[0,1])$ such that $\int_0^\infty\psi_R dr=1$.
 We define $\tilde u\in W^{1,\infty}_0(\R^3;\R^3)$ by
 \begin{equation}
  \tilde u(\Phi_t(r,\theta,z)):= \tilde F(\theta) \psi_R(r) \psi_3(z)
  \hskip5mm
  \text{where}\hskip5mm
  \tilde F(\theta):=\int_0^\theta \tilde f(s)ds.
 \end{equation}
 We differentiate and obtain
\begin{equation}
D(\tilde u(\Phi_t(r,\theta,z)))
  =(D\tilde u) \circ\Phi_t \, D\Phi_t.
\end{equation}
The three derivatives on the left can be easily computed from the definition. 
Recalling \eqref{eqderPhit},
 \begin{equation}
  (D\tilde u)\circ\Phi_t = \frac1r\tilde f \psi_R \psi_3 \otimes Q_te_\theta +\tilde F \psi_R' \psi_3 \otimes Q_te_r + \tilde F\psi_R\psi_3' \otimes Q_te_3.
 \end{equation}
 Since $\Div \C\hat\contbeta=0$ distributionally and $\hat\contbeta\in L^1_\loc(\R^3;\R^{3\times 3})$, we have 
 \begin{equation*}
  \int_{\R^3} \C\hat\contbeta D\tilde  u dx=0.
 \end{equation*}
 We write this equation in cylindrical coordinates, writing the three terms separately
 and using \eqref{eqhatbetaphiG}.
 Since $\int_\R \psi_3 dz=1$ and $\int_\R\psi_3'dz=0$, this reduces to
 \begin{equation*}
  \begin{split}
   0=&\int_\R \int_0^\infty  \int_0^{2\pi} \C\hat\contbeta(\Phi_t(r,\theta,z)) \cdot (D\tilde u) (\Phi_t(r,\theta,z))
   r d\theta drdz\\
   =& \int_0^\infty  \int_0^{2\pi} \C G(\theta)\cdot
   (\frac1r\tilde f \psi_R\otimes Q_te_\theta ) d\theta dr
   + \int_0^\infty  \int_0^{2\pi} \C G(\theta)\cdot
  (\tilde F \psi_R'  \otimes Q_te_r )d\theta dr.
  \end{split}
 \end{equation*}
 The second term vanishes since $\int_0^\infty \psi_R'(r)dr=0$. We are left with the first term, which 
 recalling \eqref{eqdefA} takes the form
 \begin{equation*}
  0=\int_0^{\infty}  \frac1r \psi_R\int_0^{2\pi} \C G(\theta)\cdot
  ({\tilde f} \otimes Q_te_\theta)  d\theta dr
  = A \int_0^{\infty}  \frac1r \psi_Rdr.
 \end{equation*}
 Since $\int_0^\infty\psi_R(r)dr=1$, the integral is positive, and we conclude that $A=0$. 
 
 We now turn to $B$. 
 We fix  $\psi_3$ as above, choose $\Psi_R\in C^\infty(\R;[0,1])$,  such that $\Psi_R(0)=1$, and define
 \begin{equation*}
  U(\Phi_t(r,\theta,z)):= \tilde g \Psi_{R}(r)\psi_3(z).
 \end{equation*}
 As above, we compute
 \begin{equation*}
 ( DU)\circ\Phi_t=\tilde  g \Psi_R'\psi_3 \otimes Q_te_r + \tilde g \Psi_R \psi_3' \otimes Q_t e_3
 \end{equation*}
 and, computing again the integral and recalling that $\int_\R \psi_3'dz=0$ and $\int_0^\infty \Psi_R' dr=-1$, we obtain
 \begin{equation*}
  \begin{split}
   0
   =&\int_\R \int_0^\infty  \int_0^{2\pi} \C G(\theta)\cdot
   (\tilde g \Psi_R'(r) \psi_3(z) \otimes Q_te_r) d\theta drdz
   \\
   &+\int_\R \int_0^\infty  \int_0^{2\pi} \C G(\theta)\cdot
   ( \tilde g\Psi_R(r)\psi_3'(z) \otimes Q_te_3) d\theta drdz
   =- B  .
  \end{split}
 \end{equation*}
 Therefore $B=0$ and the proof of the first assertion is concluded.
 
 It remains to prove \eqref{eqdecaybetabt}. This follows immediately from the fact that $N$ is $-2$-homogeneous and continuous,
 \begin{equation}
 \int_{\R t} |N|(x-y)d\calH^1(y)
 \le \|N\|_{L^\infty(S^2)} \int_\R \frac{1}{|x-st|^2} ds =\frac{\pi\|N\|_{L^\infty(S^2)} }{\dist(x,\R t)}.
 \end{equation}
\end{proof}
 
\subsection{Line tension energy of straight dislocations}

The variational problem that characterizes $\beta_{b,t}$ provides the energy per unit length of the straight infinite dislocation,
$\psiC(b,t)$, which we will call the unrelaxed {line-tension energy density}. 
Indeed using $\beta_{b,t}$ we can compute the elastic energy induced by a straight dislocation along $\R t$ in a cylinder with axis $\R t$.
Precisely given $r,R,h\in(0,\infty)$ with $r<R$ we define the hollow cylinders
\begin{equation}\label{def:hol_cyl}
 T_h^{R,r}:=(B'_R\setminus B'_r)\times(0,h),
\end{equation}
and the full cylinders
\begin{equation}\label{def:full_cyl}
 T_h^R:=T_h^{R,0}=B'_R\times(0,h).
\end{equation}
We recall that $B'_\rho$ denotes the 2-dimensional disk of radius $\rho>0$, $B'_\rho:=\{(x_1,x_2): x_1^2+x_2^2<\rho^2\}$, and that before \eqref{eqdefPhit} we introduced $Q_t$ as a fixed rotation with $Q_te_3=t$.
Therefore using the radial structure of $\beta_{b,t}$, from \eqref{eqvarprobbetzabt}  we have
\begin{equation}\label{eqpsi0cylinder}
\int_{Q_tT_h^{R,r}}
\frac12 \C\contbeta_{b,t} \cdot\contbeta_{b,t}dy
=h \ln \frac{R}{r} \psiC (b, t)
\end{equation}
(see also \cite[Lemma~5.1(iv)]{ContiGarroniOrtiz2015}).

With this energy density one can associate 
to any $\mu\in\calM^1(\Omega)$
the line-tension energy
\begin{equation}\label{eq-line-tension}
 \int_\gamma \psiC(\theta,\tau)d\calH^1.
\end{equation}
As shown in \cite{ContiGarroniMassaccesi2015}, this functional may not be lower semicontinuous and needs to be relaxed (see Section~\ref{secgammaconv}    formula \eqref{psi-rel}  below).
 A variational characterization of $\psi_\C$, in which 
the elastic energy is minimized over cylinders,
has been obtained in \cite[Lemma~5.5]{ContiGarroniOrtiz2015}. To explain it, for
$\C$ as in \eqref{eqdefC},
$b\in\R^3$, $t\in S^2$, $h\ge R>r>0$ we define
 \begin{equation}\label{eqdefabthRr}
 \begin{split}
\infcyl(\C,b,t,h,R,r):=\inf \Big\{  \frac{1}{h\ln \frac Rr}\int_{Q_t T^{R,r}_h} \frac12 \C \beta\cdot\beta& dx:  \beta=\beta_{b,t}+Du,\\& u\in W^{1,1}(Q_tT^{R,r}_h;\R^3)\Big\}.
 \end{split}
 \end{equation}
Then $\psiC$ arises as limit of $\infcyl$ for $r\to0$. We improve over \cite[Lemma~5.5]{ContiGarroniOrtiz2015}  removing one error term, and prove the following.
\begin{lemma}\label{lemmacellproblemlb}
Assume that $\C$ obeys \eqref{eqdefC}.
\begin{enumerate}
 \item \label{lemmacellproblemlbA}
 Let $b\in\R^3$, $t\in S^2$, $h\ge R>0$.
Then \begin{equation}\label{eqlimroApsic}
\lim_{r\to0}\infcyl(\C,b,t,h,R,r)=\psiC(b,t).   
  \end{equation}
 \item \label{lemmacellproblemlbbcont}
There is $c>0$, depending only on $\C$, such that
\begin{equation}\label{eqpsiccontunif}
\frac1c|b|^2\le \psiC(b,t)\le c|b|^2
\end{equation}
and
\begin{equation}\label{eqpsiccontunifb}
 \psiC(b,t)\le \left(1+\frac{|b-b'|}{|b|+|b'|}\right)\psiC(b',t)+c|b-b'| (|b|+|b'|)
\end{equation}
for all $b$, $b'\in\R^3$, $t\in S^2$.
 \item \label{lemmacellproblemlbtcont}There is $c>0$, depending only on $\C$, such that
for all $b\in \R^3$, $t,t'\in S^2$, we have
\begin{equation}\label{eqpsictlip}
 \psiC(b,t)\le (1+c|t-t'|)\psiC(b,t).
\end{equation}
\end{enumerate}
\end{lemma}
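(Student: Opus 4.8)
We establish the three assertions in turn; the only nontrivial point is the lower bound in \eqref{eqlimroApsic}.

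\emph{Proof of \eqref{eqpsiccontunif} and \eqref{eqpsiccontunifb}.} The upper bound in \eqref{eqpsiccontunif} follows by testing \eqref{eqvarprobbetzabt} with $f\equiv b/(2\pi)$, $g=0$. For the lower bound, by the representation \eqref{eqbetamuline2} the map $b\mapsto\beta_{b,t}$ is linear, so $b\mapsto\psiC(b,t)$ is a nonnegative quadratic form; it is positive definite because otherwise \eqref{eqdefC} would force $\sym\beta_{b,t}\equiv0$, so $\beta_{b,t}$, being curl‑free on the connected set $\R^3\setminus\R t$, would be a constant element of $\Rskw$, hence $\equiv0$ as $\beta_{b,t}\in L^{3/2}$, contradicting $\curl\beta_{b,t}=b\otimes t\,\calH^1\LL(\R t)\ne0$ for $b\ne0$. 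Continuity of $t\mapsto\beta_{\cdot,t}$ (again from \eqref{eqbetamuline2}) and compactness of $S^2$ then give the uniform lower bound. For \eqref{eqpsiccontunifb}, writing $(f',g')$ for the pair representing the minimizer $\beta_{b',t}$, the pair $(f'+\tfrac{b-b'}{2\pi},g')$ is admissible for $\psiC(b,t)$; expanding the quadratic functional and using $\int_0^{2\pi}|\beta_{b',t}(\Phi_t(1,\theta,0))|^2\,d\theta\le c|b'|^2$ — a consequence of \eqref{eqdecaybetabt}, since $\dist(\Phi_t(1,\theta,0),\R t)=1$ — together with Young's inequality gives $\psiC(b,t)\le\psiC(b',t)+c|b-b'|(|b|+|b'|)$, which implies \eqref{eqpsiccontunifb} because $|b-b'|\le|b|+|b'|$.

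\emph{Proof of \eqref{eqpsictlip}.} If $|t-t'|$ is bounded away from $0$, \eqref{eqpsictlip} follows from \eqref{eqpsiccontunif}; otherwise choose $\bar Q\in\SO(3)$ with $\bar Qt'=t$ and $|\bar Q-\Id|\le c|t-t'|$, and take $Q_t:=\bar Q Q_{t'}$ (admissible, since $Q_te_3=t$). If $(f',g')$ represents $\beta_{b,t'}$, the same pair is admissible for $\psiC(b,t)$ and the associated integrand at angle $\theta$ equals $\tfrac12\C(M\bar Q^T)\cdot(M\bar Q^T)$ with $M:=\beta_{b,t'}(\Phi_{t'}(1,\theta,0))=f'(\theta)\otimes Q_{t'}e_\theta+g'\otimes Q_{t'}e_r$; since $|\C(M\bar Q^T)\cdot(M\bar Q^T)-\C M\cdot M|\le c|\bar Q^T-\Id|\,|M|^2$, we get $\psiC(b,t)\le\psiC(b,t')+c|t-t'|\int_0^{2\pi}|M|^2\,d\theta\le(1+c|t-t'|)\psiC(b,t')$ by \eqref{eqdecaybetabt} and \eqref{eqpsiccontunif}.

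\emph{Proof of \eqref{eqlimroApsic}.} Taking $u=0$ in \eqref{eqdefabthRr} and using \eqref{eqpsi0cylinder} gives $\infcyl(\C,b,t,h,R,r)\le\psiC(b,t)$ for every $r$, hence $\limsup_{r\to0}\infcyl\le\psiC(b,t)$. For the converse, fix $r<R$ and an arbitrary admissible $\beta=\beta_{b,t}+Du$ with finite energy. Replacing $u$ by $u-a-Ax$, where $A\in\Rskw$ is the average of $\skw Du$ and $a$ is chosen so that the new $u$ has zero mean, changes neither the energy (by \eqref{eqdefC}, as $\C A=0$) nor the admissibility, so we may assume $\int_{Q_tT^{R,r}_h}\skw Du=0$ and $\int_{Q_tT^{R,r}_h}u=0$. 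Completing the square and integrating by parts, using $\Div\C\beta_{b,t}=0$ on the hollow cylinder (where $\beta_{b,t}$ is smooth), we obtain
\begin{equation*}
\int_{Q_tT^{R,r}_h}\tfrac12\C\beta\cdot\beta\,dx=h\ln\tfrac Rr\,\psiC(b,t)+\int_{Q_tT^{R,r}_h}\tfrac12\C Du\cdot Du\,dx+\int_{\partial(Q_tT^{R,r}_h)}(\C\beta_{b,t}\,\nu)\cdot u\,d\calH^2.
\end{equation*}
By \eqref{eqdefC} and Korn's and Poincaré's inequalities, whose constants stay bounded as $r\to0$ (the hollow cylinder converging to the full one), the middle term is $\ge c\|Du\|_{L^2}^2$ and $\|u\|_{H^1}\le C\|Du\|_{L^2}$; it thus suffices to bound the boundary term by $C\|Du\|_{L^2}$ with $C$ independent of $r$, which gives $\|Du\|_{L^2}^2\le C^2/(4c)$ and hence $\infcyl\ge\psiC(b,t)-C^2/(4ch\ln\tfrac Rr)$, i.e.\ $\liminf_{r\to0}\infcyl\ge\psiC(b,t)$.

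To bound the boundary term we split $\partial(Q_tT^{R,r}_h)$ into the inner lateral surface $\{|x'|=r\}$, the outer one $\{|x'|=R\}$, and the two caps $\{z\in\{0,h\}\}$. On the outer surface $|\C\beta_{b,t}|\le c|b|/R$ by \eqref{eqdecaybetabt}, so that contribution is $\le C\|u\|_{L^2}\le C\|Du\|_{L^2}$; on the caps the traction behaves like $c|x'|^{-1}$ times a bounded profile, and since $|x'|^{-1}$ lies in $H^{-1/2}$ of the two‑dimensional disk with a norm independent of $r$ (the puncture at the axis has zero $H^{1/2}$‑capacity in dimension two), pairing with the trace of $u$ gives again $\le C\|Du\|_{L^2}$. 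The decisive contribution is the inner surface: by the $(-1)$‑homogeneous radial form of $\beta_{b,t}$ in \eqref{eqvarprobbetzabt1} the traction there equals $-\tfrac1r(\C\beta_{b,t})(\Phi_t(1,\theta,0))Q_te_r$, and the variation in $g$ in \eqref{eqvarprobbetzabt} yields the Euler--Lagrange identity $\int_0^{2\pi}(\C\beta_{b,t})(\Phi_t(1,\theta,0))Q_te_r\,d\theta=0$, i.e.\ this traction has vanishing average in $\theta$; a short Fourier computation then shows that its $H^{-1/2}$‑norm on $\partial B'_r\times(0,h)$ is in fact $o(1)$ as $r\to0$, the $\tfrac1r$ prefactor being absorbed by the rescaling of the circumferential frequencies — which is possible only because the zero mode vanishes. (Equivalently, one may integrate by parts once more over a thin shell $\{r<|x'|<2r\}$, choosing a good intermediate radius and using the same cancellation.) Collecting these estimates gives the required $r$‑independent bound and completes the proof. \emph{The main obstacle} is precisely this uniform‑in‑$r$ control of the boundary term: a naive Cauchy--Schwarz estimate only produces an $o((\ln\tfrac Rr)^{1/2})\|Du\|_{L^2}$ bound, hence the useless $\infcyl\ge\psiC-O(1)$; one genuinely has to exploit the Euler--Lagrange equation of the cell problem — the vanishing $\theta$‑average of the inner traction, which is exactly what forbids the competing field from lowering the energy by a logarithmic radial correction — together with the borderline fact $|x'|^{-1}\in H^{-1/2}$ on the caps and the $r$‑uniformity of the Korn, Poincaré and trace constants. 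This sharper treatment is where we improve over \cite[Lemma~5.5]{ContiGarroniOrtiz2015}, removing a spurious error term.
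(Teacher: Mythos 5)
Parts (ii) and (iii) of your proposal are essentially correct, though you argue from scratch where the paper simply cites \cite[Lemma~5.1(iii)]{ContiGarroniOrtiz2015} (for \eqref{eqpsiccontunif}) and \cite[Lemma~5.7]{ContiGarroniOrtiz2015} (for \eqref{eqpsictlip}). One small slip: $\beta_{b,t}$ is only in $L^{3/2}_\loc(\R^3)$ (it is $(-1)$-homogeneous), so what rules out a nonzero constant skew limit is the homogeneity, not $L^{3/2}(\R^3)$-integrability. Your test-function proof of \eqref{eqpsiccontunifb} with the pair $(f'+\tfrac{b-b'}{2\pi},g')$ is a clean alternative to the paper's route, which instead uses the linearity $\beta_{b,t}=\beta_{b',t}+\beta_{b-b',t}$ from \eqref{eqbetamuline2} and Young's inequality on the quadratic form of \eqref{eqpsi0cylinder}; your bound is in fact slightly sharper (no $(1+\delta)$ factor), which is fine since it implies the stated inequality.

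For part (i) your route is genuinely different, and I want to correct a misapprehension in your closing remark. You describe your sharp boundary-term control as ``where we improve over \cite[Lemma~5.5]{ContiGarroniOrtiz2015}''. In fact the paper does \emph{not} re-open the boundary-term analysis at all. It quotes verbatim the bound $\liminf_{r\to0}\infcyl(\C,b,t,h,R,r)\ge\psiC(b,t)-c\,|b|^2R/h$ from \cite[Lemma~5.5]{ContiGarroniOrtiz2015} and then observes the elementary monotonicity
\begin{equation*}
\infcyl(\C,b,t,h,R,r)\ge\frac{\log(R'/r)}{\log(R/r)}\,\infcyl(\C,b,t,h,R',r)\qquad(r<R'<R),
\end{equation*}
obtained by restricting a competitor. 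Sending $r\to0$ makes the prefactor tend to $1$, and then sending $R'\to0$ eliminates the error term. That scaling trick, not a refined boundary estimate, is the announced improvement.

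Your direct completion-of-the-square argument would, if carried out, give a self-contained proof bypassing \cite[Lemma~5.5]{ContiGarroniOrtiz2015}, and your observation that the stationarity condition $\int_0^{2\pi}(\C G)Q_te_r\,d\theta=0$ kills the zero mode of the inner-surface traction is correct and genuinely useful. But as written there are real gaps. First, the $r$-uniformity of the Korn and Poincar\'e constants of $T^{R,r}_h$ is asserted, not proved; the paper establishes a version of this fact only later, in Lemma~\ref{lemmarigidcylinderhole}, through a dyadic construction, so it cannot simply be invoked here. Second, the cap estimate is the weak link: the traction there involves the contraction $(\C G)Q_te_3$, for which no Euler--Lagrange cancellation is available, and your appeal to ``$|x'|^{-1}\in H^{-1/2}$ with $r$-independent norm'' still requires the trace constant $H^1(T^{R,r}_h)\to H^{1/2}(B'_R\setminus B'_r)$ to be uniform in $r$ — a borderline, scale-critical claim you do not verify. (A workable fix is to move the caps to a good interior height chosen by averaging, where $u$ has an $H^1$-trace and a one-dimensional Hardy-type inequality gives $\int_r^R f^2\,ds\le C\int_r^R(|f'|^2+f^2)\,s\,ds$ with $C$ independent of $r$; but this costs a factor in front of $\psiC$ that must be removed by a further limit, and none of this is in your sketch.) Third, the inner-surface estimate likewise needs the shell argument spelled out. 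In short, your plan is plausible and conceptually interesting, but the technical work it defers is exactly what the paper's monotonicity trick is designed to avoid.
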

\begin{proof}\ref{lemmacellproblemlbA}:
Since any function $u\in W^{1,1}(Q_tT^{R,r}_h;\R^3)$ can be extended to a function in 
$W^{1,1}(Q_tT^{R}_h;\R^3)$, we have  
 \begin{equation}\label{eqdefabthRrb}
 \begin{split}
  \infcyl(\C,b,t,h,R,r)=\inf \Big\{  \frac{1}{h\ln \frac Rr}\int_{Q_t T^{R,r}_h} &\frac12 \C \beta\cdot\beta dx: \beta\in L^1(Q_tT_h^R;\R^{3\times 3}),\\ &\curl\beta=b\otimes t \calH^1\LL (t\R\cap Q_t T^{R}_h) \Big\}.
 \end{split}
 \end{equation}
By \cite[Lemma~5.5]{ContiGarroniOrtiz2015} we get
\begin{equation}\label{eq:liminf_linear_problem}
\liminf_{r\to0} \infcyl(\C,b,t,h,R,r)\ge \psiC(b,t)-c\frac{|b|^2R}{h}.
\end{equation}
Moreover for $R>R'>r$ it holds
\begin{equation*}
 \infcyl(\C,b,t,h,R,r)\ge \frac{\log\frac {R'}{r}}{\log\frac Rr} \infcyl(\C,b,t,h, R',r).
\end{equation*}
Taking the limit as $r\to0$,
since $\lim_{r\to0}
\frac{\log\frac {R'}{r}}{\log\frac Rr}=1$ from \eqref{eq:liminf_linear_problem} we infer
\begin{equation*}
\liminf_{r\to0}\infcyl(\C,b,t,h,R,r)\ge 
\liminf_{r\to0}\infcyl(\C,b,t,h,R',r)\ge 
\psiC(b,t)-c\frac{|b|^2R'}{h}
\end{equation*}
for all $R'>0$, and therefore for $R'=0$.
At the same time, by \eqref{eqpsi0cylinder} we obtain
$\infcyl(b,t,h,R,r)\le\psiC(b,t)$ for all $r\in(0,R)$, and the proof of \eqref{eqlimroApsic} is concluded.

\ref{lemmacellproblemlbbcont}:
The bounds in \eqref{eqpsiccontunif} follow {from \cite[Lemma~5.1(iii)]{ContiGarroniOrtiz2015}} (the upper bound can be also easily proven 
inserting the bound in 
\eqref{eqdecaybetabt} into 
\eqref{eqpsi0cylinder} and integrating).
The remaing
bound (Eq.~\eqref{eqpsiccontunifb}) is proven using 
Lemma~\ref{lemmabetabtkernel1} and 
\eqref{eqpsi0cylinder}.
We  first observe that by \eqref{eqbetamuline2} we have
$\beta_{b,t}=\beta_{b',t}+\beta_{b-b',t}$.
Therefore for any $\delta>0$, fixing some $r<R\le h$ 
from \eqref{eqpsi0cylinder} we obtain
\begin{equation}
 \begin{split}
  \psiC(b,t)\le (1+\delta)\psiC(b',t)+
  \left(1+\frac1\delta\right) \psiC(b-b',t).
 \end{split}
\end{equation}
In turn, by the upper bound in 
\eqref{eqpsiccontunif}
we have
\begin{equation}
 \begin{split}
  \psiC(b,t)\le (1+\delta)\psiC(b',t)+
  c\left(1+\frac1\delta\right) |b-b'|^2.
 \end{split}
\end{equation}
Setting $\delta:=|b-b'|/(|b|+|b'|)\in(0,1]$ concludes the proof of \eqref{eqpsiccontunifb}.

\ref{lemmacellproblemlbtcont}: follows from 
\cite[Lemma~5.7]{ContiGarroniOrtiz2015}.
\end{proof}

\section{Model and main results}\label{sec-main}

\subsection{Dislocations in finite kinematics}
\label{secdislofinite} 

We work within the general framework of continuum mechanics, using finite kinematics. In the presence of dislocations, there is no smooth bijection between the reference configuration and the deformed configuration. Therefore any choice of a reference configuration includes a high degree of arbitrariness.
We use spatial variables, with an energy obtained integrating over the deformed configuration $\Omega$, and focus on the strain $\beta:\Omega\to\R^{3\times 3}$, with $\beta(x)$ seen as a linear map from the tangent space to the deformed configuration to the tangent space to the reference configuration, or the so-called (fictitious) ``intermediate configuration'' or ``lattice configuration''. Precisely, for $x\in\Omega$ (a point in the physical space occupied by the material) and $t\in S^2$ a direction in the same space, $\beta(x)t$ is the corresponding vector in the intermediate configuration, as illustrated in Figure~\ref{figdefconf}. It is then natural to expect that the line integral of $\beta$ over a closed (spatial) curve results in a lattice vector, the Burgers vector.
Therefore {we consider} the distributional rotation $\curl\beta$
as a measure of the density of dislocations.
Indeed, this is how normally the Burgers circuit argument is formulated, see for example \cite[Fig.~1.19]{HullBacon},
 \cite[(7)]{AcharyaBassani2000}, or \cite{LuckhausMugnai2010} for a mathematical treatment.

We can relate $\curl\beta$ to the dislocation density tensor as used in continuum mechanics. Within the customary  multiplicative decomposition of the strain into an elastic and a plastic part $F=F_eF_p$, the density of dislocations (in the sense of the Nye tensor) is given by 
\begin{equation}
\alpha:= \frac{1}{\det F_p} (\CCurl F_p) F_p^T
=(\curl F_e^{-1})\cof F_e,
\end{equation}
where $\curl$ denotes the rotation in the spatial configuration, whereas $\CCurl$ denotes the rotation in the material configuration.
We refer for example to \cite{CermelliGurtin2001,svendsen02,MielkeMueller2006,ReinaConti2014} for these formulas and their equivalence (to compare formulas it is useful to keep in mind that
in \cite{CermelliGurtin2001} curl is defined columnwise instead of rowwise as it is here, and that $\cof F=F^{-T}\det F$ for any invertible matrix $F$).  In this language, the variable $\beta$ we use corresponds to $F_e^{-1}$, seen as a spatial field. 
Whereas $\alpha$ measures the density of (geometrically necessary) dislocations seen as the total Burgers vector crossing a certain area in the reference configuration, $\curl\beta$ measures the total Burgers vector crossing a certain area in the deformed configuration. We prefer this version for consistency with the fact that we work in the spatial representation, and because $\curl\beta$ (at variance with $\curl\beta \cof \beta^{-1}$) can be easily understood distributionally.

In order to correctly formulate the variational model it is important to understand how frame indifference acts in this setting. In the usual material formulation of (dislocation-free) nonlinear elasticity, one considers a (bijective) deformation field $u:\omega\to\Omega$,
with $\omega\subseteq\R^3$ the reference configuration and $\Omega\subseteq\R^3$ the deformed (spatial) configuration. 
In this simple setting, $\beta:\Omega\to\R^{3\times 3}$ is defined by $\beta(u(x))=(Du(x))^{-1}$, and it is the gradient of the map $v:\Omega\to\omega$ which is the inverse to $u$.
Superimposing a rotation amounts to replacing $u$ by $u^Q(x):=Qu(x)$, $u^Q:\omega\to Q\Omega$,
so that the deformation gradient $Du(x)$ gets replaced by
$Du^Q(x)=QDu(x)$. 
The inverse $v^Q$ of $u^Q$ is given by
$v^Q(u^Q(x))=x$, which is the same as
$v^Q(Qy)=v(y)$. Differentiating this expression one obtains
$Dv^Q(Qy)Q=Dv(y)$, which is the same as
\begin{equation}\label{eqbetaqrotkin}
\beta^Q(Qy)Q=\beta(y). 
\end{equation}
This expression shows the action of rotations on the field $\beta$. The above computation based on the chain rule shows that the multiplication of $\beta$ by a rotation on the right is naturally coupled to a change of variables by the same rotation.
An hyperelastic material can then be modeled by the energy
\begin{equation}
 \int_\Omega W(\beta) dy,
\end{equation}
and material frame indifference leads to the requirement
\begin{equation}\label{eqWFQFFapp}
 W(FQ)=W(F) \text{ for all } F\in \R^{3\times 3}, Q\in\SO(3).
\end{equation}
With a change of variables, one can relate this expression to the usual integration over the reference configuration:
\begin{equation}\begin{split}
 \int_\Omega W(\beta(y)) dy
& =\int_{u(\omega)} W((Du)^{-1}(u^{-1}(y))) dy\\
& =\int_\omega W((Du)^{-1}(x)) \det Du(x) dx
 =\int_{\omega} \hat W(Du(x)) dx,
\end{split}\end{equation}
where $\hat W(F)=W(F^{-1})\det F$. One immediately sees that 
\eqref{eqWFQFFapp} is equivalent to the usual right-invariance $\hat W(QF)=\hat W(F)$, and the requirement that $\{ W=0\} \cap \{\det>0\}=\SO(3)$ is equivalent
to  $\{\hat W=0\} \cap \{\det>0\}=\SO(3)$.

If the bijective map $u$ does not exist globally, the same procedure can be performed locally, away from the dislocation cores (or using the intermediate configuration).
In the spatial formulation, we consider $\beta:\Omega\to\R^{3\times 3}$. It maps directions $t$ in the spatial configuration (in $\Omega$) onto directions $\beta t$ in the (fixed) lattice configuration. 
If we insert a rotation, mapping $\Omega$ to $Q\Omega$, the direction $t$ becomes $Qt$, and a point $y\in\Omega$ becomes  $Qy\in Q\Omega$. However, the vector in the lattice configuration is not modified, therefore necessarily
$\beta^Q(Qy)Qt=\beta(y)t$ for all $t\in\R^3$, which is the same as \eqref{eqbetaqrotkin}.

We next address how the distribution of dislocations, understood as $\curl\beta$, transforms under rotations, 
and how the line energy $\psiC$ transforms. The key fact, in terms of the field introduced in \eqref{eqbetaqrotkin}, is
\begin{equation}
(\curl \beta^Q)(Qy)Q=\curl \beta(y).
\end{equation}
We remark that this equation, written in terms of Nye's tensor $\alpha=\curl \beta\cof \beta^{-1}$, 
leads to $\alpha^Q(Qy)=\alpha(y)$, i.e., the Nye tensor is invariant under changes of coordinates.

\begin{figure}[t!]
 \begin{center}
  \includegraphics[width=12cm]{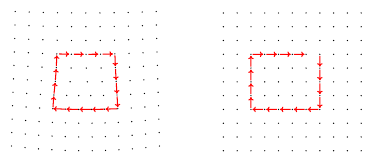}
 \end{center}
\caption{A Burgers circuit in finite elasticity. Left: spatial (deformed) configuration. A closed path goes around the dislocation. The path is composed of finitely many segments  $t_i$ that join an atom to a neighbour. Right: representation in the reference (more precisely, intermediate) configuration. Here the atoms are exactly on the reference Bravais lattice, each of the segments $t_i$ is mapped to a corresponding lattice vector which (in the continuum limit) is $\beta t_i$. The path does not close, the sum of the segments is the Burgers vector, which is naturally an element of the  (undeformed) crystal lattice. Therefore the contour integral of $\beta$ equals the Burgers vector of the dislocation.}
\label{figdefconf}
\end{figure}

\begin{lemma}\label{lemmarotatebetacurl}
\begin{enumerate}
 \item\label{lemmarotatebetacurlbeta}
Let $Q\in \SO(3)$, $F\in\R^{3\times 3}$,
$\lambda\in(0,\infty)$, $v\in\R^3$,
$\Omega\subseteq\R^3$ open. Given $\beta\in L^1_\loc(\Omega;\R^{3\times 3})$  we define $\hat \beta\in L^1_\loc(Q^T\Omega;\R^{3\times 3})$  by
\begin{equation}\label{eqdefhatbeta}
 \hat \beta(x):=F\beta(\lambda Qx+v) Q.
\end{equation}
Then
\begin{equation}\label{curlhatbeta}
(\curl \hat \beta)(x):=\lambda F(\curl \beta)(\lambda Qx+v) Q.
\end{equation}
\item\label{lemmarotatebetacurlbetameas}
 If $\curl\beta$ is a (matrix-valued) measure,  then 
 so is $\curl\hat \beta$, and for any Borel set $A$ one has 
 $(\curl\hat \beta)(A)=\frac1{\lambda^2} F((\curl\beta)(\lambda QA+v))Q$. In particular, if
 $\curl\beta=b\otimes t\calH^1\LL\gamma\in\calM^1(\R^3)$ 
 then $\curl\hat\beta=\frac1\lambda F\hat b\otimes Q^T\hat t\calH^1\LL  Q^T\frac{\gamma-v}\lambda\in\calM^1(\R^3)$, where $\hat b(x):=b(\lambda Qx+v)$ 
 and $\hat t(x):=t(\lambda Qx+v)$.
\item\label{lemmarotatebetacurlpsic} 
For any $\C$ which obeys \eqref{eqdefC} and any $Q\in \SO(3)$
the line-tension energy $\psiC$ obeys
\begin{equation}
 \psiC(b,t)=\psi_{\C_Q}(Q^Tb,Q^Tt)
\end{equation}
where $\C_Q$ is defined by
\begin{equation}\label{eqdefCQapp}
  \C_Q A\cdot A =
 \C (QAQ^T)\cdot (QAQ^T).
\end{equation}
\end{enumerate}
\end{lemma}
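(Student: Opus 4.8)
The plan is to prove the three assertions in order, using the explicit kernel representation wherever possible. For part~\ref{lemmarotatebetacurlbeta}, I would simply differentiate the defining formula \eqref{eqdefhatbeta} in the sense of distributions: for a test function $\varphi\in C^\infty_c(Q^T\Omega)$, write the distributional curl of $\hat\beta$ against $\varphi$, perform the linear change of variables $y=\lambda Q x+v$ (whose Jacobian is $\lambda^3$, with $|\det Q|=1$), and track how the constant matrices $F$ (acting on the left, hence commuting with the differential operator applied rowwise) and $Q$ (acting on the right, which rotates the gradient direction) pass through. The chain rule produces exactly one factor of $\lambda$ from differentiating the argument $\lambda Qx+v$ and a factor of $Q$ from the inner derivative, giving \eqref{curlhatbeta}. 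This is a routine computation; the only care needed is to check that the rowwise curl interacts correctly with left/right matrix multiplication, which follows from the fact that $(\curl(F\beta Q))_{ij}=F_{ik}(\curl(\beta Q))_{kj}$ and that right multiplication by $Q$ on $\beta$ corresponds to a linear recombination of the spatial derivatives.

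For part~\ref{lemmarotatebetacurlbetameas}, I would note that \eqref{curlhatbeta} immediately shows $\curl\hat\beta$ is a measure whenever $\curl\beta$ is, by pullback under the affine diffeomorphism $x\mapsto\lambda Qx+v$; the factor $\lambda^{-2}$ rather than $\lambda^{-3}$ arises because pushing forward a measure under a dilation by $\lambda$ scales $\mathcal H^1$ by $\lambda$ (not $\lambda^3$), and combining this with the extra explicit $\lambda$ from \eqref{curlhatbeta} gives the net $\lambda^{-2}$. Concretely, if $\curl\beta=b\otimes t\,\calH^1\LL\gamma$, then under $x\mapsto\lambda Qx+v$ the rectifiable set $\gamma$ pulls back to $Q^T(\gamma-v)/\lambda$, the tangent $t$ pulls back to $Q^T t$ (a unit vector, since $Q^T$ is orthogonal), the length element scales by $1/\lambda$, and the prefactors $F(\cdot)Q$ from \eqref{curlhatbeta} act on $b\otimes t$ to produce $(Fb)\otimes(Q^T t)$; assembling the constants yields $\frac1\lambda F\hat b\otimes Q^T\hat t\,\calH^1\LL Q^T\frac{\gamma-v}{\lambda}$. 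That $\Div$ of this measure still vanishes and that it lies in $\calM^1$ is preserved because these are coordinate-independent conditions (or can be read off directly from the transformation rule applied to $\Div$).

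For part~\ref{lemmarotatebetacurlpsic}, the cleanest route is through the variational characterization \eqref{eqvarprobbetzabt} of $\psiC(b,t)$. I would take the competitor field $\tilde\beta$ achieving (or approximating) $\psiC(b,t)$, namely $\tilde\beta=\beta_{b,t}$, and define $\hat\beta(x):=Q^T\beta_{b,t}(Qx)Q$; by part~\ref{lemmarotatebetacurlbeta} with $F=Q^T$, $\lambda=1$, $v=0$, this has $\curl\hat\beta=Q^T(b\otimes t)Q\,\calH^1$-on the rotated line, which is exactly $(Q^Tb)\otimes(Q^Tt)\,\calH^1\LL(\R Q^Tt)$. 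Then I compute the energy density of $\hat\beta$ with respect to $\C_Q$: by \eqref{eqdefCQapp}, $\C_Q\hat\beta(x)\cdot\hat\beta(x)=\C(Q\hat\beta(x)Q^T)\cdot(Q\hat\beta(x)Q^T)=\C\beta_{b,t}(Qx)\cdot\beta_{b,t}(Qx)$, so integrating over the unit circle in the plane orthogonal to $Q^Tt$ and changing variables by $Q$ gives precisely the integral defining $\psiC(b,t)$. This shows $\psi_{\C_Q}(Q^Tb,Q^Tt)\le\psiC(b,t)$; the reverse inequality follows by symmetry, applying the same argument with $Q$ replaced by $Q^T$ and $\C$ by $\C_Q$, noting $(\C_Q)_{Q^T}=\C$. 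The main obstacle — such as it is — is bookkeeping: making sure the rotation $Q_t$ used implicitly in \eqref{eqvarprobbetzabt} for the direction $t$ and the one used for $Q^Tt$ are consistent (one may take $Q_{Q^Tt}=Q^TQ_t$), and that the structural ansatz \eqref{eqvarprobbetzabt1} is mapped to the corresponding ansatz for the rotated direction. Since Lemma~\ref{lemmabetabtkernel1} already guarantees $\beta_{b,t}$ is independent of the choice of $Q_t$, this consistency issue is harmless and the identity follows.
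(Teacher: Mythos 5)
Your proposal is correct. Parts~(\ref{lemmarotatebetacurlbeta}) and~(\ref{lemmarotatebetacurlbetameas}) follow essentially the paper's approach: the paper carries out the index computation for~(\ref{lemmarotatebetacurlbeta}) explicitly via the permutation-tensor identity $\eps_{ijk}Q_{hj}Q_{pk}=Q_{si}\eps_{shp}$ (valid because $\det Q=1$), which is precisely the ``care needed'' you flag but do not spell out — the chain rule gives a factor $\lambda Q_{hj}$ and the right-multiplication contributes $Q_{pk}$, and it is only through this $\eps$-tensor identity that the two factors of $Q$ recombine into right multiplication of $\curl\beta$ by $Q$; this is the non-obvious computational heart of~(\ref{lemmarotatebetacurlbeta}), so it is worth being explicit. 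Your explanation of the $\lambda^{-2}$ in part~(\ref{lemmarotatebetacurlbetameas}) slightly conflates the general Borel-set formula with the $\calH^1$ case: for a general Borel set the factor is $\lambda\cdot\lambda^{-3}=\lambda^{-2}$ (extra $\lambda$ from \eqref{curlhatbeta} times the Jacobian), and only in the second step, specializing to $b\otimes t\,\calH^1\LL\gamma$, does the one-dimensional scaling $\calH^1\mapsto\lambda\calH^1$ convert this into the $\lambda^{-1}$ prefactor in the final expression; the paper keeps these two steps separate.

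Part~(\ref{lemmarotatebetacurlpsic}) is where you take a genuinely different route. The paper starts from the cylinder formulation $\infcyl(\C,b,t,h,R,r)$ of \eqref{eqdefabthRranhang}, rotates an arbitrary admissible three-dimensional field $\beta\in L^1(Q_tT^R_h)$, changes variables to show $\infcyl(\C_Q,Q^Tb,Q^Tt,h,R,r)\le\infcyl(\C,b,t,h,R,r)$, and then invokes the limit $r\to0$ from Lemma~\ref{lemmacellproblemlb}\ref{lemmacellproblemlbA}. You instead work directly with the one-dimensional variational characterization \eqref{eqvarprobbetzabt}, rotate the optimal profile $\beta_{b,t}$, verify it lies in the admissible class (with the consistent choice $Q_{Q^Tt}=Q^TQ_t$, which is legitimate because Lemma~\ref{lemmabetabtkernel1} makes $\beta_{b,t}$ independent of that choice), and read off the equality of energies pointwise from \eqref{eqdefCQapp}. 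Your argument is more elementary, avoiding the $r\to0$ limit and the detour through $\infcyl$; the paper's version has the advantage of being robust even without access to Lemma~\ref{lemmabetabtkernel1}, since it never needs the explicit minimizer. The reverse inequality in your argument should be justified by checking $(\C_Q)_{Q^T}=\C$, which you note; that identity is immediate from \eqref{eqdefCQapp}. Both proofs are sound.
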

We remark that \eqref{eqdefhatbeta}
is the same as
\begin{equation}
\beta(y):=F^{-1}\hat \beta(Q^Ty) Q^T.
\end{equation}
\begin{proof}
\ref{lemmarotatebetacurlbeta}:
We first observe that (summing over repeated indices)
\begin{equation}
 \eps_{ijk}Q_{hj}Q_{pk}=Q_{si}\eps_{shp}.
\end{equation}
Indeed, the left-hand side is the $i$-th component of
$(Q^Te_h)\wedge (Q^Te_p)=Q^T(e_h\wedge e_p)$, and $e_i\wedge e_{i+1}=e_{i+2}$ (with indices taken modulo 3).
We now compute (again with implicit sums)
$\hat\beta_{ak}(x)=F_{ab}\beta_{bp}(\lambda Qx+v)Q_{pk}$ and
\begin{equation}\begin{split}
 (\curl\hat \beta)_{ai}(x)
 &=\eps_{ijk}\partial_j \hat \beta_{ak}(x)
 =\eps_{ijk} F_{ab} Q_{pk}(\partial_h \beta_{bp})(\lambda Qx+v) \lambda Q_{hj}\\
& =\lambda F_{ab} Q_{si} \eps_{shp}(\partial_h \beta_{bp})(\lambda Qx+v) \\
&=\lambda F_{ab} (\curl \beta)_{bs}(\lambda Qx+v) Q_{si}.
 \end{split}
\end{equation}
This proves \eqref{curlhatbeta}.

\ref{lemmarotatebetacurlbetameas}:
If $\beta$ is regular, by \ref{lemmarotatebetacurlbeta}
\begin{equation}
 \int_A \curl\hat\beta(x) dx=
 \int_A \lambda F\curl\beta(\lambda Qx+v)Q dx=
\lambda \int_{\lambda QA+v} \frac{1}{\lambda^3}F\curl\beta(y)Q dy,
 \end{equation}
the general case follows by density. Further,
\begin{equation}
\begin{split}
\frac{1}{\lambda^2} F (b\otimes t\calH^1\LL\gamma)(\lambda QA+v)Q&=
\frac{1}{\lambda^2} F \int_{(\lambda QA+v)\cap\gamma} b\otimes td\calH^1Q\\
&=
\frac{1}{\lambda}F \int_{A\cap \lambda^{-1}Q^T(\gamma-v)} \hat b\otimes \hat td\calH^1Q,
\end{split}
\end{equation}
and $F(\hat b\otimes \hat t)Q=(F\hat b)\otimes (Q^T\hat t)$.

\ref{lemmarotatebetacurlpsic}:
We recall \eqref{eqdefabthRr}, which can be equivalently written as
 \begin{equation}\label{eqdefabthRranhang}
 \begin{split}
\infcyl(\C,b,t,h,R,r):=\inf \Big\{  \frac{1}{h\ln \frac Rr}\int_{Q_t T^{R,r}_h}& \frac12 \C \beta\cdot\beta dx:  
\beta\in L^{1}(Q_t T^{R}_h;\R^{3\times 3}),\\
 &
 \curl\beta=b\otimes t\calH^1\LL (\R t\cap Q_t T^{R}_h)\Big\}.
 \end{split}
 \end{equation}
 For any $\beta$ as above we define $\hat\beta$ by
\begin{equation}
 \hat \beta(x):=Q^T\beta(Qx) Q.
\end{equation}
Obviously $\hat\beta\in L^{1}(Q^TQ_t T_h^R;\R^{3\times 3})$, and by \ref{lemmarotatebetacurlbetameas} we have
\begin{equation}
\curl\hat\beta
=Q^Tb\otimes Q^Tt \calH^1\LL (\R Q^Tt \cap Q^TQ_t T_h^R).
\end{equation}
With a change of variables
 \begin{equation}
 \begin{split}
\int_{Q_t T^{R,r}_h} \frac12 \C \beta\cdot\beta dx
=&
\int_{Q_t T^{R,r}_h} \frac12 \C(Q\hat\beta(Q^Tx)Q^T)\cdot
(Q\hat\beta(Q^Tx)Q^T)dx\\
=&
\int_{Q^T Q_t T^{R,r}_h} \frac12 \C_Q \hat\beta\cdot\hat\beta  dy.
\end{split}
 \end{equation}
Then using that $Q^T Q_t T^{R,r}_h=Q_{Q^Tt} T^{R,r}_h$
and taking the infimum over all choices of $\beta$
gives
\begin{equation}
 \infcyl(\C_Q,Q^Tb,Q^Tt,h,R,r)
 \le \infcyl(\C,b,t,h,R,r).
\end{equation}
Taking the limit $r\to0$ with Lemma~\ref{lemmacellproblemlb}
\ref{lemmacellproblemlbA}
we obtain
\begin{equation}
 \psi_{\C_Q}(Q^Tb,Q^Tt)\le \psiC(b,t),
\end{equation}
and analogous computation proves the converse inequality.
\end{proof}

We remark that in the geometrically linear setting one identifies the reference with the deformed configuration, hence the discussion above becomes largely irrelevant.
The linearization procedure is based on considering deformations $u$ close to the identity, in the sense that $u(x)=x+\delta v(x)$, with $v$ the scaled displacement
and $\delta$ the linearization parameter. One then 
introduces the strain measure $\xi(x):=Dv(x)$,
computes $F_e(x)=\Id+\delta \xi(x)$ 
and $\beta(x+\delta v(x))=(\Id+\delta \xi(x))^{-1}$. A Taylor series for $\delta\to0$, if all fields are regular, gives
$\beta(x)=\Id-\delta \xi(x)+O(\delta^2)$, 
and in the linear theory the strain measure is identified with $\xi(x)= -\frac{\beta(x)-\Id}{\delta}$.
In the presence of dislocation, the passage through $u$ and $v$ can only be done locally, but linearization still amounts to $\xi=-\frac{\beta-\Id}{\delta}$.
Obviously $\curl\xi =-\frac1\delta \curl\beta$ is then the easiest measure of the dislocation density. If one uses a quadratic energy the prefactor $\delta$ can be dropped, and the minus sign amounts to a minor change in the definition of the Burgers vector. 

In this paper we provide a unified mathematical approach for the linear and the nonlinear setting, using different assumptions on the energy density $W$ presented in the next section.

\subsection{Main assumptions}

We introduce the class of dilute dislocations identified with a class of divergence-free measures concentrated on polyhedral curves. 

\def\segment{s}
\begin{definition}\label{defdiluteness-curve}
 Given two positive parameters $\alpha,h>0$ 
 and an open set $\Omega\subseteq\R^3$
 we say that a  polyhedral curve $\gamma\subset\overline\Omega$ is $(h,\alpha)$-{\em dilute} if it is the union of finitely many closed segments $\segment_j\subset\overline\Omega$  such that
 \begin{enumerate}
  \item each $\segment_j$ has length at least $h$;
  \item if $\segment_j$ and $\segment_i$ are disjoint then their distance is at least $\alpha h$;
  \item if the segments $\segment_j$ and $\segment_i$ are not disjoint then they share an endpoint, and the angle between them is at least $\alpha$;
  \item $\gamma$ does not have endpoints inside $\Omega$.
 \end{enumerate}
The set of $(h,\alpha)$-dilute polyhedrals is denoted by $P(h,\alpha)$.
\end{definition}
The diluteness condition given in the Definition \ref{defdiluteness-curve} allows to define the set of compatible configurations.
The asymptotic analysis will be performed assuming that  the diluteness parameters $h$ and $\alpha$ are much larger then the lattice spacing $\eps$, in a sense made precise in \eqref{eqdefheps} below.

\begin{definition}
\label{defdilutedistribution}
A $(h,\alpha)$-dilute dislocation distribution is a measure $\mu\in 
\calM^1(\Omega)$ of the form $\mu=\theta\otimes \tau\calH^1\LL\gamma$ such that $\gamma\in P(h,\alpha)$.

Given a Bravais lattice $\calB$
(i.e., a set of the form  $\calB=F\Z^3$ for some invertible matrix $F\in\R^{3\times 3}$), we denote by
$\calM^1_\calB(\Omega)$ the set of measures $\mu\in\calM^1(\Omega)$ such that $\theta\in \calB$ $\calH^1$-almost everywhere on $\gamma$.
\end{definition}

We consider an elastic energy density $W$ in one of  two natural frameworks.
To simplify notation in the presence of a mixed growth condition we define for $p\in[1,2]$ the function {$\mixedgrowth:[0,\infty)\to[0,\infty)$}, by
\begin{equation}\label{eqdefmixedgrowth}
 \mixedgrowth(t):=t^p\wedge t^2.
\end{equation}

\paragraph{Assumption \HWFinite:} In a geometrically nonlinear setting, 
$W:\R^{3\times 3}\to[0,\infty)$  is minimized at the identity and is invariant under the right action of $\SO(3)$,
\begin{equation}\label{eqWfWFQ}
 {W(F)=W(FQ)} \text{ for all } F\in \R^{3\times 3}, Q\in \SO(3).
\end{equation}
We assume also that $W$ is Borel, twice differentiable in a neighbourhood of $\Id$, and
\begin{equation}\label{eqlowerboundWfinite}
\begin{split}
 \frac1c \mixedgrowth(\dist(F,\SO(3)))
\le W(F) 
\le c \mixedgrowth(\dist(F,\SO(3))),
\end{split}
\end{equation}
for some $p\in (1,2]$ and $c>0$.

\paragraph{Remark.} If \HWFinite\ holds, then the tensor 
\begin{equation}\label{eq-C-tensor-nonlin}
 \C:=D^2W(\Id)
\end{equation}
satisfies condition \eqref{eqdefC},
{and there is a modulus of continuity $\omega:[0,\infty)\to[0,\infty)$ with $\omega(\delta)\to0$ as $\delta\to0$ such that
\begin{equation}
 \left|W(\Id+A)-\frac12 \C A\cdot A \right|\le \omega(|A|)|A|^2.
\end{equation}
For any $Q\in\SO(3)$, using 
$W(Q(\Id+A))=W(Q(\Id+A)Q^T)$ one obtains
\begin{equation}\label{eqWQIdACQ}
 \left|W(Q(\Id+A))-\frac12 \C_Q A\cdot A \right|\le \omega(|A|)|A|^2,
\end{equation}
where $\C_Q\in\R^{3\times 3\times 3\times 3}_\sym$  is defined from $\C$ and $Q$ by
\begin{equation}\label{eqdefCQ}
  \C_Q A\cdot A =
 \C (QAQ^T)\cdot (QAQ^T).
\end{equation}
It is easy to see that it also obeys condition \eqref{eqdefC}.}

\paragraph{Assumption \HWLin:} In a geometrically linear setting, 
$W:\R^{3\times 3}\to[0,\infty)$  
is minimized at zero and only depends on the symmetric part of its argument, 
\begin{equation}
 W(F)=W(F+S) \text{ for all } F\in \R^{3\times 3}, S\in \R^{3\times 3}_\skw.
\end{equation}
We assume also that $W$ is Borel, twice differentiable in a neighbourhood of 0,
and
\begin{equation}\label{eqlowerboundWlin}
 \frac1c \mixedgrowth(|F+F^T|) \le W(F) 
 \le c \mixedgrowth(|F+F^T|) ,
\end{equation}
for some $p\in (1,2]$ and $c>0$.

\paragraph{Remark.} If \HWLin\ holds, then the tensor 
\begin{equation}\label{eq-C-tensor-lin}
 \C:=D^2W(0)
\end{equation}
satisfies condition \eqref{eqdefC}.

It will be useful to have some properties of $\mixedgrowth$. We denote by $\mixedgrowth^{**}$ its convex envelope, which obeys
 \begin{equation}\label{eqmixedgrowthconv} \mixedgrowth^{**}\le\mixedgrowth\le c_p \mixedgrowth^{**},
\end{equation}
for some $c_p>0$.
\begin{remark}\label{rem-phip}
{$(i)$}
The function $\mixedgrowth$
 for every $\delta>0$ satisfies
  \begin{equation}\label{eqlemmaVabG2}
  \mixedgrowth(|a+b|)\le (1+\delta) \mixedgrowth(|a|)+\left(1+\frac1\delta\right) \mixedgrowth(|b|) \quad\text{ for all } a,b\in\R.
 \end{equation}
In particular, with $\delta=1$ we obtain
 \begin{equation}\label{eqlemmaVabG}
  \mixedgrowth(|a+b|)\le 2 \mixedgrowth(|a|){+2\mixedgrowth(|b|) \quad\text{ for all } a,b\in\R}.
 \end{equation}
 To prove \eqref{eqlemmaVabG2}, we 
 distinguish two cases. If $|a|\le1$ and $|b|\le 1$, then
  \begin{equation*}\begin{split}
   \mixedgrowth(|a+b|) \le& |a+b|^2  \\
   \le&
   (1+\delta) |a|^2+(1+\frac1\delta)|b|^2=(1+\delta) \mixedgrowth(|a|)+(1+\frac1\delta)\mixedgrowth(|b|).
\end{split}  \end{equation*}
 Otherwise, we write
  \begin{equation*}\begin{split}
   \mixedgrowth(|a+b|) \le |a+b|^p \le& (|a|+|b|)^p = (|a|+|b|)^2 (|a|+|b|)^{p-2}\\
   \le &
   \left[(1+\delta) |a|^2+(1+\frac1\delta)|b|^2\right](|a|+|b|)^{p-2}.
\end{split}  \end{equation*}
We recall that $p-2\le0$, and that in this case $|a|+|b|\ge1$.
If $|a|\le 1$, then  $|a|^2(|a|+|b|)^{p-2}\le |a|^2=\mixedgrowth(|a|)$. If 
$|a|> 1$, then  $|a|^2(|a|+|b|)^{p-2}\le
|a|^2|a|^{p-2}=|a|^p=\mixedgrowth(|a|)$. Therefore
$|a|^2(|a|+|b|)^{p-2}\le\mixedgrowth(|a|)$, and the same for $b$. This concludes the proof.

{
$(ii)$ For any  $f\in L^p(\Omega;\R^{3\times 3})$ there are $a,b\in L^p(\Omega;\R^{3\times 3})$  with $|a|\le 1$ everywhere, $|b|\ge1$ wherever $b\ne0$, $f=a+b$, and
\begin{equation}\label{2prigidity_bis}
\int_{\Omega} (|a|^2+ |b|^p )dx =
\int_{\Omega} \mixedgrowth(|f|)dx.
\end{equation}
To see this, it suffices to set $a:=f\chi_{\{|f|\le1\}}$, and $b:=f-a$.
}
\end{remark}

\subsection{$\Gamma$-convergence to a line tension model}
\label{secgammaconv}
We now introduce a small parameter $\eps>0$, which in this semidiscrete model represents the lattice spacing, and 
(given an open set $\Omega\subseteq\R^3$ and a Bravais lattice $\calB\subset\R^3$)
the class of admissible configurations 
\begin{equation}\label{Astar}
 \calA^*_\eps:=\{(\mu,\beta)\in 
 \calM_{\eps\calB}^1(\Omega)\times   L^1(\Omega;\R^{3\times 3})
 :
 \curl\beta=\mu\}.
\end{equation}
 For a Borel set $A\subseteq\Omega$ we define
\begin{equation}\label{eqdefFepsbetaA}
 F_\eps[\beta,A]:=\frac{1}{\eps^2\ln\frac1\eps}
 \int_A W(\beta) dx
 =:\frac{1}{\eps^2\ln\frac1\eps}\Elast[\beta,A]
\end{equation}
and write briefly  $F_\eps[\beta]:=F_\eps[\beta,\Omega]$.
The asymptotic analysis will be performed for any diluteness parameters $\alpha_\eps$ and $h_\eps$
that obey
\begin{equation}\label{eqdefheps}
\lim_{\eps\to0} \frac{\log \frac{1}{\alpha_\eps h_\eps}}{\log \frac1\eps}=
\lim_{\eps\to0} \alpha_\eps=
\lim_{\eps\to0} h_\eps
=0\,.
\end{equation}

For pairs in the set of admissible configurations $\calA^*_\eps$ we shall use the following notion of convergence.
\begin{definition}\label{defconvergence}
We say that 
$(\mu_\eps,\beta_\eps)\in \mathcal{M}_{\eps\calB}^1(\Omega)\times L^1(\Omega;\R^{3\times3})$ converges to $(\mu,\eta,Q)
\in\mathcal M^1_{\calB}(\Omega)\times L^1_\loc(\Omega;\R^{3\times 3}) \times \SO(3)
$ in finite kinematics with $p$ growth if  
{$\frac1\eps\mu_\eps$ locally weak-* converges to $\mu$, in the sense that}
\begin{equation}\label{eqweakconvmuepsmu}
\frac1\eps\mu_\eps\weakstarto\mu         \hskip5mm
{\text{ in $\Omega'$, for all $\Omega'\subset\subset\Omega$}}
\end{equation}
and there are $Q_\eps\in \SO(3)$ such that $Q_\eps\to Q$ and 
\begin{equation}\label{eqetaepstheorem}
 \frac{Q_\eps^T\beta_\eps-\Id}{\eps(\ln\frac1\eps)^{1/2}}
 \weakto \conteta \text{ weakly in } L^{q}_\loc(\Omega;\R^{3\times 3})
 \end{equation}
 for {$q:=\frac32\wedge p$}.

 We say that 
$(\mu_\eps,\beta_\eps)\in \mathcal{M}_{\eps\calB}^1(\Omega)\times L^1(\Omega;\R^{3\times3})$ converges to $(\mu,\eta)
\in\mathcal M^1_\calB(\Omega)\times L^1_\loc(\Omega;\R^{3\times 3}) $ 
in infinitesimal kinematics with $p$ growth if  
\eqref{eqweakconvmuepsmu} holds and 
\begin{equation}\label{eqetaepstheoremlin}
 \frac{\beta_\eps}{\eps(\ln\frac1\eps)^{1/2}}
 \weakto \conteta \text{ weakly in } L^{q}_\loc(\Omega;\R^{3\times 3}).
 \end{equation}
 \end{definition}

{The local weak-$*$ convergence in \eqref{eqweakconvmuepsmu} can be equivalently defined testing with elements of $C^0_c(\Omega)$.}

In what follows, for $b\in \calB$ and $t\in S^2$, the function $\psi_\C^{\rm rel}(b,t)$ denotes the $\calH^1$-elliptic envelope of $\psi_\C(b,t)$  and it is given by
\begin{equation}\label{psi-rel}
 \begin{split}
  \psiC^{\rm rel}(b,t):=\inf\{&\int_\gamma\psi_\C(\theta(x),\tau(x))d\calH^1(x)\colon\ \nu=\theta\otimes\tau\calH^1\LL\gamma\in\calM_{\calB}^1(B_{1/2}),\\
  &\supp(\nu-b\otimes t\calH^1\LL(\R t\cap B_{1/2}))\subset\subset B_{1/2}
  \}.
 \end{split}
\end{equation}
In \cite{ContiGarroniMassaccesi2015} it is proven that $\psi_\C^{\rm rel}$ provides the energy density of the relaxation of the line tension energy given in \eqref{eq-line-tension}.
{By a change of variables, one easily sees that the same holds for the function $\hat\psi(b,t):=\psi(b,Qt)$, for any $Q\in\SO(3)$.}

The main result of the paper is then the following compactness and $\Gamma$-convergence statement {in the subcritical regime $p<2$}.

\begin{theorem}\label{thm:Gamma-limit}
Let $\Omega\subset\R^3$ be a bounded Lipschitz set and assume that $(h_\eps,\alpha_\eps)$ obeys \eqref{eqdefheps}.
Assume also that $W$ obeys \HWFinite\ for some $p\in(1,2)$.
Then the functionals
\begin{equation}
F_\eps^\subcr[\mu,\beta]:=
\begin{cases}
  F_\eps[\beta], & \text{ if }
  (\mu,\beta)\in \calA^*_\eps \text{ and $\mu$ is  $(h_\eps,\alpha_\eps)$-dilute},\\
  \infty ,& \text{ otherwise,}
 \end{cases}
\end{equation}
$\Gamma$-converge, with respect to the convergence in finite kinematics with $p$ growth in the sense of Definition~\ref{defconvergence}, to
\begin{equation}\label{eqgammalimtheorem}
\Ffinite[\mu,\eta,Q]:=\int_\Omega \frac12 \C_Q \eta\cdot \eta dx + \int_\gamma \psi_\C^\rel (b,Qt) d\calH^1,
\end{equation}
if  $(\mu,\eta,Q)\in \mathcal M^1_{\calB}(\Omega)\times L^2(\Omega;\R^{3\times 3}) \times \SO(3)$, 
 $\mu=b\otimes t\calH^1\LL\gamma$ and $\curl\eta=0$, and $\infty$ otherwise, 
 $\C_Q$ as in \eqref{eq-C-tensor-nonlin} and \eqref{eqdefCQ}, $\psi_\C^\rel$  as in \eqref{psi-rel}.

 Further, if $\Omega$ is connected then any 
sequence with $F_\eps^\subcr[\mu_\eps,\beta_\eps]$ bounded has a subsequence that converges in the same topology.

If instead $W$ obeys \HWLin, then the corresponding assertions hold 
with respect to convergence in infinitesimal kinematics with $p$ growth, with \eqref{eqgammalimtheorem} replaced by
\begin{equation}\label{eqgammalimlim}
\Flin[\mu,\eta]:=\int_\Omega \frac12 \C \eta\cdot \eta dx + \int_\gamma \psi_\C^\rel (b,t) d\calH^1
\end{equation}
if  $(\mu,\eta)\in \mathcal M^1_{\calB}(\Omega)\times L^2(\Omega;\R^{3\times 3})$, 
 $\mu=b\otimes t\calH^1\LL\gamma$ and $\curl\eta=0$, and $\infty$ otherwise, with $\C$ as in \eqref{eq-C-tensor-lin}.
\end{theorem}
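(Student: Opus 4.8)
The plan is to prove the result as a standard $\Gamma$-convergence-plus-compactness statement, treating the finite-kinematics case in detail and indicating how the linear case follows by the simpler analogue of the same arguments. Throughout, the key mechanism is the \emph{separation of scales}: the relaxation of the line-tension energy happens at the one-dimensional level and only involves $\psiC$ and $\psiC^\rel$, whereas the concentration of the three-dimensional elastic energy onto $\gamma$ is governed by the cell-problem analysis of Lemma~\ref{lemmacellproblemlb}, in which the linearized tensor $\C=D^2W(\Id)$ emerges automatically from \eqref{eqlowerboundWfinite} and \eqref{eqWQIdACQ}. I would organize the proof into three parts: (a) compactness, (b) the $\liminf$ inequality, (c) the $\limsup$ inequality.

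\emph{Compactness.} Given a sequence with $F_\eps^\subcr[\mu_\eps,\beta_\eps]$ bounded, first extract the rotation: since $W$ is frame-indifferent and behaves like $\mixedgrowth(\dist(\cdot,\SO(3)))$, a rigidity estimate (geometric rigidity in the mixed-growth form, cf.\ Remark~\ref{rem-phip}$(ii)$, splitting $\beta_\eps-R_\eps$ into its ``small'' and ``large'' parts) produces constant rotations $Q_\eps$ with $\|Q_\eps^T\beta_\eps-\Id\|$ controlled in the mixed norm by $(\eps^2\ln\frac1\eps\,F_\eps)^{1/2}$ up to the $L^p$-part; connectedness of $\Omega$ is used to make $Q_\eps$ global. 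Passing to a subsequence $Q_\eps\to Q$, the rescaled fields $\conteta_\eps:=(Q_\eps^T\beta_\eps-\Id)/(\eps(\ln\frac1\eps)^{1/2})$ are bounded in $L^q_\loc$ with $q=\frac32\wedge p$, hence have a weak limit $\conteta$, which is curl-free because $\curl\beta_\eps=\mu_\eps$ and $|\mu_\eps|/(\eps(\ln\frac1\eps)^{1/2})\to0$ in the relevant sense. The measures $\frac1\eps\mu_\eps$ have mass bounded by a constant times $h_\eps\,F_\eps+\ldots$ — more precisely one uses the lower bound on $F_\eps$ in terms of the line-tension energy, see below — and by the diluteness of $\gamma_\eps$ together with \cite{ContiGarroniMassaccesi2015} one gets local weak-$*$ precompactness of $\frac1\eps\mu_\eps$ with limit $\mu\in\calM^1_\calB(\Omega)$; the lower bound $\psiC\ge \frac1c|b|^2$ and the quantization of $\theta_\eps$ in $\eps\calB$ are what keeps $\mu$ in the lattice class.

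\emph{Lower bound.} Fix a converging sequence. The crucial localized estimate is: for a single straight segment of Burgers vector $\eps b$, inside a thin cylinder around it of radius $\sim\alpha_\eps h_\eps$ and minus an $\eps$-core, the rescaled elastic energy is bounded below by (length)$\cdot\psiC(b,t)$ up to errors that vanish under \eqref{eqdefheps}; this is exactly where Lemma~\ref{lemmacellproblemlb}\ref{lemmacellproblemlbA} (the improved cell problem $\infcyl\to\psiC$) and the expansion \eqref{eqWQIdACQ} are invoked, converting $W(Q_\eps(\Id+A))$ into $\frac12\C_{Q_\eps}A\cdot A$ plus a negligible remainder. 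Summing over the finitely many segments of the dilute curve $\gamma_\eps$, using their pairwise separation $\ge\alpha_\eps h_\eps$ to make the cylinders disjoint and the angle condition at junctions to control the vertex contributions, yields $\liminf F_\eps\ge \int_{\gamma}\psiC(b,t)\,d\calH^1$ along a further subsequence in which $\gamma_\eps\to\gamma$; lower semicontinuity then upgrades $\psiC$ to $\psiC^\rel$ via the $\calH^1$-elliptic relaxation result of \cite{ContiGarroniMassaccesi2015}. For the elastic bulk term one localizes away from the cylinders: there $\beta_\eps$ is curl-free, $W(\beta_\eps)\ge\frac12\C_{Q_\eps}(\conteta_\eps\eps(\ln\frac1\eps)^{1/2})\cdot(\ldots)-\text{remainder}$, so after dividing by $\eps^2\ln\frac1\eps$ one gets $\frac12\C_{Q_\eps}\conteta_\eps\cdot\conteta_\eps$, and weak lower semicontinuity of this convex quadratic form (plus $\C_{Q_\eps}\to\C_Q$) gives $\int_\Omega\frac12\C_Q\conteta\cdot\conteta$. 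Adding the two contributions (they live on asymptotically disjoint regions) gives the $\liminf$ inequality, and in particular $\conteta\in L^2$.

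\emph{Upper bound.} By density it suffices to construct recovery sequences for $\mu$ a finite $(h,\alpha)$-dilute polyhedral with $\psiC^\rel$ replaced by $\psiC$ on an approximating dilute curve (the relaxation is recovered by a diagonal argument since $\psiC^\rel$ is the $\calH^1$-elliptic envelope), and for $\conteta$ smooth and compactly supported curl-free. Near each segment insert the exact straight-dislocation profile $\eps\beta_{b,t}$ of Lemma~\ref{lemmabetabtkernel1}, truncated at the core scale $\eps$ and glued to $\Id$ at the diluteness scale using a cut-off that costs lower-order energy (Lemma~\ref{lemmacellproblemlb}\ref{lemmacellproblemlbbcont}-\ref{lemmacellproblemlbtcont} handle the matching of Burgers vectors and directions at vertices); away from the segments set $\beta_\eps:=Q_\eps(\Id+\eps(\ln\frac1\eps)^{1/2}\conteta)$. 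Then $F_\eps[\beta_\eps]\to\int_\Omega\frac12\C_Q\conteta\cdot\conteta+\int_\gamma\psiC(b,Qt)$ by \eqref{eqpsi0cylinder} and a pointwise/dominated convergence argument for the Taylor expansion — this is the ``upper bound by a pointwise limit'' referred to in the introduction, so no relaxation is needed at this stage. The linear case is identical but easier: no rotations, $\C=D^2W(0)$, and the kinematic discussion of Section~\ref{secdislofinite} is vacuous.

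\emph{Main obstacle.} The hardest part is the lower bound near the dislocation lines: one must localize the genuinely nonlinear, only-mixed-growth energy $W$ onto thin tubes around a curve whose diluteness degenerates ($h_\eps,\alpha_\eps\to0$), control the interaction between nearby segments and the behaviour at the polyhedral vertices, and simultaneously perform the nonlinear-to-linear reduction so that the constant that appears is exactly $\psiC=\psi_{D^2W(\Id)}$ and not something larger — all while the error terms in \eqref{eq:liminf_linear_problem} and \eqref{eqWQIdACQ} must be shown to vanish after division by $\eps^2\ln\frac1\eps$ under precisely the scaling \eqref{eqdefheps}. Making the ``asymptotically disjoint regions'' decomposition rigorous, so that the bulk term and the line term do not double-count energy, is the technical heart of the argument.
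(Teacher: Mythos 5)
Your overall architecture matches the paper's (compactness, $\liminf$, $\limsup$), and many of the tools you name are the right ones. However there is a genuine gap in the lower bound near the dislocation line, and the rigidity estimate you invoke for compactness is not the one that applies.

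For the lower bound near the cores, you propose to Taylor-expand $W$ inside the tubes via \eqref{eqWQIdACQ} and then invoke the linear cell-problem limit of Lemma~\ref{lemmacellproblemlb}\ref{lemmacellproblemlbA}. This fails as stated: the error in \eqref{eqWQIdACQ} is $\omega(|A|)|A|^2$, which is only negligible when $|A|$ is uniformly small, while inside the annular tube the strain $\beta_\eps-Q_\eps$ is \emph{not} pointwise small -- the energy bound only gives the integral control $\int\mixedgrowth(|\beta_\eps-Q_\eps|)\le c\,\eps^2 h\log\frac{R}{r}$, consistent with $O(1)$ strains near the inner boundary. Restricting to a good set where $|A|\le\delta$ destroys the curl constraint, so you no longer have an admissible competitor for the cell problem $\infcyl$ in \eqref{eqdefabthRr} and cannot recover the sharp constant $\psiC$ from the quadratic form on the good set alone. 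The paper resolves this with Proposition~\ref{propcellproblem} (applied via Corollary~\ref{corcellproblem}): by a contradiction/blow-up argument, one selects a dyadic subtube with below-average energy, rescales it to a fixed hollow cylinder, splits the rescaled field into small and large parts as in Remark~\ref{rem-phip}$(ii)$, shows that the normalized small part converges weakly in $L^2$ to a curl-free perturbation of the profile $\beta_{b,t}$, and concludes via the variational characterization of $\psiC$; the linearization happens \emph{after} this compactification, on a set of asymptotically full measure, while the large part vanishes weakly and contributes nothing. Your sketch has no analogue of this step, and without it the precise constant $\psiC$ cannot be extracted from the nonlinear mixed-growth energy.

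For compactness you appeal to ``geometric rigidity in the mixed-growth form'', but $\beta_\eps$ satisfies $\curl\beta_\eps=\mu_\eps\neq0$, so the curl-free rigidity of Lemma~\ref{lemmarigidcylinderhole} does not apply on $\Omega'$. The paper needs the incompatible-field rigidity of Proposition~\ref{proprigiditycras} (via the Bourgain--Brezis estimate), which adds $c\,|\curl\beta|(\Omega')$ to the right-hand side, together with the mass bound $|\mu_\eps|(\Omega')\le cE\eps$ furnished by the coercivity Lemma~\ref{lemmacompactnesscellpb}. Your upper-bound construction (explicit straight-profile plus cut-offs on each segment) is a genuine alternative to the paper's, which instead inserts the global solution $\xi$ of the linear system and invokes Proposition~\ref{propupperboundcontunrelax2} and the transversality Lemma~\ref{lemmatransversal}; both can in principle work, but if you take your route you must still verify that the cut-off regions, including those at the polyhedral vertices, contribute $o(\eps^2\log\frac1\eps)$ to the energy, an estimate you do not address.
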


We remark that there are several equivalent ways of treating the sequence of rotations $Q_\eps\to Q$. One simple observation is that  by Lemma~\ref{lemmarotatebetacurl}\ref{lemmarotatebetacurlpsic}, $\psi_\C^\rel (b,Qt)=\psi_{\C_Q}^\rel (Q^Tb,t)$, 
which can be inserted in the second term of 
\eqref{eqgammalimtheorem}.
One can also replace $\C_Q$ by $\C$ in the first term, if the fields are redefined accordingly; in order to keep 
the differential constraints one should also rotate the domain of integration. Specifically, given a sequence 
$(\mu_\eps,\beta_\eps)
\in \calA^*_\eps(\Omega)$ (defined as in \eqref{Astar}) that converges as above, 
 for each $\eps$ one considers the pair $(\hat\mu_\eps,\hat\beta_\eps)\in \calA^*_\eps(Q_\eps\Omega)$ defined by
\begin{equation}
 \hat \beta_\eps(y):=\beta_\eps(Q_\eps^T y) Q_\eps^T\,,\hskip1cm
 \hat\mu_\eps (A):=\mu_\eps(Q_\eps^TA)Q_\eps^T.
\end{equation}
Then $F_\eps^\subcr[\hat\mu_\eps,\hat\beta_\eps,Q_\eps\Omega]=
F_\eps^\subcr[\mu_\eps,\beta_\eps,\Omega]$,
$\hat \mu_\eps\weakstarto \hat\mu:=b\otimes \hat t \calH^ 1\LL Q\gamma$, with $\hat t(y):=Q t(Q^Ty)$,
and setting 
$\hat\eta(y):=Q\eta(Q^Ty)Q^T$ condition \eqref{eqetaepstheorem} becomes
\begin{equation}
 \frac{\hat\beta_\eps-\Id}{\eps(\log\frac1\eps)^{1/2}} \weakto \hat \eta \text{ weakly in } L^q_\loc(Q\Omega;\R^{3\times 3})
\end{equation}
(the fact that the domain changes along the sequence is not a problem for $L^q_\loc$ convergence since $Q_\eps\to Q$). The limiting functional in 
\eqref{eqgammalimtheorem} then takes the form
\begin{equation}
\int_{Q\Omega} \frac12 \C \hat\eta\cdot \hat\eta dx + \int_{\hat \gamma} \psi_\C^\rel (b,\hat t) d\calH^1,
\end{equation}
with $\hat\mu=b\otimes \hat t \calH^1\LL\hat\gamma$ and $\hat t$ the unit tangent to $\hat\gamma:=Q\gamma$.
For simplicity we stick to the formulation in which the integration domain $\Omega$ is fixed.

Related results have been proven before in  \cite{ContiGarroniOrtiz2015,garroni2020nonlinear}. We present here a more general argument that permits to prove Theorem \ref{thm:Gamma-limit} in a unified way for finite and infinitesimal kinematics. Our argument also provides the $\Gamma$-convergence result for different types of core regularizations, which are needed in the case $p=2$ and are discussed in Subsection \ref{secdiffregular}. 

In Section~\ref{seccellpb} we give a unified treatment for the cell problem formula. Building on this, compactness and lower bound are then proven in Section~\ref{seccomp}. 
In particular, we shall  introduce in \eqref{eqdefelbeps} an auxiliary functional, for which we will prove the lower bound result. 
The auxiliary functional
is chosen so that, after rescaling, it is below $F_\eps^\subcr $, with corresponding bounds holding for other regularizations as discussed below,
and then all results follow at once.
Analogously, in Section \ref{secupperbound}, the upper bound will be proven for a second auxiliary functional  defined in \eqref{eqdefeubeps}, which after rescaling is
larger than $F_\eps^\subcr$, up to a small error term which can be controlled. Moreover we show that the recovery sequence needed for this upper bound has a good decay near the dislocation line, so that different types of core regularization do not change its energy asymptotically.

In Section \ref{sec-mainproofs} we will collect the proofs of the $\Gamma$-convergence results (Theorem \ref{thm:Gamma-limit} and Theorem \ref{thm:Gamma-limit2} below) that are obtained as straightforward consequences of the results proved in Section~\ref{seccellpb}, Section~\ref{seccomp}  and Section \ref{secupperbound}.
All proofs are given explicitly in the case \HWFinite. The case \HWLin\ is very similar therefore we only point out a few relevant differences.

\subsection{Extension to different core regularizations}
\label{secdiffregular}

In the case of quadratic growth (in the sense $p=2$) one needs a different regularization and possibly a different set of admissible configurations.  This requires different convergence properties and compactness results. 
The $\Gamma$-convergence result instead can be obtained as a consequence of the 
arguments developed to prove Theorem~\ref{thm:Gamma-limit}.

In the literature one considers configurations $(\mu, \beta)\in\calM^1(\Omega)\times L^1(\Omega;\R^{3\times3})$ with $\curl \beta= \mu$ and the associated energy 
\begin{equation}\label{eq-energy-core}
 \int_{\Omega\setminus (\supp\mu)_\eps} W(\beta) dx,
\end{equation}
where  $(\supp\mu)_\eps:=\{x\in\Omega\colon\dist(x,\supp\mu)<\eps\}$.
This is known as  the core-region approach.
Alternatively, one can replace the  condition $\curl\beta=\mu$
with a different condition
{that only involves the behavior of $\beta$  away from the singularity},
that we will call $\rho$-compatibility of the pair $(\mu,\beta)$, see Definition~\ref{defbetamucompatible} below. Since the energy \eqref{eq-energy-core} does not depend on the value of $\beta$ inside the core region, one can show that the asymptotics of the rescaled energy does not depend on the set of admissible configurations chosen.
Nevertheless for the compactness result an extension argument is needed, which goes beyond the scope of the present work.
A third option is a regularization via mollification of the measure, which smears out the singularity on a scale $\eps$.

\begin{definition}\label{defbetamucompatible}
Let $\Omega\subset\R^3$ be a bounded Lipschitz set, $\rho>0$.
A pair $(\mu,\beta)\in  \calM^1(\Omega)\times L^1(\Omega\setminus (\supp\mu)_\rho;\R^{3\times 3})$ is 
$\rho$-compatible if
there are an extension $\tilde\mu\in\mathcal M^1(\R^3)$ of $\mu$ and $\beta_0\in L^{1}(\Omega;\R^{3\times3})$ such that  $\curl\beta_0=0$ in $\Omega$ and $\beta=\beta^{\tilde\mu}+\beta_0$ in $\Omega\setminus (\supp\tilde\mu)_\rho$, where $\beta^{\tilde\mu}$ is the solution obtained from $\tilde\mu$ via Theorem~\ref{theoremsolr3}.
\end{definition}
{We recall that 
by \cite[Lemma 2.3]{ContiGarroniMassaccesi2015}) 
for any 
$\mu\in\calM_{\eps\calB}^1(\Omega)$
there exists an extension $\tilde\mu\in\calM_{\eps\calB}^1(\R^3)$ such that $\mu=\tilde\mu\LL\Omega$ and $|\tilde\mu|(\R^3)\le c(\Omega)|\mu|(\Omega)$.}

We define, for $\eps>0$ and $\rho>0$,
\begin{equation}\label{eqdefacoreepss}
 \calA^\core_{\rho,\eps}:=\{(\mu,\beta)\in 
 \calM_{\eps\calB}^1(\Omega)\times   L^1(\Omega;\R^{3\times 3})
 :
 (\mu,\beta) \text{ is $\rho$-compatible}\},
\end{equation}
and
\begin{equation}\label{eq:Amoll}
 \begin{split}
  \calA^\moll_\eps:=\{(\mu,\beta)\in 
  \calM_{\eps\calB}^1(\Omega)\times   &L^1(\Omega;\R^{3\times 3})
  :
  \curl\beta=\varphi_\eps*\tilde\mu,\\
  &\text{ for some extension $\tilde\mu\in\calM_{\eps\calB}^1(\R^3)$ of $\mu$}
  \}.
 \end{split}
\end{equation}
Here $\varphi_\eps(x):=\eps^{-3}\varphi(x/\eps)$ is a fixed mollifier at scale $\eps$; as usual $\varphi\in C^\infty_c(B_1;[0,\infty))$ and $\|\varphi\|_{L^1}=1$.

\begin{lemma}\label{lem:comparison-admissible-pairs}
 \begin{enumerate}
  \item\label{lem:comparison-admissible-pairscoremon} 
  $\calA^{\core}_{\rho,\eps}\subseteq
\calA^{\core}_{\rho',\eps}$ 
   for every $\rho'\ge\rho$;
  \item\label{lem:comparison-admissible-pairsmollcore} $\calA_\eps^\moll\subseteq$ for every $\rho\ge\eps$;
  \item\label{lem:comparison-admissible-pairsstarcore} 
 $\calA^*_\eps\subseteq\calA^{\core}_{\rho,\eps}$ for every $\rho>0$. 
  \end{enumerate} 
\end{lemma}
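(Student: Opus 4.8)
The plan is to prove the three inclusions separately, each by unwinding the relevant definitions and checking that the defining properties of the larger class are inherited.

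For \ref{lem:comparison-admissible-pairscoremon}, suppose $(\mu,\beta)\in\calA^\core_{\rho,\eps}$, so there are an extension $\tilde\mu\in\calM^1(\R^3)$ and a curl-free $\beta_0\in L^1(\Omega;\R^{3\times 3})$ with $\beta=\beta^{\tilde\mu}+\beta_0$ on $\Omega\setminus(\supp\tilde\mu)_\rho$. If $\rho'\ge\rho$ then $(\supp\tilde\mu)_{\rho'}\supseteq(\supp\tilde\mu)_\rho$, hence $\Omega\setminus(\supp\tilde\mu)_{\rho'}\subseteq\Omega\setminus(\supp\tilde\mu)_\rho$, and the same decomposition $\beta=\beta^{\tilde\mu}+\beta_0$ holds on the smaller set. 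The only mild point is that the statement of $\rho'$-compatibility asks for a field defined on $\Omega\setminus(\supp\mu)_{\rho'}$: one simply restricts $\beta$, using $(\supp\mu)_{\rho'}\subseteq(\supp\tilde\mu)_{\rho'}$ so that $\beta^{\tilde\mu}$ is smooth and finite there. Thus $(\mu,\beta|_{\Omega\setminus(\supp\mu)_{\rho'}})\in\calA^\core_{\rho',\eps}$.

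For \ref{lem:comparison-admissible-pairsmollcore} (which as printed reads ``$\calA^\moll_\eps\subseteq$'', clearly a typo for ``$\calA^\moll_\eps\subseteq\calA^\core_{\rho,\eps}$'' for every $\rho\ge\eps$), take $(\mu,\beta)\in\calA^\moll_\eps$, with $\curl\beta=\varphi_\eps*\tilde\mu$ for some extension $\tilde\mu\in\calM^1_{\eps\calB}(\R^3)$. Let $\beta^{\tilde\mu}$ be the strain associated with $\tilde\mu$ by Theorem~\ref{theoremsolr3}. Away from $(\supp\tilde\mu)_\eps$ the mollified measure $\varphi_\eps*\tilde\mu$ vanishes, since $\varphi$ is supported in $B_1$; there $\curl\beta=0$. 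Set $\beta_0:=\beta-\beta^{\tilde\mu}$. On $\R^3\setminus(\supp\tilde\mu)_\eps$ we have $\curl\beta^{\tilde\mu}=\tilde\mu=0$ as well (distributionally), so $\curl\beta_0=0$ on that set, and hence on $\Omega\setminus(\supp\tilde\mu)_\rho\subseteq\Omega\setminus(\supp\tilde\mu)_\eps$ since $\rho\ge\eps$. The subtlety is that $\beta_0$ as defined is only curl-free on $\Omega\setminus(\supp\tilde\mu)_\rho$, whereas the definition of $\rho$-compatibility requires a $\beta_0\in L^1(\Omega;\R^{3\times 3})$ that is curl-free on all of $\Omega$; to fix this one extends $\beta_0$ across the core, e.g.\ invoking that on the open set $\Omega\setminus(\supp\tilde\mu)_\rho$ the distributional curl vanishes and using a local potential together with a cutoff, or more simply noting that since the $\rho$-compatibility condition only constrains $\beta$ on $\Omega\setminus(\supp\mu)_\rho$ one is free to redefine $\beta_0$ inside the core arbitrarily (say as a fixed curl-free field matching on the boundary). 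This bookkeeping inside the core is the one place requiring a little care, but it is exactly the point emphasized in the text that the energy and the compatibility notion ignore the core.

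For \ref{lem:comparison-admissible-pairsstarcore}, let $(\mu,\beta)\in\calA^*_\eps$, so $\curl\beta=\mu$ in $\Omega$. By the cited \cite[Lemma 2.3]{ContiGarroniMassaccesi2015} pick an extension $\tilde\mu\in\calM^1_{\eps\calB}(\R^3)$ with $\tilde\mu\LL\Omega=\mu$, and let $\beta^{\tilde\mu}$ be its associated strain from Theorem~\ref{theoremsolr3}. Then $\curl(\beta-\beta^{\tilde\mu})=\mu-\tilde\mu=0$ in $\Omega$. So $\beta_0:=\beta-\beta^{\tilde\mu}$ is curl-free in $\Omega$ (here there is no core issue: the identity holds on all of $\Omega$), lies in $L^1_\loc$, and one checks it lies in $L^1(\Omega;\R^{3\times 3})$ because $\beta\in L^1(\Omega)$ and $\beta^{\tilde\mu}\in L^{3/2}(\R^3)\subseteq L^1(\Omega)$ by Theorem~\ref{theoremsolr3}\ref{lemmasolr3exist} on the bounded set $\Omega$. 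Hence $\beta=\beta^{\tilde\mu}+\beta_0$ everywhere on $\Omega$, in particular on $\Omega\setminus(\supp\mu)_\rho$, so $(\mu,\beta)$ is $\rho$-compatible for every $\rho>0$. The main (minor) obstacle throughout is purely the treatment of the core region in \ref{lem:comparison-admissible-pairsmollcore}, namely producing a globally curl-free $\beta_0$ on $\Omega$ from one that is only curl-free outside the core; everything else is a direct consequence of the definitions together with the integrability and extension statements already available.
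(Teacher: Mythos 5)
Parts \ref{lem:comparison-admissible-pairscoremon} and \ref{lem:comparison-admissible-pairsstarcore} of your argument are correct and essentially identical to the paper's: (i) is by monotonicity of the core region, and (iii) is the direct computation $\curl(\beta-\beta^{\tilde\mu})=\mu-\tilde\mu\LL\Omega=0$ together with the $L^{3/2}\subset L^1_{\rm loc}$ integrability of $\beta^{\tilde\mu}$.

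For part \ref{lem:comparison-admissible-pairsmollcore}, however, you have correctly identified the difficulty but not resolved it, and the fixes you gesture at do not work as stated. Setting $\beta_0:=\beta-\beta^{\tilde\mu}$ gives $\curl\beta_0=\varphi_\eps*\tilde\mu-\tilde\mu\ne0$ in $\Omega$, and this field is only curl-free on the tubular complement $\Omega\setminus(\supp\tilde\mu)_\eps$. The obstruction to your first fix (``local potential together with a cutoff'') is that this tubular complement is not simply connected: a curl-free field there need not be exact, and indeed $\beta^{\tilde\mu}$ itself is curl-free but has nonzero circulations equal to the Burgers vectors, so ``curl-free $\Rightarrow$ gradient of a local potential'' does not extend to a global potential without further argument. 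Your second fix (``redefine $\beta_0$ inside the core arbitrarily, matching on the boundary'') likewise fails in general: a distributionally curl-free $L^1$ field on $\Omega$ that agrees with a given $\beta_0$ outside the core exists only if the given $\beta_0$ has vanishing circulation around every loop surrounding the core, which is exactly what must be proved, not assumed. The paper supplies precisely this missing input: it shows that $\beta_1:=\beta^{\tilde\mu*\varphi_\eps}-\beta^{\tilde\mu}$ is globally exact on $\R^3\setminus(\supp\tilde\mu)_{\eps'}$ by a circulation argument (using $\beta^{\tilde\mu*\varphi_\eps}=\beta^{\tilde\mu}*\varphi_\eps$ to show both terms have identical circulations), produces a global potential $u_1$ with $\beta_1=Du_1$, cuts it off to $u_0\in W^{1,\infty}_\loc(\R^3;\R^3)$, and only then defines $\beta_0:=Du_0+\beta-\beta^{\tilde\mu*\varphi_\eps}$, which is manifestly curl-free on all of $\Omega$ and reduces to $\beta-\beta^{\tilde\mu}$ outside the core. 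This circulation/exactness step is the heart of the lemma and is absent from your proposal.
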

\begin{proof}
\ref{lem:comparison-admissible-pairscoremon} is immediate from the definition.

 \ref{lem:comparison-admissible-pairsmollcore}:
 Let $(\mu,\beta)\in \calA_\eps^\moll$, and select an extension $\tilde\mu$ as in the definition of 
 $\calA_\eps^\moll$.
 We define $\beta_1:=\beta^{\tilde\mu\ast\varphi_\eps}-
 \beta^{\tilde\mu}$, 
 and fix $\eps'\in(0,\eps)$ such that $\supp\varphi_\eps\subset B_{\eps'}$. By the integral representation in \eqref{eqbetaNmu} we have that $\beta_1\in C^\infty(\R^3\setminus (\supp\tilde\mu)_{\eps'};\R^{3\times 3})$. We show that $\beta_1$ is exact, in the sense that the circulation on every closed curve $\gamma\subset
 \R^3\setminus (\supp\tilde\mu)_{\eps'}$ vanishes,
 \begin{equation}\label{eqbeta1circ}
  \int_\gamma \beta_1  d\tau =0.
 \end{equation}
To see this, we observe that (again by  \eqref{eqbetaNmu}) $\beta^{\tilde\mu\ast\varphi_\eps}=\beta^{\tilde\mu}\ast\varphi_\eps$, which implies
 \begin{equation}
  \int_\gamma \beta^{\tilde\mu\ast\varphi_\eps} d\tau =
  \int_\gamma \int_{B_{\eps'}} 
  \beta^{\tilde\mu}(\cdot-z)\varphi_\eps(z) dz   d\tau=
  \int_{B_{\eps'}} \varphi_\eps(z)\left(
  \int_{\gamma-z} \beta^{\tilde\mu}  d\tau\right) dz. 
 \end{equation}
Since $\curl\beta^{\tilde\mu}=0$ in $\R^3\setminus \supp\tilde\mu$, 
and $\gamma-z\subset \R^3\setminus \supp\tilde\mu$ for all $z\in B_{\eps'}$,
the last integrand does not depend on $z$. Therefore
 \begin{equation}
  \int_\gamma \beta^{\tilde\mu\ast\varphi_\eps}  d\tau =
  \int_{\gamma} \beta^{\tilde\mu} d\tau 
 \end{equation}
which proves \eqref{eqbeta1circ}. In turn, this implies that there is $u_1\in C^\infty(\R^3\setminus ({\supp\tilde\mu})_{\eps'};\R^3)$ such that 
$\beta_1=Du_1$ in this set.
We now select $\theta\in C^\infty_c([0,\eps))$ such that $\theta=1$ on $[0,\eps']$, and  define $u_0\in W^{1,\infty}_\loc(\R^3;\R^3)$ by
\begin{equation}
 u_0(x):=u_1(x) (1-\theta(\dist(x,\supp \tilde\mu))).
\end{equation}
At this point we define $\beta_0:=Du_0+\beta-\beta^{\tilde\mu\ast\varphi_\eps}
{\in L^1(\Omega;\R^{3\times 3})}$.
From $\curl\beta=\varphi_\eps\ast\tilde\mu$ in $\Omega$
we obtain $\curl\beta_0=0$ in $\Omega$; from $Du_0=Du_1=
\beta^{\tilde\mu\ast\varphi_\eps}-
 \beta^{\tilde\mu}$ in  $\Omega\setminus (\supp\tilde\mu)_\eps$ we obtain 
$\beta=\beta^{{\tilde\mu}}+\beta_0$ in $\Omega\setminus (\supp\tilde\mu)_\eps$.
Hence $\beta\in \calA^{\core}_{\eps,\eps}\subseteq \calA^{\core}_{\rho,\eps}$ for all $\rho\ge \eps$.

 \ref{lem:comparison-admissible-pairsstarcore}:
Let $(\mu,\beta)\in \calA^*_\eps$, 
{and let $\tilde\mu\in\calM_{\eps\calB}^1(\R^3)$ be an extension of $\mu$.}
Setting $\beta_0:=\beta-\beta^{\tilde\mu}$, one sees that
$\curl\beta_0=\curl(\beta-\beta^{\tilde\mu})=0$ in $\Omega$, which concludes the proof.
 \end{proof}

In the set of admissible configurations $\calA^\core_{\rho_\eps,\eps}$ (with $\rho_\e\to0$) we introduce the following convergence, which is a variant of the one in Definition~\ref{defconvergence}. The key difference is that now the value of $\beta_\eps$ in the core region $(\supp\mu_\eps)_{\rho_\eps}$ is ignored.
\begin{definition}\label{defconvergence2}
We say that 
$(\mu_\eps,\beta_\eps)\in \mathcal{M}_{\eps\calB}^1(\Omega)\times L^1(\Omega;\R^{3\times3})$ converges to $(\mu,\eta,Q)
\in\mathcal M^1_\calB(\Omega)\times L^1_\loc(\Omega;\R^{3\times 3}) \times \SO(3)
$ in finite kinematics with $p$ growth and radius $\rho_\eps$ if  $\rho_\eps\to0$,
\begin{equation}\label{eqweakconvmuepsmu2}
\frac1\eps\mu_\eps\weakstarto\mu         \hskip5mm
{\text{ in $\Omega'$, for all $\Omega'\subset\subset\Omega$,}}
\end{equation}
and there are $Q_\eps\in \SO(3)$ such that $Q_\eps\to Q$ and 
\begin{equation}\label{eqetaepstheorem2}
 \frac{Q_\eps^T\beta_\eps-\Id}{\eps(\ln\frac1\eps)^{1/2}}  \chi_{\Omega\setminus (\supp\mu_{\eps})_{\rho_{\eps}}}
 \weakto \conteta \text{ weakly in } L^{q}_\loc(\Omega;\R^{3\times 3})
 \end{equation}
 for {$q:=\frac32\wedge p$}.
 
 We say that 
$(\mu_\eps,\beta_\eps)\in \mathcal{M}_{\eps\calB}^1(\Omega)\times L^1(\Omega;\R^{3\times3})$ converges to $(\mu,\eta)
\in\mathcal M^1_\calB(\Omega)\times L^1_\loc(\Omega;\R^{3\times 3}) $ 
in infinitesimal kinematics with $p$ growth and radius $\rho_\eps$ if $\rho_\eps\to0$,
\eqref{eqweakconvmuepsmu2} holds, and 
\begin{equation}\label{eqetaepstheoremlin2}
 \frac{\beta_\eps}{\eps(\ln\frac1\eps)^{1/2}}  \chi_{\Omega\setminus (\supp\mu_{\eps})_{\rho_\eps}}
 \weakto \conteta \text{ weakly in } L^{q}_\loc(\Omega;\R^{3\times 3}).
 \end{equation}
 \end{definition}

We then have the following $\Gamma$-convergence result {which includes the critical case $p=2$}.

\begin{theorem}\label{thm:Gamma-limit2}
Let $\Omega\subset\R^3$ be a bounded Lipschitz set and assume that $(h_\eps,\alpha_\eps)$ obeys \eqref{eqdefheps}. Assume also that $W$ obeys \HWFinite \ for some $p\in(1,2]$.
Let $\rho_\eps\to0$ be
such that {$\rho_\eps\ge\eps$} and
\begin{equation}
 \lim_{\eps\to0} \frac{\ln \rho_\eps}{\ln\eps}=1.
\end{equation}
\begin{enumerate}
 \item 
The functionals
\begin{equation}
F_\eps^*[\mu,\beta]:=
\begin{cases}
  F_\eps[\beta,\Omega\setminus{(\supp\mu)_{\rho_\eps}}], & \text{ if  $(\mu,\beta)\in\mathcal A_\eps^*$,\
  $\mu$ is $(h_\eps,\alpha_\eps)$-dilute,}\\
  \infty, & \text{ otherwise,}
 \end{cases}
\end{equation}
$\Gamma$-converge to the functional $\Ffinite$ defined in \eqref{eqgammalimtheorem} with respect to the convergence in finite kinematics with $p$ growth and radius $\rho_\eps$  in Definition~\ref{defconvergence2}.
\item The same holds also
for the functionals
\begin{equation}
F_\eps^\core[\mu,\beta]:=
\begin{cases}
  F_\eps[\beta,\Omega\setminus{(\supp\mu)_{\rho_\eps}}], & \text{ if  $(\mu,\beta)\in \calA^\core_{{\rho_\eps,\eps}}$}\\
  &\text{ and 
  $\mu$ is $(h_\eps,\alpha_\eps)$-dilute},\\
  \infty, & \text{ otherwise}.
 \end{cases}
\end{equation}
\item\label{thm:Gamma-limit2moll} The same holds also for the functionals 
\begin{equation}
F_\eps^\moll[\mu,\beta]:=
\begin{cases}
  F_\eps[\beta,\Omega], & \text{ if  $(\mu,\beta)\in \calA^\moll_\eps$,\
  $\mu$ is $(h_\eps,\alpha_\eps)$-dilute,}\\
  \infty, & \text{ otherwise}
 \end{cases}
\end{equation}
with respect to the convergence of Definition~\ref{defconvergence}.
 \end{enumerate}
If  instead $W$ obeys \HWLin, then the corresponding assertions hold 
with respect to convergence in infinitesimal kinematics, with $\Ffinite$ replaced by $\Flin$ defined  in
\eqref{eqgammalimlim}.
\end{theorem}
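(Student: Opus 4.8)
The plan is to obtain Theorem~\ref{thm:Gamma-limit2} by combining the two auxiliary functionals $\Elb$ and $\Eub$ (introduced in \eqref{eqdefelbeps} and \eqref{eqdefeubeps}) with the comparison of admissible classes in Lemma~\ref{lem:comparison-admissible-pairs}, reusing the scheme of the proof of Theorem~\ref{thm:Gamma-limit}; the new content is only a bookkeeping check that each regularized energy is sandwiched between the two auxiliary functionals, together with one adaptation of the recovery sequence in the mollified case. I treat explicitly the case \HWFinite; the case \HWLin\ is handled identically after replacing $\Ffinite$ by $\Flin$, $\C_Q$ by $\C$, and dropping the sequence $Q_\eps$, exactly as for Theorem~\ref{thm:Gamma-limit}.

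For the $\Gamma$-liminf I would first observe that, for all three functionals $F_\eps^*$, $F_\eps^\core$, $F_\eps^\moll$, the energy depends on $\beta_\eps$ only outside the core $(\supp\mu_\eps)_{\rho_\eps}$: this is built into the definition of $F_\eps^*$ and $F_\eps^\core$, while for $F_\eps^\moll$ it follows from $\int_\Omega W(\beta_\eps)\,dx\ge\int_{\Omega\setminus(\supp\mu_\eps)_{\rho_\eps}}W(\beta_\eps)\,dx$ together with Lemma~\ref{lem:comparison-admissible-pairs}\ref{lem:comparison-admissible-pairsmollcore} (recall $\rho_\eps\ge\eps$), which shows that $(\mu_\eps,\beta_\eps)$ is $\rho_\eps$-compatible in the sense of Definition~\ref{defbetamucompatible}. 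Since $\Elb$ is constructed so that its rescaling lies below $F_\eps[\beta_\eps,\Omega\setminus(\supp\mu_\eps)_{\rho_\eps}]$ for every $\rho_\eps$-compatible pair, and since the convergence in Definition~\ref{defconvergence2} supplies precisely the weak $L^q_\loc$ convergence of the appropriately rescaled $(Q_\eps^T\beta_\eps-\Id)$, restricted to $\Omega\setminus(\supp\mu_\eps)_{\rho_\eps}$, that is the input for the lower bound of $\Elb$ proven in Section~\ref{seccomp} (which itself rests on the cell-problem analysis of Section~\ref{seccellpb}), the liminf inequality follows at once. For $F_\eps^\moll$ the assumed convergence is that of Definition~\ref{defconvergence}; since $|(\supp\mu_\eps)_{\rho_\eps}|\to0$ and the rescaled strains are bounded in $L^q_\loc$ with $q>1$, multiplication by the indicator of $\Omega\setminus(\supp\mu_\eps)_{\rho_\eps}$ does not change the weak limit, so the convergence of Definition~\ref{defconvergence2} holds as well and the same argument applies.

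For the $\Gamma$-limsup I would use the recovery sequence $(\mu_\eps,\beta_\eps)$ constructed in Section~\ref{secupperbound} for $\Eub$, which belongs to $\calA_\eps^*$ and, in the subcritical case, realizes $F_\eps[\beta_\eps,\Omega]\to\Ffinite$. By Lemma~\ref{lem:comparison-admissible-pairs}\ref{lem:comparison-admissible-pairsstarcore} this sequence is admissible for $F_\eps^\core$ as well, and for both $F_\eps^*$ and $F_\eps^\core$ the quantitative decay of $\beta_\eps$ near the dislocation line established in Section~\ref{secupperbound} shows that the contribution of any tube $(\supp\mu_\eps)_{\rho_\eps}$ with $\ln\rho_\eps/\ln\eps\to1$ to the rescaled energy is infinitesimal; hence $F_\eps[\beta_\eps,\Omega\setminus(\supp\mu_\eps)_{\rho_\eps}]\to\Ffinite$ even in the critical case $p=2$, where $F_\eps[\beta_\eps,\Omega]$ may be infinite, and the convergence of $(\mu_\eps,\beta_\eps)$ in the sense of Definition~\ref{defconvergence2} is immediate from its convergence in the sense of Definition~\ref{defconvergence}. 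For $F_\eps^\moll$ one must instead produce a sequence with $\curl\beta_\eps=\varphi_\eps*\tilde\mu_\eps$: following the computation in the proof of Lemma~\ref{lem:comparison-admissible-pairs}\ref{lem:comparison-admissible-pairsmollcore}, the correction $\beta^{\varphi_\eps*\tilde\mu_\eps}-\beta^{\tilde\mu_\eps}$ is, away from a tube of radius $\sim\eps$, a gradient, which I would cut off inside that tube; the resulting modification of $\beta_\eps$ is supported in $(\supp\mu_\eps)_{c\eps}$, where both $\beta_\eps$ and its modification stay bounded, so it changes the rescaled energy by $O(1/\ln\frac1\eps)=o(1)$ and changes the rescaled strain by $o(1)$ in $L^q_\loc$; therefore $F_\eps[\beta_\eps,\Omega]\to\Ffinite$ persists, now for a sequence in $\calA_\eps^\moll$ converging in the sense of Definition~\ref{defconvergence}.

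The main obstacle is not in this assembly but in the two ingredients it invokes, both part of the earlier sections: constructing $\Elb$ so that it is simultaneously below every regularized energy and yet still captures the full $\Gamma$-liminf, and equipping the recovery sequence of Section~\ref{secupperbound} with the decay near $\gamma$ that renders all core regularizations asymptotically equivalent. Once these are available, Theorem~\ref{thm:Gamma-limit2} follows by the comparison arguments above.
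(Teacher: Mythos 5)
Your proposal is correct and follows essentially the same route as the paper: for all three regularizations, the lower bound is obtained by sandwiching the rescaled functional above $\frac{1}{\eps^2\ln\frac1\eps}\Elb_{\rho_\eps,\eps}$ via the inclusions of Lemma~\ref{lem:comparison-admissible-pairs} and then invoking Theorem~\ref{theoremlowerboundsec5}, while the upper bound reuses the recovery sequence of Theorem~\ref{theoubinternal} together with the near-core decay estimate (\eqref{eq:upperbound2}/\eqref{eqepssnuhat}). The only place you deviate slightly from the paper is the recovery sequence for $F_\eps^\moll$: the paper directly replaces $\xi$ by $\xi*\varphi_\eps$ in the ansatz \eqref{eq:recovery} and verifies the energy estimate in Remark~\ref{remubinternal2}, whereas you propose to keep the original recovery field and add a curl-correcting term that is cut off outside a tube of radius $\sim\eps$; these are equivalent constructions (the cut-off version differs from Remark~\ref{remubinternal2}'s by a gradient), and both rely on the same underlying fact recorded in Remark~\ref{remarkbetamollcore} that $\|\xi*\varphi_\eps\|_{L^\infty}\lesssim 1/\eps$ makes the tube contribution $O(\eps^2)$, hence negligible after rescaling. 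One small imprecision worth flagging: you write that inside $(\supp\mu_\eps)_{c\eps}$ "both $\beta_\eps$ and its modification stay bounded", which is not literally true — each has a $\sim 1/\dist$ singularity that cancels in the sum; what is bounded is the mollified field $Q(\Id+\eps_k\ln^{1/2}\tfrac1{\eps_k}\eta+\eps_k\xi*\varphi_{\eps_k})-Q$, and that is what produces the $O(\eps^2)$ tube energy.
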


We remark that this result does not contain a compactness statement. This requires different arguments and will be addressed elsewhere. 
Only in the case of assertion \ref{thm:Gamma-limit2moll} compactness can be obtained from the rigidity estimate as in the case of Theorem~\ref{thm:Gamma-limit}.

\section{Straight dislocations in a cylinder}
\label{seccellpb}

This section deals with the cell problem, which describes a single straight dislocation in a cylinder. We first present in Lemma~\ref{lemmarigidcylinderhole} a rigidity estimate with mixed growth for an hollow cylinder, then in Lemma~\ref{lemmacompactnesscellpb} the standard coercivity statement which shows the origin of the logarithmic divergence. Afterwards, in Proposition~\ref{propcellproblem}, we prove a lower bound, relating the energy in a cylinder around a straight dislocation to the line-tension energy $\psiC$ defined in \eqref{eqvarprobbetzabt}.

\subsection{Rigidity and coercivity}

The first result shows that the constant in the rigidity estimate with mixed growth from
\cite[Theorem~1.1]{ContiDolzmannMueller2014} for domains $T_h^{R,r}$ 
with $r\le \frac12 R$ and $R\le h$
can be estimated with $h^2/R^2$, and in particular does not depend on $r$.
We recall the definition of the tubes in  \eqref{def:hol_cyl} and \eqref{def:full_cyl}, and the definition of the function $\Phi_p$ in \eqref{eqdefmixedgrowth} and its properties (see Remark \ref{rem-phip}).

\begin{lemma}\label{lemmarigidcylinderhole}
For any $p\in(1,2]$
 there is $c=c(p)>0$ such that for any $r,R,h>0$ with $2r\le R\le h$, any $Q'\in\SO(3)$, and any $\beta\in L^1(Q'T_h^{R,r};\R^{3\times 3})$ with $\curl\beta=0$ there are $Q\in \SO(3)$ such that
 \begin{equation}\label{rigidity-cylinder}
  \int_{Q'T_h^{R,r}} \mixedgrowth(|\beta-Q|) dx \le c \frac{h^2}{R^2}\int_{Q'T_h^{R,r}} \mixedgrowth(\dist(\beta,\SO(3))) dx 
 \end{equation}
and  {$S\in \R^{3\times 3}_\skw$} such that
 \begin{equation}\label{korn-cylinder}
  \int_{Q'T_h^{R,r}} \mixedgrowth(|\beta-S|) dx \le c \frac{h^2}{R^2}\int_{Q'T_h^{R,r}} \mixedgrowth(|\beta+\beta^T|) dx .
 \end{equation}
 \end{lemma}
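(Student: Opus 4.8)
The plan is to prove \eqref{rigidity-cylinder} first and then deduce \eqref{korn-cylinder} as a linearized analogue (or, more precisely, argue it in parallel with the same scheme). The key point is that \cite[Theorem~1.1]{ContiDolzmannMueller2014} already provides, for a fixed Lipschitz domain, a rigidity estimate with mixed growth $\mixedgrowth$; what we need is uniformity of the constant as $r\to 0$, with the stated dependence $h^2/R^2$ on the geometry. Since the hollow cylinder $T_h^{R,r}$ degenerates as $r\to 0$, one cannot simply invoke the theorem on each $T_h^{R,r}$ and hope the constant stays bounded; instead I would \emph{avoid the hole altogether}. The first step is therefore to reduce the hollow cylinder to the full cylinder: given $\beta\in L^1(Q'T_h^{R,r};\R^{3\times 3})$ with $\curl\beta=0$, extend $\beta$ to a curl-free field $\tilde\beta$ on all of $Q'T_h^R$. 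Because $T_h^{R,r}$ is an annular region and $\curl\beta=0$ there, $\beta$ is a gradient, $\beta=Dv$ on $Q'T_h^{R,r}$ (the annulus has trivial relevant topology — any loop in $T_h^{R,r}$ bounds in $T_h^R$, and curl-free plus the fact that $\beta$ represents a single-valued strain means the potential exists; if one worries about multivaluedness one restricts to a simply connected subdomain, but here $\curl\beta=0$ as a distribution on the full annulus suffices); one then extends $v$ across the inner cylinder $B'_r\times(0,h)$ by a $W^{1,1}$ (indeed $W^{1,p}$ after truncation) extension, obtaining $\tilde\beta=D\tilde v$ on $Q'T_h^R$. The extension can be taken so that $\int_{B'_r\times(0,h)}\mixedgrowth(\dist(\tilde\beta,\SO(3)))$ is controlled by the same quantity over a fixed collar inside $T_h^{R,r}$ — this is a standard extension lemma, but here I only need the \emph{opposite} direction of control, so actually I can be cruder.

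The cleanest route avoids even the extension: I would rescale. Set $\lambda:=R/h\le 1$ and consider $\beta_\lambda(x):=\beta(hx/R\cdot\text{(something)})$ — more carefully, rescale lengths by $1/R$ so that the outer radius becomes $1$, the height becomes $h/R\ge 1$, and the inner radius becomes $r/R\le 1/2$. The domain $T_{h/R}^{1,r/R}$ now has outer radius normalized, but the height $h/R$ may be large. To kill the height dependence, cover the tall cylinder $T_{h/R}^{1,r/R}$ by $O(h/R)$ overlapping \emph{short} hollow cylinders $T_{2}^{1,r/R}$ (each of height comparable to the outer radius, with overlaps of fixed proportion). On each such short piece, \cite[Theorem~1.1]{ContiDolzmannMueller2014} gives rigidity with a constant $c(p)$ \emph{independent of the inner radius} — and this independence is exactly the content one must extract from that theorem. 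The way to get it: for the short hollow cylinder $T_2^{1,s}$ with $s\le 1/2$, one again notes the region contains the fixed "thick shell" $T_2^{1,1/2}$ on which rigidity holds with an absolute constant (a fixed Lipschitz domain), giving a rotation $Q$; then one propagates this $Q$ inward through the thin part $T_2^{1/2,s}$ using that there $\beta=Dv$ and a Poincaré/extension argument on the annulus $B'_{1/2}\setminus B'_s$, which one can do uniformly in $s$ because one extends the potential to the full disk $B'_{1/2}$ — here the full-disk rigidity of \cite{ContiDolzmannMueller2014} on $B'_{1/2}\times(0,2)$ (a fixed domain!) applies to the extended field, and the extension only adds energy over $B'_s\times(0,2)$, which is part of the thin region and whose contribution to the right-hand side we are \emph{not} trying to bound from below. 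This gives a single rotation $Q$ valid on the whole short piece with an absolute constant. Finally, chaining the $O(h/R)$ short pieces: consecutive pieces overlap in a fixed-size region, and on the overlap both rotations $Q_i,Q_{i+1}$ are close to $\beta$ in $L^p$-with-mixed-growth sense, so $|Q_i-Q_{i+1}|^2$ times the overlap volume is bounded by the local energy; summing the telescoping differences and using that there are $O(h/R)$ of them produces the total rotation and the factor $(h/R)^2=h^2/R^2$ — one power of $h/R$ from the number of pieces in the triangle-inequality sum, squared because the estimate is on the square of the Frobenius norm (the mixed-growth version requires the standard trick of splitting $\beta-Q=(\beta-Q_i)+(Q_i-Q)$ via \eqref{eqlemmaVabG2} and summing, which produces exactly the $N^2$-type growth when $N=O(h/R)$ pieces are chained).

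For \eqref{korn-cylinder} I would run the identical argument with the geometrically linear rigidity (Korn's inequality with mixed growth) in place of the nonlinear one: \cite{ContiDolzmannMueller2014} also covers the linearized statement, or one linearizes, replacing $\dist(\beta,\SO(3))$ by $|\beta+\beta^T|$ and the rotation $Q$ by a skew matrix $S$; the chaining of skew matrices across overlapping pieces is formally the same telescoping estimate, again yielding the $h^2/R^2$ factor. No new ideas are needed for the linear case beyond bookkeeping.

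\textbf{Main obstacle.} The crux is the claim that the rigidity constant for a short hollow cylinder $T_2^{1,s}$ can be bounded independently of $s\in(0,1/2]$. Naively \cite[Theorem~1.1]{ContiDolzmannMueller2014} is stated for a fixed domain and its constant could blow up as $s\to0$. The resolution — reducing to the fixed full cylinder by extending the curl-free potential across the thin core, at the cost of energy only in the thin core region which does not appear in the lower bound we want — is the step that must be executed with care: one must make sure the extension is $W^{1,1}$ (enough for the curl constraint and for the energy bounds in the mixed-growth class, via truncation to land in $W^{1,p}$) and that the orientation/rotation matched on the thick shell is genuinely transported to the whole piece. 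Everything else (the covering by $O(h/R)$ short cylinders, the telescoping of rotations, the mixed-growth triangle inequality \eqref{eqlemmaVabG2}, the final $h^2/R^2$) is routine.
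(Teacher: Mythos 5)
Your outer reductions --- rescaling so that $R=1$, covering the tall cylinder with $O(h/R)$ short pieces of bounded aspect ratio, and telescoping the rotations via \eqref{eqlemmaVabG2} to produce the factor $h^2/R^2$ --- coincide with the paper's proof. The genuine divergence, and the gap, is in how you remove the inner hole.

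The step ``$\curl\beta=0$ on the hollow cylinder implies $\beta=Dv$'' is false: $T_h^{R,r}$ has nontrivial first homology, so $\beta$ may carry a nonzero circulation around the axis, in which case no single-valued potential exists on the annulus. Your parenthetical that loops in $T_h^{R,r}$ bound in $T_h^R$ is true but irrelevant, since $\beta$ is defined only on $T_h^{R,r}$, not on $T_h^R$. This is not a corner case: the lemma is invoked in Lemma~\ref{lemmacompactnesscellpb} and Corollary~\ref{corcellproblem} precisely on fields with circulation $\eps b\neq 0$, and a field that is not exact cannot be extended across the core. Moreover, even in the exact case the extension is not cost-free in the way you assert. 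After running \cite[Theorem~1.1]{ContiDolzmannMueller2014} on the filled cylinder, the right-hand side becomes $\int_{B'_{1/2}\times(0,2)}\mixedgrowth(\dist(D\tilde v,\SO(3)))\,dx$, and the contribution from inside $B'_s$ must be bounded \emph{above} by a universal multiple of the annular contribution, uniformly in $s$; your remark that the extension energy ``does not appear in the lower bound we want'' is a misreading of what must be estimated. Building a mixed-growth extension with this $s$-uniform property (in a form that respects the distance to $\SO(3)$ rather than a linear seminorm) is exactly the nontrivial content, not a ``standard extension lemma.''

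The paper's argument avoids both obstacles at once. It decomposes $T_1^{1,r}$ dyadically into fixed-aspect-ratio shells $C_{i,j}$, applies the Conti--Dolzmann--M\"uller estimate on each shell after cutting it into two overlapping simply connected halves (so a local potential exists and there is no circulation issue), and chains the resulting rotations across the binary tree of overlaps. The decisive point you would need, and that the extension route does not supply, is that the shell volumes decay geometrically: the recursion $a_i\le\tfrac12 a_{i-1}+c(E_i+E_{i-1})$ sums to a constant independent of the depth $N\sim\log(1/r)$, whereas naive chaining of $N$ links gives $N^2\sim\log^2(1/r)$. If you want to salvage the extension route you would first have to subtract an explicit curl-free field carrying the circulation (say a multiple of $b\otimes\nabla\theta$) and then prove a mixed-growth extension theorem on degenerating annuli with a uniform constant; both are substantially more work than the dyadic decomposition already in the paper.
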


\begin{proof}
 We first show that it suffices to prove both assertions for $Q'=\Id$. Indeed, given
 $\beta\in L^1(Q'T_h^{R,r};\R^{3\times 3})$
 one defines
 $\hat\beta\in L^1(T_h^{R,r};\R^{3\times 3})$
 by \begin{equation}
\hat\beta(x):=Q'^T\beta(Q'x)Q'.     
    \end{equation}
Then (by Lemma~\ref{lemmarotatebetacurl}\ref{lemmarotatebetacurlbeta}) $\curl\hat\beta=0$, $\dist(\hat\beta(x),\SO(3))=\dist(\beta(Q'x),\SO(3))$, 
 $|\hat\beta+\hat\beta^T|(x)
 =|\beta+\beta^T|({Q'x})$,
  and 
 $|\hat\beta-A|(x)=|\beta-Q'AQ'^T|(Q'x)$ for any $A\in\R^{3\times 3}$. 
 Therefore the assertions for $\hat\beta$ translate in the desired estimates for $\beta$. In the rest of the proof we deal with $Q'=\Id$.
 
 By scaling we can assume $R=1$. 
  We next show that it suffices to prove both assertions for $h=1$. Indeed, given $h>1$ we 
fix $0=z_0<z_1<\dots <z_N=h-1$ such that
 $z_{i}\le z_{i-1}+\frac12$ 
 {for $1\le i\le N$}, 
 $z_{i}\ge z_{i-2}{+ \frac12}$ {for $2\le i\le N$}, and $N= \lceil 2h-2\rceil$ (for example, $z_i=i/2$ for $i<N$, and $z_N=h-1$). We then apply \eqref{rigidity-cylinder} to each cylinder $T_1^{1,r}+z_ie_3$ and obtain matrices $Q_i\in \SO(3)$ such that
 \begin{equation}\label{eqphipbetaqi}
  \int_{T_1^{1,r}+z_ie_3} \mixedgrowth(|\beta-Q_i|) dx \le c \int_{T_1^{1,r}+z_ie_3} \mixedgrowth(\dist(\beta,\SO(3))) dx .
 \end{equation}
By \eqref{eqlemmaVabG},
 \begin{equation}\label{eqphipqiqi1a}
\Phi_p( |Q_i-Q_{i-1}|)\le 2\mixedgrowth(|\xi-Q_i|)+2
 \mixedgrowth(|\xi-Q_{i-1}|)
 \end{equation}
for every $\xi\in\R^{3\times 3}$. 
 From $r\le\frac12$ and $z_i\le z_{i-1}+\frac12$ we obtain
$\calL^3((T_1^{1,r}+z_ie_3)\cap (T_1^{1,r}+z_{i-1}e_3))\ge \frac38\pi$, so that
\begin{equation}\label{eqphipqiqi1b}
 \Phi_p( |Q_i-Q_{i-1}|)\le
 c \int_{T_1^{1,r}+z_ie_3} \mixedgrowth(|\beta-Q_i|) dx+c \int_{T_1^{1,r}+z_{i-1}e_3} \mixedgrowth(|\beta-Q_{i-1}|) dx.
\end{equation}
Using  $z_{i}\ge z_{i-2}{+ \frac12}$ we see that the overlap is finite, hence
\begin{equation}\label{eqphipqiqi1}
 \sum_{i=1}^N\Phi_p( |Q_i-Q_{i-1}|)
 \le c \int_{T_h^{1,r}} \mixedgrowth(\dist(\beta,\SO(3))) dx .
\end{equation}
By \eqref{eqlemmaVabG2} with $\delta=1/N$,
for $1\le i\le N$ we have
\begin{equation}
 \Phi_p( |Q_i-Q_{0}|)
 \le \left(1+\frac1N\right) 
 \Phi_p( |Q_{i-1}-Q_{0}|)
 + (1+N) \Phi_p( |Q_{i}-Q_{i-1}|),
\end{equation}
and iterating
\begin{equation}
 \Phi_p( |Q_i-Q_{0}|)
 \le \left(1+\frac1N\right)^i(1+N) 
\sum_{j=1}^i  \Phi_p( |Q_{j}-Q_{j-1}|).
\end{equation}
Therefore, using $i\le N$ and \eqref{eqphipqiqi1},
\begin{equation}\label{eqphipqiq0}
\Phi_p( |Q_i-Q_0|)\le c N   \int_{T_h^{1,r}} \mixedgrowth(\dist(\beta,\SO(3))) dx 
\end{equation}
for each $i\in\{0,\dots, N\}$ and, since $N\le 2h$, we conclude,
applying \eqref{eqlemmaVabG}, \eqref{eqphipqiq0} and
\eqref{eqphipbetaqi},
\begin{equation}\begin{split}
\int_{T_h^{1,r}} \mixedgrowth(|\beta-Q_0|) dx \le&
c\sum_{i=0}^N\left[\mixedgrowth(|Q_i-Q_0|)+
 \int_{T_1^{1,r}+z_ie_3} \mixedgrowth(|\beta-Q_i|) dx\right]
\\
\le & c h^2\int_{T_h^{1,r}} \mixedgrowth(\dist(\beta,\SO(3))) dx.
\end{split}\end{equation}

It remains to prove \eqref{rigidity-cylinder} in the case $R=h=1$, $Q'=\Id$. 
Assume first that  $r=2^{-N}$ for some integer  $N\ge1$. 
If  $N=1$ then, since  $\curl\beta=0$, using \cite[Theorem 1.1]{ContiDolzmannMueller2014} 
on two overlapping simply connected subsets of $T_1^{1,1/2}$ 
and then \eqref{eqlemmaVabG}
one obtains that
there is $Q\in \SO(3)$ such that
\begin{equation}\label{eq:cdm}
 \int_{T_1^{1,1/2}} \mixedgrowth(|\beta-Q|) dx \le 
 c\int_{T_1^{1,1/2}} \mixedgrowth(\dist(\beta,\SO(3))) dx.
\end{equation}
The argument is the same in the linear case,
using \cite[Theorem 2.1]{ContiDolzmannMueller2014}.
For the same reason, 
for any $\rho>0$ and any $\beta\in L^1(T_\rho^{\rho,\rho/4};\R^{3\times 3})$ with $\curl\beta=0$ there is $Q\in\SO(3)$ with
\begin{equation}\label{eq:cdm4}
 \int_{T_\rho^{\rho,\rho/4}} \mixedgrowth(|\beta-Q|) dx \le 
 c\int_{T_\rho^{\rho,\rho/4}} \mixedgrowth(\dist(\beta,\SO(3))) dx;
\end{equation}
by scaling the constant does not depend on $\rho$.

If instead $N\ge2$ we divide $T_1^{1,r}$ into dyadic sets $C_{i,j}$ of the form $
T_\rho^{\rho,\rho/4}$. More precisely we set 
\begin{equation*}C_{0,0}:=T_1^{1,1/4}\ \text{ and }\
 C_{i,j}:= T_{R_i}^{R_i,R_i/4}+jR_ie_3,
\end{equation*}
with $R_i:=2^{-i}$, $i=1,\dots, N-2$ and $j=0,\dots,2^i-1$.
We now apply \eqref{eq:cdm4} to each $C_{i,j}$ and find $Q_{i,j}\in\SO(3)$ such that
\begin{equation}\label{eq:rigidity-cdm}
 \int_{C_{i,j}} \mixedgrowth(|\beta-Q_{i,j}|) dx \le 
 c E_{i,j},\hskip1cm E_{i,j}:=\int_{C_{i,j}} \mixedgrowth(\dist(\beta,\SO(3))) dx,
\end{equation} 
with an universal constant.  Notice that $\calL^3(C_{i,j}\cap C_{i-1,k_j})\ge c  R_i^3$ for $i\ge1$ and
\begin{equation*}
 k_j:=\begin{cases}
  j/2,&\text{if $j$ is even},\\
  (j-1)/2,&\text{if $j$ is odd}
 \end{cases}
\end{equation*}
and $j=0,\dots,2^i-1$.
Then by \eqref{eqlemmaVabG}, arguing as in \eqref{eqphipqiqi1a}-\eqref{eqphipqiqi1b}
\begin{equation*}
 R_i^3 \mixedgrowth(|Q_{i,j}-Q_{i-1,k_j}|)\le 
 c(E_{i,j}+E_{i-1,k_j}),
\end{equation*}
so that from \eqref{eqlemmaVabG} we have
\begin{equation}\label{eq:diff-rotations}
 \begin{split}
R_i^3\mixedgrowth(|Q_{i,j}-Q_{0,0}|)\le 2&R_i^3\mixedgrowth(|Q_{i-1,k_j}-Q_{0,0}|)
+c (E_{i,j}+E_{i-1,k_j}).
 \end{split}
\end{equation}
For every $i$ let
\begin{equation}\label{eq:diff-rotations0}
 a_i:=\sum_{j =0}^{2^i-1}R_i^3 \mixedgrowth(|Q_{i,j}-Q_{0,0}|).
\end{equation}
Obviously $a_0=0$. Moreover by \eqref{eq:diff-rotations} 
\begin{equation}\begin{split}
 a_i\le &\sum_{j =0}^{2^i-1}
2R_i^3\mixedgrowth(|Q_{i-1,k_j}-Q_{0,0}|)
+c \sum_{j =0}^{2^i-1} (E_{i,j}+E_{i-1,k_j}).
\end{split}
 \end{equation}
In each sum, $k_j$ takes twice each value in $\{0,\dots, 2^{i-1}-1\}$. Recalling that $R_{i-1}=2R_i$, for $i\ge 1$ we obtain
 \begin{equation*}
 a_i\le \frac12 a_{i-1}
 + c(E_i+E_{i-1})
\end{equation*}
with $E_i:=\sum_{j=0}^{2^i-1}E_{i,j}$, for every $i\ge0$.
By induction we obtain
\begin{equation*}
 a_i \le c \sum_{k=1}^i \frac{1}{2^{i-k}} (E_k+E_{k-1}),
\end{equation*}
for every $i\ge1$.
We conclude that
\begin{equation}\label{eq:diff-rotations2}
 \sum_{i=0}^{N-2} a_i \le c \sum_{i=0}^{N-2}E_i.
\end{equation}
By \eqref{eqlemmaVabG},
\eqref{eq:diff-rotations0}, \eqref{eq:rigidity-cdm} and \eqref{eq:diff-rotations2} we obtain
\begin{equation*}
 \begin{split}
 \int_{T_{1}^{1,2^{-N}}}
\mixedgrowth(|\beta-Q_{0,0}|) dx
&\le \sum_{i=0}^{N-2}\sum_{j=0}^{2^i-1}\int_{C_{i,j}}\mixedgrowth(|\beta-Q_{0,0}|)dx
\\
&\le c \sum_{i=0}^{N-2} a_i + c\sum_{i=0}^{N-2}\sum_{j=0}^{2^i-1}
\int_{C_{i,j}}
\mixedgrowth(|\beta-Q_{i,j}|) dx
\le c E,
 \end{split}
\end{equation*}
with $E:=\sum_{i=0}^{N-2}E_i$. Again by scaling the above inequality holds with the same constant in $T_{2^{N}\rho}^{2^N\rho,\rho}$ for any $\rho>0$ and $N\in\N$.\\
Finally if $r\in(0,\frac12]$ is arbitrary we 
select $N\in\N$ such that $\frac12 <2^Nr\le 1$. Since $r\le\frac12$, we have $N\ge 1$, and therefore
$2^{-N}\le\frac12< 2^Nr$. This implies 
\begin{equation*}
(r,1)=(r, 2^Nr)\cup (2^{-N},1). 
\end{equation*}
For the same reason,
\begin{equation*}
 (0,1)=(0, 2^Nr) \cup (1-2^Nr,1).
\end{equation*}
Therefore
$$T_1^{1,r}=T_1^{1,2^{-N}}\cup T_{2^{N}r}^{2^Nr,r}
\cup \left[(1-2^Nr)e_3+ T_{2^{N}r}^{2^Nr,r}\right]=:C^1\cup C^2\cup C^3.
$$
We apply the rigidity estimate to each of these three cylinders separately 
and obtain rotations $Q^1$, $Q^2$, $Q^3$. As the overlap between them is not uniformly controlled we introduce a fourth cylinder. We let $\rho:=\frac{2^{-N}+r}2$ be the average between the two values of the inner radius and set
\begin{equation*}
 C^4:=(\frac12-2^{N-1}\rho)e_3+T^{2^N\rho,\rho}_{2^N\rho}=
 (B'_{2^N\rho}\setminus B_\rho')\times\left(\frac12-2^{N-1}\rho,
 \frac12+2^{N-1}\rho\right).
\end{equation*}
We then apply the rigidity estimate also to $C^4$, and obtain another rotation $Q^4$. 
From $\frac12<2^Nr\le 1$ we obtain $\frac34<2^N\rho\le1$
hence $2^N\rho-2^{-N}>\frac14$. This implies
 $\mathcal L^3(C^1\cap C^4)\ge c >0$.
Similarly, $2^Nr-\rho\ge\frac18$ and $2^Nr-(\frac12-2^{N-1}\rho)\ge \frac18$ show that 
$\mathcal L^3(C^i\cap C^4)\ge c >0$ for $i=2,3$. Therefore we can conclude
using the triangular inequality \eqref{eqlemmaVabG}.
\end{proof}

\begin{lemma}\label{lemmacompactnesscellpb}
Assume that $W$ satisfies \HWFinite\ or \HWLin\  for some $p\in 
(1,2]$. There is $c>0$ such that
for any $0<\eps\le r<R\le h$, with $2r\le R$,  $b\in\calB$, $t\in S^2$,  and
$\beta\in L^1(Q_t \Cyl^{R}_h;\R^{3\times 3})$ such that
  \begin{equation}\label{eq:curlbeta}
   \curl\beta = \eps b\otimes t \calH^1\LL (\R t\cap Q_t\Cyl^{R}_h)
  \end{equation}
we have 
  \begin{equation} 
   c \eps^2 h  |b|\ln\frac Rr \le \int_{Q_t \Cyl^{R,r}_h} W(\beta) dx .
\end{equation}
The constant depends only on $W$, $p$, and $\calB$.
\end{lemma}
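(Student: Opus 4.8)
The plan is to deduce the estimate from the rigidity Lemma~\ref{lemmarigidcylinderhole} via a dyadic decomposition into annular cells of bounded aspect ratio, on each of which a single rotation controls the elastic energy while the circulation constraint forces a lower bound. If $b=0$ the inequality is trivial, so assume $b\ne0$; as $b\in\calB$ this gives $|b|\ge\lambda_\calB>0$, where $\lambda_\calB$ is the length of a shortest nonzero vector of $\calB$. First I would reduce to $t=e_3$, $Q_t=\Id$: setting $\hat\beta(x):=Q_t^T\beta(Q_tx)Q_t$, Lemma~\ref{lemmarotatebetacurl}\ref{lemmarotatebetacurlbeta},\ref{lemmarotatebetacurlbetameas} (with $F=Q_t^T$, $\lambda=1$, $v=0$) shows that $\hat\beta\in L^1(T_h^R;\R^{3\times3})$ is curl-free on $T_h^{R,r}$ and obeys $\curl\hat\beta=\eps\,(Q_t^Tb)\otimes e_3\,\calH^1\LL(\R e_3\cap T_h^R)$, while $\dist(\hat\beta(x),\SO(3))=\dist(\beta(Q_tx),\SO(3))$ since conjugation fixes $\SO(3)$ and the Frobenius norm. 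Hence by \HWFinite\ and the change of variables $y=Q_tx$,
\begin{equation*}
 \int_{Q_t T_h^{R,r}}W(\beta)\,dy\ \ge\ \frac1c\int_{T_h^{R,r}}\mixedgrowth(\dist(\hat\beta,\SO(3)))\,dx,
\end{equation*}
and since $|Q_t^Tb|=|b|$ it suffices to bound this last integral below by $c\,\eps^2h|b|\log\tfrac Rr$.

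Next I would cover the bulk of $T_h^{R,r}$ by the pairwise disjoint hollow annular cells
\begin{equation*}
 A_{k,l}:=\bigl(B'_{2s_k}\setminus B'_{s_k}\bigr)\times\bigl(2ls_k,\,2(l+1)s_k\bigr),\qquad s_k:=2^kr,
\end{equation*}
for $k=0,\dots,K$ with $K:=\lfloor\log_2(R/r)\rfloor-1\ (\ge0$ since $R\ge2r)$ and $l=0,\dots,L_k-1$ with $L_k:=\lfloor h/(2s_k)\rfloor$. Since $2s_k=2^{k+1}r\le R\le h$, every cell is a translate of the hollow cylinder $T_{2s_k}^{2s_k,s_k}$, $L_k\ge h/(4s_k)\ge1$, and the $A_{k,l}$ are disjoint subsets of $T_h^{R,r}$. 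As $\hat\beta$ is curl-free on each $A_{k,l}$, Lemma~\ref{lemmarigidcylinderhole} applies (with a constant independent of $k$, since the cell has height equal to its outer radius, so $h^2/R^2=1$ there) and yields $Q_{k,l}\in\SO(3)$ with $\int_{A_{k,l}}\mixedgrowth(|\hat\beta-Q_{k,l}|)\,dx\le c\int_{A_{k,l}}\mixedgrowth(\dist(\hat\beta,\SO(3)))\,dx$.

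The lower bound on a single cell comes from the circulation constraint. By the curl condition and a slicing argument, for a.e.\ $(s,z)$ with $s_k<s<2s_k$ and $z$ in the $l$-th height interval, the circulation of $\hat\beta$ around $\partial B'_s\times\{z\}$ equals $\eps Q_t^Tb$, i.e.\ $s\int_0^{2\pi}\hat\beta(se_r(\theta)+ze_3)\,e_\theta(\theta)\,d\theta=\eps Q_t^Tb$. Since $\int_0^{2\pi}e_\theta\,d\theta=0$ and $|Ae_\theta|\le|A|$, subtracting the constant $Q_{k,l}$ gives $\int_0^{2\pi}|\hat\beta(se_r+ze_3)-Q_{k,l}|\,d\theta\ge\eps|b|/s\ge\eps|b|/(2s_k)$. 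Applying Jensen's inequality to $\mixedgrowth^{**}$ (convex, nondecreasing, and $\ge\mixedgrowth/c_p$ by \eqref{eqmixedgrowthconv}) and integrating over $s\in(s_k,2s_k)$ and over the height interval, using that $\mixedgrowth$ is nondecreasing and $\calL^3(A_{k,l})\sim s_k^3$, yields $\int_{A_{k,l}}\mixedgrowth(|\hat\beta-Q_{k,l}|)\,dx\ge c\,s_k^3\,\mixedgrowth(\eps|b|/(4\pi s_k))$. Combining with the rigidity bound and summing over the $L_k\ge h/(4s_k)$ values of $l$,
\begin{equation*}
 \sum_{l=0}^{L_k-1}\int_{A_{k,l}}\mixedgrowth(\dist(\hat\beta,\SO(3)))\,dx\ \ge\ c\,h\,s_k^2\,\mixedgrowth\!\bigl(\tfrac{\eps|b|}{4\pi s_k}\bigr).
\end{equation*}
A short case distinction — using $s_k\ge r\ge\eps$ in the branch $\mixedgrowth(u)=u^p$ and $|b|\ge\lambda_\calB$ in the branch $\mixedgrowth(u)=u^2$ — shows $s_k^2\,\mixedgrowth(\eps|b|/(4\pi s_k))\ge c(p,\calB)\,\eps^2|b|$. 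Summing over $k=0,\dots,K$ and using disjointness and $K+1=\lfloor\log_2(R/r)\rfloor\ge\tfrac12\log_2(R/r)$ then gives $\int_{T_h^{R,r}}\mixedgrowth(\dist(\hat\beta,\SO(3)))\,dx\ge c\,\eps^2h|b|\log\tfrac Rr$, as required.

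I expect the only delicate point to be the slicing step: deriving, for the merely $L^1$ field $\hat\beta$, the circulation identity on a.e.\ horizontal circle from the distributional identity $\curl\hat\beta=\eps(Q_t^Tb)\otimes e_3\,\calH^1\LL(\R e_3)$. I would settle this by mollifying $\hat\beta$, applying Stokes' theorem to $\hat\beta*\varphi_\delta$, and letting $\delta\to0$, combined with Fubini; the remaining bookkeeping (disjointness of the cells, the rounding in $K$ and $L_k$, and the harmless loss of a bounded factor in $\log\frac Rr$ when $R/r$ is bounded) is routine. The case \HWLin\ is entirely analogous, using the Korn estimate \eqref{korn-cylinder} in place of \eqref{rigidity-cylinder}, $|\cdot+\cdot^T|$ in place of $\dist(\cdot,\SO(3))$, and skew matrices $S_{k,l}\in\Rskw$ in place of $Q_{k,l}$ (for which still $\int_0^{2\pi}S_{k,l}e_\theta\,d\theta=0$).
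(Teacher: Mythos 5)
Your proof is correct and reaches the desired estimate, but by a route that differs from the paper's in how it deploys the rigidity Lemma~\ref{lemmarigidcylinderhole}. The paper reduces to $h=R$ and applies that lemma \emph{once} to the whole hollow cylinder $Q_t \Cyl_R^{R,r}$ --- relying precisely on its nontrivial feature that the rigidity constant is uniform in $r$ --- to produce a single rotation $Q$; it then runs the circulation-plus-Jensen argument integrated continuously over the radius $\rho\in(r,R)$, and closes with a case analysis on the relative sizes of $\eps|b|$, $r$, $R$. You instead tile $\Cyl_h^{R,r}$ by dyadically scaled annular cells of unit aspect ratio, apply Lemma~\ref{lemmarigidcylinderhole} to each cell separately (so a different rotation $Q_{k,l}$ per cell, and you only invoke the easiest instance $r=R/2$, $h=R$ of that lemma), run Jensen at the cell level, and sum; the case distinction is then done cell by cell and is somewhat cleaner. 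Both are legitimate: your dyadic decomposition in effect unrolls the chaining that the paper packages into the proof of Lemma~\ref{lemmarigidcylinderhole}, so you use less of that lemma at the cost of extra bookkeeping, while the paper's version is shorter once the lemma is available. The slicing step you flag --- extracting the circulation identity on a.e.\ circle from the distributional curl constraint --- is handled the same way in both proofs (mollification, Stokes, Fubini) and is indeed routine; the paper simply asserts it.
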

\begin{proof}
We prove the statement for the case $h=R$. If $h> R$, 
it suffices to apply this bound on each of the $\lfloor h/R\rfloor$ disjoint subsets of $T_h^{R,r}$ which have the same shape as $T_R^{R,r}$, and then to use that $h/R\le 2\lfloor h/R\rfloor$. For $b=0$ there is nothing to prove. Recalling that $b\in\calB$, we see that we can assume $|b|\ge c>0$.

{Since $\curl\beta=0$ in $T_R^{R,r}$, by
\HWFinite\ and Lemma~\ref{lemmarigidcylinderhole}}
there is a rotation $Q\in\SO(3)$ such that
\begin{equation}
\int_{Q_t \Cyl^{R,r}_R} \mixedgrowth(|\beta-Q|)dx \le c \int_{Q_t \Cyl^{R,r}_R} 
W(\beta) dx .
\end{equation}
In the linear case the same holds with $S\in\R^{3\times 3}_\skw$.

Moreover by  condition \eqref{eq:curlbeta}, for every circle $C$ with radius $\rho\in(r,R)$, 
centered in a point of the segment $(0,Rt)$,
and contained in the plane orthogonal to $t$,
\begin{equation}
\eps|b|\le  \int_C |\beta-Q| d\calH^1 .
\end{equation}
By  monotonicity and Jensen's inequality, we then have
\begin{equation}
\mixedgrowth^{**}\left(\frac{\eps|b|}{2\pi\rho}\right)\le 
\mixedgrowth^{**}\left(\frac{1}{2\pi\rho}\int_C |\beta-Q| d\calH^1 \right)
\le \frac{1}{2\pi\rho}\int_C \mixedgrowth(|\beta-Q|) d\calH^1 
\end{equation}
where $\mixedgrowth^{**}$ is the convex envelope of $\mixedgrowth$.
Integrating over all circles $C$ by Fubini's theorem we
easily get
\begin{equation}\label{eqinteralcompacthepsi}
h \int_r^R 
\mixedgrowth^{**}\left(\frac{\eps|b|}{2\pi\rho}\right) \rho d\rho \le c \int_{Q_t \Cyl^{R,r}_h} 
W(\beta) dx .
\end{equation}

If $p=2$ then $\mixedgrowthnop_2^{**}(t)=\mixedgrowthnop_2(t)=|t|^2$ and integrating the left-hand side concludes the proof.

It remains to estimate the left-hand side of \eqref{eqinteralcompacthepsi} for $p<2$.
By \eqref{eqmixedgrowthconv} there are
$\eta$, $\eta'>0$ such that
$$\eta |t|^2\le \mixedgrowth^{**}(t)\quad\text{in}\quad [0,1/\pi]\qquad\text{and}\qquad \eta'|t|^p\le\mixedgrowth^{**}(t)\quad\text{in}\quad [1/(2\pi),\infty).$$
If $\eps|b|\le 2r $, we obtain
\begin{equation}\label{eq412}
\frac{1}{\eta} 
h \int_r^R 
\mixedgrowth^{**}\left(\frac{\eps|b|}{2\pi\rho}\right) \rho d\rho\ge
h \int_r^R \frac{\eps^2|b|^2}{(2\pi \rho)^2} \rho d\rho =
\frac{1}{(2\pi)^2}
h\eps^2|b|^2 \ln \frac{R}{r}.
\end{equation}
If $\eps|b|\ge R$, we have
\begin{equation}\label{eqsadfsdf}
\begin{split}
\frac{1}{\eta'} h \int_r^R 
\mixedgrowth^{**}\left(\frac{\eps|b|}{2\pi\rho}\right) \rho d\rho 
&\ge 
h \int_r^R \frac{\eps^p|b|^p}{(2\pi \rho)^p} \rho d\rho 
\\&=\frac{h\eps^p|b|^p}{(2\pi)^p(2-p)} 
(R^{2-p}-r^{2-p})\\
&\ge \frac{1-2^{p-2}}{(2\pi)^p(2-p)}
h\eps^p|b|^p R^{2-p}.
\end{split}
\end{equation} 
We observe that $s^{2-p}\ge (2-p)\ln s$ for all $s>0$.
Using that $\eps\le r<R$,
\begin{equation*}
\eps^2 \ln \frac Rr \le \eps^2 \ln \frac R\eps\le {\frac{1}{2-p}} \eps^p R^{2-p} \,.
\end{equation*}
Combining these two concludes the proof in this case.

Finally we consider the case $2r<\eps|b|< R$. We have 
\begin{equation}
\eta'h \int_{r}^{\e|b|}\frac{\eps^p|b|^p}{(2\pi \rho)^p} \rho d\rho +
\eta h \int_{\e|b|}^{R}\frac{\eps^2|b|^2}{(2\pi \rho)^2} \rho d\rho 
\le 
h \int_r^R 
\mixedgrowth^{**}\left(\frac{\eps|b|}{2\pi\rho}\right) \rho d\rho  \,.
\end{equation} 
Estimating the {two integrals} as in \eqref{eq412} and \eqref{eqsadfsdf} and  observing that for some $c_p>0$ it holds, since $|b|\ge \ln|b|$,
\begin{equation*}
\begin{split}
\frac{1-2^{p-2}}{(2\pi)^p(2-p)}
h \e^2|b|^2+
\frac{1}{(2\pi)^2}
h\e^2|b|^2\log\frac{R}{\e|b|}\ge& c_p h \e^2|b|\left(\log\frac{\e|b|}{\e}+\log\frac{R}{\e|b|}\right)\\
=c_p
h \e^2|b| \log \frac{R}{\e}\,,
\end{split}
\end{equation*}
with $\eps\le r$
we conclude.
\end{proof}

\subsection{Lower bound for a cylinder}
We present in Proposition~\ref{propcellproblem} the key estimate for the lower bound. We work in a unified framework able to treat
jointly the case of finite and infinitesimal kinematics and therefore
frame the problem via a generic energy density $V$.
In the rest of the paper we use the results of this section only via Corollary~\ref{corcellproblem}.

\newcommand\betalin{\xi}
\newcommand\Qroten{{\tilde Q}}
\begin{proposition}\label{propcellproblem}
{Assume that $W$ satisfies \HWFinite\ or 
\HWLin\ for some $p\in(1,2]$.
Define $V(F):=W(\Id+\Qroten F\Qroten^T)$ for some $\Qroten\in\SO(3)$ in the first case, and $V:=W$ in the second case.}
Let $\C^V:=D^2V(0)$,
$c_K>0$,
and $\ell_0>0$.
There 
is a nondecreasing function $\sigma:(0,\infty)\to(0,\infty)$, with 
$\lim_{s\to0}\sigma(s)=0$, depending only on $V$, $p$, $c_K$ and $\ell_0$ with the following property:
if  $0<2r\le R\le h$, $\eps>0$, $b\in\R^3$, $t\in S^2$, $\betalin\in L^1(Q_t \Cyl^{R}_h;\R^{3\times 3})$ obey
 $|b|\ge\ell_0$,
 \begin{equation}\label{eqproplbcylassck}
  \| \mixedgrowth(|\betalin|)\|_{L^1(Q_t\Cyl^{R,r}_h)}
  \le c_K{
  \eps^2 h{|b|^2}\ln \frac Rr},
 \end{equation}
and
\begin{equation}\label{eqcurlbetalinpropilb}
   \curl\betalin = \eps b\otimes t \calH^1\LL (\R t\cap Q_t\Cyl^{R}_h),
  \end{equation}
then necessarily
  \begin{equation} 
\frac{1}{\eps^2 h  \ln\frac Rr }
  \int_{Q_t \Cyl^{R,r}_h} V(\betalin) dx \ge \psi_{\C^V}(b,t) \left(1- \sigma\left(\frac{r}{R}+  \frac{\eps |b|}{r}\right)\right).
  \end{equation}
\end{proposition}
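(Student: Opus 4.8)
The plan is to reduce the energy in the hollow cylinder $Q_tT_h^{R,r}$ to a sum of energies on thin annular slices and, on each slice, to compare the rescaled strain with the optimal two-dimensional profile $\contbeta_{b,t}$ from Lemma~\ref{lemmabetabtkernel1}. First I would pass to the blown-up variables: set $\betalin'(x):=\betalin(x)/(\eps|b|)$ (so that $\curl\betalin'$ has amplitude $1/|b|$ and unit Burgers vector $b/|b|$ in the rescaled picture), and recall that by Taylor expansion of $V$ near $0$ one has $V(F)=\frac12\C^V F\cdot F+\mathrm{error}$, with the error controlled by $\omega(|F|)|F|^2$ in the regime $|F|\le 1$ (cf.\ \eqref{eqWQIdACQ}) and by $\mixedgrowth(|F|)$ in general; the hypothesis \eqref{eqproplbcylassck} provides exactly the $L^1$ control of $\mixedgrowth(|\betalin|)$ needed to absorb the large-field part. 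The idea is that on the bulk of the domain $\betalin$ is of size $\eps|b|/\dist(\cdot,\R t)$, which is small as soon as we stay at distance $\gg\eps|b|$ from the core; that is where the term $\eps|b|/r$ in $\sigma$ enters — it measures how close to the inner boundary we are forced to integrate.

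Next I would slice the cylinder along the $t$-axis into $\sim h/R$ pieces of length comparable to $R$ (Lemma~\ref{lemmacellproblemlb}\ref{lemmacellproblemlbA} and the lower bound of \cite[Lemma~5.5]{ContiGarroniOrtiz2015} are stated for cylinders with $h\ge R$, and the quantity $\infcyl(\C^V,b,t,h,R,r)$ is exactly the infimum of $\frac1{h\ln(R/r)}\int\frac12\C^V\beta\cdot\beta$ over $\beta$ with the prescribed curl, via \eqref{eqdefabthRrb}). The key point is that the lower bound $\frac1{\eps^2 h|b|^2\ln(R/r)}\int\frac12\C^V\betalin\cdot\betalin\,dx\ge \psi_{\C^V}(b/|b|,t)(1-\sigma(r/R))$ for the \emph{purely quadratic} functional with the \emph{exact} curl constraint is precisely Lemma~\ref{lemmacellproblemlb}\ref{lemmacellproblemlbA} combined with \eqref{eq:liminf_linear_problem}: there $\infcyl\ge\psiC - c|b|^2R/h$, and since here $h\ge R$ the error term $cR/h$ is $O(1)$ — to get it small I would instead invoke the monotonicity in $R$ used in the proof of that lemma, applying the estimate on sub-cylinders of radius $R'\ll R$ so that the error becomes $cR'/h$ while $\log(R'/r)/\log(R/r)\to1$ as $r\to0$; quantitatively this yields the $\sigma(r/R)$ dependence. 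Then $\psi_{\C^V}(b/|b|,t)=\psi_{\C^V}(b,t)/|b|^2$ by $2$-homogeneity of $\psi_\C$ in $b$ (clear from \eqref{eqvarprobbetzabt1}--\eqref{eqvarprobbetzabt}), which restores the factor $|b|^2$ in the final inequality.

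The remaining and genuinely new work — this is the main obstacle — is controlling the \emph{linearization error}, i.e.\ replacing $V$ by $\frac12\C^V\,\cdot\,\cdot$ uniformly over all admissible $\betalin$ satisfying only the a priori bound \eqref{eqproplbcylassck}, not a pointwise smallness bound. Here I would split $\betalin=\betalin\chi_{\{|\betalin|\le\eps|b|/\lambda\}}+\betalin\chi_{\{|\betalin|>\eps|b|/\lambda\}}$ for a threshold $\lambda$ to be optimized, or more robustly use the decomposition of Remark~\ref{rem-phip}$(ii)$, $\betalin=a+b$ with $|a|\le$ (small) and $|b|\ge$ (threshold) wherever nonzero, so that $\int(|a|^2+|b|^p)\,dx\le\int\mixedgrowth(|\betalin|)\,dx\le c_K\eps^2h|b|^2\ln(R/r)$. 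On the set where $\betalin$ is large the bad contribution to $\int V(\betalin)$ is nonnegative (so it only helps the lower bound and can be discarded), but it may destroy the curl constraint; to fix this I would modify $\betalin$ on that set — e.g.\ replace it by a gradient there, using that the set has controlled measure (by Chebyshev applied to the $L^p$ bound) and a Poincaré/extension argument on the annular slice — incurring an error that is small relative to $\eps^2h|b|^2\ln(R/r)$ precisely when $\eps|b|/r$ is small. On the complementary set the expansion $|V(F)-\frac12\C^V F\cdot F|\le\omega(|F|)|F|^2$ gives a contribution bounded by $\omega(\eps|b|/r)\cdot c_K\eps^2h|b|^2\ln(R/r)$, which is absorbed into $\sigma$. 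Assembling: the quadratic lower bound with exact curl gives the main term, the slicing handles $h>R$, and the two error terms (linearization error $\sim\omega(\eps|b|/r)$ and curl-repair error $\sim\eps|b|/r$, plus the $r/R$ from \eqref{eq:liminf_linear_problem}) combine into the single modulus $\sigma(r/R+\eps|b|/r)$. The monotonicity of $\sigma$ and $\sigma(0^+)=0$ follow by construction, and the dependence only on $V,p,c_K,\ell_0$ is transparent since $\ell_0$ enters only through $|b|\ge\ell_0$ when converting $\mixedgrowth(|\betalin|)$-control into pointwise thresholds.
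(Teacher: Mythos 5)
Your proposal takes a direct quantitative route, but it has a genuine gap precisely at what you flag as ``the main obstacle'': modifying $\betalin$ on the set where it is large while preserving the curl constraint. The constraint $\curl\betalin=\eps b\otimes t\,\calH^1\LL(\R t\cap Q_tT_h^R)$ is topological in the hollow cylinder: it amounts to a prescribed circulation around the hole. Knowing that the bad set $\{|\betalin|>M\}$ has small Lebesgue measure (Chebyshev applied to the $\mixedgrowth$-bound) does not permit a cheap repair by a gradient there: that set may cross every circle around the axis at some height, and any replacement by a gradient on it destroys the circulation, while a replacement that keeps the circulation must carry energy of order $\eps^2|b|^2$ per unit height, comparable to the whole budget $\eps^2h|b|^2\ln(R/r)$ once $\ln(R/r)$ is only moderately large. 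The obstruction is cohomological, not local, so a Poincar\'e/extension argument on an annular slice does not help.

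The paper sidesteps this by a contradiction and compactness argument. Assuming the inequality fails along a sequence with $r_j/R_j+\eps_j|b_j|/r_j\to0$, it pigeonholes over a dyadic annular covering to extract one rescaled tube $T_1^{1,\delta}$ carrying good average energy; there it decomposes $\bar\xi_j=A_j+B_j$ exactly via Remark~\ref{rem-phip}$(ii)$, and then rescales by $\delta^{k_j}R_j/(\eps_j|b_j|)\to\infty$ to $\hat A_j,\hat B_j$. The crucial observation that replaces your repair step is that $B_j\to0$ in measure, so $\hat B_j$ is supported on a vanishing set and hence $\hat B_j\weakto0$ in $L^p$, while $\hat A_j\weakto\hat A_*$ in $L^2$; the curl constraint then passes to the weak limit, so $\hat A_*$ automatically has the prescribed circulation and can be compared with $\psi_{\C^V}$ via $\infcyl$ and Lemma~\ref{lemmacellproblemlb}\ref{lemmacellproblemlbA}, the Taylor expansion of $V$ being applied only on $E_j^\eta=\{|A_j|+|B_j|\le\eta\}$, which eventually fills the tube. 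No pointwise or measure-theoretic repair is ever needed. A secondary gap: you invoke a quantitative lower bound $\infcyl\ge\psi_{\C^V}(1-\sigma(r/R))$, but Lemma~\ref{lemmacellproblemlb}\ref{lemmacellproblemlbA} and \eqref{eq:liminf_linear_problem} give only a $\liminf_{r\to0}$ statement, with no rate; the compactness argument avoids needing one by sending $\delta\to0$ only after $j\to\infty$. Reframing your proof along the compactness route would resolve both issues at once.
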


\begin{corollary}\label{corcellproblem}
Assume that $W$ satisfies \HWFinite\ for some $p\in (1,2]$.
Let $\C:=D^2W(\Id)$ {and let $\calB$ be a Bravais lattice.}
  There is a nondecreasing function $\tilde\sigma:(0,\infty)\to(0,\infty)$, with $\lim_{s\to0}\tilde\sigma(s)=0$, depending only on $W$, $\calB$ and $p$, with the following property:
  if
 $0<2r\le R\le h$, $\eps>0$, $b\in\calB$, $t\in S^2$, $\beta\in L^1(Q_t \Cyl^{R,r}_h;\R^{3\times 3})$ such that 
 \begin{equation}
  (\mu,\beta)\quad\text{ is $r$-compatible with} \quad\mu:=\eps b\otimes t \calH^1\LL (\R t\cap Q_t\Cyl^{R}_h)
 \end{equation}
 (in the sense of Definition~\ref{defbetamucompatible}, for the domain $Q_tT^R_h$)
{and $Q\in \SO(3)$ then}
  \begin{equation}
   \begin{split}\label{eq:cell-problem-nonlinear}
&\frac{1}{\eps^2 h  \ln\frac Rr }
   \int_{Q_t \Cyl^{R,r}_h} W(\beta) dx \\
   &\ge  \psiC(b,Qt) \left(1- \tilde\sigma\left(
   {\frac{r}{R}}+ \frac{\|\beta-Q\|_{L^1(Q_t\Cyl^{R,r}_h)}}{R^3}+{\frac{\eps |b|}{r}}\right)\right).
   \end{split}
  \end{equation}
Assume that $W$ satisfies \HWLin\ for some $p\in(1,2]$ and let $\C:=D^2W(0)$. Then the same holds by replacing  \eqref{eq:cell-problem-nonlinear} with
  \begin{equation}
 \begin{split}\label{eq:cell-problem-linear}
  &\frac{1}{\eps^2 h  \ln\frac Rr }
  \int_{Q_t \Cyl^{R,r}_h} W(\beta) dx 
 \ge  \psiC(b,t) \left(1- {\tilde\sigma\left(
  {\frac{r}{R}}+ 
  {\frac{\eps |b|}{r}}\right)}\right).
 \end{split}
\end{equation}
\end{corollary}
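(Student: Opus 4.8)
The goal is to deduce the corollary from Proposition~\ref{propcellproblem}, so the whole task is to translate the hypotheses of the corollary into the hypotheses of the proposition, check the constants, and then unwind the changes of variables. First I would treat the nonlinear case \HWFinite. Given the $r$-compatible pair $(\mu,\beta)$ on $Q_tT^R_h$, by Definition~\ref{defbetamucompatible} there is an extension $\tilde\mu$ and a curl-free $\beta_0$ with $\beta=\beta^{\tilde\mu}+\beta_0$ on the hollow cylinder; in particular $\curl\beta=\eps b\otimes t\,\calH^1\LL(\R t\cap Q_tT^R_h)$ there, which is exactly \eqref{eqcurlbetalinpropilb}. Next, given the rotation $Q$ from the statement, I set $\Qroten:=Q^T$ (so that the $V$ of the proposition is $V(F)=W(\Id+Q^TFQ)$) and introduce the shifted/rotated field $\betalin:=Q^T(\beta-Q)Q$. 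Using the frame indifference \eqref{eqWfWFQ} in the form $W(Q(\Id+Q^T(\beta-Q)Q))=W(\beta)$ we get $V(\betalin)=W(\beta)$ pointwise, so the two energy integrals coincide; and by Lemma~\ref{lemmarotatebetacurl}\ref{lemmarotatebetacurlbeta} (with $F=Q^T$, $\lambda=1$, rotation $Q$) the curl of $\betalin$ is $Q^T(\eps b\otimes t\,\calH^1\LL\cdots)=\eps Q^Tb\otimes Q^Tt\,\calH^1\LL(\R Q^Tt\cap\ldots)$. Here one has to be a little careful: the proposition wants the singular set to lie along $\R t$ with the cylinder $Q_tT^R_h$, so I would either absorb the rotation $Q$ into a redefinition of the reference rotation $Q_{Q^Tt}$ (using $Q^TQ_t = Q_{Q^Tt}$ up to an in-plane rotation, which does not affect the hollow cylinder) or, more cleanly, apply Lemma~\ref{lemmarotatebetacurl}\ref{lemmarotatebetacurlpsic} at the end to pass between $\psi_{\C^V}$ and $\psiC$.

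Second, I would produce the energy bound \eqref{eqproplbcylassck} needed by the proposition. The corollary does not assume it directly, so I distinguish two regimes. If $\|\mixedgrowth(|\betalin|)\|_{L^1(Q_tT^{R,r}_h)}\le c_K\eps^2 h|b|^2\ln\frac Rr$ for a suitable fixed $c_K$, then Proposition~\ref{propcellproblem} applies verbatim and gives the conclusion with error argument $\sigma(r/R+\eps|b|/r)$; since $\|\betalin\|_{L^1}=\|\beta-Q\|_{L^1}$, the extra term $\|\beta-Q\|_{L^1}/R^3$ appearing in $\tilde\sigma$ is simply carried along for free (it only makes $\tilde\sigma$ larger, which is allowed). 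If instead that $L^1$-bound fails, then $\|\beta-Q\|_{L^1(Q_tT^{R,r}_h)}/R^3$ is bounded below by a positive constant (after using $h\ge R$ and $|b|^2$ comparable — recall $b\in\calB$ so $|b|\ge c_\calB>0$ whenever $b\ne0$, and the $b=0$ case is trivial), hence $\tilde\sigma$ evaluated at that argument can be taken $\ge1$, making the right-hand side of \eqref{eq:cell-problem-nonlinear} nonpositive and the inequality trivially true. This is the standard device for removing an a-priori energy bound from the hypotheses, and it is where the $\|\beta-Q\|_{L^1}/R^3$ term in $\tilde\sigma$ earns its keep.

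Third, I would assemble the error function. Proposition~\ref{propcellproblem} outputs $\psi_{\C^V}(b,t)(1-\sigma(r/R+\eps|b|/r))$; I need $\psiC(b,Qt)(1-\tilde\sigma(\cdots))$. By Lemma~\ref{lemmarotatebetacurl}\ref{lemmarotatebetacurlpsic}, $\psi_{\C^V}(\cdot,\cdot)=\psi_{\C_{Q^T}}(\cdot,\cdot)$ and $\psiC(b,Qt)=\psi_{\C_Q}(Q^Tb,t)$; matching the Burgers vector and tangent (via the substitution $b\mapsto Q^Tb$, $t\mapsto t$, and noting $\C^V=\C_{Q^T}$ because $D^2[W(\Id+Q^T\cdot Q)](0)\cdot A\cdot A=\C(Q^TAQ)\cdot(Q^TAQ)$) these two line energies agree, so no spurious factor appears. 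Then I define $\tilde\sigma(s):=\min\{1,\,c\,\sigma(c\,s)\}$ with $c$ depending only on $W,\calB,p$ chosen large enough to cover both the rescaling of the argument (the term $\|\beta-Q\|_{L^1}/R^3$ was added; and $\eps|b|/r$ is the same up to the bounded factor $|b|\ge c_\calB$) and the trivial-case threshold. The monotonicity and $\tilde\sigma(s)\to0$ as $s\to0$ follow immediately. The linear case \HWLin\ is strictly easier: there $V:=W$, no rotation $Q$ enters the energy (since $W$ depends only on the symmetric part), one uses the Korn version of the rigidity and $\psiC(b,t)$ with no $Q$, and the term $\|\beta-Q\|_{L^1}/R^3$ is not even needed in $\tilde\sigma$ — one only keeps $r/R+\eps|b|/r$, exactly as in \eqref{eq:cell-problem-linear}.

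The only genuinely delicate point is the bookkeeping in the first step: making sure that the rotation $Q$, which rotates the \emph{tangent} of the dislocation and hence the axis of the cylinder, is handled consistently — either by transporting everything through Lemma~\ref{lemmarotatebetacurl} and keeping track of which cylinder $Q_tT^{R,r}_h$ or $Q_{Q^Tt}T^{R,r}_h$ the integral lives on, or by noting the hollow cylinder is invariant under the in-plane ambiguity in the choice of $Q_t$, so the two agree and Proposition~\ref{propcellproblem} can be applied with the rotated data as is. Everything else is a routine constant chase. I expect no essential obstacle beyond this.
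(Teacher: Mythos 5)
Your skeleton — turn $r$-compatibility into the curl condition, conjugate the strain so that $V(\xi)=W(\beta)$, invoke Proposition~\ref{propcellproblem}, and unwind via Lemma~\ref{lemmarotatebetacurl}\ref{lemmarotatebetacurlpsic} — is the right shape, but there is a genuine gap in which rotation you feed into the proposition. You apply Proposition~\ref{propcellproblem} directly with the \emph{arbitrary} rotation $Q$ of the statement, which requires the a~priori bound \eqref{eqproplbcylassck}, i.e.\ $\|\mixedgrowth(|\beta-Q|)\|_{L^1(Q_tT^{R,r}_h)}\le c_K\eps^2h|b|^2\ln\frac Rr$, and then handle its possible failure by claiming $\|\beta-Q\|_{L^1}/R^3$ is bounded below. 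That claim is false: the right-hand side of the bound carries the factor $\eps^2$ and can be arbitrarily small compared with $R^3$. Concretely, take $h=R$ large, $r=\sqrt R$, $\eps$ tiny, $\beta=\beta^{\tilde\mu}+Q_*$ with $Q_*$ the optimal rotation, and $|Q-Q_*|\sim\eps$; then $\|\mixedgrowth(|\beta-Q|)\|_{L^1}\gtrsim R^3\eps^2\gg\eps^2R\ln R$, so the a~priori bound fails, while $\|\beta-Q\|_{L^1}/R^3\lesssim\eps$ is tiny and $\int W(\beta)\lesssim\eps^2h|b|^2\ln\frac Rr$ is in the nontrivial regime, so neither branch of your dichotomy yields the conclusion.

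What is missing is a rigidity step. The paper first reduces to $R\le h\le 2R$ by subdividing the cylinder, then disposes of the case $\int W(\beta)\,dx\ge c_u\eps^2h|b|^2\ln\frac Rr$ directly using $\psiC(b,Qt)\le c_u|b|^2$ from Lemma~\ref{lemmacellproblemlb}\ref{lemmacellproblemlbbcont}. Otherwise, Lemma~\ref{lemmarigidcylinderhole} together with \eqref{eqlowerboundWfinite} produces a rotation $Q_*\in\SO(3)$ — \emph{not} the $Q$ of the statement — with
\begin{equation*}
 \int_{Q_tT^{R,r}_h}\mixedgrowth(|\beta-Q_*|)\,dx \le c\int_{Q_tT^{R,r}_h}W(\beta)\,dx < cc_u\,\eps^2h|b|^2\ln\tfrac Rr ,
\end{equation*}
which \emph{is} \eqref{eqproplbcylassck} for $\xi:=Q_*^T\beta-\Id$, $\Qroten:=Q_*$, $c_K:=cc_u$ (depending only on $\C$). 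Proposition~\ref{propcellproblem} then applies with $\ell_0:=\min\{|b|:b\in\calB\setminus\{0\}\}$ and, via Lemma~\ref{lemmarotatebetacurl}\ref{lemmarotatebetacurlpsic}, yields a lower bound with $\psi_{\C_{Q_*}}(Q_*^Tb,t)=\psiC(b,Q_*t)$. To pass from $Q_*$ to the arbitrary $Q$ one invokes the Lipschitz dependence on the tangent, Lemma~\ref{lemmacellproblemlb}\ref{lemmacellproblemlbtcont}, at the cost of a factor $(1-c|Q-Q_*|)$, and estimates $|Q-Q_*|$ by integrating $Q-Q_*=(\beta-Q_*)-(\beta-Q)$ over the cylinder of volume $\sim R^3$ and controlling $\|\beta-Q_*\|_{L^1}$ by the rigidity bound. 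That is exactly how the term $\|\beta-Q\|_{L^1}/R^3$ enters $\tilde\sigma$: it is the price of allowing an arbitrary $Q$, not a free add-on. A minor further remark: with $\Qroten:=Q^T$ and $\xi:=Q^T(\beta-Q)Q$ one gets $V(\xi)=W(\Id+(Q^T)^2(\beta-Q)Q^2)\ne W(\beta)$; the consistent choice is $\Qroten:=Q_*$ and $\xi:=Q_*^T\beta-\Id$, which gives $V(\xi)=W(\Id+Q_*\xi Q_*^T)=W(\beta Q_*^T)=W(\beta)$ and $\curl\xi=\eps Q_*^Tb\otimes t\,\calH^1\LL(\R t\cap Q_tT^R_h)$.
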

\begin{proof}
Assume \HWFinite\ holds.  
Without loss of generality, $R\le h\le 2R$. Otherwise we decompose the cylinder 
in 
$\lfloor h/R\rfloor$ pieces with height $h'\in [R,2R)$, apply the result to 
each of them and sum.
By $r$-compatibility, there are an extension $\tilde\mu\in \calM^1(\R^3)$ of $\mu$ and $\beta_0\in L^{1}(Q_t T^{R}_h;\R^{3\times 3})$ with
$\curl\beta_0=0$ in $Q_t T^{R}_h$ and
$\beta=\beta^{\tilde\mu}+\beta_0$ in $Q_t T^{R,r}_h$.
We use $\beta^{\tilde\mu}+\beta_0$ to define an extension
$\beta\in L^1(Q_t T^{R}_h;\R^{3\times3})$ such that
\begin{equation}\label{eqcurlbetacorr}
 \curl\beta = \eps b\otimes t \calH^1\LL (\R t\cap Q_t\Cyl^{R}_h).
\end{equation}
By Lemma~\ref{lemmacellproblemlb}
there is $c_u>0$ such that
$\psiC(b,t)\le c_u |b|^2$ for all $b$ and $t$. If 
$\int_{Q_t \Cyl^{R,r}_h} W(\beta) dx\ge c_u \eps^2 h |b|^2 \ln\frac Rr$, we are done. Therefore we can assume the converse inequality holds.
 By the rigidity estimate in Lemma~\ref{lemmarigidcylinderhole} and \eqref{eqlowerboundWfinite}, there is a rotation $Q_*\in\SO(3)$ such that
\begin{equation}\label{eqrigiditycor}
 \int_{Q_t \Cyl^{R,r}_h} \mixedgrowth(|\beta-Q_*|)dx \le c \int_{Q_t \Cyl^{R,r}_h} 
W(\beta) dx 
 < cc_u \eps^2 h |b|^2 \ln\frac Rr .
\end{equation}
We define
\begin{equation}
\betalin:=Q_*^T\beta-\Id\quad
\text{ and }\quad
V(F):=
W(\Id+Q_*F Q_*^T) \text{ for } F\in\R^{3\times 3},
\end{equation}
so that
$V(\xi)=
W(\beta Q_*^T)=W(\beta)$
and  $D^2V(0)=\C_{Q^*}$, which was
defined from $\C=D^2W(\Id)$ in \eqref{eqdefCQ}.
We observe that 
 \eqref{eqcurlbetacorr}
gives
 \begin{equation}
\curl\betalin=Q_*^T\curl\beta=
\eps Q_*^Tb\otimes t \calH^1\LL (\R t\cap Q_t\Cyl^{R}_h),
 \end{equation}
 which is \eqref{eqcurlbetalinpropilb} with $Q_*^Tb$ in place of $b$.
Equation
\eqref{eqrigiditycor} implies
\eqref{eqproplbcylassck} with $c_K:=cc_u$, which depends only on $\C$.
By Proposition~\ref{propcellproblem} with $\ell_0:=\min\{|b|: b\in\calB\setminus\{0\}\}$ and the above estimates we obtain
  \begin{equation} \label{eqlbpsiqtb}
\frac{1}{\eps^2 h  \ln\frac Rr }
  \int_{Q_t \Cyl^{R,r}_h} W(\beta) dx \ge \psi_{\C_{Q^*}}(Q_*^Tb,t) \left(1- \sigma\left(
  \frac{r}{R}+ \frac{\eps |b|}{r}\right)\right).
  \end{equation}
  By Lemma~\ref{lemmarotatebetacurl}\ref{lemmarotatebetacurlpsic},
  $\psi_{\C_{Q^*}}(Q_*^Tb,t)=\psiC(b,Q_*t)$.
By  Lemma~\ref{lemmacellproblemlb}\ref{lemmacellproblemlbtcont} there is $c>0$, depending only on $\C$, such that
\begin{equation}
\psiC(b,Qt) \le 
 (1+c|Q-Q_*|)  \psiC(b,Q_*t)
\end{equation}
which implies
\begin{equation}
 (1-c|Q-Q_*|) \psiC(b,Qt) \le \psiC(b,Q_*t) .
\end{equation}
Therefore
\begin{equation} \label{lowerboundW}
\begin{split}
&\frac{1}{\eps^2 h  \ln\frac Rr }
   \int_{Q_t \Cyl^{R,r}_h} W(\beta) dx \\
 &  \ge  \psiC(b,Qt) 
   \left(1- \sigma\left( {\frac{r}{R}+} \frac{\eps |b|}{r}\right)\right)(1-c|Q-Q_*|).
   \end{split}
  \end{equation}
  From \eqref{eqrigiditycor}, treating separately the part with $|\beta-Q_*|<1$ and the one with $|\beta-Q_*|\ge 1$, we have
  \begin{equation}\begin{split}
\frac\pi2 R^2h&|Q-Q_*|\le \|\beta-Q_*\|_{L^1(Q_t\Cyl^{R,r}_h)}+\|\beta-Q\|_{L^1(Q_t\Cyl^{R,r}_h)} \\
   &\le C\Bigl(\eps h R |b| \ln^{1/2} \frac Rr + {\eps^{2/p}R^{2/p'} h  |b|^{2/p}\ln^{1/p}\frac Rr}\Bigr)+\|\beta-Q\|_{L^1(Q_t\Cyl^{R,r}_h)},
               \end{split}
  \end{equation}
where $p'$ is the conjugate of $p$ (i.e., $p'=\frac{p}{p-1}$).
  Therefore, recalling that $h\in [R,2R]$
  {and that $\log t\le t^2$  for all $t>0$},
  \begin{equation}\label{Q-Q*}\begin{split}
   |Q-Q_*|&\le  C\Bigl(\frac{\eps  |b|}{R} \ln^{1/2} \frac Rr + \frac{\eps^{2/p}}{R^{2/p}}  
   |b|^{2/p}\ln^{1/p}\frac Rr+\frac{\|\beta-Q\|_{L^1(Q_t\Cyl^{R,r}_h)}}{R^3}\Bigr)\\
   &\le  C\Bigl(\frac{\eps  |b|}{r}+ \frac{\eps^{2/p}}{r^{2/p}}  
   |b|^{2/p}+\frac{\|\beta-Q\|_{L^1(Q_t\Cyl^{R,r}_h)}}{R^3}\Bigr).
               \end{split}
  \end{equation}
Plugging \eqref{Q-Q*} into \eqref{lowerboundW} we easily obtain \eqref{eq:cell-problem-nonlinear}. 

{The proof of \eqref{eq:cell-problem-linear} is similar but simpler. Indeed, one obtains from Lemma~\ref{lemmarigidcylinderhole} a matrix $S_*\in\R^{3\times 3}_\skw$
with the property corresponding to \eqref{eqrigiditycor}. One then sets 
$\xi:=\beta-S_*$, which obeys $\curl\xi=\curl\beta$. Therefore
 Proposition~\ref{propcellproblem}   gives,
 using $V(\xi)=W(\xi)=W(\beta)$,
 \eqref{eqlbpsiqtb}
with $\psiC(b,t)$ instead of $\psi_{\C_{Q^*}}(Q_*^Tb,t)$, and 
\eqref{eq:cell-problem-linear} follows.}
\end{proof}

\newcommand{\Deltaj}{\Delta_j}
\begin{proof}[Proof of Proposition~\ref{propcellproblem}]
We first show that there is $c\ge1$ such that
\begin{equation}\label{lemmaVab}
 V(a+b)\le c V(a)+c\mixedgrowth(|b|) \text{ for all $a,b\in\R^{3\times3}$}.
\end{equation}
To prove this, assume first \HWFinite, and to shorten notation let $a':=\Qroten a\Qroten^T$, $b':=\Qroten b\Qroten^T$.
From
\eqref{eqlowerboundWfinite}
we obtain 
\begin{equation}\begin{split}              
 V(a+b)&=W(\Id+a'+b')\le c \mixedgrowth(\dist(\Id+a'+b',\SO(3))) .
             \end{split}
\end{equation}
Using $\dist(\Id+a'+b',\SO(3))\le
\dist(\Id+a',\SO(3))+|b'|$, monotonicity of $\mixedgrowth$,
\eqref{eqlemmaVabG} and then \eqref{eqlowerboundWfinite}
 again we have
\begin{equation}\begin{split}
 V(a+b)&\le c 
 \mixedgrowth(\dist(\Id+a',\SO(3))+|b|)
 \\
 &\le 2c \mixedgrowth(\dist(\Id+a',\SO(3)))+
 2c \mixedgrowth(|b|)\\
&\le 2c^2 W(\Id+a')+2c \mixedgrowth(|b|)
 = 2c^2V(a) +2c\mixedgrowth(|b|),
  \end{split}
\end{equation}
and then \eqref{lemmaVab} follows. 

If \HWLin\ holds, one similarly computes using \eqref{eqlowerboundWlin} and \eqref{eqlemmaVabG} 
\begin{equation}
\begin{split}
 V(a+b)&=W(a+b)\le c \mixedgrowth(|a+b|)
 \\
& \le 2c \mixedgrowth(|a|)+ 2c \mixedgrowth(|b|)
\le 2c^2V(a)+2c \mixedgrowth(|b|).
\end{split}
\end{equation}
{This concludes the proof of \eqref{lemmaVab}.}

We reason by contradiction and assume that there 
exist $\rho>0$ 
and  sequences $r_j$, $R_j$, $h_j$,
 $\eps_j$, $b_j$, $t_j$, $\xi_j$ as in the statement 
 {such that, writing $Q_j:=Q_{t_j}$},
\begin{equation}
\frac{1}{\eps_j^2 h_j  \ln\frac {R_j}{r_j}} 
\int_{{Q_j} \Cyl^{R_j,r_j}_{h_j}} V(\xi_j) dx\le
 (1-2\rho)
 \psi_{\C^V}({b_j},t_j),
\end{equation}
\begin{equation}\label{eqassxicK}
 {\|\mixedgrowth(|\xi_j|)\|_{L^1({Q_j}\Cyl^{R_j,r_j}_{h_j}
)}}
 \le c_K {{\eps_j^2 h_j  |b_j|^2}\ln\frac{ R_j}{r_j}},
\end{equation}
 and
 \begin{equation}\label{eqparkinfty0}
\Deltaj:=\frac{r_j}{R_j}+ \frac{\eps_j |b_j|}{r_j}\to0.
 \end{equation}
  Since $\psi_{\C^V}$ is {positively} two-homogeneous in the first argument (see 
\eqref{eqvarprobbetzabt} and \eqref{eqbetamuline2}, or
\cite[Lemma~5.1(iii)]{ContiGarroniOrtiz2015}),
\begin{equation}
\frac{1}
{\eps_j^2 h_j |b_j|^2 \ln\frac {R_j}{r_j}} 
\int_{Q_{j }\Cyl^{R_j,r_j}_{h_j}} V(\xi_j) dx\le
 (1-2\rho)
 \psi_{\C^V}\left(\frac{b_j}{|b_j|},t_j\right).
\end{equation}
After passing to a subsequence, we can assume that $\frac{b_j}{|b_j|}\to b_*$ and $Q_j\to Q_*$, for some $b_*\in S^2$,
$Q_*\in\SO(3)$; this implies $t_j=Q_je_3\to t_*:=Q_*e_3$.
By continuity of $\psi_{\C^V}$
{(which follows from
\eqref{eqvarprobbetzabt} and \eqref{eqbetamuline2}),} 
\begin{equation}\label{eqcontradictcellpb3dtmp}
\limsup_{j\to\infty} \frac{1}
{\eps_j^2 h_j |b_j|^2 \ln\frac {R_j}{r_j}} 
\int_{{Q_j} \Cyl^{R_j,r_j}_{h_j}} V(\xi_j) \le
 (1-2\rho) \psi_{\C^V}(b_*, t_*).
\end{equation}
Choose now $\lambda>0$ such that $\lambda c_K\le \rho
 \psi_{\C^V}(b_*, t_*)$.
By \eqref{eqcontradictcellpb3dtmp} and \eqref{eqassxicK},
\begin{equation}\label{eqcontradictcellpb3d}
\limsup_{j\to\infty} \frac{1}
{\eps_j^2 h_j |b_j|^2 \ln\frac {R_j}{r_j}} 
\int_{{Q_j} \Cyl^{R_j,r_j}_{h_j}} V(\xi_j) +\lambda \mixedgrowth(|\xi_j|)dx\le
 (1-\rho) \psi_{\C^V}(b_*, t_*).
\end{equation}
Let $\delta\in(0,\frac12]$ be fixed. We observe that the sets
\begin{equation}
 C^{i,k}_j:=
 {Q_j}((B'_{\delta^k R_j}\setminus B'_{\delta^{k+1}R_j}) \times (i\delta^k R_j, (i+1)\delta^k R_j))
\end{equation}
for 
$k\in\N\cap[1, (1-\delta)\frac{\ln R_j/r_j}{\ln 1/\delta}-1]$, 
$i\in \N\cap[0, \frac{h_j}{\delta^k R_j}-1]$
are disjoint and contained in ${Q_j} \Cyl^{R_j,r_j
}_{h_j}$.
In particular for any such $k$ we have
\begin{equation}\label{eqdeltaRkj}
\delta^{k+1}R_j\ge R_j^{\delta}r_j^{1-\delta}. 
\end{equation}
We choose $k_j$ (depending on $j$) such that
\begin{equation}
\begin{split}
 {\left\lfloor (1-\delta)\frac{\ln R_j/r_j}{\ln 1/\delta}-1\right\rfloor}  
\int_{\cup_i C^{i,k_j}_j} V(\xi_j) +\lambda \mixedgrowth(|\xi_j|) dx \\
\le \int_{{Q_j} 
\Cyl^{R_j,r_j}_{h_j}} V(\xi_j)  +\lambda \mixedgrowth(|\xi_j|)dx
\end{split}
\end{equation}
and then $i_j$ (depending on $j$) with
\begin{equation}\label{C^i,k}
\begin{split}
\left\lfloor (1-\delta)\frac{\ln R_j/r_j}{\ln 1/\delta}-1\right\rfloor&
 \left\lfloor \frac{h_j}{\delta^{k_j} R_j} \right\rfloor 
  \int_{C^{i_j,k_j}_j} V(\xi_j) +\lambda \mixedgrowth(|\xi_j|)dx \\
 &\le  \int_{{Q_j} 
\Cyl^{R_j,r_j}_{h_j}} V(\xi_j)+\lambda \mixedgrowth(|\xi_j|)dx.
\end{split}
\end{equation}
Inserting \eqref{C^i,k} in \eqref{eqcontradictcellpb3d} and taking the limit, using $R_j/r_j\to\infty$ and that $h_j\ge R_j$ and $k_j\ge1$ imply
$\lfloor \frac{h_j}{\delta^{k_j} R_j} \rfloor\ge 
\frac{h_j}{\delta^{k_j} R_j} (1-\delta)$, 
we obtain
\begin{equation}\label{eqestimcyl1dei}
 \limsup_{j\to\infty} 
 \frac{(1-\delta)^2}{\eps_j^2|b_j|^2 \delta^{k_j}R_j\ln\frac 1\delta}
 \int_{C^{i_j,k_j}_j} V(\xi_j) +\lambda \mixedgrowth(|\xi_j|) dx \le  (1-\rho) \psi_{\C^V}(b_*, 
t_*).
\end{equation}
We now rescale to a fixed {tube} $\Cyl_1^{1,\delta}$.
As in the entire
proof, the maps are defined and integrable over the entire cylinder $T^1_1$, but the estimates are only on the restriction to the tube  $T^{1,\delta}_1$.
Precisely, we define 
$\bar\xi_j\in L^1(\Cyl_1^{1};\R^{3\times 3})$ by
\begin{equation}\label{eqdefbarxi}
 \bar\xi_j(x):={Q_j^T}\xi_j(\delta^{k_j}R_j {Q_j}x+  i_j \delta^{k_j}R_jt_j){Q_j}
\end{equation}
and observe that it obeys (see Lemma~\ref{lemmarotatebetacurl}\ref{lemmarotatebetacurlbetameas}, and recall that $Q_j^Tt_j=e_3$)
\begin{equation}\label{eqcurlbarxij}
 \curl\bar\xi_j=\frac{\eps_j{Q_j^T}b_j}{\delta^{k_j}R_j} \otimes e_3 \calH^1\LL 
(0,1)e_3.
\end{equation}
Using \eqref{eqestimcyl1dei}, with a change of variables we obtain
\begin{equation}\label{eqs23434}
 \limsup_{j\to\infty} 
 \frac{ (\delta^{k_j}R_j)^2(1-\delta)^2}{\eps_j^2|b_j|^2 \ln\frac 1\delta}
 \int_{\Cyl_1^{1,\delta}} V({{Q_j}}\bar\xi_j {Q_j}^T) dx \le  (1-\rho) \psi_{\C^V}(b_*, t_*)
\end{equation}
and, for sufficiently large $j$,
\begin{equation}\label{eqs23434b}
 \frac{ (\delta^{k_j}R_j)^2 }{\eps_j^2|b_j|^2 }
 \int_{\Cyl_1^{1,\delta}} \mixedgrowth(|\bar\xi_j|)dx \le   \frac2\lambda\psi_{\C^V}(b_*, t_*) \ln \frac1\delta.
\end{equation}
In the rest of the argument we shall use \eqref{eqcurlbarxij}, \eqref{eqs23434}, 
\eqref{eqs23434b} and the definition of $\psi_{\C^V}$ to reach a contradiction.

By Remark~\ref{rem-phip}(ii)
and \eqref{eqs23434b} there are 
$A_j,B_j:\Cyl_1^{1,\delta}\to\R^{3\times 3}$ such that 
       \begin{equation}\label{eqxijrigab}
        \bar\xi_j=
    A_j+B_j,
       \end{equation}
$|A_j|\le 1$ everywhere, 
$|B_j|\ge 1$ on the set $\{B_j\ne0\}$,
and for sufficiently large $j$
\begin{equation}\label{eqAjBj2p}
 \frac{ (\delta^{k_j}R_j)^2}{\eps_j^2|b_j|^2 }
 \int_{\Cyl_1^{1,\delta}}
( |A_j|^2+|B_j|^p )dx \le c C \ln \frac1\delta.
\end{equation}
Recalling that $\eps_j|b_j|/r_j\to0$ and that $r_j/R_j\to0$, we compute
with \eqref{eqdeltaRkj}
\begin{equation}\label{eqestconefsdd}
   \frac{ \delta^{k_j}R_j }{\eps_j {|b_j|}}\ge
 \frac{r_j}{\eps_j{|b_j|}} (R_j/r_j)^{\delta} \to\infty.
\end{equation}
From \eqref{eqAjBj2p} we obtain $A_j\to0$ in $L^2(T_1^{1,\delta};\R^{3\times 3})$ and $B_j\to0$ in $L^p(T_1^{1,\delta};\R^{3\times 3})$. 
We define  $\hat A_j, \hat B_j\in L^1({T_1^{1,\delta}};\R^{3\times 3})$ by 
\begin{equation}\label{eqrescalehataa}
\hat A_j:=\frac{\delta^{k_j}R_j}{\eps_j|b_j| } A_j\hskip5mm\text{ and 
}\hskip5mm
\hat B_j:=\frac{\delta^{k_j}R_j}{\eps_j|b_j| }B_j.
   \end{equation}
With \eqref{eqAjBj2p} and
\eqref{eqestconefsdd}   
    we obtain that 
    \begin{equation}
     \limsup_{j\to\infty} \|\hat B_j\|_{L^p(T_1^{1,\delta})} +
     \limsup_{j\to\infty} \|\hat A_j\|_{L^2(T_1^{1,\delta})} <\infty.
    \end{equation}
Passing to a further subsequence, $\hat A_j\weakto \hat A_*$ weakly in $L^2(T_1^{1,\delta};\R^{3\times 3})$. 
Let $\eta\in(0,1)$ be fixed and set
\begin{equation}
E_j^\eta:=\{x\in T^{1,\delta}_1: 
(|A_j|+|B_j|)(x)\le\eta\}.
\end{equation}
{We recall that} $A_j\to0$ in $L^2$ and $B_j\to0$ in $L^p$. In particular, they converge  to zero in measure, and therefore $\calL^3(T^{1,\delta}_1\setminus E^\eta_j)\to0$.
On $E^\eta_j$ we have $B_j=0$
 and in particular $\hat B_j
 =\hat B_j \chi_{T_1^{1,\delta}\setminus E^\eta_j} 
 \weakto0$ in $L^p(T_1^{1,\delta};\R^{3\times 3})$.
By the differentiability of  $V$ in a neighbourhood of the origin, if $\eta$ is sufficiently small we have
\begin{equation}\label{eqlbtaylor}
\begin{split}
V(F)\ge 
{V(0)+DV(0)\cdot F+}
\frac12 \C^V F \cdot F - \omega({\eta}) |F|^2 \\
\text{ for all } 
F\in\R^{3\times 3} \text{ with } |F|\le \eta,
\end{split}
\end{equation}
where $\C^V:=D^2V(0)$ and
$\omega:[0,\infty)\to[0,\infty)$ is monotone, continuous, with $\omega(0)=0$.
We shall use this estimate with $F={Q_j}A_jQ_j^T$ on $E^\eta_j$.
As $F=0$ is a minimizer of $V$, $V(0)=0$ and $DV(0)=0$, so that
\begin{equation}
\begin{split}
 V({Q_j}\bar\xi_jQ_j^T)\ge &
  V({Q_j}A_jQ_j^T)\chi_{E^\eta_j} \\
 \ge& \frac12 \C^V
({Q_j}A_jQ_j^T)\cdot ({Q_j}A_jQ_j^T)\chi_{E^\eta_j} -\omega(\eta) 
|A_j|^2
\end{split}
\end{equation}
which is the same as
\begin{equation}
 \frac{(\delta^{k_j}R_j)^2}{\eps_j^2|b_j|^2 } 
 V({Q_j}\bar\xi_jQ_j^T)\ge \frac12 \C^V  ({Q_j}\hat A_jQ_j^T) \cdot ({Q_j}\hat A_jQ_j^T)\chi_{E^\eta_j} 
-\omega(\eta) |\hat A_j|^2.
\end{equation}
We recall that $\calL^3(T^{1,\delta}_1\setminus E^\eta_j)\to0$, which together with
$  \hat A_j\weakto \hat A_*$ and $Q_j\to Q_*$ implies
\begin{equation}
 {Q_j} 
 \hat A_jQ_j^T\chi_{E^\eta_j} \weakto {Q_*}
\hat  A_*Q_*^T\text{ weakly in $L^2(T_1^{1,\delta};\R^{3\times 3})$}.
\end{equation}
As $\hat A_j$ is bounded in $L^2(T_1^{1,\delta};\R^{3\times 3})$
\begin{equation}
\liminf_{j\to\infty} \int_{T^{1,\delta}_1}
\frac{(\delta^{k_j}R_j)^2}{\eps_j^2|b_j|^2 } 
V( {Q_j}\bar\xi_jQ_j^T)dx \ge \int_{T^{1,\delta}_1}\frac12 \C^V  ( {Q_*}\hat A_*Q_*^T)\cdot ( {Q_*}\hat 
A_*Q_*^T)dx -c \omega(\eta)
\end{equation}
for every $\eta\in (0,1)$ sufficiently small and therefore for $\eta=0$.
Hence recalling \eqref{eqs23434}, and 
{with $\C^V_{Q^*}$  defined from $\C^V$ as} 
in \eqref{eqdefCQ}, 
for any $\delta\in(0,\frac12]$ we have
\begin{equation}\label{eqqdeltasdf}
\frac{(1-\delta)^2}{\ln \frac1\delta}
\int_{T^{1,\delta}_1}\frac12 \C^V_{Q_*} \hat A_*\cdot \hat A_*dx\le (1-\rho) 
\psi_{\C^V}(b_*, t_*).
\end{equation}
To conclude it remains to relate the left-hand side to $\psiC$. We shall show  below that there is $u_*\in W^{1,1}(T_1^{1,\delta};\R^3)$ such that
\begin{equation}\label{eqhatadust}
 \hat A_*=Du_*+\beta_{Q_*^Tb_*,e_3},
\end{equation}
 with $\beta_{b,t}$ defined as in
Lemma~\ref{lemmabetabtkernel1}. We recall the definition of 
$\infcyl$ in \eqref{eqdefabthRr},  and see 
that $Du_*+\beta_{Q_*^Tb_*,e_3}$ is an admissible test field, so that  \eqref{eqqdeltasdf} implies
\begin{equation}
{(1-\delta)^2}
\infcyl(\C^V_{Q_*},Q_*^Tb_*,e_3,1,1,\delta)\le (1-\rho) 
\psi_{\C^V}(b_*, t_*).
\end{equation}
Taking the limit $\delta\to0$ with Lemma~\ref{lemmacellproblemlb}, 
we obtain
\begin{equation}
\psi_{\C^V_{Q_*}}(Q_*^Tb_*,e_3)\le (1-\rho) 
\psi_{\C^V}(b_*, t_*).
\end{equation}
Recalling that by 
Lemma~\ref{lemmarotatebetacurl}\ref{lemmarotatebetacurlpsic}
one has
$\psi_{\C^V_{Q_*}}(Q_*^Tb_*,Q_*^Tt_*)=
\psi_{\C^V}(b_*,t_*)$
and that
$Q_*e_3=t_*$ gives the desired
contradiction.

It remains to prove \eqref{eqhatadust}. We define
$\bar\beta_j\in L^1(T_1^1;\R^{3\times 3})$ as the strain field associated to the Burgers vector $Q_j^Tb_j/|b_j|$ appearing (after scaling) in \eqref{eqcurlbarxij},
\begin{equation}
\bar\beta_j :=\beta_{Q_j^Tb_j/|b_j|, e_3}.
\end{equation}
By \eqref{eqcurlbarxij}
we have $\curl(\frac{\delta^{k_j}R_j}{\eps_j|b_j|}\bar \xi_j-  {\bar\beta}_j)=0$ in $T_1^1$ 
and there is $v_j\in W^{1,1}(T_1^1;\R^3)$ such that $
\frac{\delta^{k_j}R_j}{\eps_j|b_j|}\bar \xi_j ={\bar\beta_j}+Dv_j$.
Inserting \eqref{eqxijrigab} and \eqref{eqrescalehataa} we obtain
\begin{equation}
 \hat A_j+\hat B_j = {\bar\beta_j} + Dv_j \hskip5mm\text{ in } T_1^{1,\delta}.
\end{equation}
We recall that $\hat A_j\weakto A_*$ in $L^2(T_1^{1,\delta};\R^{3\times 3})$ and
$\hat B_j\weakto 0$ in $L^p(T_1^{1,\delta};\R^{3\times 3})$. Further,
$b_j/|b_j|\to b_*$, $Q_j\to Q_*$, $t_j\to t_*$, and $\beta_{b,t}$ depends continuously on $b$ and $t$ away from the singularity, which implies that
$\bar\beta_j$ converges uniformly in $T^{1,\delta}_1$ to
\begin{equation}
\bar\beta_*:=\beta_{Q_*^Tb_*, e_3}.
\end{equation}
Therefore the sequence $v_j$ (possibly after adding a constant to each of them) is bounded in $W^{1,p}(T_1^{1,\delta};\R^3)$ and
there is $ v_*\in W^{1,1}(T_1^{1,\delta};\R^3)$ with 
\begin{equation}
 \hat A_*= {\bar\beta_*} + D v_* \hskip5mm\text{ in } T_1^{1,\delta}.
\end{equation}
This
concludes the proof of \eqref{eqhatadust} and of the proposition.
\end{proof}

The cell problems from \cite{ContiGarroniOrtiz2015,garroni2020nonlinear} are all immediate consequences  
of
Proposition~\ref{propcellproblem}, via Corollary~\ref{corcellproblem}.
As an easy consequence one can also prove a lower bound when the dislocation is straight.
\begin{proposition}
Assume that $W$ satisfies \HWFinite\ for some $p\in(1,2]$.
Let $\C:=D^2W(\Id)$.
For any $0<R\le h$, $b\in\calB$, $t\in S^2$, let 
  $(\mu_\eps,\beta_\eps)\in \calM^1(Q_t\Cyl^{R}_h)\times L^1(Q_t \Cyl^{R}_h;\R^{3\times 3})$ be $\rho_\eps$-compatible, with $\rho_\eps\ge\eps$, 
  in the sense of Definition \ref{defbetamucompatible}, 
with $\mu_\eps:=\eps b\otimes t \calH^1\LL (\R t\cap Q_t\Cyl^{R}_h)$.
 Assume that
 \begin{equation}
  \lim_{\eps\to0}\frac{\log\eps}{\log\rho_\eps}=1
 \end{equation}
and
 \begin{equation}\label{eq:diff-rotation}
 \lim_{\eps\to0}\|\beta_\eps-Q\|_{L^1(Q_tT^{R,\rho_\eps}_h)}=0,
 \end{equation}
for some $Q\in \SO(3)$. Then 
 \begin{equation}\label{eq:asymptotic-nonlinear-cell}
  \psiC(b,Qt)\le \liminf_{\eps\to0} \frac{1}{\eps^2 h \ln\frac R{\eps}}
  \int_{Q_tT^{R,\rho_\eps}_h} W(\beta_\eps) dx .
 \end{equation}
Assume that $W$ satisfies \HWLin\ for some $p\in(1,2]$.
Let $\C:=D^2W(0)$. Then the same holds by replacing 
\eqref{eq:asymptotic-nonlinear-cell} with
 \begin{equation}\label{eq:asymptotic-linear-cell}
 \psiC(b,t)\le \liminf_{\eps\to0} \frac{1}{\eps^2 h \ln \frac R\e}
 \int_{Q_tT^{R,\rho_\eps}_h} W(\beta_\eps) dx .
\end{equation}
\end{proposition}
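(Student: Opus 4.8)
The plan is to deduce this final proposition directly from Corollary~\ref{corcellproblem}, which already packages the cell-problem lower bound with the error term expressed via $\|\beta-Q\|_{L^1}$, $\eps|b|/r$ and $r/R$. The only work is to verify that along the sequence $\eps\to0$ all three error contributions vanish when we take $r=\rho_\eps$, and to handle the cosmetic difference between $\ln(R/\rho_\eps)$ and $\ln(R/\eps)$ in the prefactor.

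First I would assume without loss of generality that the $\liminf$ on the right is finite (otherwise there is nothing to prove) and pass to a subsequence realizing it. For each $\eps$ the pair $(\mu_\eps,\beta_\eps)$ is $\rho_\eps$-compatible with $\mu_\eps=\eps b\otimes t\,\calH^1\LL(\R t\cap Q_tT^R_h)$, so Corollary~\ref{corcellproblem} applies with $r=\rho_\eps$ and gives
\begin{equation}\label{eqfinalpropapply}
\frac{1}{\eps^2 h \ln\frac{R}{\rho_\eps}}\int_{Q_tT^{R,\rho_\eps}_h} W(\beta_\eps)\,dx
\ge \psiC(b,Qt)\left(1-\tilde\sigma\left(\frac{\rho_\eps}{R}+\frac{\|\beta_\eps-Q\|_{L^1(Q_tT^{R,\rho_\eps}_h)}}{R^3}+\frac{\eps|b|}{\rho_\eps}\right)\right).
\end{equation}
Now $\rho_\eps\to0$ (a consequence of $\rho_\eps\ge\eps$ together with $\log\eps/\log\rho_\eps\to1$, or simply of $\rho_\eps\ge\eps\to0$ combined with $\rho_\eps\to0$ being forced by the logarithmic relation), so $\rho_\eps/R\to0$; the hypothesis \eqref{eq:diff-rotation} kills the middle term; and $\eps|b|/\rho_\eps\le |b|$, but more is true: $\log\eps/\log\rho_\eps\to1$ forces $\rho_\eps/\eps\to\infty$ (since $\log\rho_\eps-\log\eps=\log(\rho_\eps/\eps)=o(|\log\eps|)$ but is also $\ge0$, and one checks that $\rho_\eps/\eps\to\infty$ is exactly what $\log\eps/\log\rho_\eps\to1$ with $\rho_\eps\ge\eps$, $\eps\to0$ gives), hence $\eps|b|/\rho_\eps\to0$. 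Therefore the argument of $\tilde\sigma$ tends to $0$, and by $\lim_{s\to0}\tilde\sigma(s)=0$ the right-hand side of \eqref{eqfinalpropapply} tends to $\psiC(b,Qt)$.

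Next I would replace $\ln(R/\rho_\eps)$ by $\ln(R/\eps)$: since $\rho_\eps\ge\eps$ we have $\ln(R/\rho_\eps)\le\ln(R/\eps)$, and the ratio $\ln(R/\rho_\eps)/\ln(R/\eps)\to1$ because $\log\eps/\log\rho_\eps\to1$ (both logarithms diverge and their quotient is $(\log R-\log\rho_\eps)/(\log R-\log\eps)\to1$). Multiplying \eqref{eqfinalpropapply} through by $\ln(R/\rho_\eps)/\ln(R/\eps)\le1$ and taking $\liminf$ yields
\begin{equation}
\liminf_{\eps\to0}\frac{1}{\eps^2 h\ln\frac{R}{\eps}}\int_{Q_tT^{R,\rho_\eps}_h}W(\beta_\eps)\,dx
\ge \liminf_{\eps\to0}\frac{\ln\frac{R}{\rho_\eps}}{\ln\frac{R}{\eps}}\cdot\psiC(b,Qt)\left(1-\tilde\sigma(\cdots)\right)=\psiC(b,Qt),
\end{equation}
which is \eqref{eq:asymptotic-nonlinear-cell}. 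For the linear case under \HWLin\ the identical argument works, invoking the second half of Corollary~\ref{corcellproblem} (inequality \eqref{eq:cell-problem-linear}), whose error term does not even contain the $\|\beta-Q\|_{L^1}$ contribution, so the hypothesis \eqref{eq:diff-rotation} is only needed to guarantee $r$-compatibility is meaningful and the conclusion \eqref{eq:asymptotic-linear-cell} follows verbatim.

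There is essentially no hard part here; the proposition is a clean corollary. The only point requiring a moment's care is the elementary logarithmic bookkeeping — verifying that $\log\eps/\log\rho_\eps\to1$ together with $\rho_\eps\ge\eps\to0$ forces both $\rho_\eps\to0$ and $\rho_\eps/\eps\to\infty$ — which I would dispatch with a one-line computation using $\log(\rho_\eps/\eps)=o(|\log\eps|)$ and $\rho_\eps/R<1$ for small $\eps$.
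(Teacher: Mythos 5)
Your proposal is almost right in spirit, but there is a genuine gap in the choice $r=\rho_\eps$, and the bookkeeping claim you flag as ``the only point requiring care'' is in fact false. You assert that $\log\eps/\log\rho_\eps\to1$ together with $\rho_\eps\ge\eps\to0$ forces $\rho_\eps/\eps\to\infty$. It does not: the case $\rho_\eps=\eps$ is allowed by the hypotheses (then $\log\eps/\log\rho_\eps\equiv1$), and there $\eps|b|/\rho_\eps=|b|$ is a fixed nonzero number. Your inference ``$\log(\rho_\eps/\eps)=o(|\log\eps|)$ and $\ge0$ implies $\log(\rho_\eps/\eps)\to\infty$'' is a non sequitur. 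As a consequence, the argument of $\tilde\sigma$ in your display does not tend to $0$, and you only obtain $\psiC(b,Qt)\bigl(1-\tilde\sigma(|b|)\bigr)$ rather than $\psiC(b,Qt)$.

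The fix is to apply Corollary~\ref{corcellproblem} with an inner radius $r_\eps$ slightly larger than $\rho_\eps$: the paper takes $r_\eps:=\rho_\eps\ln\frac{1}{\rho_\eps}$. Since $r_\eps\ge\rho_\eps$, the $\rho_\eps$-compatibility you have implies $r_\eps$-compatibility, and $W\ge0$ lets you shrink the integration domain from $Q_tT^{R,\rho_\eps}_h$ to $Q_tT^{R,r_\eps}_h$. With this choice $\eps|b|/r_\eps\le |b|/\ln\frac1{\rho_\eps}\to0$, while $r_\eps/R\to0$ and the $L^1$ term still vanishes because $Q_tT^{R,r_\eps}_h\subseteq Q_tT^{R,\rho_\eps}_h$. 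The prefactor still converges, since $\log r_\eps=\log\rho_\eps+\log\log\frac{1}{\rho_\eps}$ gives $\log r_\eps/\log\eps\to1$ and hence $\ln(R/r_\eps)/\ln(R/\eps)\to1$. The rest of your argument — passing to a subsequence realizing the $\liminf$, invoking $\tilde\sigma(s)\to0$, and switching $\ln(R/r_\eps)$ for $\ln(R/\eps)$ — is correct once this substitution is made, and the linear case is handled identically via the \HWLin\ branch of the corollary.
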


\begin{proof}
The proof follows by applying Corollary \ref{corcellproblem} with $r=r_\eps:=\rho_\eps\ln\frac1{\rho_\eps}$.
 \end{proof}

\section{Compactness and lower bound}\label{seccomp}
In this section we provide the proofs of  the compactness and the lower bound needed to prove Theorem \ref{thm:Gamma-limit}.
The proof is similar to the one of \cite[Prop.~6.6]{ContiGarroniOrtiz2015} using Proposition~\ref{propcellproblem} {(via Corollary \ref{corcellproblem})}  instead of {\cite[Lemma~5.8]{ContiGarroniOrtiz2015}}.
We discuss the various arguments separately in order to be able to reuse them.

\subsection{Rigidity}\label{sectionextensionrigidity}

We first recall a rigidity result from \cite{ContiGarroniRigidity}, which is based {on the Friesecke-James-Müller geometric rigidity \cite{FrieseckeJamesMueller2002}
and on the Bourgain-Brezis critical integrability bound
 \cite{BourgainBrezis2007},}
{and extend it to smaller exponents.}
\begin{proposition}[From \cite{ContiGarroniRigidity}]\label{proprigiditycras}
 Let $\Omega\subset\R^3$ be a bounded connected Lipschitz set,
 $q\in[1,\frac32]$. Then there is a constant $c=c(\Omega,q)$ such that for any $\contbeta\in L^q(\Omega;\R^{3\times 3})$
 such that $\curl\beta$ is a finite measure
 there are $Q\in \SO(3)$ such that
 \begin{equation}
  \|\contbeta-Q\|_{L^{q}(\Omega)} \le c \|\dist(\contbeta,\SO(3))\|_{L^{q}(\Omega)} +c|\curl\contbeta|(\Omega)
 \end{equation}
 and ${S}\in \R^{3\times 3}_\skw$ such that
\begin{equation}
  \|\contbeta-{S}\|_{L^{q}(\Omega)} \le c \|\contbeta+\contbeta^T\|_{L^{q}(\Omega)} +c|\curl\contbeta|(\Omega).
 \end{equation}
 \end{proposition}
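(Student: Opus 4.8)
The case $q=3/2$ is exactly the estimate proved in \cite{ContiGarroniRigidity}: one splits $\beta=\beta_1+v$ with $\curl v=0$ and $\|\beta_1\|_{L^{3/2}(\Omega)}\le c(\Omega)\,|\curl\beta|(\Omega)$ — the existence of such a $\beta_1$ with $\curl\beta_1=\curl\beta$ being the content of the Bourgain--Brezis \cite{BourgainBrezis2007} critical-integrability bound for fields with measure-valued curl — and then applies the Friesecke--James--Müller geometric rigidity \cite{FrieseckeJamesMueller2002} to $v$ (respectively, Korn's inequality for the skew version, which does not use \cite{BourgainBrezis2007} beyond the splitting). The plan for the present, more general statement is to run the very same argument with $L^q$ in place of $L^{3/2}$; the only step needing a separate input is the rigidity estimate for $v$ at the end.

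So, given $\beta\in L^q(\Omega;\R^{3\times 3})$ with $q\in[1,3/2]$ and $\curl\beta$ a finite measure, take $\beta_1$ as above. Since $\Omega$ is bounded and $q\le3/2$, H\"older's inequality yields $\|\beta_1\|_{L^q(\Omega)}\le|\Omega|^{1/q-2/3}\|\beta_1\|_{L^{3/2}(\Omega)}\le c\,|\curl\beta|(\Omega)$. The remainder $v:=\beta-\beta_1\in L^q(\Omega;\R^{3\times 3})$ is curl-free, and pointwise $\dist(v,\SO(3))\le\dist(\beta,\SO(3))+|\beta_1|$ and $|v+v^T|\le|\beta+\beta^T|+2|\beta_1|$, so the $L^q(\Omega)$ norms of $\dist(v,\SO(3))$ and of $v+v^T$ are controlled by the right-hand sides of the two asserted inequalities.

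It remains to apply the $L^q$ versions of geometric rigidity and of Korn's inequality to the curl-free field $v$: for every $q\in(1,\infty)$, on a bounded connected Lipschitz set there are $Q\in\SO(3)$ and $S\in\R^{3\times 3}_\skw$ with $\|v-Q\|_{L^q(\Omega)}\le c(\Omega,q)\|\dist(v,\SO(3))\|_{L^q(\Omega)}$ and $\|v-S\|_{L^q(\Omega)}\le c(\Omega,q)\|v+v^T\|_{L^q(\Omega)}$; these reduce to the classical gradient statements — the $L^q$ geometric rigidity for gradients being a standard consequence of \cite{FrieseckeJamesMueller2002} via a maximal-function/Lipschitz-truncation argument, and $L^q$ Korn being classical — by the finite decomposition of $\Omega$ into simply connected pieces and the matching of the rotations on the positive-measure overlaps, carried out as in \cite{ContiGarroniRigidity}. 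Combining with the triangle inequalities $\|\beta-Q\|_{L^q}\le\|v-Q\|_{L^q}+\|\beta_1\|_{L^q}$ and $\|\beta-S\|_{L^q}\le\|v-S\|_{L^q}+\|\beta_1\|_{L^q}$ and the bound on $\|\beta_1\|_{L^q}$ finishes the proof. The one point that really needs care is this last ingredient: the $L^q$ rigidity and Korn estimates are available only for $q>1$, so the endpoint $q=1$ — where both fail for gradient fields — would require a different and more delicate argument; it is, however, never used in this paper, since throughout $q=\frac32\wedge p$ with $p\in(1,2)$.
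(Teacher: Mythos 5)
Your reconstruction of the argument from \cite{ContiGarroniRigidity} is accurate and matches the route the paper takes: the proof cites that reference, which splits $\beta=\beta_1+v$ via the Bourgain--Brezis estimate (giving $\|\beta_1\|_{L^{3/2}}\le c|\curl\beta|(\Omega)$ and $\curl v=0$), and then applies geometric rigidity (respectively Korn) to the curl-free part. The paper's own proof simply says the $q<\frac32$ case ``follows from the same argument'' with $s=q$ plus H\"older; your write-up supplies exactly the details this remark is gesturing at, including the H\"older step $\|\beta_1\|_{L^q}\le|\Omega|^{1/q-2/3}\|\beta_1\|_{L^{3/2}}$, which is the same factor $(\calL^3(Q_r))^{1/q-2/3}$ appearing in the paper's rescaled Eq.~(11). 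So this is the same proof, written out more explicitly rather than by reference.

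Your closing remark about the endpoint $q=1$ is a correct and useful observation that the paper glosses over. The $L^q$ geometric rigidity and $L^q$ Korn for gradient fields do require $q>1$ (both fail at $q=1$, cf.\ Ornstein-type non-inequalities), so the stated range $q\in[1,\frac32]$ is slightly too generous: as written, both your argument and the paper's establish it only for $q\in(1,\frac32]$. Since the paper invokes the proposition exclusively with $q=\frac32\wedge p$ for $p\in(1,2)$, this has no downstream consequence, but it would be cleaner to state the range as $q\in(1,\frac32]$ or to explicitly exclude $q=1$ from the claim.
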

 \begin{proof}
 The important case is the critical exponent
$q=\frac32$, which was proven in \cite{ContiGarroniRigidity}. The 
 case $q\in [1,\frac32)$ follows from the same argument, using $s=q$ in the proof of Theorem~1 
 in \cite{ContiGarroniRigidity}, and using Hölder's inequality in (8). Then after scaling Eq.~(11) becomes
 \begin{equation}
  \|\beta-R\|_{{L^q}(Q_r)} \le c \|\dist(\beta,\SO(3))\|_{{L^q}(Q_r)} + 
  (\calL^3(Q_r))^{\frac1q-\frac23}
  |\curl\beta|(Q_r)),
 \end{equation}
the factor $(\calL^3(Q_r))^{\frac1q-\frac23}=r^{\frac3q-2}$ is uniformly bounded since $Q_r\subseteq \Omega$.  We remark that the second term in \cite[Eq.~(11)]{ContiGarroniRigidity} incorrectly contains $Du$ instead of $\beta$.
The rest of the argument does not make use of the specific value of $s$ and is unchanged.
{The modification in the linear case is the same.}
\end{proof}

  \subsection{Compactness}

We recall the definition of  the class $\calA^*_\eps$ of admissible pairs  in \eqref{Astar} 
and {that of} the elastic energy
$\Elast[\beta,A]$ in \eqref{eqdefFepsbetaA} for fields $\beta\in L^1(\Omega;\R^{3\times 3})$ and Borel sets $A\subseteq\Omega$.

\begin{proposition}\label{propcptmubd}
  Let $\Omega\subset\R^3$ be a bounded Lipschitz set.
Assume  $W$ obeys
\HWFinite\ or \HWLin\  for some $p\in (1,2]$.
Let  $(\mu_\eps,\beta_\eps)\in\calA^*_\eps$ be such that $\mu_\eps$ is $(h_\eps,\alpha_\eps)$-dilute in the sense of Definition \ref{defdilutedistribution}, where $(h_\eps,\alpha_\eps)$ obeys \eqref{eqdefheps}, and

 $$\Elast[\beta_\eps, \Omega\setminus(\supp\mu_\eps)_{\rho_\eps}] \le E \eps^2 \ln \frac1\eps,$$ 
 for some $E>0$ and for some $\rho_\eps\geq \eps$ such that  
 \begin{equation}\label{eqlnrhoepsepscpt}
 \lim_{\eps\to0}\frac{\log\rho_\eps}{\log\eps}=1. 
 \end{equation}
Then there is ${c}>0$ such that for any $\Omega'\subset\subset\Omega$ 
\begin{equation}\label{eq:compactness1}
 \limsup_{\eps\to0}\frac1\eps |\mu_\eps|(\Omega')\le {cE}.
\end{equation}
Moreover there are $\mu\in  \calM_{\calB}^1(\Omega)$ and a subsequence $\eps_k$ such that 
\begin{equation}\label{eq:compactness2}
\frac{1}{\eps_k}\mu_{\eps_k}\weakstarto\mu \text{ in } \Omega'
\text{ for every $\Omega'\subset\subset\Omega$.}
\end{equation}
\end{proposition}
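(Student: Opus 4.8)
\noindent\emph{Proof proposal.} The plan is to obtain the mass bound \eqref{eq:compactness1} by packing many pairwise disjoint hollow cylinders around the dilute curve, summing the coercivity estimate of Lemma~\ref{lemmacompactnesscellpb} over them, and then passing to a subsequential weak-$*$ limit and invoking a closure theorem for lattice-valued divergence-free measures. Choose the cylinder scale $R_\eps:=\tfrac14\alpha_\eps^2 h_\eps$. From \eqref{eqdefheps} one has $\log\frac1{R_\eps}\le 2\log\frac1{\alpha_\eps h_\eps}+\log 4=o(\log\frac1\eps)$, while \eqref{eqlnrhoepsepscpt} gives $\log\frac1{\rho_\eps}=(1+o(1))\log\frac1\eps$. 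Hence, for $\eps$ small, $\eps\le\rho_\eps$, $2\rho_\eps<R_\eps\le\tfrac12\alpha_\eps h_\eps$, and $\log\frac1\eps\big/\log\frac{R_\eps}{\rho_\eps}\to1$.

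Next, fix $\Omega'\subset\subset\Omega''\subset\subset\Omega$ and take $\eps$ small enough that $h_\eps<\dist(\Omega',\partial\Omega'')$ and $R_\eps<\dist(\Omega'',\partial\Omega)$. Write $\mu_\eps=\sum_j\eps\, b_j\otimes\tau_j\,\calH^1\LL s_j$ with $b_j\in\calB$ constant on each segment $s_j$ of the $(h_\eps,\alpha_\eps)$-dilute polyhedral supporting $\mu_\eps$ (constancy on $s_j$ follows from $\Div\mu_\eps=0$). For each $s_j$ meeting $\Omega'$ one readily checks $\ell_j'':=\calH^1(s_j\cap\Omega'')\ge h_\eps$ (using that $s_j$ has length $\ge h_\eps$ and $h_\eps<\dist(\Omega',\partial\Omega'')$), and trivially $\ell_j''\ge\calH^1(s_j\cap\Omega')$. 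Removing from $s_j\cap\Omega''$ the balls of radius $\alpha_\eps h_\eps$ about its (at most two) endpoints leaves a sub-segment of length $\ge(1-2\alpha_\eps)\ell_j''\ge\tfrac12\ell_j''$; by the angle bound in Definition~\ref{defdiluteness-curve} and $R_\eps\ll\alpha_\eps h_\eps$, the closed $R_\eps$-tubes around these sub-segments (over all $j$) are pairwise disjoint, at mutual distance $\ge\rho_\eps$, and---after deleting $\rho_\eps$-cores---contained in $\Omega\setminus(\supp\mu_\eps)_{\rho_\eps}$. Tiling the sub-segment of $s_j$ by $N_j\ge\lfloor\ell_j''/(2R_\eps)\rfloor\ge\ell_j''/(4R_\eps)$ consecutive pieces of length $R_\eps$, around each piece $\beta_\eps$ restricts to a field on a cylinder isometric to $Q_tT^{R_\eps}_{R_\eps}$ with $\curl\beta_\eps=\eps\, b_j\otimes t\,\calH^1\LL(\R t\cap Q_tT^{R_\eps}_{R_\eps})$, so Lemma~\ref{lemmacompactnesscellpb} (whose hypotheses hold for $\eps$ small by the first paragraph) gives $\int_{Q_tT^{R_\eps,\rho_\eps}_{R_\eps}}W(\beta_\eps)\,dx\ge c\,\eps^2 R_\eps|b_j|\log\frac{R_\eps}{\rho_\eps}$. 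Summing over all these disjoint hollow cylinders,
\begin{align*}
E\eps^2\log\tfrac1\eps\ \ge\ \Elast\big[\beta_\eps,\Omega\setminus(\supp\mu_\eps)_{\rho_\eps}\big]
&\ \ge\ \sum_{j:\,s_j\cap\Omega'\ne\emptyset}N_j\, c\,\eps^2 R_\eps|b_j|\log\tfrac{R_\eps}{\rho_\eps}\\
&\ \ge\ \tfrac c4\,\eps^2\log\tfrac{R_\eps}{\rho_\eps}\sum_{j}\ell_j''|b_j|.
\end{align*}
Since $|\mu_\eps|(\Omega')=\eps\sum_j\calH^1(s_j\cap\Omega')|b_j|\le\eps\sum_j\ell_j''|b_j|$, this yields $\frac1\eps|\mu_\eps|(\Omega')\le 4E\log\frac1\eps\big/\big(c\log\frac{R_\eps}{\rho_\eps}\big)$; letting $\eps\to0$ and using the first paragraph gives \eqref{eq:compactness1}, with the constant independent of $\Omega'$.

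Since this bound is uniform in $\Omega'\subset\subset\Omega$, fix an exhaustion $\Omega_1\subset\subset\Omega_2\subset\subset\cdots\nearrow\Omega$; by weak-$*$ compactness of bounded sequences of Radon measures and a diagonal argument, some subsequence $\eps_k$ satisfies $\frac1{\eps_k}\mu_{\eps_k}\weakstarto\mu$ in every $\Omega'\subset\subset\Omega$ for a matrix-valued measure $\mu$ with $|\mu|(\Omega)\le cE<\infty$ and $\Div\mu=0$ distributionally (as $\Div\mu_{\eps_k}=0$). Finally, the $\frac1{\eps_k}\mu_{\eps_k}$ are divergence-free, take values in the lattice $\calB$, are supported on polyhedral (hence $1$-rectifiable) curves, and have uniformly bounded mass on each $\Omega'$; by the closure theorem for such measures---equivalently, for integral $1$-currents with coefficients in $\calB\cong\Z^3$, via the Federer--Fleming closure theorem; cf.\ \cite{ContiGarroniMassaccesi2015}---the limit $\mu$ is again of the form $\theta\otimes\tau\,\calH^1\LL\gamma$ with $\gamma$ $1$-rectifiable, $\tau$ a unit tangent field and $\theta\in\calB$ $\calH^1$-a.e.\ on $\gamma$, i.e.\ $\mu\in\calM^1_\calB(\Omega)$. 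This gives \eqref{eq:compactness2}.

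The main obstacle is the packing step: the scale $R_\eps$ must be small enough relative to $\alpha_\eps h_\eps$ that the tubes around distinct segments (in particular near the polyhedral vertices, where the angle condition of Definition~\ref{defdiluteness-curve} is used) stay disjoint and away from the $\rho_\eps$-cores, yet large enough relative to $\rho_\eps$ that Lemma~\ref{lemmacompactnesscellpb} still contributes a logarithm asymptotic to $\log\frac1\eps$; it is crucial here that that lemma is already uniform in the inner radius $r$, so that one may take $r=\rho_\eps$ without any extra error term. The remaining structure of the argument---the measure-theoretic compactness and the closure of $\calM^1_\calB$ under weak-$*$ convergence with bounded mass---is standard and is only invoked.
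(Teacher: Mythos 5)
Your proposal is correct and follows essentially the same route as the paper's proof: pack pairwise disjoint hollow cylinders along the trimmed segments at a radius $R_\eps$ satisfying $\rho_\eps\ll R_\eps\ll\alpha_\eps h_\eps$ and $\log\frac1{R_\eps}=o(\log\frac1\eps)$, sum the coercivity bound of Lemma~\ref{lemmacompactnesscellpb} over them, and conclude via weak-$*$ compactness together with the closure result of \cite{ContiGarroniMassaccesi2015}. The only differences are cosmetic: you choose $R_\eps=\tfrac14\alpha_\eps^2h_\eps$ rather than $(\alpha_\eps h_\eps)^2$, introduce an intermediate domain $\Omega''$ instead of the paper's $I_\eps$-bookkeeping, and tile each cylinder into height-$R_\eps$ blocks rather than invoking the lemma once with $h=\calH^1(S^i_\eps)$ (which internally does the same subdivision). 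One small caveat: when segments cross $\partial\Omega''$ the balls should be removed around the genuine endpoints of $s_j$ rather than around the endpoints of $s_j\cap\Omega''$, so that the angle condition at shared vertices applies; the paper sidesteps this by trimming the actual segment endpoints, and your length estimate $\ge(1-2\alpha_\eps)\ell_j''$ still holds with this adjustment.
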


\begin{remark}Proposition \ref{propcptmubd} still holds true if we replace  $\calA^*_\eps$ with
  $\calA^\core_{\rho_\eps,\eps}$
   or with $\calA^\moll_\eps$ defined in \eqref{eqdefacoreepss} and \eqref{eq:Amoll} respectively. Indeed, in both cases is suffices to extend (separately for each $i$) the restriction $\beta_\eps|_{T^i_\eps\setminus \hat T^i_\eps}$ to a function $\tilde\beta_\eps$ in $L^1(T^i_\eps;\R^{3\times 3})$ with $\curl\tilde\beta_\eps=\mu\LL T^i_\eps$
as was done at the beginning of the proof of Corollary~\ref{corcellproblem}.   
\end{remark}

\begin{proof}
 
Since $\mu_\eps$ is dilute, we have that $\mu_\eps= \sum_i \eps b^i_\eps\otimes t_\eps^i {\cal H}^1\LL\gamma^i_\eps$, where $\gamma^i_\eps\subset\overline\Omega$ are finitely many closed segments satisfying the diluteness conditions {(i)--(iv)} of Definition \ref{defdiluteness-curve}. 
Possibly subdividing the segments into smaller ones, we can assume that they have length in $[h_\eps, 2h_\eps]$.
We set $\gamma_\eps:=\cup_i\gamma_\eps^i$.
We fix $\Omega'\subset\subset\Omega$ and
 set $I_\eps:=\{i: \gamma^i_\eps\cap\Omega'\ne\emptyset\}$.
We choose $r_\eps=\rho_\eps$, $R_\eps:=(\alpha_\eps h_\eps)^2$, $\delta_\eps:=\alpha_\eps h_\eps$.
We let $S_\eps^i\subset\R$ be a segment of length 
\begin{equation}\label{eqcalh1sepsi}
\calH^1(S_\eps^i)=
\calH^1(\gamma_\eps^i)-2\delta_\eps\ge (1-2\alpha_\eps) \calH^1(\gamma_\eps^i),
\end{equation}
$A_\eps^i$ an affine isometry that maps $S_\eps^ie_3$ into $\gamma_\eps^i$ and the midpoint of $S_\eps^ie_3$ to the midpoint of $\gamma_\eps^i$. We define the cylinders
\begin{equation}\label{def:cylinders-compactness}
 \cyl^i_\eps:=A_\eps^i(B'_{R_\eps}\times S_\eps^i)\text{ and }
 \hat \cyl^i_\eps:=A_\eps^i(B'_{r_\eps}\times S_\eps^i) \,.
\end{equation}
Disjoint segments in the family $\{\gamma_\eps^i\}_i$ are separated by $\alpha_\eps h_\eps\gg R_\eps$. {Since} the  angle between two non-disjoint segments is {at least} $\alpha_\eps$, {and} for small $\eps$ {we have} $\delta_\eps \tan \frac12\alpha_\eps > R_\eps$ {we obtain that} the sets $\{\cyl_{\eps}^i\}_i$ are pairwise disjoint and $\cyl^i_\eps\cap \gamma^j_\eps=\emptyset$ for all $j\ne i$. 
Further, for $\eps$ sufficiently small 
we have $2h_\eps+R_\eps\le\dist(\Omega',\partial\Omega)$ which implies
$T_\eps^i\subseteq\Omega$
for all $i\in I_\eps$.
Therefore
\begin{equation}\label{eqcompsum}
\begin{split}
\sum_{i\in I_\eps}  \Elast[\beta_\eps, \cyl_{\eps}^i\setminus \hat \cyl_\eps^i]\le&
\Elast[\beta_\eps,\Omega \cap (\gamma_\eps)_{R_\eps} \!\!\setminus\!\! (\gamma_\eps)_{\rho_\eps}]\\
\le&
 \Elast[\beta_\eps,\Omega\setminus(\gamma_\eps)_{\rho_\eps}].
 \end{split}
\end{equation}
By Lemma~\ref{lemmacompactnesscellpb}, 
\begin{equation}\label{eqsumicylinders}
 c \sum_{i\in I_\eps} \eps^2 \calH^1(S^i_\eps) {|b_\eps^i|}\ln \frac{R_\eps}{r_\eps}  \le 
  \Elast[\beta_\eps,\Omega\setminus(\gamma_\eps)_{\rho_\eps}]\le { E} \eps^2 \ln\frac1\eps.
\end{equation}
By the diluteness condition
{\eqref{eqdefheps}} and \eqref{eqlnrhoepsepscpt},
\begin{equation}\label{eqrrepslog}
 \lim_{\eps\to0}\frac{\ln \frac{R_\eps}{r_\eps}}{\ln \frac{1}{\eps}}=
 \lim_{\eps\to0}\frac{\ln \frac{\alpha_\eps^2h_\eps^2}{\rho_\eps}}{\ln \frac{1}{\eps}}=1.
\end{equation}
Since $\calH^1(S^i_\eps) \ge (1-2\alpha_\eps) \calH^1(\gamma^i_\eps) $, we conclude that {for small $\eps$}
\begin{equation}\label{eqmuepsomegap}
\frac1\eps|\mu_\eps|(\Omega')
{=\sum_{i\in I_\eps} |b_\eps^i| \calH^1(\gamma_\eps^i\cap\Omega')}
\le c  E.\end{equation}
Therefore there is a subsequence {such that $\eps^{-1}\mu_\eps$ converges to a limiting measure $\mu\in\calM(\Omega';\R^{3\times 3})$, 
with $|\mu|(\Omega')\le c E$. Since $\Omega'$ was arbitrary, we have $\mu\in\calM(\Omega;\R^{3\times 3})$, and there is a diagonal subsequence such that $\eps_k^{-1}\mu_{\eps_k}\weakstarto\mu$ in all $\Omega'\subset\subset\Omega$.} {By \cite{ContiGarroniMassaccesi2015} the limit also belongs to $\calM_\calB^1(\Omega)$. This concludes the proof of \eqref{eq:compactness1} and \eqref{eq:compactness2}.}
\end{proof}

\begin{proposition}\label{propcptmubdbeta}
Let $\Omega\subset\R^3$ be a  bounded, connected Lipschitz set.
Assume $W$ obeys
\HWFinite\ or \HWLin\ for some $p\in(1,2)$. Let $(\mu_\eps,\beta_\eps)\in\calA^*_\eps$ be such that $\mu_\eps$ is $(h_\eps,\alpha_\eps)$-dilute in the sense of Definition \ref{defdilutedistribution}, where $(h_\eps,\alpha_\eps)$ obeys \eqref{eqdefheps}, and
$$\Elast[\beta_\eps, \Omega] \le E \eps^2 \ln \frac1\eps$$ for some $E>0$,

Then there are a subsequence $\eps_k\to0$,
$\conteta\in L^{1}_\loc(\Omega;\R^{3\times 3})$ with $\curl\conteta=0$,
$\mu\in \calM_\calB^1(\Omega)$
and $Q_*\in\SO(3)$
such that $(\mu_{\eps_k},\beta_{\eps_k})$ converges to 
$(\mu,\eta, Q_*)$ in finite kinematics with $p$ growth if \HWFinite\ holds, and  
to $(\mu,\eta)$ in infinitesimal kinematics with $p$ growth if \HWLin\ holds, in the sense of Definition~\ref{defconvergence}.

\end{proposition}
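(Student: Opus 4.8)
The plan is to combine the compactness of the rescaled dislocation densities from Proposition~\ref{propcptmubd} with the geometric rigidity estimate of Proposition~\ref{proprigiditycras} applied to $\beta_\eps$. Throughout set $q:=\frac32\wedge p$, so that $q\le p$ and $q\in[1,2)$ because $p<2$ (and $q\le\frac32$, so Proposition~\ref{proprigiditycras} applies). First I would invoke Proposition~\ref{propcptmubd} with $\rho_\eps:=\eps$ (admissible because $\Elast[\beta_\eps,\Omega\setminus(\supp\mu_\eps)_\eps]\le\Elast[\beta_\eps,\Omega]\le E\eps^2\ln\frac1\eps$): after passing to a subsequence $\eps_k$ there is $\mu\in\calM^1_\calB(\Omega)$ with $\frac1{\eps_k}\mu_{\eps_k}\weakstarto\mu$ in every $\Omega'\subset\subset\Omega$, and $\frac1{\eps_k}|\mu_{\eps_k}|(\Omega')\le cE$ for $k$ large. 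Note also that \HWFinite\ together with $\Elast[\beta_\eps,\Omega]<\infty$ forces $\dist(\beta_\eps,\SO(3))\in L^p(\Omega)$, hence $\beta_\eps\in L^p(\Omega)\subseteq L^q(\Omega)$, so that Proposition~\ref{proprigiditycras} is indeed applicable.

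The heart of the matter is to turn the energy bound into an $L^q$ bound of the correct order $\eps(\ln\frac1\eps)^{1/2}$ on $\dist(\beta_\eps,\SO(3))$, where the mixed growth of $W$ must be controlled. By \HWFinite, $\int_\Omega\mixedgrowth(\dist(\beta_\eps,\SO(3)))\,dx\le c\,\Elast[\beta_\eps,\Omega]\le cE\eps^2\ln\frac1\eps$. Using the decomposition of Remark~\ref{rem-phip}(ii) I would write $\dist(\beta_\eps,\SO(3))=a_\eps+b_\eps$ with $|a_\eps|\le1$, with $|b_\eps|\ge1$ where $b_\eps\ne0$, and $\int_\Omega(|a_\eps|^2+|b_\eps|^p)\,dx\le cE\eps^2\ln\frac1\eps$. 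Since $q\le2$ and $\Omega$ is bounded, Hölder's inequality gives $\|a_\eps\|_{L^q(\Omega)}\le C\|a_\eps\|_{L^2(\Omega)}\le C\eps(\ln\frac1\eps)^{1/2}$; since $q\le p$ and $|b_\eps|\ge1$ on its support, $\|b_\eps\|_{L^q(\Omega)}^q\le\int_\Omega|b_\eps|^p\,dx\le cE\eps^2\ln\frac1\eps$, hence $\|b_\eps\|_{L^q(\Omega)}\le C\eps^{2/q}(\ln\frac1\eps)^{1/q}=o\bigl(\eps(\ln\tfrac1\eps)^{1/2}\bigr)$ because $2/q>1$. Combining this with $|\curl\beta_\eps|(\Omega')=|\mu_\eps|(\Omega')\le cE\eps$ and applying Proposition~\ref{proprigiditycras} over an exhaustion of $\Omega$ by connected Lipschitz subsets --- the rotations produced on nested subsets agree up to $O(\eps(\ln\frac1\eps)^{1/2})$, so may be chosen to form a single sequence $Q_\eps$ --- one obtains $Q_\eps\in\SO(3)$ with
\begin{equation*}
\|\beta_\eps-Q_\eps\|_{L^q(\Omega')}\le c\|\dist(\beta_\eps,\SO(3))\|_{L^q(\Omega')}+c|\mu_\eps|(\Omega')\le C(\Omega')\,\eps\Bigl(\ln\tfrac1\eps\Bigr)^{1/2}
\end{equation*}
for every $\Omega'\subset\subset\Omega$.

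To conclude, I would pass to a further subsequence so that $Q_{\eps_k}\to Q_*\in\SO(3)$ (compactness of $\SO(3)$) and so that the sequence $\frac{\beta_{\eps_k}-Q_{\eps_k}}{\eps_k(\ln\frac1{\eps_k})^{1/2}}$, bounded in $L^q_\loc(\Omega;\R^{3\times3})$, converges weakly there; multiplying on the left by $Q_{\eps_k}^T\to Q_*^T$ (a strongly convergent factor times a weakly convergent one) gives
\begin{equation*}
\frac{Q_{\eps_k}^T\beta_{\eps_k}-\Id}{\eps_k(\ln\frac1{\eps_k})^{1/2}}\weakto\conteta\qquad\text{weakly in }L^q_\loc(\Omega;\R^{3\times3}),
\end{equation*}
with $\conteta\in L^1_\loc(\Omega;\R^{3\times3})$. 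For the differential constraint I note that $\curl\bigl(\frac{Q_{\eps_k}^T\beta_{\eps_k}-\Id}{\eps_k(\ln\frac1{\eps_k})^{1/2}}\bigr)=\frac{Q_{\eps_k}^T\mu_{\eps_k}}{\eps_k(\ln\frac1{\eps_k})^{1/2}}$, which tends to $0$ as a distribution on $\Omega$ since $\frac1{\eps_k}\mu_{\eps_k}\weakstarto\mu$ locally while $(\ln\frac1{\eps_k})^{-1/2}\to0$; as $\curl$ is continuous under distributional convergence, $\curl\conteta=0$. This is exactly convergence of $(\mu_{\eps_k},\beta_{\eps_k})$ to $(\mu,\conteta,Q_*)$ in finite kinematics with $p$ growth in the sense of Definition~\ref{defconvergence}.

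Under \HWLin\ the argument is the same after replacing $\dist(\beta_\eps,\SO(3))$ by $|\beta_\eps+\beta_\eps^T|$ (using $W(\beta_\eps)\ge\frac1c\mixedgrowth(|\beta_\eps+\beta_\eps^T|)$) and the skew-symmetric version of Proposition~\ref{proprigiditycras}: one gets a skew-symmetric matrix $S_\eps$ with $\|\beta_\eps-S_\eps\|_{L^q(\Omega')}\le C(\Omega')\eps(\ln\frac1\eps)^{1/2}$, and --- since $W$ depends only on the symmetric part of its argument, so that modifying $\beta_\eps$ by the constant $-S_\eps$ affects neither $F_\eps$ nor the dislocation density $\curl\beta_\eps=\mu_\eps$ --- one concludes, for the thus normalized sequence, $\frac{\beta_{\eps_k}}{\eps_k(\ln\frac1{\eps_k})^{1/2}}\weakto\conteta$ with $\curl\conteta=0$, i.e.\ convergence in infinitesimal kinematics with $p$ growth. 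The step I expect to be the main obstacle is the mixed-growth splitting in the second paragraph: one must verify that the ``plastic part'' $b_\eps$ is of strictly lower order, which is exactly where the hypothesis $p<2$ enters; for $p=2$ this gain disappears ($\|b_\eps\|_{L^q}$ is then only $O(\eps(\ln\frac1\eps)^{1/2})$, with no room to spare), consistent with the fact that Theorem~\ref{thm:Gamma-limit2} carries no compactness statement in the critical case outside the mollified setting.
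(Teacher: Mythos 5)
Your proof is essentially the paper's: the splitting of $\dist(\beta_\eps,\SO(3))$ via Remark~\ref{rem-phip}(ii) is exactly the paper's direct split over $\{d_\eps\le1\}$ and $\{d_\eps>1\}$, and the rigidity step via Proposition~\ref{proprigiditycras}, the subsequence extraction of $Q_\eps$, and the distributional passage to $\curl\eta=0$ all match. One small correction to your closing remark: for $p=2$ the $L^q$ estimate on $\dist(\beta_\eps,\SO(3))$ and hence the rigidity argument go through unchanged (with $b_\eps$ controlled just as well, since $\Phi_2(t)=t^2$); the reason $p=2$ is excluded is that with quadratic growth and no core cut-off the constraint $\curl\beta_\eps=\mu_\eps$ forces infinite energy whenever $\mu_\eps\ne0$, so compactness would be vacuous rather than failing for lack of integrability gain.
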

\begin{proof}
We first remark that Proposition~\ref{propcptmubd} holds for this sequence {(with $\rho_\eps=\eps$)}. Fix a connected Lipschitz set $\Omega'\subset\subset\Omega$.
Then by \eqref{eq:compactness1} for $\eps$ small enough we have 
\begin{equation}\label{eq:compactness3}
|\mu_\eps|(\Omega')\le {cE}\eps.
\end{equation}
Let 
\begin{equation}
{d_\eps}:=\dist(\beta_\eps,\SO(3))\in L^1(\Omega).
\end{equation}
{From \HWFinite\ we obtain the pointwise bound $d_\eps^2\wedge d_\eps^p=\mixedgrowth(d_\eps)\le c W(\beta_\eps)$.}
Since 
{$q:=\frac32\wedge p<2$,}
\begin{equation}
 \left(\int_{\Omega\cap \{{d_\eps}\le 1\}} {d_\eps^q} dx\right)^{1/q} \le 
 {(\calL^3(\Omega))^{\frac1q-\frac12}}
 \left(\int_{\Omega\cap 
 \{{d_\eps}\le 1\}} {d_\eps^2} dx \right)^{1/2}\le c\left(\eps^2 \ln\frac1\eps E\right)^{1/2}.
\end{equation}
{Using first $q\le p$ and then $q< 2$, for $\eps$ small}
\begin{equation}
\left( \int_{\Omega\cap \{{d_\eps}>1\}} {d_\eps^q} dx\right)^{1/q} \le
\left( \int_{\Omega\cap \{{d_\eps}>1\}} {d_\eps^p} dx \right)^{1/q}
 {\le c\left(\eps^2 \ln\frac1\eps E\right)^{1/2}.}
\end{equation}
Combining these two estimates,
\begin{equation}
 \|\dist(\beta_\eps,\SO(3))\|_{L^q(\Omega)} \le
c\eps \ln^{1/2}\frac1\eps E^{1/2}.
\end{equation}
By {Proposition \ref{proprigiditycras} and  \eqref{eq:compactness3},}
there is $Q_\eps\in\SO(3)$ such that
{for small $\eps$}
\begin{equation}\label{eqrigidhatbetaeps}
{ \|\beta_\eps-Q_\eps\|_{L^{q}(\Omega')}}\le c |\mu_\eps|(\Omega') + c \|\dist(\beta_\eps,\SO(3))\|_{L^q(\Omega)} 
 \le  
c\eps \ln^{1/2}\frac1\eps E^{1/2}
\end{equation}
{where $c$ may depend on $\Omega'$}.
The rotation $Q_\eps$, however, may be chosen not to depend on $\Omega'$.
Passing to a further  subsequence we can assume $Q_{\eps_k}\to Q_*$ for some $Q_*\in\SO(3)$.
Taking a diagonal subsequence, this holds for all subsets $\Omega'$.

From \eqref{eqrigidhatbetaeps} we see that, after extracting a further subsequence, there is $\eta\in L^q(\Omega';\R^{3\times 3})$ such that
\begin{equation}\label{eqeetaepsweakc}
\eta_{\eps_k}:=
\frac{Q_{\eps_k}^T\beta_{\eps_k}-\Id}{{\eps_k} \ln^{1/2}\frac1{\eps_k}} 
\weakto \eta \text{ weakly in $L^q(\Omega';{\R^{3\times 3}})$}.
\end{equation}
Again, after taking a diagonal subsequence this holds for all $\Omega'$.
{In order to prove $\curl\eta=0$, we write}
\begin{equation}
 \curl\eta_\eps = 
\frac{Q_\eps^T\curl \beta_\eps}{\eps \ln^{1/2}\frac1\eps}=\frac{1}{\eps\ln^{1/2}\frac1\eps} Q_\eps^T\mu_\eps .
\end{equation}
As $|\mu_\eps|(\Omega')\le c \eps$, $\curl \eta_\eps\to0$ distributionally in $\Omega'$.

The proof in the geometrically linear case is analogous.
\end{proof}

\subsection{Lower bound}

  {We define, given 
 a   pair $(\mu,\beta)\in\calM_{\eps\calB}^1(\Omega)\times L^1(\Omega;\R^{3\times 3})$ and  $\rho,\eps>0$,}
  \begin{equation}\label{eqdefelbeps}
   \Elb_{\rho,\eps}[\mu,\beta]:=\begin{cases}\displaystyle
 \Elast[\beta, \Omega\setminus(\supp\mu)_{\rho}], 
 & \text{ if $(\mu,\beta)\in\mathcal A^\core_{\rho,\eps}$},\\
 \infty, & \text{ otherwise}.
   \end{cases}
  \end{equation}
We recall that the class $\mathcal A^\core_{\rho,\eps}$ of admissible pairs
was defined in \eqref{eqdefacoreepss}.
As $\calA_\eps^*\subseteq\calA_{\rho,\eps}^\core$ for every $\rho>0$ {by Lemma~\ref{lem:comparison-admissible-pairs}\ref{lem:comparison-admissible-pairsstarcore}},
for any $(\mu,\beta)\in \calM^1_{\eps\calB}(\Omega)\times L^1(\Omega;\R^{3\times 3})$ and all $\rho>0$ we have 
\begin{equation}\label{eqelbesubcr}
\frac{1}{\eps^2\log\frac1\eps}\Elb_{\rho,\eps}[\mu,\beta]\le F^\subcr_\eps[\mu,\beta]. 
\end{equation}
 This will permit (in Section~\ref{sec-mainproofs}) to prove the lower bound in Theorem~\ref{thm:Gamma-limit} by means of a lower bound on $\Elb_{\rho_\eps,\eps}$ for a good choice of $\rho_\eps$. Precisely, we assume that
$\rho_\eps\to0$ is such that
\begin{equation}\label{largercore}
 \lim_{\eps\to0}\frac{\log\rho_\eps}{\log\eps}=1.
\end{equation}

\begin{theorem}\label{theoremlowerboundsec5}
 Let $\Omega\subset\R^3$ be open, bounded, Lipschitz.
Let
$\rho_\eps\to0$ be such that \eqref{largercore} holds true.
Let
$(\mu_\eps,\beta_\eps)\in\calA^\core_{\rho_\eps,\eps}$ be such that $\mu_\eps$ is $(h_\eps,\alpha_\eps)$-dilute, where $(h_\eps,\alpha_\eps)$ obeys \eqref{eqdefheps}.

Assume that $W$ obeys \HWFinite\ for some $p\in(1,2]$. 
Assume that there are a sequence $\eps_k\to0$  and $(\mu,\eta,Q)\in\mathcal{M}^1_{\calB}(\Omega)\times L^1_{\rm loc}(\Omega;\R^{3\times3})\times \SO(3)$, with $\mu=b\otimes t\calH\LL\gamma$ and $\curl\conteta=0$,  such that 
$(\mu_{\eps_k},\beta_{\eps_k})$ converges to 
$(\mu,\eta,Q)$  in finite kinematics with $p$ growth and radius $\rho_{\eps_k}$ in the sense of Definition~\ref{defconvergence2}.

Then 
\begin{equation}\label{eqlowerboundprop}
 \int_\gamma \psi_\C^\rel (b,Qt) d\calH^1+ \int_\Omega\frac12  \C_Q\conteta\cdot \conteta dx \le \liminf_{k\to\infty} \frac{1}{\eps_k^2\ln \frac1{\eps_k}} \Elb_{\rho_{\eps_k},\eps_k}[\mu_{\eps_k},\contbeta_{\eps_k}],
\end{equation}
with $\C:=D^2W(\Id)$ and $\psi_\C^\rel$ defined as in \eqref{psi-rel}.
Further, if the right-hand side of
\eqref{eqlowerboundprop} is finite  then
$\eta\in L^2(\Omega;\R^{3\times 3})$. 

Assume $W$ obeys \HWLin\ for some $p\in(1,2]$.  Then the same holds with respect to the convergence in infinitesimal kinematics with $p$ growth and radius $\rho_{\eps_k}$  in the sense of Definition~\ref{defconvergence2} 
with $\psi_\C^\rel(b,Qt)$ replaced by 
$\psi_\C^\rel(b,t)$ and $\C_Q$ by $\C=D^2W(0)$. 
\end{theorem}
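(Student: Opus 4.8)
The plan is to prove the lower bound \eqref{eqlowerboundprop} by localizing around the dislocation line and in the bulk, applying the cell-problem estimate of Corollary~\ref{corcellproblem} on tubes around the segments of $\gamma_{\eps_k}$ and the rigidity/lower-semicontinuity of the quadratic form $\C_Q$ away from the dislocation. First I would reduce to the case in which the right-hand side is finite, so that by Proposition~\ref{propcptmubd} the rescaled measures $\eps_k^{-1}\mu_{\eps_k}$ are locally bounded and converge weak-$*$ to $\mu$ (after the subsequence already fixed), and the rescaled strains $\eta_{\eps_k}:=(Q_{\eps_k}^T\beta_{\eps_k}-\Id)/(\eps_k(\ln\tfrac1{\eps_k})^{1/2})\chi_{\Omega\setminus(\supp\mu_{\eps_k})_{\rho_{\eps_k}}}$ converge weakly in $L^q_\loc$ to $\eta$. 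Since the functional is additive over disjoint Borel sets, I would split $\Omega\setminus(\supp\mu_{\eps_k})_{\rho_{\eps_k}}$ into a ``tube part'' $(\gamma_{\eps_k})_{R_{\eps_k}}\setminus(\gamma_{\eps_k})_{\rho_{\eps_k}}$, with $R_{\eps_k}$ an intermediate scale chosen as in the proof of Proposition~\ref{propcptmubd} (e.g.\ $R_{\eps_k}=(\alpha_{\eps_k}h_{\eps_k})^2$, so that $R_{\eps_k}\to0$ but $\ln\tfrac{R_{\eps_k}}{\rho_{\eps_k}}/\ln\tfrac1{\eps_k}\to1$), and a ``bulk part'' $\Omega\setminus(\gamma_{\eps_k})_{R_{\eps_k}}$, and estimate the liminf of each contribution separately, using superadditivity of $\liminf$.

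For the bulk part, on $\Omega'\subset\subset\Omega$ I would use the Taylor expansion \eqref{eqWQIdACQ} around $\Id$ (in the rotated form, using $Q_{\eps_k}\to Q$), namely $W(\beta_{\eps_k})=W(Q_{\eps_k}(\Id+Q_{\eps_k}^T\beta_{\eps_k}-\Id))\ge \tfrac12\C_{Q_{\eps_k}}(Q_{\eps_k}^T\beta_{\eps_k}-\Id)\cdot(Q_{\eps_k}^T\beta_{\eps_k}-\Id)-\omega(|\cdot|)|\cdot|^2$, restricted to the set where $|Q_{\eps_k}^T\beta_{\eps_k}-\Id|$ is small (which carries asymptotically all the mass of $\eta_{\eps_k}$, since the set where it is large has vanishing measure by the $L^q$ bound). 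After dividing by $\eps_k^2\ln\tfrac1{\eps_k}$ this gives $\tfrac{1}{\eps_k^2\ln\frac1{\eps_k}}\Elast[\beta_{\eps_k},\Omega'\setminus(\gamma_{\eps_k})_{R_{\eps_k}}]\ge \int_{\Omega'}\tfrac12\C_{Q_{\eps_k}}\eta_{\eps_k}\cdot\eta_{\eps_k}\,dx-o(1)$, and by convexity and weak $L^q$-convergence together with $Q_{\eps_k}\to Q$ (so $\C_{Q_{\eps_k}}\to\C_Q$), the liminf is at least $\int_{\Omega'}\tfrac12\C_Q\eta\cdot\eta\,dx$; letting $\Omega'\uparrow\Omega$ gives the bulk term, and finiteness of the liminf forces $\eta\in L^2$.

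For the tube part, I would split $\gamma_{\eps_k}$ into its constituent segments $s^i_{\eps_k}$ (of length in $[h_{\eps_k},2h_{\eps_k}]$), restrict each to a slightly shorter segment $S^i_{\eps_k}$ leaving cylindrical caps of size $\sim\alpha_{\eps_k}h_{\eps_k}$ near the endpoints as in Proposition~\ref{propcptmubd}, so that the tubes $T^i_{\eps_k}$ of radius $R_{\eps_k}$ around $S^i_{\eps_k}$ are pairwise disjoint and meet no other segment. On each such tube, $(\mu_{\eps_k},\beta_{\eps_k})$ restricted to $T^i_{\eps_k}$ is $\rho_{\eps_k}$-compatible with $\eps_k b^i_{\eps_k}\otimes t^i_{\eps_k}\calH^1\LL(\ldots)$, so Corollary~\ref{corcellproblem} applies and yields $\Elast[\beta_{\eps_k},T^i_{\eps_k}\setminus\hat T^i_{\eps_k}]\ge\eps_k^2\,\calH^1(S^i_{\eps_k})\,\ln\tfrac{R_{\eps_k}}{\rho_{\eps_k}}\,\psi_\C(b^i_{\eps_k},Q^i_{\eps_k}t^i_{\eps_k})(1-\tilde\sigma(\cdots))$ with an error argument of $\tilde\sigma$ that tends to $0$ (using $\rho_{\eps_k}/R_{\eps_k}\to0$, $\eps_k|b^i_{\eps_k}|/\rho_{\eps_k}\to0$, and the bound on $\|\beta_{\eps_k}-Q^i_{\eps_k}\|_{L^1}/R_{\eps_k}^3$ coming from the rigidity estimate on each tube; here one may take $Q^i_{\eps_k}$ the rotation supplied by Lemma~\ref{lemmarigidcylinderhole}, which by \eqref{eqrigidhatbetaeps}-type estimates is uniformly close to $Q$ for most tubes). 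Summing over $i$, dividing by $\eps_k^2\ln\tfrac1{\eps_k}$, and using $\ln\tfrac{R_{\eps_k}}{\rho_{\eps_k}}/\ln\tfrac1{\eps_k}\to1$, the tube contribution is bounded below by $\liminf\sum_i\calH^1(S^i_{\eps_k})\psi_\C(b^i_{\eps_k},Q^i_{\eps_k}t^i_{\eps_k})$; identifying $\eps_k^{-1}\mu_{\eps_k}$ with the associated $\calB$-valued one-varifold and passing to the limit with weak-$*$ convergence together with the $\calH^1$-ellipticity of $\psi_\C^\rel$ (i.e.\ the lower-semicontinuity result of \cite{ContiGarroniMassaccesi2015} applied to $\hat\psi(b,t):=\psi_\C(b,Qt)$, as remarked after \eqref{psi-rel}) gives $\geq\int_\gamma\psi_\C^\rel(b,Qt)\,d\calH^1$.

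The main obstacle I anticipate is the matching of the line-tension lower bound with the relaxed density $\psi_\C^\rel$: Corollary~\ref{corcellproblem} only produces the unrelaxed $\psi_\C$ on each straight tube, and to pass to $\psi_\C^\rel$ in the limit one must invoke the $\calH^1$-ellipticity / lower-semicontinuity machinery for one-dimensional $\calB$-valued measures from \cite{ContiGarroniMassaccesi2015}, which requires careful bookkeeping of the diluteness (the caps near endpoints and junctions, which have vanishing relative length by \eqref{eqcalh1sepsi}, must be discarded without losing mass) and a uniform control that the rotations $Q^i_{\eps_k}$ appearing in the cell estimate can be replaced by the global limit $Q$ up to errors that vanish in the limit — this uses \eqref{eqpsictlip} from Lemma~\ref{lemmacellproblemlb}\ref{lemmacellproblemlbtcont} and the fact that $\|\beta_{\eps_k}-Q_{\eps_k}\|_{L^q}$ is small, but must be done so as not to destroy the disjointness/summability structure. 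A secondary subtlety, already handled in the cited proof of \cite[Prop.~6.6]{ContiGarroniOrtiz2015} and reused here verbatim, is ensuring the bulk and tube estimates do not double-count energy, which is why the split into the three disjoint regions $\Omega\setminus(\gamma_{\eps_k})_{R_{\eps_k}}$, the tubes, and the caps, is essential. The geometrically linear case is identical with $\C_Q$, $\psi_\C^\rel(b,Qt)$, $Q_{\eps_k}$ replaced by $\C$, $\psi_\C^\rel(b,t)$, and a skew matrix $S_{\eps_k}$, using \eqref{eq:cell-problem-linear} and the linear part of Lemma~\ref{lemmarigidcylinderhole} and Proposition~\ref{proprigiditycras}.
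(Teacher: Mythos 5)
Your overall architecture is the right one and matches the paper's: split $\Omega'\setminus(\supp\mu_{\eps_k})_{\rho_{\eps_k}}$ into disjoint tubes around the dilute segments and a bulk region, apply Corollary~\ref{corcellproblem} on the tubes and a Taylor expansion plus weak lower semicontinuity in the bulk, then pass to the relaxed density using the $\calH^1$-ellipticity result from \cite{ContiGarroniMassaccesi2015}, and let $\Omega'\uparrow\Omega$. The bulk part you describe is essentially what the paper does (the paper introduces a slowly diverging truncation level $L_{\eps}$ with $L_\eps^2\,\omega(L_\eps\,\eps\ln^{1/2}\tfrac1\eps)\to 0$ to control the remainder term quantitatively, but the idea is the same).

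The gap is in how you feed Corollary~\ref{corcellproblem} in the tube part. You propose to invoke it with the per-tube rotation $Q^i_{\eps_k}$ coming from Lemma~\ref{lemmarigidcylinderhole}, so as to make the error $\|\beta_{\eps_k}-Q^i_{\eps_k}\|_{L^1(T^i)}/R^3$ small, and then to replace $\psiC(b^i,Q^i_{\eps_k}t^i)$ by $\psiC(b^i,Qt^i)$ using \eqref{eqpsictlip}. This last replacement is exactly the step you flag as delicate, and it genuinely is: the global $L^q$ bound $\|\beta_{\eps_k}-Q_{\eps_k}\|_{L^q(\Omega')}\lesssim\eps\ln^{1/2}\tfrac1\eps$ does not by itself control $\sup_i|Q^i_{\eps_k}-Q_{\eps_k}|$, because the tubes have volume of order $R_{\eps}^2 h_{\eps}\to 0$ and the per-tube rigidity bound scales with $\int_{T^i}W(\beta_{\eps_k})$, which can be as large as the total energy on a few tubes. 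You would have to show that the ``bad'' tubes (those where $Q^i_{\eps_k}$ is far from $Q$) carry an asymptotically negligible fraction of $\sum_i|b^i_{\eps_k}|\calH^1(S^i_{\eps_k})$; this is a nontrivial counting argument that your sketch does not supply, and leaving it open means the unrelaxed lower bound $\sum_i\calH^1(S^i_{\eps_k})\psiC(b^i_{\eps_k},Qt^i_{\eps_k})$ is not actually established.

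The paper sidesteps this entirely by a cleaner use of the same Corollary: since \eqref{eq:cell-problem-nonlinear} holds for an \emph{arbitrary} choice of $Q$, one plugs the global $Q_{\eps_k}$ into it on every tube, obtaining directly $\psiC(b^i_{\eps_k},Q_{\eps_k}t^i_{\eps_k})$ with error $\tilde\sigma\bigl(\tfrac{r_\eps}{R_\eps}+\|\beta_{\eps_k}-Q_{\eps_k}\|_{L^1(T^i\setminus\hat T^i)}/R_\eps^3+\tfrac{\eps|b^i|}{r_\eps}\bigr)$. The middle argument of $\tilde\sigma$ is then overestimated, uniformly in $i$, by $\|\beta_{\eps_k}-Q_{\eps_k}\|_{L^1(\Omega'\setminus(\gamma_\eps)_{r_\eps})}/R_\eps^3\lesssim\|\eta_{\eps_k}\|_{L^1(\Omega')}\,\eps\ln^{1/2}\tfrac1\eps/(\alpha_\eps^6h_\eps^6)\to0$, which follows from the convergence hypothesis \eqref{eqetaepstheorem2} and the diluteness scaling \eqref{eqdefheps}. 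The comparison between the internal rigidity rotation and the chosen $Q$ is already carried out once, inside the proof of Corollary~\ref{corcellproblem} (the estimate \eqref{Q-Q*}), so it need not be redone per tube. The passage from $\psiC(b^i,Q_{\eps_k}t^i)$ to $\psiC(b^i,Qt^i)$ is then harmless because $Q_{\eps_k}\to Q$ is a single, tube-independent rotation, and \eqref{eqpsictlip} gives a uniform multiplicative factor $(1+c|Q_{\eps_k}-Q|)\to1$. With this modification your argument closes; the rest of your sketch, including the use of \eqref{eqcalh1sepsi} to discard the caps and the relaxation step via \cite{ContiGarroniMassaccesi2015}, agrees with the paper.
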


\begin{proof}
We discuss the proof in the finite case, the linear case is similar.
For notational simplicity we write $\eps$ for $\eps_k$.
Without loss of generality we can assume that $\Omega$ is connected and that the $\liminf$ is finite and, possibly passing to a further subsequence, that it is a limit; in particular
\begin{equation}\label{eqElbE}
 \Elb_{\rho_\eps,\eps}[\mu_\eps,\beta_\eps]\le E \eps^2\log\frac1\eps
\end{equation}
{for some $E>0$.}
During the proof we shall extract further subsequences, which are not made explicit in the notation.\\
By hypothesis on $(\mu_\eps,\beta_\eps)$ we have that 

\begin{equation} \label{eq:conv-mu}
 \frac1\eps\mu_\eps\weakstarto\mu   \hskip5mm{\text{in $\Omega'$ for all $\Omega'\subset\subset\Omega$,}}
 \end{equation}
and there is $Q_\eps\in \SO(3)$  such that $Q_\eps\to Q$ and 
\begin{equation}\label{eq:conv-beta}
\eta_\eps:= \frac{Q_\eps^T\beta_\eps-\Id}{\eps(\ln\frac1\eps)^{1/2}}  \chi_{\Omega\setminus (\supp\mu_{\eps})_{\rho_{\eps}}}
 \weakto \conteta \text{ weakly in } L^{q}_\loc(\Omega;\R^{3\times 3})
\end{equation}
for {$q:=\frac32\wedge p$}.

Fix a connected Lipschitz set $\Omega'\subset\subset\Omega$.
The key idea in the proof is to use Proposition~\ref{propcellproblem} and the double differentiability of $W$ near the identity to show that, writing $\mu_\eps=\eps b_\eps\otimes t_\eps\calH^1\LL\gamma_\eps$, there are $\sigma_\eps\to0$ and 
two disjoint subsets
$\Omega'_\eps, \omega'_\eps\subset\Omega\setminus (\gamma_\eps)_{\rho_\eps}$ such that 
$\calL^3(\Omega'\setminus\Omega'_\eps)\to0$,
\begin{equation}\label{eqlinetensbea}
(1-\sigma_\eps)\int_{\Omega'\cap \gamma_\eps} \psiC({ b_\eps}, Q_\eps t_\eps)d\calH^1 \le  \frac{1}{\eps^2\ln\frac1\eps}
\int_{\omega_\eps'}
W(\beta_\eps)dx,
\end{equation}
and
\begin{equation}\label{eq:elasticenergy}
\int_{\Omega'_\eps}\frac12\C_{Q_\eps}\eta_\eps\cdot\eta_\eps dx-\sigma_\eps \le
\frac{1}{\eps^2\ln\frac1\eps}
 \int_{\Omega'_\eps}W(\beta_\eps)dx.
\end{equation}

We start by showing \eqref{eqlinetensbea}, following the setting in
the proof of Proposition~\ref{propcptmubd}.
Since $\mu_\eps$ is dilute we can write $\mu_\eps=\sum_i \eps b^i_\eps\otimes t_\eps^i {\cal H}^1\LL\gamma^i_\eps$, with $\gamma_\eps=\cup_i \gamma_\eps^i$ $(h_\eps,\alpha_\eps)$-dilute, 
and we can assume $\calH^1(\gamma_\eps^i)\in[h_\eps,2h_\eps]$.
 We define $R_\eps$, $S_\eps^i$, $T_\eps^i$, $  \hat \cyl^i_\eps $, $I_\eps$ as in the proof of Proposition~\ref{propcptmubd} where now  $r_\eps:=\max\{\rho_\eps,\eps h_\eps^{-2}\}$. We set 
 \begin{equation}
 \gamma'_\eps:=\bigcup_{i\in I_\eps}\gamma^i_\eps
\hskip5mm\text{ and }\hskip5mm\omega'_\eps:=\bigcup_{i\in I_\eps} T^i_\eps\setminus \hat T^i_\eps\subseteq
 (\gamma'_\eps)_{R_\eps}\setminus (\gamma_\eps)_{\rho_\eps}.
 \end{equation}
 Clearly $\Omega'\cap\gamma_\eps\subseteq \gamma_\eps'$.
 As in Proposition~\ref{propcptmubd},
 for $\eps$ sufficiently small the $T^i_\eps$ are disjoint and
 $T^i_\eps\subseteq\Omega$ for all $i\in I_\eps$.
 By Corollary \ref{corcellproblem} with $h=\calH^1(S_\eps^i)$, $R=R_\eps$, $r=r_\eps$, for every index $i\in I_\eps$ we get
  \begin{equation}\label{eqbdelbonefyl}
  \Elast[\beta_\eps, \cyl_{\eps}^i\setminus \hat \cyl_\eps^i]\ge 
  {(1-\tilde\sigma^i_\eps)}
  \eps^2 \calH^1(S_\eps^i) \ln\frac {R_\eps}{r_\eps} \psiC(b_\eps^i,Q_\eps t_\eps^i) ,
  \end{equation}
  {where}
  \begin{equation} 
  \begin{split}
  {\tilde\sigma_\eps^i:=  \tilde\sigma}\left({\frac{r_\eps}{R_\eps}}+ \frac{\|\beta_\eps-Q_\eps\|_{L^1(\Omega'\setminus(\gamma_\eps)_{r_\eps})}}{R_\eps^3}+ {\frac{\eps |b_\eps^i|}{r_\eps}}\right),
  \end{split}
  \end{equation}
and $\tilde\sigma\colon(0,\infty)\to(0,\infty)$ nondecreasing and such that $\lim_{s\to0}\tilde\sigma(s)=0$. We show that the argument of $\tilde \sigma$ converges uniformly in $i$ to zero.
From the definitions of $R_\eps$ and $r_\eps$, the diluteness condition \eqref{eqdefheps} and \eqref{largercore} we obtain $r_\eps/R_\eps\to0$.
By \eqref{eq:conv-beta} we have that $\sup_\eps\|\eta_\eps\|_{L^1(\Omega')}<\infty$, and recalling 
the definition of $R_\eps$ and the diluteness assumption,
\begin{equation}
\begin{split}
 \lim_{\eps\to0} 
\frac{\|\beta_\eps-Q_\eps\|_{L^1(\Omega'\setminus(\gamma_\eps)_{r_\eps})}}{R_\eps^3}\le
\lim_{\eps\to0} \frac{\|\eta_\eps\|_{L^1(\Omega')} \eps \ln^{1/2}\frac1\eps}{\alpha_\eps^6h_\eps^6}=0.
\end{split}
 \end{equation}
{We remark that 
\eqref{eq:compactness1} implies that for sufficiently small $\eps$ one has}
\begin{equation}\label{eqbepsiunif}
 {\sum_{i\in I_\eps} |b_\eps^i|\calH^1(\gamma_i^\eps)} \le {cE},
\end{equation}
{and using $\calH^1(\gamma_i^\eps) \ge h_\eps$} and  {$r_\eps\ge\eps h_\eps^{-2}$}, we obtain
\begin{equation}
{ \lim_{\eps\to0} \sup_{i\in I_\eps} \frac{\eps |b_\eps^i|}{r_\eps} 
\le 
 \lim_{\eps\to0} h_\eps \sum_{i\in I_\eps} |b_\eps^i|\calH^1(\gamma_i^\eps)
\le \lim_{\eps\to0} c h_\eps=0.}
\end{equation}
Therefore for all ${i\in I_\eps}$
\begin{equation} 
 \begin{split}
\tilde\sigma_\eps^i\le{\hat\sigma_\eps:=  \tilde\sigma}\left({\frac{r_\eps}{R_\eps}}+ \frac{c \eps \ln^{1/2}\frac1\eps}{\alpha_\eps^6h_\eps^6} +ch_\eps\right)
 \end{split}
\end{equation}
with $\hat\sigma_\eps\to0$.
{In turn, summing \eqref{eqbdelbonefyl} over $i\in I_\eps$ and using \eqref{eqcalh1sepsi} we obtain
  \begin{equation} \label{eqlbgigpres}
  \begin{split}
 \frac{1}{\eps^2\ln\frac1\eps} \Elast[\beta_\eps,  \bigcup_{i\in I_\eps}\cyl_{\eps}^i\setminus \hat \cyl_\eps^i]\ge & (1-\hat\sigma_\eps) \sum_{i\in I_\eps}\calH^1(S_\eps^i) \frac{\ln\frac {R_\eps}{r_\eps}}{\ln\frac1\eps} \psiC(b_\eps^i,Q_\eps t_\eps^i) \\
 \ge &(1-\hat\sigma_\eps) \frac{\ln\frac {R_\eps}{r_\eps}}{\ln\frac1\eps} (1-2\alpha_\eps)\int_{\gamma_\eps'} \psiC(b_\eps,Q_\eps t_\eps) d\calH^1.
  \end{split}
  \end{equation}
By 
\begin{equation}
 \lim_{\eps\to0}\frac{\ln \frac{R_\eps}{r_\eps}}{\ln \frac{1}{\eps}}=
 \lim_{\eps\to0}\frac{\ln \frac{\alpha_\eps^2h_\eps^4}{{\max\{\rho_\eps, \eps h_\eps^{-2}\}}}}{\ln \frac{1}{\eps}}=1,
\end{equation}
 the factor in front of the integral converges to 1.   Therefore 
\eqref{eqlinetensbea} follows.}
{In turn, from \eqref{eqlinetensbea}} we obtain
\begin{equation}
\liminf_{\eps\to0} \int_{ \gamma_\eps'} \psiC(b_\eps,Q_\eps t_\eps) d\calH^1 \le
\liminf_{\eps\to0}   \frac{1}{\eps^2\ln\frac1\eps}
\int_{\omega_\eps'}
W(\beta_\eps)dx. 
\end{equation}
By $Q_\eps\to Q$, \eqref{eqpsictlip} and the relaxation result of \cite[Theorem 3.1]{ContiGarroniMassaccesi2015}, this implies
\begin{equation}\label{eqlinetensbeadue} 
\int_{\Omega'\cap\gamma} \psi_\C^\rel (b,Qt) d\calH^1\le 
\liminf_{\eps\to0}   \frac{1}{\eps^2\ln\frac1\eps}
\int_{\omega_\eps'}
W(\beta_\eps)dx. 
\end{equation}

We next show \eqref{eq:elasticenergy}.
Since $W$ is twice differentiable at the identity, there is $\omega:[0,\infty)\to[0,\infty)$, {monotone and} continuous in $0$, with $\omega(0)=0$, such that for any $\delta>0$ sufficiently small we have
\begin{equation}
W({\Id+F})\ge \frac12 \C F \cdot F - \omega({\delta}) |F|^2  \text{ for all } F\in\R^{3\times 3} \text{ with } |F|\le \delta.
\end{equation}
We shall apply this with $F=(Q_\eps^T\beta_\eps(x)-\Id)\chi_{\Omega\setminus (\gamma_\eps)_{\rho_\eps}}
{(x)}=\eps\ln^{1/2}\frac1\eps \eta_\eps(x)$, away from the boundary, the singularity, and the parts where $\eta_\eps$ is exceptionally large. To make this precise, we first define
\begin{equation}
 \hat\Omega'_\eps:=\Omega'\setminus (\gamma'_\eps)_{R_\eps}\setminus (\partial\Omega')_{R_\eps}\subseteq \Omega'\setminus 
 (\gamma_\eps)_{R_\eps}.
\end{equation}
As $\Omega'$ is Lipschitz, $\gamma_\eps$ is dilute and
$\calH^1(\gamma^i_\eps)\in[h_\eps, 2h_\eps]$, using 
{\eqref{eqbepsiunif} and $|b_i|\ge c>0$} we obtain
\begin{equation}\label{eqhatomegapes}
\calL^3(\Omega'\setminus \hat\Omega'_\eps)
\le 
c(\Omega')R_\eps + 
c \sum_{i\in I_\eps} 
(R_\eps^3+h_\eps R_\eps^2)
 \le 
 cR_\eps+
 c R_\eps^2 \sum_{i\in I_\eps}  \calH^1(\gamma_\eps^i)\to0.
\end{equation}
We then choose $L_\eps\to\infty$ such that
 $L_\eps\eps \ln^{1/2}\frac1\eps+ L_\eps^2  \omega(L_\eps\eps 
\ln^{1/2}{\frac1\eps})\to0$ and define
\begin{equation}\begin{split}
 \Omega'_{\eps}:=&\{x\in\hat\Omega'_\eps: |\beta_\eps(x)-Q_\eps|\le L_\eps \eps\ln^{1/2}\frac1\eps\}
=\{x\in\hat\Omega'_\eps: |\eta_\eps(x)|\le L_\eps\}.
\end{split}
\end{equation}
Since $\eta_\eps$ is bounded in $L^q(\Omega';{\R^{3\times 3}})$, $\calL^3(\hat\Omega'_\eps \setminus \Omega'_\eps)\to0$,
and recalling \eqref{eqhatomegapes} we obtain $\calL^3(\Omega'\setminus \Omega'_\eps)\to0$.
For any $x\in\Omega'_{\eps}$ we have
\begin{equation}
\begin{split}
 W(\beta_\eps(x))=&W(\beta_\eps(x)Q_\eps^T)\\
 \ge&
 \frac12 \C (\beta_\eps(x)Q_\eps^T-\Id)\cdot (\beta_\eps(x)Q_\eps^T-\Id) \\
 &\hskip1cm - |\beta_\eps(x)Q_\eps^T-\Id|^2
 \omega({L_\eps\eps \ln^{1/2}(1/\eps)})  \\
 \ge&\eps^2\ln \frac1\eps\left(
 \frac12 \C_{Q_\eps} \eta_\eps(x)\cdot\eta_\eps (x)- L_\eps^2
 \omega({L_\eps\eps \ln^{1/2}(1/\eps)})  \right) .
 \end{split}
\end{equation}
Therefore
\begin{equation}
\begin{split}
\frac{1}{\eps^2\ln\frac1\eps}
 \int_{\Omega'_{\eps}} W(\beta_\eps) dx \ge& 
 \int_{\Omega'_{\eps}} \frac12\C_{Q_\eps}\eta_\eps\cdot\eta_\eps dx-  L_\eps^2\omega({L_\eps\eps \ln^{1/2}(1/\eps)}) 
 \calL^3(\Omega)
  \end{split}
\end{equation}
which is \eqref{eq:elasticenergy}.
Moreover
by \eqref{eq:conv-beta} and $\calL^3(\Omega'\setminus \Omega'_\eps)\to0$ we obtain that
\begin{equation}
 \eta_\eps\chi_{\Omega'_\eps}\weakto \eta
 \text{ in } L^1(\Omega';{\R^{3\times 3}})
\end{equation}
and, by lower semicontinuity and
$\C_{Q_\eps}\to\C_Q$, 
\begin{equation}\label{eqelastertatar}
 \int_{\Omega'} \frac12  \C_Q\eta\cdot\eta dx \le \liminf_{\eps\to0}
\frac{1}{\eps^2\ln\frac1\eps}
 \int_{\Omega'_{\eps}} W(\beta_\eps) dx.
\end{equation}
Combining  \eqref{eqlinetensbeadue}
and \eqref{eqelastertatar}, recalling that $\omega'_\eps\cap\Omega'_\eps=\emptyset$,  we obtain
\begin{equation}\
 \int_{\Omega'\cap\gamma} \psi_\C^\rel (b,Qt) d\calH^1+ \int_{\Omega'}\frac12  \C_Q\conteta\cdot \conteta dx \le \liminf_{k\to\infty} \frac{1}{\eps_k^2\ln \frac1{\eps_k}} \Elb_{\rho_{\eps_k},\eps_k}[\mu_{\eps_k},\contbeta_{\eps_k}]
\end{equation}
for all Lipschitz sets $\Omega'\subset\subset\Omega$; 
as the right-hand side does not depend on $\Omega'$ the same holds on $\Omega$. This proves
\eqref{eqlowerboundprop}.

If the right-hand side is finite, then $\eta+\eta^T\in L^2(\Omega;\R^{3\times 3})$. By Korn's inequality, recalling that $\curl\eta=0$, this implies $\eta\in L^2(\Omega;\R^{3\times3})$.
\end{proof}

\section{Upper bound}\label{secupperbound}

A key technical result of this section, which permits a refinement of the argument used for the upper bound in \cite{ContiGarroniOrtiz2015,garroni2020nonlinear} is the following statement. It shows that the upper bound for the unrelaxed problem can be taken at constant $\beta$.

\begin{proposition}\label{propupperboundcontunrelax2}
Let $\C$ obey (\ref{eqdefC}).
 Let $\mu=b\otimes t \calH^1\LL \gamma \in \calM_\calB^1(\R^3)$ be polygonal, and let $\contbeta\in L^{3/2}(\R^3;\R^{3\times3})$ be the solution to
 \begin{equation}\label{eqsystembetamu}
  \begin{cases}
   \curl\contbeta=\mu,\\
   \Div \C\contbeta=0.
  \end{cases}
 \end{equation}
Let $\nu_\eps$ be the {positive} measure defined by
\begin{equation*}
 \nu_\eps:= \frac{1}{\ln\frac1\eps}\frac12  \C\contbeta\cdot \contbeta  \calL^3\LL(\R^3\setminus (\gamma)_\eps).
\end{equation*}
 Then $\nu_\eps$ converges weakly{-$*$} to
 \begin{equation}
  \nu_0:=\psiC(b,t)\calH^1\LL \gamma.
 \end{equation}
 In particular, if $\Omega\subset\R^3$ is an open set with $|\mu|(\partial\Omega)=0$, then
\begin{equation}\label{eqprop61omega}
 \lim_{\eps\to0} \frac{1}{\ln \frac1\eps}
 \int_{\Omega\setminus (\gamma)_\eps} \frac12 \C \contbeta\cdot  \contbeta dx  = \int_{\Omega\cap\gamma}  \psiC(b,t) d\calH^1.
\end{equation} 
Further, the positive measures
\begin{equation}\label{eqdefhatnueps}
\hat\nu_\eps:= \frac{1}{\ln\frac1\eps} |\contbeta|^2 \calL^3\LL(\R^3\setminus (\gamma)_\eps)
\end{equation}
are uniformly locally bounded, in the sense that for all $R>0$ one has 
\begin{equation}\label{eqsupepsnuepsbr}
\limsup_{\eps\to0}\hat\nu_\eps(B_R)<\infty,
\end{equation}
{and for any $s\in (0,1)$ obey
\begin{equation}\label{eqepssnuhat}
 \limsup_{\eps\to0}\hat\nu_\eps((\gamma)_{\eps^s})\le c (1-s).
\end{equation}}
\end{proposition}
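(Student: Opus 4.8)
\emph{Strategy and crude bounds.} The plan is to localise the elastic energy near $\gamma$, where it is governed by the straight–dislocation profiles $\contbeta_{b,t}$, and to show that the finitely many vertices of $\gamma$ carry, after dividing by $\ln\frac1\eps$, only a contribution controlled by the size of the excised vertex neighbourhoods. Write $\gamma=\bigcup_{i=1}^M\gamma_i$ for the decomposition into maximal closed segments with constant multiplicity $b_i$ and tangent $t_i$, and $V=\{v_1,\dots,v_K\}$ for the vertices; since $\Div\mu=0$ there are no free endpoints, so $\gamma$ is compact. I use throughout the pointwise bound $|\contbeta(x)|\le c\sum_i|b_i|/\dist(x,\gamma_i)$ of Theorem~\ref{theoremsolr3}\ref{lemmasolr3lp}, which via Young's inequality gives $|\contbeta|^2\le c\sum_i|b_i|^2\dist(\cdot,\gamma_i)^{-2}$, together with $|\contbeta(x)|\le c|x|^{-2}$ for large $|x|$ (from the $(-2)$–homogeneity of $N$ and compactness of $\gamma$). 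Splitting, for each segment, the region $\{\dist(\cdot,\gamma_i)>\eps\}$ into a cylindrical part around the relative interior of $\gamma_i$ and two spherical caps at its endpoints, one finds $\int_{\R^3\setminus(\gamma)_\eps}|\contbeta|^2\,dx\le c\,\calH^1(\gamma)\ln\frac1\eps+C$ for small $\eps$; this yields \eqref{eqsupepsnuepsbr} (in fact a global bound $\hat\nu_\eps(\R^3)\le c$) and shows that no mass of $\nu_\eps$ escapes to infinity. The same computation localised to $B_\delta(v_k)$ gives the \emph{vertex estimate} $\int_{B_\delta(v_k)\setminus(\gamma)_\eps}|\contbeta|^2\,dx\le C_\gamma\,\delta\ln\frac1\eps$ for $\eps$ small (the cylindrical contribution of a ray through $v_k$ truncated at length $\delta$ being $\le 2\pi\delta\ln\frac\delta\eps$), hence $\limsup_{\eps\to0}\nu_\eps(\overline{B_\delta(v_k)})\le C\delta$.

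\emph{Local cylinder computation.} Fix an interior point $x_0$ of a segment $\gamma_i$, let $L_i$ be the line through $x_0$ in direction $t_i$, and let $C$ be a closed cylinder of small radius $\varrho$ around a sub-segment $[P,Q]\subset\gamma_i$ of $L_i$, lying in the relative interior of $\gamma_i$ and chosen so that $C$ is disjoint from all $\gamma_j$, $j\ne i$, and from $V$. Both $\contbeta=N*\mu$ and the profile $\contbeta_{L_i}$ around $L_i$ (equal to $\contbeta_{b_i,t_i}$ translated, by Lemma~\ref{lemmabetabtkernel1} and the subsequent remark) are convolutions of $N$, so on $C$ the remainder $g:=\contbeta-\contbeta_{L_i}$ is the $N$–convolution of a measure supported at distance $d_0>0$ from $C$; hence $g\in C^\infty(C)$ with $\|g\|_{L^\infty(C)}\le M$ independent of $\eps$ (the two rays $L_i\setminus\gamma_i$ contribute the convergent integral $\int_0^\infty(d_0^2+\sigma^2)^{-1}d\sigma$). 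Since $(\gamma)_\eps\cap C=(L_i)_\eps\cap C$, the set $C\setminus(\gamma)_\eps$ is, up to a rigid motion, the hollow cylinder $Q_{t_i}T^{\varrho,\eps}_{\ell}$ with $\ell=|P-Q|$, and \eqref{eqpsi0cylinder} together with the translation invariance of $\contbeta_{b,t}$ along $t$ gives
\[
\int_{C\setminus(\gamma)_\eps}\frac12\C\contbeta_{L_i}\cdot\contbeta_{L_i}\,dx=\ell\,\ln\frac\varrho\eps\,\psiC(b_i,t_i),
\]
while the remaining contributions are bounded by $cM\int_{C\setminus(L_i)_\eps}|\contbeta_{L_i}|\,dx+cM^2\calL^3(C)\le C$ uniformly in $\eps$, because $\int_{C\setminus(L_i)_\eps}\dist(\cdot,L_i)^{-1}\,dx\le c\ell\varrho$. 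Dividing by $\ln\frac1\eps$ yields $\nu_\eps(C)\to\ell\,\psiC(b_i,t_i)=\nu_0(C)$.

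\emph{Passage to the limit and the tube bound.} Extract a subsequence with $\nu_{\eps_k}\weakstarto\nu$, a finite positive measure by the global bound. Since $\contbeta$ is smooth and locally bounded on $\R^3\setminus\gamma$, $\nu_\eps(\Omega')\to0$ for every $\Omega'\subset\subset\R^3\setminus\gamma$, so $\supp\nu\subseteq\gamma$. Given $\varphi\in C_c(\R^3)$ and $\delta>0$, cover $\gamma\setminus\bigcup_kB_{2\delta}(v_k)$ by finitely many essentially disjoint cylinders $C_{l,n}$ as above, each small enough that $\varphi$ oscillates less than $\delta$ on it, with radii and endpoints chosen (outside a $\nu$–negligible countable set, admissible since $\nu$ is finite) so that $\nu(\partial C_{l,n})=0$; then $\nu(C_{l,n})=\lim_k\nu_{\eps_k}(C_{l,n})=\nu_0(C_{l,n})$ by the previous step. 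Writing $\int\varphi\,d\nu=\int_{\bigcup C_{l,n}}\varphi\,d\nu+\int_{\R^3\setminus\bigcup C_{l,n}}\varphi\,d\nu$, bounding the second term by $\|\varphi\|_\infty\sum_k\nu(B_{2\delta}(v_k))\le CK\delta$ (vertex estimate) and, on the first, replacing $\varphi$ by its value at the axis midpoint of each $C_{l,n}$ (error $\le2\delta\,\nu_0(\R^3)$) while $\nu_0(\gamma\cap\bigcup_kB_{2\delta}(v_k))\le C\delta$, one gets $\big|\int\varphi\,d\nu-\int_\gamma\varphi\,\psiC(b,t)\,d\calH^1\big|\le C(\varphi,\gamma)\,\delta$; letting $\delta\to0$ gives $\nu=\nu_0$, and since every subsequence has this limit, $\nu_\eps\weakstarto\nu_0$. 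Assertion \eqref{eqprop61omega} then follows, since $|\mu|(\partial\Omega)=0$ forces $\calH^1(\gamma\cap\partial\Omega)=0$ and hence $\nu_0(\partial\Omega)=0$. For \eqref{eqepssnuhat} with $s\in(0,1)$, note $(\gamma)_{\eps^s}\setminus(\gamma)_\eps\subseteq\bigcup_i\big((\gamma_i)_{\eps^s}\setminus(\gamma_i)_\eps\big)\cup\{\dist(\cdot,\gamma_i)>\eps^s\ \text{for all }i\}$: on the last set $|\contbeta|^2\le c\eps^{-2s}$ and its volume is $O(\eps^{2s})$, contributing $O(1)$; on $(\gamma_i)_{\eps^s}\setminus(\gamma_i)_\eps$ one has $\int\dist(\cdot,\gamma_i)^{-2}\,dx=2\pi\calH^1(\gamma_i)(1-s)\ln\frac1\eps+o(\ln\frac1\eps)$. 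Summing and dividing by $\ln\frac1\eps$ yields $\limsup_{\eps\to0}\hat\nu_\eps((\gamma)_{\eps^s})\le c(1-s)$ with $c$ depending only on $\C$ and $\mu$.

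\emph{Main obstacle.} The delicate point is the behaviour at the corners: near a vertex $\contbeta$ is a superposition of several non-collinear singular profiles rather than a single $\contbeta_{b,t}$, and one has to verify by the direct geometric estimate above that the renormalised energy it carries is $O(\delta)$, hence asymptotically irrelevant. The remainder is bookkeeping — checking that all error constants are uniform in $\eps$, and the standard measure-theoretic care in choosing cylinder boundaries that are not charged by the weak-$*$ limit.
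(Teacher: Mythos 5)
Your proof is correct and shares the two basic ingredients with the paper's (the pointwise kernel bound $|\beta|\le c\sum_i|b_i|/\dist(\cdot,\gamma_i)$ and the exact cylinder energy \eqref{eqpsi0cylinder} for the straight profile), but it identifies the weak-$*$ limit by a genuinely different route. The paper extracts a subsequential limit $\nu_0$, shows $\nu_0\ll\calH^1\LL\gamma$ by ball estimates, and then computes the Radon--Nikod\'{y}m derivative at $\calH^1$-a.e.\ interior point $x\in\gamma_i$ using shrinking cylinders $T^x_{r_k}$; the upper bound there comes from a Young-type $(1+\delta)$ splitting against $\beta_{b_i,t_i}(\cdot-x)$, and the \emph{lower} bound comes from the variational characterization of $\psiC$ via the cell problem $\infcyl$ and Lemma~\ref{lemmacellproblemlb}\ref{lemmacellproblemlbA}. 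Vertices require no special treatment in that approach, being $\calH^1$-negligible. You instead fix a cylinder $C$ around a subsegment, expand $\frac12\C\beta\cdot\beta=\frac12\C\beta_{L_i}\cdot\beta_{L_i}+\C\beta_{L_i}\cdot g+\frac12\C g\cdot g$ with $g=\beta-\beta_{L_i}$ bounded on $C$ (because $g$ is the kernel convolution of a measure kept at positive distance from $C$), and obtain the \emph{exact} two-sided limit $\nu_\eps(C)\to\calH^1(\gamma\cap C)\,\psiC(b_i,t_i)$ --- no $(1+\delta)$ slop and no appeal to the variational lower bound. You then need to assemble the global statement from these local limits by a covering argument, which forces an explicit vertex estimate $\nu(B_\delta(v_k))\le C\delta$. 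Net comparison: your bilinear expansion is more elementary and self-contained (it bypasses both Radon--Nikod\'{y}m and Lemma~\ref{lemmacellproblemlb}), but it exploits that $\beta$ is \emph{exactly} the kernel solution; the paper's use of $\infcyl$ is more robust (it would give the lower bound for any admissible competitor, not just the exact solution) and its Radon--Nikod\'{y}m framework automatically quarantines the vertices without a separate estimate. The remaining assertions \eqref{eqsupepsnuepsbr}, \eqref{eqepssnuhat}, and \eqref{eqprop61omega} are handled in essentially the same way in both proofs, and your observation that in fact $\hat\nu_\eps(\R^3)\le c$ uniformly is a small strengthening of \eqref{eqsupepsnuepsbr}.
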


\begin{proof}
Since $\mu$ is polygonal, we have that $\gamma=\cup_i\gamma_i$, with the $\gamma_i$ being finitely many pairwise disjoint segments in $\R^3$. For the same reason, $b\in L^\infty(\gamma,\calH^1;\calB)$.
 By Theorem \ref{theoremsolr3}\ref{lemmasolr3lp}, for any $x\not\in\overline \gamma$ we have
\begin{equation*}
 |\contbeta|(x)\le \frac{c}{\dist(x,\gamma)} = c \max_i f_i(x)
\end{equation*}
where
\begin{equation}\label{eqdeffi}
f_i(x):=\frac{1}{\dist(x, \gamma_i)}\,.
\end{equation}
In particular, this shows that 
\begin{equation}\label{eqnuepsnuepsi}
 \nu_\eps\le
c\hat\nu_\eps\le
 \frac{c}{\ln\frac1\eps} \sum_i f_i^2 \calL^3\LL(\R^3\setminus (\gamma)_\eps)\le c
\sum_i \nu_\eps^i,
 \end{equation}
where
\begin{equation*}
 \nu_\eps^i:=
\frac{1}{\ln\frac 1\eps} f_i^2 \calL^3\LL(\R^3\setminus (\gamma_i)_\eps). 
 \end{equation*}
 We fix an index $i$ and estimate $\nu_\eps^i$ separately near $\gamma_i$ and far away. Let $\delta,R>0$.
Since $f_i\le 1/\delta$ outside $(\gamma_i)_\delta$, 
\begin{equation}\label{eqnuioutside}
 \nu_\eps^i(B_R\setminus (\gamma_i)_\delta) \le  \frac{\delta^{-2} }{\ln\frac1\eps}\calL^3(B_R).
\end{equation}
We recall that we denote by $B_r(x)$ and $B'_r(x)$ the balls of radius $r>0$ centered at $x$ in $\R^3$ and $\R^2$ respectively; for $x=0$ we simply write $B_r$ and $B'_r$.
 Let now $x\in \overline{\gamma_i}$, $r>0$.  Then
\begin{equation}\label{eqnuinside}
\begin{split}
 \nu_\eps^i(B_r(x)) &\le \frac{1}{\ln\frac 1\eps} \left( 
 \int_{B_r\setminus B_\eps} \frac{1}{|y|^2} dy + 
 2r \int_{B_r'\setminus B_\eps'} \frac{1}{|y'|^2}dy'\right)\\
& =\frac{1}{\ln\frac1\eps} (4\pi(r-\eps) + 2r \cdot 2\pi \cdot \ln \frac{r}{\eps})\\
&\le  \frac{4\pi r + 4\pi r\ln {(r/\eps)}}{\ln(1/\eps)} .
 \end{split}
\end{equation}
Note that the first integral in the first inequality of \eqref{eqnuinside} is needed to estimate $\nu_\eps^i(B_r(x))$
 in the case that $x$ is near the endpoints of $\gamma_i$. 
For any $\delta\in (0,\calH^1(\gamma_i))$ we can cover $(\gamma_i)_\delta$ with at most $3\calH^1(\gamma_i)/\delta$ balls $B_{2\delta}(x^k)$, with $x^k\in\overline\gamma_i$.  Therefore, using \eqref{eqnuinside} with $r=2\delta$ and \eqref{eqnuioutside},
\begin{equation*}
\nu_\eps^i(B_R) \le \frac{3\calH^1(\gamma_i)}{\delta} 
\frac{8\pi\delta + {8\pi\delta\ln(2\delta/\eps)}}{\ln(1/\eps)}  +
 \frac{\delta^{-2} }{\ln(1/\eps)}\calL^3(B_R),
\end{equation*}
which implies
\begin{equation}\label{eqnuepsibounded}
 \limsup_{\eps\to0} \nu_\eps^i(B_R) \le 24\pi \calH^1(\gamma_i)<\infty
\end{equation} 
for all $R>0$. Recalling \eqref{eqnuepsnuepsi}, this proves \eqref{eqsupepsnuepsbr}. 

{For any $s\in (0,1)$ the same computation, using $\delta=\eps^s$ in \eqref{eqnuinside}, shows that
\begin{equation}\label{eqboundepss}
 \limsup_{\eps\to0} \nu_\eps^i((\gamma_i)_{\eps^s}) \le 3\calH^1(\gamma_i)\limsup_{\eps\to0}
\frac{8\pi + 8\pi\ln(2\eps^{s-1})}{\ln\frac1\eps}   =24\pi \calH^1(\gamma_i) (1-s),
\end{equation}
and similarly $\limsup_{\eps\to0} \nu_\eps^i((\gamma_j)_{\eps^s})=0$ for $j\ne i$. 
This proves \eqref{eqepssnuhat}.
}

By \eqref{eqnuepsibounded}, after taking a subsequence we can assume that for each $i$ $\nu_\eps^i\weakstarto\nu_0^i$ for some Radon measure $\nu_0^i$.
By \eqref{eqnuioutside} we obtain $\nu_0^i(B_R\setminus (\gamma_i)_\delta)=0$ for any $\delta$ and $R$; taking 
$\delta\to0$ and $R\to\infty$ this gives $\nu^i_0(\R^3\setminus \overline{\gamma_i})=0$.
By \eqref{eqnuinside} we obtain 
\begin{equation*}
 \nu_0^i(B_r(x))\le 4\pi r \le 4\pi  \calH^1(B_r(x)\cap \gamma_i)\text{ for any } x\in \overline{\gamma_i}, r\in (0, \calH^1(\gamma_i)).
\end{equation*}
Therefore $\nu_0^i\ll \calH^1\LL \gamma_i$. Summing over $i$ (and possibly taking a further subsequence), $\nu_\eps\weakto\nu_0$ with $\nu_0\ll \calH^1\LL\gamma$.

In order to conclude the proof it suffices to show that for any $i$ we have
\begin{equation}
\frac{d\nu_0}{d\calH^1\LL\gamma}(x)=\psiC(b_i,t_i) \hskip5mm \text{ for $\calH^1$-almost every }x\in\gamma_i,
\end{equation}
where we recall that $b_i\in\calB$ and $t_i\in S^2$ are defined by 
$\mu=\sum_i b_i\otimes t_i\calH^1\LL\gamma_i$.

{For $x\in\gamma_i$ we}
denote by $T_r^x$ a cylinder with axis parallel to $t_i$,  radius $r>0$ and height $2r$ centered at  $x$,
\begin{equation*}
 T_r^x:= x + Q_{t_i} ( B_r'\times (-r,r)),
\end{equation*}
where as usual $Q_{t_i}$ is an element of $\SO(3)$ with $Q_{t_i}e_3=t_i$.
By the Radon-Nikodým theorem, for $\calH^1$-almost every $x\in \gamma_i$ there is a sequence $r_k\to0$ such that $\nu_0(\partial T^x_{r_k})=0$ and
\begin{equation*}
 \frac{d\nu_0}{d\calH^1\LL\gamma_i}(x)=\lim_{k\to\infty} \frac{\nu_0(T_{r_k}^x)}{\calH^1(\gamma_i\cap T_{r_k}^x)}.
\end{equation*}
We can assume that $x$ is not an endpoint of $\gamma_i$. For sufficiently large $k$, $\calH^1(\gamma_i\cap T_{r_k}^x)=2r_k$.
Recalling that $\nu_\eps(T_{r_k}^x)\to \nu_0(T_{r_k}^x)$, the definition of $\nu_\eps$, and $\dist(x,\gamma_j)>0$ for $j\ne i$,
\begin{equation}\label{eqdnudh1gi}
 \frac{d\nu_0}{d\calH^1\LL\gamma_i}(x)=\lim_{k\to\infty}  \frac{1}{2r_k}
 \lim_{\eps\to0} \frac{1}{\ln\frac1\eps}\int_{T_{r_k}^x\setminus (\gamma_i)_\eps}
\frac12 \C\contbeta\cdot\contbeta dy. 
\end{equation}
By Theorem \ref{theoremsolr3}\ref{lemmasolr3intrep}
\begin{equation*}
 \contbeta(y)=\int_{\R^3} N(y-z) d\mu(z)=\sum_j \int_{\gamma_j} N(y-z) b_j\otimes t_j d\calH^1(z)=:\sum_j\contbeta_j(y).
\end{equation*}
Assume first $j\ne i$, so that $\dist(x,\gamma_j)>0$.
Since $|N(z)|\le c/|z|^2$, we have $|\contbeta_j|\le c$ in $T_{r_k}^x$ for sufficiently large $k$.
The term $\beta_i$ instead is compared with $\contbeta_{b_i, t_i}$.
By  Lemma \ref {lemmabetabtkernel1} we have that
\begin{equation*}
 \contbeta_{b_i,t_i}(y-x)-\contbeta_i(y)= \int_{(x+\R t_i)\setminus \gamma_i} 
 N(y-z) (b_i\otimes t_i) d\calH^1(z)
\end{equation*}
which implies, using again $|N(z)|\le c/|z|^2$,
\begin{equation*}
| \contbeta_{b_i,t_i}(y-x)-\contbeta_i(y)|\le c \hskip5mm\text{ for } y\in T_{r_k}^x
\end{equation*}
for sufficiently large $k$ {(with $c$ depending on $x$ and $i$)}. 

As for any $\delta{\in(0,1]}$ 
\begin{equation*}
 \C\contbeta(y)\cdot\contbeta(y) \le (1+\delta) \C\contbeta_{b_i,t_i}(y-x)\cdot \contbeta_{b_i,t_i}(y-x)
 + \frac c\delta |\contbeta(y)-\contbeta_{b_i,t_i}(y-x)|^2
\end{equation*}
we obtain, recalling \eqref{eqpsi0cylinder} in order to evaluate the integral, 
\begin{equation*}
\begin{split}
 \int_{T_{r_k}^x\setminus (\gamma_i)_\eps}
\frac12 \C\contbeta\cdot\contbeta dy
&\le (1+\delta)\int_{T_{r_k}^0\setminus (\R t_i)_\eps}
\frac12 \C\contbeta_{b_i,t_i} \cdot\contbeta_{b_i,t_i}dy
+ \frac c\delta r_k^3\\
&=(1+\delta) 2r_k \ln \frac{r_k}{\eps} \psiC (b_i, t_i)+ \frac c\delta r_k^3.
\end{split}
\end{equation*}
Therefore {\eqref{eqdnudh1gi} gives}
\begin{equation*}
 \frac{d\nu_0}{d\calH^1\LL\gamma_i}(x)\le (1+\delta) \lim_{k\to\infty}  
 \lim_{\eps\to0} \frac{\ln \frac{r_k}{\eps }}{\ln\frac1\eps} \psiC(b_i,t_i)
{=(1+\delta) \psiC(b_i,t_i)}
 \end{equation*}
for any $\delta>0$, so that
\begin{equation*}
 \frac{d\nu_0}{d\calH^1\LL\gamma_i}(x)\le 
 \psiC(b_i, t_i).
\end{equation*}
In order to prove the converse bound we start from \eqref{eqdnudh1gi} and notice that $\beta$ is an admissible test function in \eqref{eqdefabthRrb} for $\infcyl(\C,b_i, t_i, {2r_k}, r_k,\eps)$.
Therefore
\begin{equation}
 \frac{d\nu_0}{d\calH^1\LL\gamma_i}(x)\ge \liminf_{k\to\infty}  
 \liminf_{\eps\to0} 
 \frac{\ln\frac{r_k}{\eps}}{\ln\frac1\eps}
 \infcyl(\C,b_i, t_i, {2r_k}, r_k,\eps)=
 \psiC(b_i,t_i),
\end{equation}
where in the last step we used
Lemma~\ref{lemmacellproblemlb}\ref{lemmacellproblemlbA}.  This concludes the proof.
\end{proof}

\begin{remark}\label{remarkbetamollcore}
{The same
argument also yields
 \begin{equation}\label{eqbetamollcore}
  \limsup_{\eps\to0} \int_{(\gamma)_{2\eps}} |\beta\ast\varphi_\eps|^2 dx <\infty.
 \end{equation}
 To see this, we observe that $|\beta|\le c\sum_i f_i$ implies $|\beta\ast\varphi_\eps|\le c\sum_i (f_i\ast \varphi_\eps)$. With \eqref{eqdeffi}
 we obtain $|f_i\ast \varphi_\eps|\le c/\eps$ everywhere, and therefore
 $\|\beta\ast\varphi_\eps\|_{L^\infty} \le c/\eps$.
 For each segment
 $\gamma_i$ one computes $\calL^3((\gamma_i)_\eps)=
 \pi\eps^2\calH^1(\gamma_i)+\frac{4\pi}3\eps^3$,
and since there are finitely many of them we have $\calL^3((\gamma)_{2\eps})\le c \eps^2 + c \eps^3$. Condition \eqref{eqbetamollcore} follows.}
\end{remark}

We next deal with a technical issue related to the intersection of $\gamma$ and the boundary of $\Omega$. Indeed, in the above proposition one has integrated over the complement of an $\eps$-neighbourhood of the entire curve $\gamma\subset\R^3$, whereas in 
$\Eub_\eps$ one integrates over the complement of  an $\eps$-neighbourhood of the part of $\gamma$ which is contained within $\Omega$. It is apparent that this may generate difficulties if $\gamma$ is tangential to $\partial\Omega$, including the extreme case in which one of the segments composing $\gamma$ is contained in $\partial\Omega$. This situation is obviously not generic, and indeed we show in the next result that in the generic situation this boundary effect vanishes in the limit. 

Let $\Omega\subseteq\R^3$ be open and Lipschitz, $\gamma\subset\R^3$ a finite union of closed segments. We  say that $\gamma$ is transversal to $\partial\Omega$  if $\gamma\cap\partial\Omega$ consists of finitely many points, each of them belongs to only one of the segments, is a point where $\partial\Omega$ has a normal, and the normal is not orthogonal to $\gamma$.

\begin{lemma}\label{lemmatransversal}
Let $\Omega\subseteq\R^3$ be an open Lipschitz set, $\gamma\subset\R^3$ a finite union of closed segments. 
\begin{enumerate}
 \item\label{lemmatransversalgeneric} For ($\calL^3$-)almost all $a\in\R^3$ the curve $a+\gamma$ is transversal to $\partial\Omega$. 
\item\label{lemmatransversallimit} If $\gamma$ is transversal to $\partial\Omega$, then
in the setting of Proposition~\ref{propupperboundcontunrelax2}
\begin{equation}\label{eqepssnuhat3}
 \limsup_{\eps\to0}
 \frac1{\log\frac1\eps} \int_{\Omega\cap (\gamma)_{\eps}\setminus(\Omega\cap\gamma)_\eps}|\beta|^2 dx =0.
\end{equation}
\end{enumerate}
\end{lemma}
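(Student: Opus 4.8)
\textbf{Proof plan for Lemma~\ref{lemmatransversal}.}

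For part \ref{lemmatransversalgeneric}, the plan is to write $\gamma=\bigcup_{k}\gamma_k$ as a finite union of closed segments and to argue separately that, for a.e.\ $a$, the translated curve $a+\gamma$ meets $\partial\Omega$ in finitely many points, that distinct segments of $a+\gamma$ do not both pass through the same boundary point, and that at each intersection point $\partial\Omega$ has a normal which is not orthogonal to the relevant segment. Since $\Omega$ is Lipschitz, $\partial\Omega$ is $(\calH^2,2)$-rectifiable and has a well-defined measure-theoretic normal $\mathcal H^2$-a.e.; write $N\subset\partial\Omega$ for the exceptional null set. For a fixed segment $\gamma_k$ with direction $t_k\in S^2$, consider the map $G_k\colon\R\times\partial\Omega\to\R^3$, $G_k(s,y)=y-st_k$, whose image is exactly the set of translations $a$ for which $a+s t_k\in\partial\Omega$ lands on the point $y$; the ``bad'' events are $a\in G_k(\R\times N)$ (normal undefined), $a\in\{y-st_k: y\in\partial\Omega,\ \nu(y)\cdot t_k=0\}$ (tangential contact), and, for $j\ne k$, $a\in G_k(\R\times\partial\Omega)\cap G_j(\R\times\partial\Omega)$ restricted to the diagonal (a boundary point hit twice). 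Each of these is the image under a locally Lipschitz map of a set that is lower-dimensional in the appropriate sense (a $\calH^2$-null subset of the $2$-rectifiable $\partial\Omega$, or a set of the form ``$1$-dimensional $\times$ $1$-dimensional'' inside $\R^3$, or an intersection of two $3$-dimensional sets cut down by requiring the two boundary points to coincide, which drops the dimension), and hence is $\calL^3$-null. I would make the "lower-dimensional" counting precise via the coarea/area formula for Lipschitz maps on rectifiable sets, or alternatively via Sard-type/Fubini arguments after a Lipschitz parametrization of $\partial\Omega$. A union of finitely many null sets over $k$ and pairs $(j,k)$ is null, proving \ref{lemmatransversalgeneric}.

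For part \ref{lemmatransversallimit}, the plan is to localize near $\gamma\cap\partial\Omega$, since away from $\partial\Omega$ the integration domain $\Omega\cap(\gamma)_\eps\setminus(\Omega\cap\gamma)_\eps$ is empty. Indeed if $x\in(\gamma)_\eps$ with $\dist(x,\partial\Omega)>2\eps$, then the nearest point of $\gamma$ to $x$ lies in $\Omega$, so $x\in(\Omega\cap\gamma)_\eps$; hence the domain of integration is contained in $(\gamma\cap\partial\Omega)_{3\eps}$. Transversality gives finitely many intersection points $p_1,\dots,p_M$, each on a unique segment $\gamma_{k(m)}$, with $\partial\Omega$ admitting at $p_m$ a normal $\nu_m$ with $\nu_m\cdot t_{k(m)}\ne0$. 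Near $p_m$, the curve $\gamma$ agrees with the line $p_m+\R t_{k(m)}$, so on $B_\delta(p_m)$ the pointwise bound $|\beta(x)|\le c/\dist(x,\gamma)=c/\dist(x,p_m+\R t_{k(m)})$ from Proposition~\ref{propupperboundcontunrelax2} (via Theorem~\ref{theoremsolr3}\ref{lemmasolr3lp}) applies. It therefore suffices to estimate, for each $m$,
\begin{equation*}
 \frac1{\log\frac1\eps}\int_{B_\delta(p_m)\cap(\gamma)_\eps\setminus(\Omega\cap\gamma)_\eps} \frac{c}{\dist(x,p_m+\R t_{k(m)})^2}\,dx.
\end{equation*}

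The main estimate, and the heart of the argument, is to show this quantity tends to $0$. I would set up cylindrical coordinates $(r,\varphi,z)$ adapted to the line $p_m+\R t_{k(m)}$ through $p_m$, so the integrand is $c/r^2$ and $dx=r\,dr\,d\varphi\,dz$, giving a density $c/r$. The set $(\gamma)_\eps$ corresponds to $\{r<\eps\}$ near $p_m$; so the only contribution comes from $\{r<\eps\}$, on which $\int \frac{c}{r}\,r\,dr\,d\varphi=c\eps$ per unit length in $z$ — already a factor $\eps$, which would make the whole thing $O(\eps/\log\frac1\eps)$ and we would be done, \emph{except} that we subtract $(\Omega\cap\gamma)_\eps$, not $(\gamma)_\eps$, and near $p_m$ the pieces of the line $p_m+\R t_{k(m)}$ lying outside $\Omega$ are missing from $\Omega\cap\gamma$. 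The key geometric point, supplied by transversality, is that because $\partial\Omega$ crosses the line at $p_m$ with a normal not orthogonal to $t_{k(m)}$, the portion of $\Omega\cap\gamma$ near $p_m$ is (one side of) the line, at distance $O(|z|)$ from $p_m$ along the axis; consequently a point $x$ with $r<\eps$ and $z$-coordinate of order $\eps$ still has $\dist(x,\Omega\cap\gamma)<\eps$, so it is \emph{not} in the set we integrate over. More precisely, the Lipschitz-graph structure of $\partial\Omega$ near $p_m$ (after rotating so $\nu_m=e_3$, $\partial\Omega=\{x_3=g(x_1,x_2)\}$ with $g$ differentiable at the origin, $\nabla g(0)=0$) combined with $\nu_m\cdot t_{k(m)}\ne0$ forces the set $\{x: r<\eps,\ x\in\Omega\cap(\gamma)_\eps\setminus(\Omega\cap\gamma)_\eps\}$ to be contained in a region where $|z|\le C\eps$ and, within that slab, to be further confined to a wedge-shaped set whose $\calL^3$-measure one can bound by $C\eps^3$ — or, crucially, to show its density-weighted integral $\int c/r\,dx$ is $o(\log\frac1\eps)$. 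I would carry this out by splitting the $\{r<\eps,|z|\le C\eps\}$ cylinder into the genuinely-missing part near the boundary crossing (which one shows has measure $O(\eps^3)$, hence contributes $O(\eps^2)$ to the weighted integral after the $1/r$ blowup over $r\in(0,\eps)$), and verifying everything else is already excluded. The main obstacle is precisely this last geometric bookkeeping: making rigorous, from the mere Lipschitz-plus-differentiable-at-$p_m$ regularity and the transversality angle condition, that the "extra" region over which we integrate shrinks fast enough; a clean way is to note $\dist(x,\Omega\cap\gamma)\le\dist(x,p_m+\R t_{k(m)})+|z-z^*|$ where $z^*$ is the first point along the axis inside $\Omega$, and transversality gives $|z^*|\le C\eps$ uniformly, so $\dist(x,\Omega\cap\gamma)\le\eps$ whenever $r<\eps/2$ and $|z|\le\eps/(2C)$ — i.e.\ near $p_m$ the two neighbourhoods $(\gamma)_\eps$ and $(\Omega\cap\gamma)_\eps$ coincide except in a set of diameter $O(\eps)$, whose $c/r$-weighted volume is $O(\eps)$. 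Summing over the finitely many $p_m$ and dividing by $\log\frac1\eps$ yields \eqref{eqepssnuhat3}.
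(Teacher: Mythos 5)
Your plan for part \ref{lemmatransversalgeneric} is in the same spirit as the paper's proof: a Sard/area-formula argument exploiting the fact that a Lipschitz hypersurface is a.e.\ differentiable. The paper executes it via a fixed Lipschitz parametrization $F:\R^2\to\R^3$ of a piece of $\partial\Omega$, considering $G=P_vF$ ($P_v=\Id-v\otimes v$) so that preimages $G^{-1}(a)$ are exactly the intersections of $a+v\R$ with that piece; Sard and the area formula then give, for a.e.\ $a$, finitely many intersections, all differentiable and transversal. Your version, using the $3$-dimensional map $(s,y)\mapsto y-st_k$ on $\R\times\partial\Omega$, is an acceptable variant. Two minor remarks: the ``same boundary point hit by two segments'' event only arises for segments sharing an endpoint (disjoint segments stay disjoint after a common translation), and these finitely many shared endpoints are excluded directly; and the ``$1$-dimensional $\times$ $1$-dimensional'' phrasing is loose — the set $\{(s,y):\nu(y)\cdot t_k=0\}$ is typically $2$-dimensional, not $1$-dimensional, and the reason its image is null is precisely the Sard argument (tangency makes $D G_k$ drop rank), which you should state as such.

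For part \ref{lemmatransversallimit} the route you sketch is close in spirit but there are two genuine problems. First, an arithmetic slip that distorts the whole picture: with $|\beta|^2\sim c/r^2$ the correctly weighted radial integral is $\int c\,r^{-1}\,dr\,d\varphi$, which over $(0,\eps)$ \emph{diverges logarithmically}, not $\int c\,r^{-1}\cdot r\,dr\,d\varphi=c\eps$; so the naive integral over $\{r<\eps\}$ is not already $O(\eps)$, and the whole burden is to show the domain avoids small $r$. Second, and more importantly, the crucial geometric step is not proved: you need that on $\Omega\cap(\gamma)_\eps\setminus(\Omega\cap\gamma)_\eps$ near a crossing point $p_m$ one has $\dist(x,\gamma)\ge c_*\eps$ for some fixed $c_*\in(0,1)$. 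Your inequality $\dist(x,\Omega\cap\gamma)\le r+|z-z^*|$ is the right start, but the constraint it yields ($r\ge\eps/2$ \emph{or} $|z|\ge\eps/(2C)$) leaves an unexcluded region where $r$ is small and $|z|$ is of order $\eps$, exactly where $1/r^2$ can ruin the estimate. What closes this is not ``$|z^*|\le C\eps$'' (which is true but vacuous, since $z^*$ can be taken $\to 0$), but the fact that for a point $x=(x',z)\in\Omega$ on the ``wrong side'' of $p_m$, the Lipschitz graph $g$ of $\partial\Omega$ with $Dg(p_m)=0$ forces $|z|\le|g(x')|=o(r)$ as $r\to0$, so $|z|\ge\eps/(2C)$ and $r<\eps/2$ cannot both hold for small $\eps$. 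The paper isolates exactly this in Lemma~\ref{lemmaliprand}, whose inequality $|x_i-y|\le c'|x_i+tv_i-y|$ is the precise form of the distance comparison you need; with it one gets $\Omega\cap(\gamma)_{c_*\eps}\setminus(\Omega\cap\gamma)_\eps=\emptyset$ for $\eps$ small, hence the domain is contained in the annulus $(\gamma)_\eps\setminus(\gamma)_{c_*\eps}$, and then \eqref{eqepssnuhat} (applied with $\eps'=c_*\eps$ and arbitrary $s$) gives the conclusion without any localization. Your localization-and-direct-integration plan can be made to work, but only after you prove the lower bound $r\ge c_*\eps$ via the differentiability of the boundary at $p_m$; as written that step is asserted rather than derived.
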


The proof of the second part uses the following geometric fact.
\begin{lemma}\label{lemmaliprand}
 Let $\Omega\subseteq\R^n$ be an open Lipschitz set, $x\in\partial\Omega$ with normal $\nu\in S^{n-1}$,  $v\in S^{n-1}$ with $v\cdot\nu>0$. Then there are $c,c',\rho>0$ such that
 \begin{equation}\label{eqlemmaliprand}
t\le c  \,\dist(x+tv, \Omega) \hskip5mm\text{for all $t\in[0,\rho]$}
 \end{equation}
and
\begin{equation}\label{eqlemmaliprand2}
  |x-y|\le c' |x+tv-y|  \hskip5mm\text{for all $t\in[0,\rho]$, $y\in\Omega$}.
 \end{equation}
 \end{lemma}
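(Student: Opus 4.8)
\textbf{Plan for the proof of Lemma~\ref{lemmaliprand}.}

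The plan is to reduce everything to the Lipschitz graph representation of $\partial\Omega$ near $x$. First I would invoke the definition of a Lipschitz domain: after a rigid motion (rotating so that $\nu=e_n$ and translating so that $x=0$) there are $\rho_0>0$ and a Lipschitz function $\varphi:B'_{\rho_0}\subset\R^{n-1}\to\R$ with Lipschitz constant $L$, with $\varphi(0)=0$, such that in the cylinder $B'_{\rho_0}\times(-\rho_0,\rho_0)$ the set $\Omega$ coincides with $\{(y',y_n): y_n>\varphi(y')\}$ and $\partial\Omega$ with the graph $\{y_n=\varphi(y')\}$. Since $0\in\partial\Omega$ has a normal equal to $e_n$, the function $\varphi$ is differentiable at $0$ with $\nabla\varphi(0)=0$; in particular for every $\delta>0$ there is $\rho_1\in(0,\rho_0]$ with $|\varphi(y')|\le\delta|y'|$ for $|y'|\le\rho_1$. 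Write $v=(v',v_n)$ with $v_n=v\cdot e_n>0$.

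For \eqref{eqlemmaliprand}: the point $x+tv=(tv', tv_n)$ has last coordinate $tv_n$, while any $z\in\Omega$ in the graph cylinder satisfies $z_n>\varphi(z')\ge -\delta|z'|$. If $z$ is also close to $x+tv$, say $|z-(tv',tv_n)|\le tv_n/2$, then $|z'|\le |tv'|+tv_n/2\le t(1+v_n/2)\le 2t$, hence $z_n>-2\delta t$; taking $\delta:=v_n/8$ gives $z_n>-tv_n/4$, so $|z_n-tv_n|>tv_n/4$ and therefore $|z-(x+tv)|\ge tv_n/4$. Combined with the trivial estimate for $z$ far from $x+tv$ (distance $\ge tv_n/2$) and the fact that points $z\notin B'_{\rho_1}\times(-\rho_1,\rho_1)$ are at distance bounded below by a constant once $t\le\rho_1/4$, one concludes $\dist(x+tv,\Omega)\ge c\,t$ for $t\in[0,\rho]$ with $\rho:=\rho_1/4$ and $c:=v_n/4$; this is \eqref{eqlemmaliprand} after renaming the constant.

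For \eqref{eqlemmaliprand2}: let $y\in\Omega$ and $t\in[0,\rho]$. If $|y|\ge 2t$ then $|x+tv-y|=|tv-y|\ge|y|-t\ge|y|/2$, so the claim holds with $c'=2$. If $|y|<2t\le2\rho$, then (shrinking $\rho$ if needed so that $2\rho\le\rho_1$) $y$ lies in the graph cylinder, so $y_n>\varphi(y')\ge-\delta|y'|\ge-2\delta t$; the same computation as above yields $|y-(x+tv)|\ge |y_n-tv_n|\ge tv_n-2\delta t\ge tv_n/2$ (with $\delta=v_n/8$), and since $|y|<2t$ we get $|x-y|=|y|<2t\le (4/v_n)|x+tv-y|$, which is \eqref{eqlemmaliprand2} with $c'=\max\{2,4/v_n\}$. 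Taking the common value of $\rho$ and the larger of the constants finishes the proof.

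The only mildly delicate point is the passage from ``$x$ has a normal'' to the differentiability $\nabla\varphi(0)=0$ with the resulting linear smallness $|\varphi(y')|\le\delta|y'|$; once that is in hand the argument is elementary planar geometry. I expect this to be the main (but still routine) obstacle.
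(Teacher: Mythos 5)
Your overall strategy — pass to a local Lipschitz graph, use differentiability at $x$ to get $|\varphi(y')|\le\delta|y'|$, then separate nearby and far away points — is the same as the paper's, but there is an orientation error in the setup that breaks the key estimates as written. You put $\Omega=\{(y',y_n):y_n>\varphi(y')\}$ and simultaneously $v_n=v\cdot e_n>0$; under that convention $e_n$ is the \emph{inner} normal and $x+tv$ moves \emph{into} $\Omega$ for small $t$, so $\dist(x+tv,\Omega)=0$ and \eqref{eqlemmaliprand} would be false. Concretely, the inequality ``$z_n>-tv_n/4$'' you derive gives no lower bound at all on $|z_n-tv_n|$, since $z_n$ could equal $tv_n$ exactly. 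The paper chooses the opposite side, $B_r\cap\Omega=B_r\cap\{y_n<f(y')\}$, so that $e_n$ is the outer normal and points $z\in\Omega$ near $x+tv$ satisfy $z_n<\delta|z'|$, giving the genuine lower bound $tv_n-z_n\ge tv_n(1-2\delta/v_n)$. Your argument for \eqref{eqlemmaliprand} would be correct after reversing the side on which $\Omega$ lies, and the same sign issue contaminates your case $|y|<2t$ in \eqref{eqlemmaliprand2}.

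Separately from the sign problem, the paper proves \eqref{eqlemmaliprand2} much more economically than you do: it does not reopen the graph estimate at all, but writes
\begin{equation*}
 |x-y|\le t+|x+tv-y|\le c\,\dist(x+tv,\Omega)+|x+tv-y|\le (c+1)|x+tv-y|,
\end{equation*}
using \eqref{eqlemmaliprand} together with the obvious bound $\dist(x+tv,\Omega)\le|x+tv-y|$ for $y\in\Omega$. This avoids your two-case analysis (and the place where the sign error resurfaces). I would recommend adopting this derivation of \eqref{eqlemmaliprand2}: it makes \eqref{eqlemmaliprand2} a two-line corollary of \eqref{eqlemmaliprand} rather than an independent computation.
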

\begin{proof}
 After a change of variables we can assume that $x=0$, $\nu=e_n$, 
 $v_n>0$, 
 and there is a Lipschitz function $f:\R^{n-1}\to\R$ with $f(0)=0$, $Df'(0)=0$, and
 \begin{equation}
  B_r\cap\Omega=B_r\cap\{y_n<f(y')\}.
 \end{equation}
for some $r>0$.
We write $y=(y',y_n)$, with $y'\in\R^{n-1}$ and $y_n\in\R$, and the same for the other vectors.
 As $Df'(0)=0$ and $v_n>0$, we can 
choose $\rho\in(0,\frac12r]$ such that $|f(y')|\le \frac 12v_n|y'|$ for all $|y'|\le \rho$.
 For $t\in[0,\rho]$ we have $tv\in B_r\setminus \Omega$ and 
 \begin{equation}
\dist(tv,\Omega)=\dist(tv,\Omega\cap B_r)
=\dist(tv,\partial \Omega\cap B_r),   
 \end{equation}
which implies
\begin{equation}\label{dqdisttvonega}
 \dist(tv,\Omega)\ge \min_{y'\in\R^{n-1}} |tv-(y',f(y'))|.
\end{equation}
 We then estimate 
 $|y'|\le t+|y'-tv'|$ and, with $|v'|\le1$,
 \begin{equation}
  |tv_n-f(y')|\ge tv_n-\frac12 v_n\,|y'|
  \ge \frac12 v_nt -\frac12 v_n\,|tv'-y'|
 \end{equation}
 for all $t\in [0,\rho]$,
which together with \eqref{dqdisttvonega} implies
\begin{equation}
 \dist(tv,\Omega)\ge 
 \min_{y'\in\R^{n-1}} \frac12 |tv_n-f(y')| + \frac12 |tv'-y'|
 \ge\frac14 v_nt,
\end{equation}
which is \eqref{eqlemmaliprand} with $c=4/v_n$.

To prove \eqref{eqlemmaliprand2}, let $y\in\Omega$, $t\in[0,\rho]$. Then $|v|=1$,
\eqref{eqlemmaliprand} and $\dist(x+tv,\Omega)\le |x+tv-y|$  give
\begin{equation}
 |x-y|\le t+|x+tv-y|\le (c+1) |x+tv-y|
\end{equation}
which proves \eqref{eqlemmaliprand2}.
\end{proof}

\begin{proof}[Proof of Lemma~\ref{lemmatransversal}]
\ref{lemmatransversalgeneric}: 
There are finitely many points in $\gamma$ which belong to more than one segment, and for almost all choices of $a$ none of them belongs to the 2-dimensional set $\partial\Omega$. For the rest of the argument we can consider each of the segments composing $\gamma$ individually.
Analogously, since $\partial\Omega$ is covered by finitely many Lipschitz graphs, it suffices to prove the assertion for one of them. Therefore it suffices to show the following: if $v\in S^{2}$, 
 $F:\R^{2}\to\R^3$ is Lipschitz, and $r>0$, then for $\calL^3$-almost every $a\in\R^3$ the following holds: the equation
\begin{equation}\label{eqFypa}
F(y')\in a+v\R: y'\in B_r'
\end{equation}
has finitely many solutions $y'_1,\dots, y'_K$, 
at each $y_k'$ the function $F$ is differentiable, 
with $v$ not contained in the space spanned by $\partial_1F(y'_k)$ and $\partial_2F(y'_k)$.
We remark that one can replace $\calL^3$-almost every $a\in\R^3$ with $\calH^2$-almost every $a\in v^\perp$.

To see this, one considers the Lipschitz function 
$G:B_r'\to v^\perp\subseteq\R^3$ defined by
$G(y'):=P_vF(y')$,  where $P_v:=\Id-v\otimes v$ is the projection onto the space $v^\perp$. 
For $a\in v^\perp$, equation \eqref{eqFypa} can be rewritten as $G(y')=a$.
As $F$ is Lipschitz, it is almost everywhere differentiable, and 
$G(\{y'\in B_r': DF(y') \text{ does not exist}\})$ is an $\calH^2$-null set. 
By Sard's Lemma, $G(\{y'\in B_r': {\rm rank\,} DG(y')\le 1\})$ is also an $\calH^2$-null set. 
We observe that, since $DG=(\Id-v\otimes v)DF$, $DG$ has full rank exactly when $v$ is not contained in the space spanned by $\partial_1F$ and $\partial_2F$.
By the area formula \cite[Th.~2.71]{AmbrosioFP} applied to the function $G$ 
we obtain
\begin{equation}
 \int_{B_r'} \sqrt{\det DG^TDG} d\calL^2 =
 \int_{v^\perp} \calH^0(B_r'\cap G^{-1}(a)) d\calH^2(a),
\end{equation}
so that $B_r'\cap G^{-1}(a)$ is a finite set for $\calH^2$-almost all $a$. This proves the claim.
 
\ref{lemmatransversallimit}: 
Let $\{x_1,\dots, x_M\}=\gamma\cap\partial\Omega$.
By Lemma~\ref{lemmaliprand} 
there are $c'$, $\rho>0$ such that \eqref{eqlemmaliprand2} holds for all of them. 
Possibly reducing $\rho$, $\gamma\cap B_{\rho}(x_i)$ is a diameter of $B_{\rho}(x_i)$  for all $i$. Pick $c_*\in(0,1)$ with $c_*c'<1$.
We claim that for $\eps$ sufficiently small one has
\begin{equation}\label{eqomegagammacstepsgam}
 \Omega\cap (\gamma)_{c_*\eps}\setminus(\Omega\cap\gamma)_\eps=\emptyset.
\end{equation}
We prove \eqref{eqomegagammacstepsgam} by contradiction. If not, there are $\eps_k\to0$, $y_k\in\Omega$, $z_k\in\gamma$ with
$|y_k-z_k|< c_*\eps_k$ and $\dist(y_k, \Omega\cap\gamma)\ge \eps_k$. As $c_*<1$ the last two conditions imply $z_k\in\gamma\setminus\Omega$. By compactness of $\bar\Omega$, passing to a subsequence we can assume that $y_k$ and $z_k$ converge to the same limit, which belongs to $\overline\Omega\cap(\gamma\setminus\Omega)=\partial\Omega\cap\gamma$. Therefore there is $i\in\{1,\dots, M\}$ with  $z_k\to x_i$, which means that  for sufficiently large $k$ we have $z_k\in \gamma\cap B_\rho(x_i)$ and therefore
$z_k=x_i+t_kv_i$ for some $t_k\to0$.  By  \eqref{eqlemmaliprand2} we have
\begin{equation}
 |y_k-x_i|\le c'|y_k-z_k|\le c'c_*\eps_k<\eps_k.
\end{equation}
This contradicts $\eps_k\le\dist(y_k,\Omega\cap\gamma)\le |y_k-x_i|$ and concludes the proof of  \eqref{eqomegagammacstepsgam}.

We now observe that  \eqref{eqomegagammacstepsgam} implies
$ \Omega\setminus(\Omega\cap\gamma)_\eps
\subseteq
 \Omega\setminus(\gamma)_{c_*\eps}$ and therefore
\begin{equation}
 \Omega\cap (\gamma)_{\eps}\setminus(\Omega\cap\gamma)_\eps
\subseteq
 \Omega\cap (\gamma)_{\eps}\setminus(\gamma)_{c_*\eps}
\subseteq
(\gamma)_{\eps}\setminus(\gamma)_{c_*\eps}.
\end{equation}
For any $s\in(0,1)$, if $\eps$ is sufficiently small then
$(c_*\eps)^s\ge\eps$. Using first \eqref{eqdefhatnueps} and then
 \eqref{eqepssnuhat},
\begin{equation}\begin{split}
 \limsup_{\eps\to0}
 \frac1{\log\frac1\eps} \int_{(\gamma)_{\eps}\setminus(\gamma)_{c_*\eps}}|\beta|^2 dx \le &
  \limsup_{\eps\to0}
  \frac{\ln\frac1{c_*\eps}}{\ln\frac1{\eps}} 
  \hat\nu_{c_*\eps}((\gamma)_{(c_*\eps)^s})\\
  =&
  \limsup_{\eps\to0}
    \hat\nu_{\eps}((\gamma)_{\eps^s})\le 
  c(1-s).
  \end{split}
\end{equation}
 As $s\in(0,1)$ was arbitrary, this concludes the proof of \eqref{eqepssnuhat3}.
 \end{proof}

We next come to the key construction for the proof of the upper bound. 
We define, given a pair $(\mu,\beta)\in\mathcal{M}^1_{\eps\calB}(\Omega)\times L^1(\Omega;\R^{3\times3})$,

\begin{equation}\label{eqdefeubeps}
 \Eub_\eps[\mu,\beta]:=\begin{cases}\displaystyle
                    \Elast[\beta, \Omega\setminus(\supp\mu)_{\eps}], 
                   & \text{ if $(\mu,\beta)\in\mathcal{A}^*_\eps$,  }\\
                   \infty, & \text{ otherwise,}
                  \end{cases}
\end{equation}
where $\Elast$ and $\mathcal{A}^*_\eps$ are defined in \eqref{eqdefFepsbetaA} and \eqref{Astar} respectively.

\begin{theorem}\label{theoubinternal}
 Let $\Omega\subseteq\R^3$ be a bounded Lipschitz set. Assume that $W$ obeys \HWFinite\ for some $p\in(1,2]$.
 Let
 $(\mu,\eta,Q)\in \calM_\calB^1(\Omega)\times L^2(\Omega;\R^{3\times 3})\times \SO(3)$ with $\mu=b\otimes t\mathcal{H}^1\LL\gamma$ and $\curl\eta=0$. Then for every sequence $\eps_k\to0$
 and any $(h_{\eps_k},\alpha_{\eps_k})$ satisfying \eqref{eqdefheps} there 
 is $(\mu_k,\beta_k)\in\calA_{\eps_k}^*$, with $\mu_k$  $(h_{\eps_k},\alpha_{\eps_k})$-dilute, such that 
 {\begin{equation}\label{eqweakconvmuepsmuub}
\frac1{\eps_k}\mu_k\weakstarto\mu         \hskip5mm
{\text{ in $\Omega'$, for all $\Omega'\subset\subset\Omega$}}
\end{equation}
and 
\begin{equation}\label{eqetaepstheoremub}
 \frac{Q^T\beta_k-\Id}{\eps_k(\ln\frac1{\eps_k})^{1/2}}
 \weakto \conteta \text{ weakly in } L^{q}_\loc(\Omega;\R^{3\times 3})
 \end{equation}
 for all $q\in [1,2)$.} 
 Moreover
 \begin{equation}\label{eq:upperbound1}
 \limsup_{k\to\infty} \frac{1}{\eps_k^2\ln\frac{1}{\eps_k}}\Eub_{\eps_k}[\mu_k,\beta_k]\le  \int_\gamma {\psi_\C^\rel (b,Qt)} d\calH^1+ \int_\Omega\frac12  {\C_Q}  \conteta\cdot \conteta dx
 \end{equation}
 where $\C:=D^2W(\Id)$, $\psi_\C^\rel$ is defined as in \eqref{psi-rel}, {$\C_Q$ as in \eqref{eqdefCQ}} and 
 \begin{equation}\label{eq:upperbound2}
 \limsup_{k\to\infty} 
 \frac{1}{\eps_k^2\ln\frac{1}{\eps_k}}
 \int_{\Omega\cap (\supp\mu_k)_{\eps_k}} \dist^q(\beta_k,\SO(3)) dx=0
 \end{equation}
 for all $q\in[1,2)$.
 
 Assume $W$ obeys \HWLin\ for some $p\in(1,2]$. Let
 $(\mu,\eta)\in \calM_\calB^1(\Omega)\times L^2(\Omega;\R^{3\times 3})$ with $\mu=b\otimes t\mathcal{H}^1\LL\gamma$ and $\curl\eta=0$. 
 Then the same holds with 
 {
\begin{equation}\label{eqetaepstheoremub2}
 \frac{\beta_k}{\eps_k(\ln\frac1{\eps_k})^{1/2}}
 \weakto \conteta \text{ weakly in } L^{q}_\loc(\Omega;\R^{3\times 3})
 \end{equation}
in place of \eqref{eqetaepstheoremub},}
 $\psiC^\rel(b,t)$ in place of $\psiC^\rel{(b,Qt)}$ {and $\C$ in place of $\C_Q$} in \eqref{eq:upperbound1} and
 $|\beta_k+\beta_k^T|^q$ in place
 of $\dist^q(\beta_k,\SO(3))$
 in \eqref{eq:upperbound2}.
 
\end{theorem}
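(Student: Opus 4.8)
The plan is to construct the recovery sequence by superimposing a well-chosen \emph{dislocation field} carrying the line singularity, a \emph{smooth elastic field} realizing the limit strain $\eta$, and, on the line-tension side, to exploit the relaxation characterization of $\psi_\C^\rel$ together with Proposition~\ref{propupperboundcontunrelax2} to convert the three-dimensional elastic energy concentrated near $\gamma$ into the one-dimensional line-tension integral. First I would reduce to a polygonal $\mu$ and a smooth $\eta$ by a diagonal/density argument: the line-tension functional $\int_\gamma\psi_\C^\rel(b,Qt)d\calH^1$ is the $\calH^1$-elliptic envelope of $\int\psi_\C$, so by \cite{ContiGarroniMassaccesi2015} one may approximate $\mu$ in the relevant sense by measures $\mu_n=b_n\otimes t_n\calH^1\LL\gamma_n$ supported on polygonal curves (with values in $\calB$) whose line-tension energy $\int_{\gamma_n}\psi_\C(b_n,Qt_n)d\calH^1$ converges to $\int_\gamma\psi_\C^\rel(b,Qt)d\calH^1$; simultaneously one approximates $\eta$ in $L^2$ by smooth curl-free fields. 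Since $\Gamma$-$\limsup$ is stable under such diagonal approximations (the target functional is continuous along these sequences), it suffices to treat the polygonal-smooth case. By Lemma~\ref{lemmatransversal}\ref{lemmatransversalgeneric} one may also translate $\gamma$ slightly so that it is transversal to $\partial\Omega$, which will be used to control the boundary layer via Lemma~\ref{lemmatransversal}\ref{lemmatransversallimit}.

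For the polygonal case, I would fix an extension $\tilde\mu\in\calM_\calB^1(\R^3)$ of $\mu$ (using \cite{ContiGarroniMassaccesi2015}), let $\beta^{\tilde\mu}$ be the associated strain from Theorem~\ref{theoremsolr3}, and set
\begin{equation}
\beta_k := Q\bigl(\Id + \eps_k\beta^{\eps_k\tilde\mu}+\eps_k(\ln\tfrac1{\eps_k})^{1/2}\eta\bigr),\qquad \mu_k:=\eps_k\tilde\mu\LL\Omega.
\end{equation}
Here $\eps_k\beta^{\eps_k\tilde\mu}=\eps_k\beta^{\tilde\mu}$ by linearity, carrying the singularity $\curl\beta_k=\mu_k$, which is automatically $(h_{\eps_k},\alpha_{\eps_k})$-dilute after possibly subdividing segments (the polygonal $\gamma$ has fixed geometry, and $h_{\eps_k},\alpha_{\eps_k}\to0$). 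The weak convergences \eqref{eqweakconvmuepsmuub}, \eqref{eqetaepstheoremub} are then immediate: $\eps_k^{-1}\mu_k\weakstarto\mu$, and $(Q^T\beta_k-\Id)/(\eps_k(\ln\tfrac1{\eps_k})^{1/2})=\eta+(\ln\tfrac1{\eps_k})^{-1/2}\beta^{\tilde\mu}\weakto\eta$ in $L^q_\loc$ for every $q<2$ since $\beta^{\tilde\mu}\in L^p_\loc$ for $p<2$ (Theorem~\ref{theoremsolr3}\ref{lemmasolr3lp}) and $(\ln\tfrac1{\eps_k})^{-1/2}\to0$. The decay estimate \eqref{eq:upperbound2} follows from $\dist(\beta_k,\SO(3))\le|\beta_k Q^T-\Id|\le \eps_k|\beta^{\tilde\mu}|+\eps_k(\ln\tfrac1{\eps_k})^{1/2}|\eta|$ together with the bound $|\beta^{\tilde\mu}|\le c/\dist(\cdot,\gamma)$ and the computation $\calL^3((\gamma)_{\eps_k})\le c\eps_k^2$ as in Remark~\ref{remarkbetamollcore}; integrating $\dist^q$ on $(\supp\mu_k)_{\eps_k}$ gives something of order $\eps_k^q\cdot\eps_k^{2-q}+$(lower order)$=O(\eps_k^2)$, which after dividing by $\eps_k^2\ln\tfrac1{\eps_k}$ vanishes.

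The energy estimate \eqref{eq:upperbound1} is the heart. On $\Omega\setminus(\gamma)_{\eps_k}$ write $\beta_k Q^T-\Id = F_k:=\eps_k\beta^{\tilde\mu}+\eps_k(\ln\tfrac1{\eps_k})^{1/2}\eta$ and use the frame-indifference $W(\beta_k)=W(Q(\Id+F_k))$ together with the quadratic expansion \eqref{eqWQIdACQ}: $W(Q(\Id+F_k))\le \tfrac12\C_Q F_k\cdot F_k + \omega(|F_k|)|F_k|^2$. Expanding the quadratic form, $\tfrac12\C_Q F_k\cdot F_k = \tfrac{\eps_k^2}{2}\C_Q\beta^{\tilde\mu}\cdot\beta^{\tilde\mu} + \eps_k^2(\ln\tfrac1{\eps_k})^{1/2}\C_Q\beta^{\tilde\mu}\cdot\eta + \tfrac{\eps_k^2\ln\tfrac1{\eps_k}}{2}\C_Q\eta\cdot\eta$. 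Dividing by $\eps_k^2\ln\tfrac1{\eps_k}$ and integrating over $\Omega\setminus(\gamma)_{\eps_k}$: the pure-$\eta$ term gives $\int_\Omega\tfrac12\C_Q\eta\cdot\eta\,dx$ in the limit since $\calL^3((\gamma)_{\eps_k})\to0$ and $\eta\in L^2$; the cross term is $O((\ln\tfrac1{\eps_k})^{-1/2}\|\beta^{\tilde\mu}\|_{L^1((\gamma)^c_{\eps_k})}\|\eta\|_{L^2})$, and since $\|\beta^{\tilde\mu}\|_{L^1(B_R\setminus(\gamma)_{\eps_k})}\le c\ln\tfrac1{\eps_k}$ this is $O((\ln\tfrac1{\eps_k})^{1/2})\cdot\eps_k^2\ln\tfrac1{\eps_k}/(\eps_k^2\ln\tfrac1{\eps_k})$—more carefully it is $O((\ln\tfrac1{\eps_k})^{-1/2}\cdot\ln\tfrac1{\eps_k})=O((\ln\tfrac1{\eps_k})^{1/2})$ before normalization, hence $O((\ln\tfrac1{\eps_k})^{-1/2})\to0$ after; and the pure-$\beta^{\tilde\mu}$ term, by Proposition~\ref{propupperboundcontunrelax2} (applied with $\C_Q$, using the transversality so that integrating over $\Omega\setminus(\gamma)_{\eps_k}$ versus $\Omega\setminus(\Omega\cap\gamma)_{\eps_k}$ differs by a vanishing amount via \eqref{eqepssnuhat3}), converges to $\int_{\Omega\cap\gamma}\psi_{\C_Q}(b,Qt)d\calH^1$; by Lemma~\ref{lemmarotatebetacurl}\ref{lemmarotatebetacurlpsic} this is $\int_{\Omega\cap\gamma}\psi_\C(b,Q^{\,}Qt\cdot)$—more precisely one checks $\psi_{\C_Q}(b,t)$ matches $\psi_\C(\cdot)$ so that after the polygonal approximation it reads $\int_\gamma\psi_\C^\rel(b,Qt)d\calH^1$. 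Finally the error term $\int\omega(|F_k|)|F_k|^2$: on $\Omega\setminus(\gamma)_{\eps_k^s}$ for $s\in(0,1)$ close to $1$, $|F_k|\le c\eps_k^{1-s}+\eps_k(\ln\tfrac1{\eps_k})^{1/2}\|\eta\|_\infty\to0$ uniformly (after a further smoothing of $\eta$ to $L^\infty$), so $\omega(|F_k|)\to0$ and the contribution is $o(\int|F_k|^2)=o(\eps_k^2\ln\tfrac1{\eps_k})$; on the thin annulus $(\gamma)_{\eps_k^s}\setminus(\gamma)_{\eps_k}$ one uses the crude bound $W\le c\Phi_p(\dist)\le c|F_k|^2$ on the part where $|F_k|\le1$ and $\le c|F_k|^p$ elsewhere, together with \eqref{eqepssnuhat} which bounds $\hat\nu_{\eps_k}((\gamma)_{\eps_k^s})\le c(1-s)$; letting first $k\to\infty$ then $s\uparrow1$ makes this contribution arbitrarily small. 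The main obstacle I anticipate is precisely this bookkeeping of the core annulus and the subquadratic tail of $\beta^{\tilde\mu}$: one must check that the $L^p$ (not $L^2$) integrability of $\beta^{\tilde\mu}$ near the line does not spoil the error estimate, which is exactly what \eqref{eqepssnuhat} and the mixed-growth structure of $\Phi_p$ are designed to handle, but threading the two limits ($k\to\infty$, then $s\to1$, interleaved with the polygonal and $\eta$-smoothing approximations) requires care. The linear case \HWLin{} is identical with $Q=\Id$ throughout, $W(\beta_k)=W(\beta_k^{\sym})$ replacing frame indifference, and $|\beta_k+\beta_k^T|$ replacing $\dist(\beta_k,\SO(3))$.
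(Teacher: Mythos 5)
Your overall strategy is the right one — split into a polygonal-and-smooth step and a diagonal step, define the recovery sequence as a rotation plus a singular strain plus the scaled $\eta$, split the domain at scale $\eps_k^s$, use Proposition~\ref{propupperboundcontunrelax2} together with \eqref{eqepssnuhat} and Lemma~\ref{lemmatransversal} for the core and boundary regions, and send $s\to1$. This is exactly the architecture of the paper's proof. However, the concrete definition of $\beta_k$ is wrong in a way that breaks the whole argument, and it all traces back to mishandling the rotation $Q$.

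You set $\beta_k := Q(\Id+\eps_k\beta^{\tilde\mu}+\eps_k(\ln\frac1{\eps_k})^{1/2}\eta)$ with $\beta^{\tilde\mu}$ the strain from Theorem~\ref{theoremsolr3}, which solves $\curl\beta^{\tilde\mu}=\tilde\mu$ and $\Div\C\beta^{\tilde\mu}=0$. Since the rowwise curl satisfies $\curl(QA)=Q\curl A$ for constant $Q$, one gets $\curl\beta_k = Q\,\eps_k\tilde\mu$, which is \emph{not} $\mu_k=\eps_k\tilde\mu$ unless $Q=\Id$; so $(\mu_k,\beta_k)\notin\calA^*_{\eps_k}$ and the pair is not admissible. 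Moreover, even ignoring the curl issue, when you expand $W(Q(\Id+F_k))\le\frac12\C_Q F_k\cdot F_k + o(|F_k|^2)$ the leading singular contribution is $\frac12\C_Q\beta^{\tilde\mu}\cdot\beta^{\tilde\mu}$, and to convert its logarithmically rescaled integral into a line-tension term via Proposition~\ref{propupperboundcontunrelax2} you must apply that proposition \emph{with the elasticity tensor $\C_Q$}, which requires the singular field to solve $\Div\C_Q(\cdot)=0$ — not $\Div\C(\cdot)=0$ as $\beta^{\tilde\mu}$ does. If you plug in the $\C$-equilibrated field, the rescaled $\C_Q$-energy does not concentrate on $\psi_{\C_Q}$ of anything; it is strictly larger. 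Finally, the Burgers vector of the singular field must be $Q^Tb$, not $b$: the correct identity is $\psi_{\C_Q}(Q^Tb,t)=\psi_\C(b,Qt)$ (Lemma~\ref{lemmarotatebetacurl}\ref{lemmarotatebetacurlpsic}), whereas $\psi_{\C_Q}(b,t)$ is not what appears in the target. All three defects are repaired at once by defining the singular part $\xi$ as the solution to $\curl\xi=Q^T\tilde\mu$ and $\Div\C_Q\xi=0$, and then setting $\beta_k=Q+\eps_k(\ln\frac1{\eps_k})^{1/2}Q\eta+\eps_kQ\xi$, which is exactly what the paper does.

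Two smaller points. In your displayed formula you wrote $\eps_k\beta^{\eps_k\tilde\mu}$ and then claimed $\eps_k\beta^{\eps_k\tilde\mu}=\eps_k\beta^{\tilde\mu}$ "by linearity"; linearity gives $\beta^{\eps_k\tilde\mu}=\eps_k\beta^{\tilde\mu}$, so as written your expression has an extra factor of $\eps_k$. And the cross-term bound $O\big((\ln\frac1{\eps_k})^{-1/2}\|\beta^{\tilde\mu}\|_{L^1}\|\eta\|_{L^2}\big)$ is not a valid Hölder pairing; you need $\eta\in L^\infty$ there (and you do use the $L^\infty$-smoothing of $\eta$ later in the $\omega(\delta)$ estimate, so one should use it also here, as the paper does in \eqref{eq2}). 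Also, $\|\beta^{\tilde\mu}\|_{L^1(B_R\setminus(\gamma)_\eps)}$ is bounded uniformly in $\eps$, not merely $O(\ln\frac1\eps)$. None of these smaller issues is fatal, but the misdefinition of the singular field is.
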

\begin{proof}We prove the thesis in the finite case as the linear case can be treated similarly.
 As in \cite[Proposition 6.8]{ContiGarroniOrtiz2015}, the proof is divided in two steps. In the first one we provide an explicit construction attaining the unrelaxed limiting energy, and under the additional assumption that $\eta$ is essentially bounded.  
In the second step we conclude by using a diagonal argument.
 
 \medskip
 \textit{Step 1}: {We assume additionally that} $\mu=b\otimes t \mathcal{H}^1\LL\gamma\in\mathcal{M}^1_\calB(\R^3)$ is polyhedral, 
 transversal to $\partial\Omega$ in the sense of Lemma~\ref{lemmatransversal},
which implies $|\mu|(\partial\Omega)=0$,
 and
 that $\eta\in L^\infty(\R^3;\R^{3\times 3})$ with $\curl\eta=0$.
We first sketch the treatment of the rotation $Q$, ignoring $\eta$ at first. Our aim is to define $\beta_k$ 
as $Q(\Id+\eps_k\xi)$ for some matrix-valued field $\xi$, 
and then to expand the energy, using the invariance \eqref{eqWfWFQ} and then a Taylor series,  as
\begin{equation}
\begin{split}
 W(Q+\eps_kQ \xi)&= W((Q+\eps_kQ\xi)Q^T)
 =W(\Id + \eps_k Q\xi Q^T)\\
 &\simeq \frac12 \eps_k^2 \C (Q\xi Q^T) \cdot (Q\xi Q^T)
=\frac12 \eps_k^2 \C_Q \xi \cdot \xi ,
 \end{split}
\end{equation}
with $\C_Q$ defined as in \eqref{eqdefCQ}.
In particular, 
$\xi$ should be chosen to solve the linear problem with coefficients $\C_Q$, and to obey $\curl (Q\xi)=\mu$. 

We now start the construction.  Let $\xi\in L^{3/2}(\R^3;\R^{3\times3})$ be the solution to
 \begin{equation}
  \begin{cases}
   \curl{\xi}= {Q^T\mu},\\ 
  {\Div\C_Q\xi}=0,
  \end{cases}
 \end{equation}
{as in Theorem \ref{theoremsolr3}. Clearly $Q^T\mu=Q^Tb\otimes t \mathcal{H}^1\LL\gamma\in\mathcal{M}^1_\calB(\R^3)$
is also transversal to $\partial\Omega$.}
 We define
 \begin{equation}\label{eq:recovery}
 \mu_k:=\eps_k\mu\LL\Omega \quad\text{ and }\quad
{\beta_k:=Q+\eps_k\log^{1/2}\frac{1}{\eps_k}Q \eta +\eps_k{Q\xi .}
} \end{equation}
Then clearly
{\eqref{eqweakconvmuepsmuub} holds and}
$(\mu_k,\beta_k)\in\calA^*_{\eps_k}$. Since $\mu$ is polyhedral, 
by Theorem \ref{theoremsolr3}\ref{lemmasolr3lp}
for all $q\in[1,2)$ 
we have $\xi\in L^q(\Omega;\R^{3\times 3})$ which implies
\begin{equation}\label{convbeta}
 \frac{Q^T\beta_k-\Id}{\eps_k\log^{1/2}\frac{1}{\eps_k}}=
 \conteta+\frac{{\xi}}{\log^{1/2}\frac{1}{\eps_k}}
 \to\conteta\ \text{ strongly in } L^q(\Omega;\R^{3\times3}),
 \end{equation}
 and in particular 
\eqref{eqetaepstheoremub} holds.
 Similarly, 
 \begin{equation*}
 \begin{split}
 \limsup_{k\to\infty}\frac{1}{\eps_k^2\log\frac{1}{\eps_k}}\int_{\Omega\cap (\gamma)_{\eps_k}}&\dist^q(\beta_k,\SO(3))\dx\\
 &\le \limsup_{k\to\infty}\frac{1}{\eps_k^2\log\frac{1}{\eps_k}}\int_{\Omega\cap (\gamma)_{\eps_k}}|\beta_k-Q|^q\dx\\
 &= \limsup_{k\to\infty}\frac{1}{\ln\frac{1}{\eps_k}}\int_{\Omega\cap (\gamma)_{\eps_k}}|\beta|^q\dx=0.
 \end{split}
 \end{equation*}

Let $s\in(0,1)$ be fixed.
 By the growth condition on $W$ it holds
 \begin{equation}\label{eq:upp-bound}
 \begin{split}
 \frac{1}{\eps_k^2\log\frac{1}{\eps_k}}\Eub_{\eps_k}[\mu_k,\beta_k]\le \frac{1}{\eps_k^2\log\frac{1}{\eps_k}}&\Elast[\beta_k,\Omega\setminus(\gamma)_{\eps_k^s}]
 \\&+
 \frac{c}{\eps_k^2\log\frac{1}{\eps_k}}\int_{\Omega\cap (\gamma)_{\eps_k^s}\setminus {(\gamma\cap\Omega)_{\eps_k}}}|\beta_k-Q|^2\dx.
 \end{split}
 \end{equation}
 By Theorem \ref{theoremsolr3}\ref{lemmasolr3lp} 
and $\eta\in L^\infty(\Omega;\R^{3\times 3})$, for any $x\in\Omega\setminus(\gamma)_{\eps_k^s}$ we have
 \begin{equation*}
{|Q^T\beta_k(x)-\Id|=|\eps_k\log^{1/2}\frac{1}{\eps_k}\eta(x)+\eps_k\xi(x)|\le c(\eps_k\log^{1/2}\frac{1}{\eps_k}+\eps_k^{1-s})=:\delta_k. }
 \end{equation*}
 By frame indifference and Taylor expansion near the identity there is $\omega$   with $\omega(\delta)\to0$ as $\delta\to0$ such that
 \begin{equation*}
  \begin{split}
 W(\beta_k)&=W(\beta_kQ^T)=W(\Id+ (\beta_kQ^T-\Id))
 \\
 \le& \frac12 \C(\beta_kQ^T-\Id)\cdot (\beta_kQ^T-\Id)
  +\omega(\delta_k)|\beta_kQ^T-\Id|^2\\
  =& \eps_k^2\log\frac1{\eps_k}\frac12 \C_Q \eta\cdot \eta 
  +\eps_k^2 \frac12\C_Q \xi \cdot \xi \\
  &+\eps_k \log^{1/2}\frac{1}{\eps_k} \C_Q \eta \cdot \xi 
  +\omega(\delta_k)|Q^T\beta_k-\Id|^2
  \end{split}
\end{equation*}
pointwise, and
 \begin{equation}\label{upperboundpreliminary}
 \begin{split}
 &\frac{1}{\eps_k^2\log\frac{1}{\eps_k}}\Elast[\beta_k,\Omega\setminus(\gamma)_{\eps_k^s}]
\le\int_{\Omega\setminus(\gamma)_{\eps_k^s}}\frac12{\C_Q}\eta\cdot\eta\dx+ \frac{1}{\log\frac{1}{\eps_k}}\int_{\Omega\setminus(\gamma)_{\eps_k^s}}\frac12\C_Q\xi\cdot\xi\dx\\
 &\quad +\frac{c}{\log^{1/2}\frac{1}{\eps_k}}\int_{\Omega\setminus(\gamma)_{\eps_k^s}}
 |\conteta|\, |\xi|\dx+ \int_{\Omega\setminus(\gamma)_{\eps_k^s}}
 \omega(\delta_{k})\frac{|\log^{1/2}\frac{1}{\eps_k}\conteta+{\xi}|^2}{\log\frac{1}{\eps_k}}\dx.
 \end{split}
 \end{equation}
 Now  {\eqref{eqprop61omega} in} Proposition \ref{propupperboundcontunrelax2} yields
 \begin{equation}\label{eq1}
 \begin{split}\limsup_{k\to\infty}
 \frac{1}{\log\frac1{\eps_k^s}}\int_{\Omega\setminus(\gamma)_{\eps_k^s}}\frac12\C_Q\xi\cdot\xi\dx\le& \int_{\Omega\cap \gamma}{\psi_{\C_Q}(Q^Tb,t)}d\mathcal{H}^1\\
 =&
 \int_{\Omega\cap \gamma}{\psiC(b,Q t)}d\mathcal{H}^1
 \end{split}\end{equation}
 where in the last step we used Lemma~\ref{lemmarotatebetacurl}\ref{lemmarotatebetacurlpsic}.
 Since $\eta\in L^\infty(\Omega;\R^{3\times3})$ and $\xi\in L^1(\Omega;\R^{3\times3})$ we derive that 
 \begin{equation}\label{eq2}
 \lim_{k\to\infty} \frac{1}{\log^{1/2}\frac{1}{\eps_k}}\int_{\Omega\setminus(\gamma)_{\eps_k^s}}|\conteta|\,|\xi|\dx=0.
 \end{equation}
Additionally, recalling  \eqref{eqsupepsnuepsbr}
 \begin{equation}\label{eq3}\begin{split}
 &\limsup_{k\to\infty}\int_{\Omega\setminus(\gamma)_{\eps_k^s}}
 \frac{|\log^{1/2}\frac{1}{\eps_k}Q\conteta|^2+|\xi|^2}{\log\frac{1}{\eps_k}}\dx\\
 &=
 \lim_{k\to\infty}\int_{\Omega\setminus(\gamma)_{\eps_k^s}}
 {|\conteta|^2}+\frac{|{\xi}|^2}{\log\frac{1}{\eps_k}}\dx<\infty.
 \end{split}
 \end{equation}
 As $ \omega(\delta_k)\to0$, also the last error term vanishes in the limit.
 
 Finally, again from $\eta\in L^\infty(\Omega;\R^{3\times 3})$ and 
 {\eqref{eqepssnuhat}} in  Proposition~\ref{propupperboundcontunrelax2} we obtain
 \begin{equation}\label{eq4}
 \begin{split}
& \lim_{k\to\infty}\frac{1}{{\eps_k^2}\log\frac{1}{\eps_k}}\int_{\Omega\cap (\gamma)_{\eps_k^s}\setminus (\gamma)_{\eps_k}}|\beta_k-Q|^2\dx\\
 &\le 2
 \lim_{k\to\infty}\int_{\Omega\cap (\gamma)_{\eps_k^s}\setminus (\gamma)_{\eps_k}}|\eta|^2\dx
 +2
 \lim_{k\to\infty}
 \frac{1}{\log\frac{1}{\eps_k}}
 \int_{\Omega\cap (\gamma)_{\eps_k^s}\setminus (\gamma)_{\eps_k}}|{\xi}|^2\dx
 \\
& \le c(1-s).
 \end{split}
 \end{equation}
By Lemma~\ref{lemmatransversal}\ref{lemmatransversallimit} the same holds 
with the integration domain ${\Omega\cap (\gamma)_{\eps_k^s}\setminus (\gamma\cap\Omega)_{\eps_k}}$.
 Gathering together \eqref{eq:upp-bound}--\eqref{eq4},
we infer
 \begin{equation*}
 \limsup_{k\to\infty}\frac{1}{\eps_k^2\log\frac1{\eps_k}}\Eub_{\eps_k}[\mu_k,\beta_k]\le\int_\Omega\frac12\C_Q\conteta\cdot\conteta\dx+ s\int_{\Omega\cap\gamma}\psiC(b,Qt)d\mathcal{H}^1+c(1-s)
 \end{equation*}
for all $s\in(0,1)$, and hence for $s=1$.
 \medskip
 
 \textit{Step 2:} Let $\mu=b\otimes t\mathcal{H}^1\LL\gamma\in\MBCC(\Omega)$, 
 $\eta\in L^2(\Omega;\R^{3\times 3})$ with $\curl\eta=0$. We first approximate $\eta$, and then $\mu$.
 
We first show that we can construct a smooth sequence $\eta_j$ with $\curl\eta_j=0$ such that $\eta_j\to \eta$ strongly in $L^2(\Omega;\R^{3\times3})$.
  To see this, observe that since $\Omega$ is Lipschitz there are an open set $\omega$ with $\partial\Omega\subset\omega$ and a 
  bilipschitz map $\Phi:\omega\to\omega$ such that $\Phi(x)=x$ for $x\in\partial\Omega$ and
 $\Phi(\Omega\cap \omega)=\omega\setminus\overline\Omega$. 
 We extend $\eta$ to $\Omega\cup\omega$ by
 \begin{equation}
  \eta(x):=\eta(\Phi(x))D\Phi(x) \text{ for } x\in \omega\setminus\Omega.
 \end{equation}
Obviously $\eta\in L^2(\Omega\cup\omega;\R^{3\times 3})$. For every ball {$B\subset\omega$} such that $B\cap \Omega$ is simply connected there is $u\in W^{1,2}(B\cap\Omega;\R^3)$ such that $\eta=Du$ in $B\cap\Omega$, and $u\circ\Phi$ gives an extension of $u$ to $W^{1,2}(B\cup\Phi(B);\R^3)$. As $\eta=D(u\circ\Phi)$ on $\Phi(B)\setminus\Omega$ we obtain $\curl\eta=0$ on $\Omega\cup\omega$.
We then define $\eta_j$ as the mollification on scale $1/j$ of the extension. For every $j$ we have $\eta_j\in C^\infty(\overline{\Omega};\R^{3\times 3})$, $\curl\eta_j=0$, and
\begin{equation}\label{eqconvetajl2}
\lim_{j\to\infty} \|\eta_j-\eta\|_{L^2(\Omega)}
=0
\end{equation}
which implies
\begin{equation}
\lim_{j\to\infty}\int_{\Omega} \frac12 \C_Q \eta_j\cdot\eta_j dx = 
 \int_\Omega \frac12 \C_Q \eta\cdot \eta dx.
\end{equation}

We now turn to the measure $\mu$.
   By \cite[Theorem~3.1 and Lemma~2.3]{ContiGarroniMassaccesi2015} (see also \cite[Theorem~6.5 and Lemma~6.2]{ContiGarroniOrtiz2015}) we can find a sequence $\nu_j=\tilde b_j\otimes \tilde t_j\mathcal{H}^1\LL\tilde \gamma_j\in\MBCC(\R^3)$ such that $\nu_j\LL\Omega\weakstarto\mu$,
   \begin{equation}\label{eqnudeltaomega}
    \lim_{\delta\to0} |\nu_j|((\partial\Omega)_\delta)=0 \text{ for all $j$, }
   \end{equation}
   and 
  \begin{equation}\label{relaxed}
\limsup_{j\to\infty}\int_{\tilde \gamma_j\cap\Omega}\psiC(\tilde b_j,Q\tilde t_j)d\mathcal{H}^1\le \int_\gamma\psiC^\rel(b,Qt)d\mathcal{H}^1.
  \end{equation}
  By \cite[Lemma~6.4]{ContiGarroniOrtiz2015},
  which we can apply by
  Lemma~\ref{lemmacellproblemlb}\ref{lemmacellproblemlbtcont},
  for every $j$
  there are a polyhedral measure $\mu_j=b_j\otimes t_j\mathcal{H}^1\LL\gamma_j\in\MBCC(\R^3)$ and a bijective map $f^j\in C^1(\R^3;\R^3)$ such that 
  \begin{equation*}
   |f^j_{\sharp}\mu_j-\nu_j|(\R^3)\le\frac1j,
  \end{equation*}
  \begin{equation*}
   |Df^j(x)-\Id|+|f^j(x)-x|\le\frac1j\quad\text{for all }x\in\R^3,
  \end{equation*}
and, for some $r_j>0$,
   \begin{equation}\label{polyhedral-approx}
\int_{\gamma_j\cap(\Omega)_{r_j}}\psiC(b_j,Qt_j)d\mathcal{H}^1\le(1+c\frac1j) \int_{\tilde\gamma_j\cap\Omega}\psiC(\tilde b_j,Q\tilde t_j)d\mathcal{H}^1+c\frac1j,
  \end{equation}
  where $(\Omega)_{r_j}=\{x:\dist(x,\Omega)<r_j\}$.
Here $f^j_\sharp\mu_j$ denotes the push-forward of the current corresponding to the measure $\mu_j$, in the sense that
\begin{equation}
 f^j_\sharp(b_j\otimes t_j\mathcal{H}^1\LL\gamma_j)
 = \left(b_j\otimes 
 \frac{D_{t_j}f_j}{|D_{t_j}f_j|}\right)
 \circ f_j^{-1}  \calH^1\LL f_j(\gamma_j).
\end{equation}
{By Lemma~\ref{lemmatransversal}\ref{lemmatransversalgeneric} 
we can choose $f_j$ and $\mu_j$ such that $\mu_j$ is transversal to $\partial\Omega$
(possibly replacing $f_j$ by $f_j+a_j$, for some $a_j\to0$).}
In particular it turns out that $\mu_j\weakstarto\mu$ in $\Omega$. Moreover since $\mu_j$ is polyhedral it is $(h_{\eps_k},\alpha_{\eps_k})$-dilute for sufficiently large $k$.  By step 1 for every $j$ there is $(\mu_k^j,\beta_k^j)\in\calA_{\eps_k}^*$ constructed as in \eqref{eq:recovery} such that $\eps_k^{-1}\mu_k^j=\mu_j$ and 
 \begin{equation*}
\frac{Q^T\beta_k^j-\Id}{\eps_k\log^{1/2}\frac{1}{\eps_k}}\to\eta_j\quad\text{ strongly in } L^q(\Omega;\R^{3\times3}),
 \end{equation*}
 for all $q\in[1,2)$.
 Moreover
 \begin{equation}\label{limsupdiagonal1}
   \limsup_{k\to\infty}\frac{1}{\eps_k^2\log\frac{1}{\eps_k}}\Eub_{\eps_k}[\mu_k^j,\beta^j_k]\le\int_\Omega\frac12\C_Q\conteta_j\cdot\conteta_j\dx+ \int_{\Omega\cap\gamma_j}\psiC(b_j,Qt_j)d\mathcal{H}^1,
 \end{equation}
 and
  \begin{equation}\label{eqlimsupepsc}
 \begin{split}
 \limsup_{k\to\infty}\frac{1}{\eps_k^2\log\frac{1}{\eps_k}}\int_{\Omega\cap (\supp\mu_k^j)_{\eps_k}}\dist^q(\beta^j_k,\SO(3))\dx=0.
 \end{split}
 \end{equation}
Further, from \eqref{eqconvetajl2},  \eqref{polyhedral-approx} and \eqref{relaxed} it follows that
 \begin{equation*}
  \limsup_{j\to\infty} 
  \int_\Omega\frac12\C_Q\conteta_j\cdot\conteta_j\dx+ \int_{\Omega\cap\gamma_j}\psiC(b_j,Qt_j)d\mathcal{H}^1
  \le \int_\Omega\frac12\C_Q\conteta\cdot\conteta\dx+
  \int_\gamma\psiC^\rel(b,Qt)d\mathcal{H}^1.
 \end{equation*}
  To conclude it remains to construct a diagonal sequence.  Notice first that for any fixed $j$ there is $k_j\in\N$ such that for $k\ge k_j$ the following properties are satisfied:
 \begin{enumerate}
  \item $\mu_k^j\LL\Omega$ is $(h_{\eps_k},\alpha_{\eps_k})$-dilute;
  \item
 $$\left\|\frac{Q^T\beta_k^j-\Id}{\eps_k\log^{1/2}\frac{1}{\eps_k}}
 -\conteta_j \right\|_{L^q(\Omega;\R^{3\times3})}\le\frac1j
 \quad \text{ for all } q\in[1,2-\frac1j];$$
\item 
 \begin{equation*}
  \frac{1}{\eps_k^2\log\frac{1}{\eps_k}}\int_{\Omega\cap(\supp\mu_k^j)_{\eps_k}}\dist^q(\beta^j_k,\SO(3))\dx\le \frac1j\quad \text{ for all } q\in[1,2-\frac1j];
 \end{equation*}
 \item 
 \begin{equation*}
\frac{1}{\eps_k^2\log\frac{1}{\eps_k}}\Eub_{\eps_k}[\mu_k^j,\beta^j_k]\le \int_\Omega\frac12\C_Q\conteta_j\cdot\conteta_j\dx+ \int_{\Omega\cap\gamma_j}\psiC(b_j,Qt_j)d\mathcal{H}^1+\frac1j.
 \end{equation*}
   \end{enumerate}
 Without loss of generality, $k_j>k_{j-1}$.
 Finally for every $k>0$ we take $\tilde\mu_k:=\mu_k^j$ and $\tilde\beta_k:=\beta_k^j$ if $k\in[k_j,k_{j+1})\cap\mathbb{N}$, and this concludes the proof.
\end{proof}

\begin{remark}\label{remubinternal2}
 The same result as in 
 Theorem~\ref{theoubinternal} holds 
 if $\Eub_\eps$ is replaced by
\begin{equation}\label{eqdefeubeps2}
E^\moll_\eps[\mu,\beta]:=\begin{cases}\displaystyle
  \Elast[\beta, \Omega], 
  & \text{ if $(\mu,\beta)\in\mathcal{A}^\moll_\eps$,
  }\\
  \infty, & \text{ otherwise,}
 \end{cases}
\end{equation}
where $\Elast$ and $\mathcal{A}^\moll_\eps$ are defined in \eqref{eqdefFepsbetaA} and \eqref{eq:Amoll} respectively.
 To see this, it suffices to replace
the definition of $\beta_k$ in \eqref{eq:recovery} with
  \begin{equation}\label{eq:recovery1}
  \hat\beta_k:=Q+\eps_k\log^{1/2}\frac{1}{\eps_k}Q\eta+\eps_k Q\xi*\varphi_\eps.
 \end{equation}
Then one applies {\eqref{eqprop61omega} in}
Proposition~\ref{propupperboundcontunrelax2} 
to a larger set $(\Omega)_{r}$, for some $r>0$. 
{For almost all $r$ we have $|\mu|(\partial((\Omega)_r))=0$, so that one}
obtains instead of  \eqref{eq1}
 \begin{equation}\label{eq1st} 
 \limsup_{k\to\infty}
 \frac{1}{\log\frac1{\eps_k^s}}\int_{(\Omega)_{r}\setminus(\gamma)_{\eps_k^s}}\frac12\C_Q\xi
 \cdot\xi\dx\le \int_{(\Omega)_{r}\cap\gamma}\psiC(b,Qt)d\mathcal{H}^1.
 \end{equation}
 {By convexity this gives
 \begin{equation}\label{eq1stb} 
 \limsup_{k\to\infty}
 \frac{1}{\log\frac1{\eps_k^s}}\int_{\Omega\setminus(\gamma)_{2\eps_k^s}}\frac12\C_Q(\varphi_{\eps_k}\ast\xi)\cdot
 (\varphi_{\eps_k}\ast\xi)\dx\le \int_{(\Omega)_{r}\cap\gamma}\psiC(b,Qt)d\mathcal{H}^1
 \end{equation}
 for almost all $r>0$, and since $|\mu|(\partial\Omega)=0$ 
 also with the integral over $\Omega\cap\gamma$ in the right-hand side. In the rest of the estimates we need to use $2\eps_k^s$ instead of $\eps_k^s$, without any significant change.}
{The estimate in \eqref{eq4} carries over by convexity, if one restricts the domain by $\eps_k$. As the exponent $s$ was generic, this is only relevant on the ``inner'' side, and it remains to deal with $(\gamma)_{2\eps_k}$.}
{This is finally treated by Remark~\ref{remarkbetamollcore}, which shows that
 \begin{equation}
 \limsup_{k\to\infty}
 \frac{1}{\eps_k^2\log\frac1{\eps_k}}\int_{(\gamma)_{2\eps_k}}
 W(\hat\beta_k) dx
 \le 
 \limsup_{k\to\infty}
 \frac{c}{\log\frac1{\eps_k}}\int_{(\gamma)_{2\eps_k}}
 (|\eta|^2+|\xi\ast\varphi_{\eps_k}|^2) dx=0.
 \end{equation}
}
{Step 2} is unchanged.
 \end{remark}

\section{Proofs of the main results}\label{sec-mainproofs}

We start with the proof of our main result, Theorem~\ref{thm:Gamma-limit}.

\begin{proof}[Proof of  Theorem~\ref{thm:Gamma-limit}]
Compactness follows from 
Proposition~\ref{propcptmubdbeta}.
 For the lower bound we use Theorem~\ref{theoremlowerboundsec5} with $\rho_\eps=\eps$
 and  {\eqref{eqelbesubcr}}.
For the upper bound we use Theorem~\ref{theoubinternal}, observing that
\begin{equation*}
 F_\eps^\subcr[\mu,\beta]\le \frac{1}{\eps^2\log\frac{1}{\eps}}\Eub_\eps[\mu,\beta] + \frac{c}{\eps^2\log\frac{1}{\eps}}\ \int_{\Omega\cap (\supp\mu)_\eps} \dist^p(\beta,\SO(3)) dx.
\end{equation*}
The geometrically linear case is proven analogously.
\end{proof}

We now turn to Theorem~\ref{thm:Gamma-limit2}.

\begin{proof}[Proof of Theorem~\ref{thm:Gamma-limit2}]
 (i): As in the proof of Theorem~\ref{thm:Gamma-limit}, the lower bound follows from Theorem~\ref{theoremlowerboundsec5} and the fact that $$\frac{1}{\eps^2\log\frac{1}{\eps}}\Elb_{\rho,\eps}[\mu,\beta]\le F_\eps^*[\mu,\beta]$$ for every $\rho\ge\rho_\eps$ which follows as above from $\calA_\eps^*\subseteq\calA_{\rho,\eps}^\core$ for every $\rho>0$. 

For the upper bound we use Theorem~\ref{theoubinternal}, observing that
{$\rho_\eps\ge\eps$ implies}
$$F_\eps^*[\mu,\beta]\le \frac{1}{\eps^2\log\frac{1}{\eps}}\Eub_\eps[\mu,\beta].$$

 (ii): For the lower bound we observe that 
  $$\frac{1}{\eps^2\log\frac{1}{\eps}}\Elb_{{\rho_\eps,\eps}}[\mu,\beta]\le F_\eps^\core[\mu,\beta].$$
 Hence the thesis follows again by Theorem~\ref{theoremlowerboundsec5}. 
 
 For the upper bound we use Theorem~\ref{theoubinternal}, observing that $$F_\eps^\core[\mu,\beta]\le \frac{1}{\eps^2\log\frac{1}{\eps}}\Eub_\eps[\mu,\beta]$$
 which again is a consequence of  $\calA^*_\eps\subseteq{\calA_{\rho_\eps,\eps}^\core}$
{(see Lemma~\ref{lem:comparison-admissible-pairs}\ref{lem:comparison-admissible-pairsstarcore}).}

 (iii): For lower bound we use Theorem~\ref{theoremlowerboundsec5} and the fact that $$\frac{1}{\eps^2\log\frac{1}{\eps}}\Elb_{\rho,\eps}[\mu,\beta]\le F_\eps^\moll[\mu,\beta]$$ for every $\rho\ge\eps$ which follows 
 from $\calA_\eps^\moll\subseteq\calA_{\rho,\eps}^\core$ (see Lemma~\ref{lem:comparison-admissible-pairs}\ref{lem:comparison-admissible-pairsmollcore}). 
 
 For the upper bound we use the same argument as in Theorem~\ref{theoubinternal}, as discussed in  Remark~\ref{remubinternal2} 
 observing that  \begin{equation*}
  F_\eps^\moll[\mu,\beta]= \frac{1}{\eps^2\log\frac{1}{\eps}}E^\moll_\eps[\mu,\beta] .
 \end{equation*}
 
\end{proof}

\section*{Acknowledgements}

This work  was partially funded by the Deutsche Forschungsgemeinschaft (DFG, German Research Foundation)
{\sl via} project 211504053 - SFB 1060, 
project 390685813 - GZ 2047/1 - HCM3,
and project 2044-390685587 - ``Mathematics M\"unster: Dynamics--Geometry--Structure'' and by  PRIN 2017BTM7SN$\_$004 ``Variational methods for stationary and evolution problems with singularities and interfaces''.


\end{document}